\documentclass[11pt,a4paper]{article}
\pdfoutput=1
\usepackage[margin=1in]{geometry}
\usepackage[utf8]{inputenc}
\usepackage[dvipsnames]{xcolor}
\usepackage[bookmarksnumbered,linktocpage,hypertexnames=false,colorlinks=true,linkcolor=NavyBlue,urlcolor=NavyBlue,citecolor=ForestGreen,anchorcolor=green,breaklinks=true,pagebackref=true,pdfusetitle]{hyperref}
\usepackage{bbm,braket,microtype,amsmath,amssymb,amsthm,amsfonts,dsfont,dynkin-diagrams,mathtools,colortbl,booktabs,algorithm,algpseudocode,graphicx,enumitem,xspace,float,mathdots,ellipsis,caption,subcaption,mleftright,multirow,adjustbox,afterpage,lmodern,footnotebackref,pdfpages,bm,authblk}
\usepackage[T1]{fontenc}
\usepackage{textcomp}
\usepackage[english]{babel}
\usepackage[capitalize,nameinlink]{cleveref}
\usepackage{tikz}
\usepackage{xfrac}
\usepackage{booktabs,longtable,colortbl}

\newtheorem*{theorem*}{Theorem}
\newtheorem{theorem}{Theorem}[section]\crefname{theorem}{Theorem}{Theorems}
\newtheorem{lemma}[theorem]{Lemma}\crefname{lemma}{Lemma}{Lemmas}
\crefname{claim}{Claim}{Claims}
\newtheorem{proposition}[theorem]{Proposition}\crefname{proposition}{Proposition}{Propositions}
\crefname{observation}{Observation}{Observations}
\newtheorem{corollary}[theorem]{Corollary}\crefname{corollary}{Corollary}{Corollaries}
\crefname{conjecture}{Conjecture}{Conjecture}
\theoremstyle{definition}
\newtheorem{definition}[theorem]{Definition}\crefname{definition}{Definition}{Definitions}
\crefname{problem}{Problem}{Problems}
\newtheorem{remark}[theorem]{Remark}\crefname{remark}{Remark}{Remarks}
\newtheorem{example}[theorem]{Example}\crefname{example}{Example}{Examples}
\crefname{condition}{Condition}{Conditions}
\numberwithin{equation}{section}
\numberwithin{table}{section}

\usepackage{aliascnt}
\makeatletter
\let\c@algorithm\relax
\makeatother
\newaliascnt{algorithm}{theorem}

\makeatletter
\let\c@table\relax
\makeatother
\newaliascnt{table}{theorem}

\makeatletter
\let\c@figure\relax
\makeatother
\newaliascnt{figure}{theorem}

\captionsetup{labelfont=bf,singlelinecheck=false,hypcap=false}

\makeatletter
\newenvironment{algorithmbreak}[1]{%
    \def\@captype{algorithm}%
    \par\noindent\hrulefill
    \vspace{-0.8em}
    \caption{#1}
    \vspace{-0.8em}
    \par\noindent\hrulefill\\}%
    {
    \vspace{-0.8em}
\par\noindent\hrulefill
}
\makeatother

\AddToHook{env/lemma/begin}{\crefalias{theorem}{lemma}}
\AddToHook{env/claim/begin}{\crefalias{theorem}{claim}}
\AddToHook{env/proposition/begin}{\crefalias{theorem}{proposition}}
\AddToHook{env/observation/begin}{\crefalias{theorem}{observation}}
\AddToHook{env/corollary/begin}{\crefalias{theorem}{corollary}}
\AddToHook{env/conjecture/begin}{\crefalias{theorem}{conjecture}}
\AddToHook{env/definition/begin}{\crefalias{theorem}{definition}}
\AddToHook{env/problem/begin}{\crefalias{theorem}{problem}}
\AddToHook{env/remark/begin}{\crefalias{theorem}{remark}}
\AddToHook{env/condition/begin}{\crefalias{theorem}{condition}}
\AddToHook{env/equation/begin}{\crefalias{theorem}{equation}}

\AddToHook{env/algorithm/begin}{\crefalias{theorem}{algorithm}}
\AddToHook{env/table/begin}{\crefalias{theorem}{table}}

\def\foo#1\endfoo{}
\newcolumntype{C}{@{}>{\foo}c<{\endfoo}}

\DeclareMathOperator{\aff}{aff}

\DeclareMathOperator{\codim}{codim}
\DeclareMathOperator{\conv}{conv}
\DeclareMathOperator{\diag}{diag}
\DeclareMathOperator{\Disc}{Disc}
\DeclareMathOperator{\GL}{GL}
\DeclareMathOperator{\Herm}{Herm}

\DeclareMathOperator{\maxrank}{maxrank}

\DeclareMathOperator{\rank}{rank}

\DeclareMathOperator{\SL}{SL}
\DeclareMathOperator{\spec}{spec}

\DeclareMathOperator{\supp}{supp}

\DeclareMathOperator{\Tr}{Tr}
\DeclareMathOperator{\U}{U}

\newcommand{\R}{\mathbb{R}}
\newcommand{\C}{\mathbb{C}}
\newcommand{\Z}{\mathbb{Z}}
\newcommand{\Q}{\mathbb{Q}}
\newcommand{\N}{\mathbb{N}}
\newcommand{\F}{\mathbb{F}}
\newcommand{\OO}{\mathcal{O}}

\def\<#1>{\left\langle\ignorespaces#1\unskip\right\rangle}

\newcommand{\ot}{\otimes}

\newcommand{\eps}{\varepsilon}

\DeclarePairedDelimiter\norm{\lVert}{\rVert}

\DeclareMathOperator{\tensorrank}{R}

\newcommand{\weylchamber}{\mathcal D}

\newcommand{\variety}{\mathbb X}

\newcommand{\polysystem}{F^v(h)} %
\newcommand{\tensorpolysystem}{F} %
\newcommand{\cost}{\textnormal{c}} %
\newcommand{\Tdet}{\mathsf{D}}
\newcommand{\Tnurmiev}{\mathsf{T}}
\newcommand{\familyparam}{\alpha}
\newcommand{\TW}{\mathsf{W}}

\newcommand{\weylgroup}{\mathcal{W}}
\newcommand{\ineqs}{\mathcal H}

\newcommand{\tensor}{T} %
\newcommand{\sep}{\mid} %
\newcommand{\lowertriangular}{\mathchoice
    {\tikz[scale=1]{\draw (0,0) -- (4mm,0mm) -- (0,4mm) -- cycle}}
    {\tikz[scale=1]{\draw (0,0) -- (4mm,0mm) -- (0,4mm) -- cycle}}
    {\hspace{1pt}\tikz[scale=0.40]{\draw (0,0) -- (4mm,0mm) -- (0,4mm) -- cycle}}
    {\hspace{0.5pt}\tikz[scale=0.33]{\draw (0,0) -- (4mm,0mm) -- (0,4mm) -- cycle}}
}
\newcommand{\uppertriangular}{\mathchoice
    {\tikz[scale=1]{\draw (0,4mm) -- (4mm,4mm) -- (4mm,0mm) -- cycle}}
    {\tikz[scale=1]{\draw (0,4mm) -- (4mm,4mm) -- (4mm,0mm) -- cycle}}
    {\tikz[scale=0.40]{\draw (0,4mm) -- (4mm,4mm) -- (4mm,0mm) -- cycle}}
    {\tikz[scale=0.33]{\draw (0,4mm) -- (4mm,4mm) -- (4mm,0mm) -- cycle}}
}
\newcommand{\astack}[2]{\mathbin{{\smash{\begin{array}{@{}c@{}}\phantom{#2}\\[-8pt]#1\\[-8pt]#2\end{array}}}}}

\newcommand{\G}{\textnormal{GL}}
\newcommand{\Sl}{\textnormal{SL}}
\newcommand{\K}{\textnormal{U}}

\newcommand{\DeltaB}{\Delta_{\textnormal{B}}}
\newcommand{\Drepr}{{\textnormal{repr}}}
\newcommand{\Dgeom}{{\textnormal{spec}}}
\newcommand{\Dsupp}{{\textnormal{supp}}}
\newcommand{\HWV}{{\textnormal{HWV}}}
\newcommand{\Tgeneric}{T_{\textnormal{r}}}
\newcommand{\ISO}{{\textnormal{ISO}}}
\newcommand{\unit}[1]{\mathsf{U}_{\smash{#1}}}

\newcommand{\MM}{\mathsf{M}}

\newcommand{\degenleq}{\trianglelefteq}
\newcommand{\degengeq}{\trianglerighteq}
\newcommand{\gdim}{n} %
\newcommand{\pdim}{d} %

\newcommand{\nurmievT}[1]{\textnormal{T}_{#1}}

\newcommand{\grayzero}{{\color{gray}0}}
\newcommand{\kronpol}[3]{\Delta(\C^{#1} \ot \C^{#2} \ot \C^{#3})}
\newcommand{\kronpolrepr}[3]{\Delta^\Drepr(\C^{#1} \ot \C^{#2} \ot \C^{#3})}
\newcommand{\kronpolgeom}[3]{\Delta^\Dgeom(\C^{#1} \ot \C^{#2} \ot \C^{#3})}

\newcommand{\cm}{\bullet}
\newcommand{\cl}{\circ}
\definecolor{lightergray}{rgb}{0.9,0.9,0.9}
\definecolor{lighterergray}{rgb}{0.95,0.95,0.95}
\allowdisplaybreaks[4]
\urlstyle{same}

\usepackage[normalem]{ulem}

\begin{document}

\title{Computing moment polytopes --- with a focus on tensors, entanglement and matrix multiplication}
\author[1,2]{Maxim van den Berg}
\author[3]{Matthias Christandl}
\author[1]{Vladimir Lysikov}
\author[3]{Harold Nieuwboer}
\author[1]{Michael Walter}
\author[2]{Jeroen Zuiddam}

\affil[1]{Ruhr University Bochum, Bochum, Germany}
\affil[2]{University of Amsterdam, Amsterdam, Netherlands}
\affil[3]{University of Copenhagen, Copenhagen, Denmark}

\date{}
\maketitle

\begin{abstract}

Tensors are fundamental in mathematics, computer science, and physics.
Their study through algebraic geometry and representation theory has proved very fruitful in the context of algebraic complexity theory and quantum information.
In particular, moment polytopes have been understood to play a key role.
In quantum information, moment polytopes (also known as entanglement polytopes) provide a framework for the single-particle quantum marginal problem and offer a geometric characterization of entanglement.
In algebraic complexity, they underpin quantum functionals that capture asymptotic tensor relations. %
More recently, moment polytopes have also become foundational to the emerging field of scaling algorithms in computer science and optimization.

Despite their fundamental role and interest from many angles, much is still unknown about these polytopes, and in particular for tensors beyond $\C^2\ot\C^2\ot\C^2$ and $\C^2\ot\C^2\ot\C^2\ot\C^2$ only sporadically have they been computed.
We give a new algorithm for computing moment polytopes of tensors (and in fact moment polytopes for the general class of reductive algebraic groups) based on a mathematical description by Franz (J.~Lie Theory 2002).

This algorithm enables us to compute moment polytopes of tensors of dimension an order of magnitude larger than previous methods, allowing us to compute with certainty, for the first time, all moment polytopes of tensors in $\C^3\ot\C^3\ot\C^3$, and with high probability those in $\C^4\ot\C^4\ot\C^4$ (which includes the $2\times 2$ matrix multiplication tensor). We discuss how these explicit moment polytopes have led to several new theoretical directions and results.

\vspace{1em}

\end{abstract}

\vspace{3em}
\setcounter{tocdepth}{3}
\tableofcontents
\newpage

\section{Introduction}
\label{section:introduction}
Tensors play a central role in various areas of computer science, mathematics and physics, such as algebraic complexity theory, quantum information theory, and additive combinatorics \cite{burgisser1996algebraic,blaser2013fast,MR1804183,walterEntanglementPolytopes2013,MR3645110,taoCapsetBlogPost2016,ellenberg2017large}.
Fundamental open problems about tensors are strongly tied to questions in computational complexity.
A well-known such problem is to determine the matrix multiplication exponent, which corresponds to the asymptotic rank of the matrix multiplication tensor, and which has been studied intensively for decades from many angles (computational, geometric, algebraic) \cite{strassen1969gaussian, DBLP:conf/focs/Strassen86, MR3081636, burgisserGeometricComplexity2011, MR3376667,MR3631613, alman2024asymmetryyieldsfastermatrix, wigderson2022asymptotic}.
A strong conjecture in this area is Strassen's asymptotic rank  conjecture, which has long been known to be intimately linked to the matrix multiplication exponent, and a recent burst of results has developed a range of strong connection between this conjecture and other problems in computational complexity theory \cite{10.1145/3618260.3649656, 10.1145/3618260.3649620, björklund2024chromaticnumber19999ntime, kaski2024universalsequencetensorsasymptotic}.
In quantum information theory, tensors are the natural formalism to study multipartite entangled quantum states, their applications, and relations under local operations, leading to fundamental problems like the quantum marginal problem \cite{christandlmitchison,daftuar2005quantum,klyachkoQuantumMarginalProblem2004,klyachko2006quantum,walterEntanglementPolytopes2013},
the $N$-representability problem \cite{klyachko2006quantum,altunbulakPauliPrincipleRevisited2008,SchillingChristandlGross2012, SchillingAltunbulakKnechtLopesWhitfieldChristandlGrossReiher2018},
and (asymptotic) entanglement transformations / entanglement distillation
\cite{MR1804183,hayashi2003errorExponents,chitambar2008tripartiteEntanglementTransformations,christandlUniversalPointsAsymptotic2021,jensenAsymptoticSpectrumLOCC2020,bugar2025errorExponentsEntanglement}.

Whereas matrices are understood through simple invariants like their rank, tensors have such intricate structure and relations that understanding them (and aforementioned problems) requires information of a richer quality. The \emph{moment polytope} is a mathematical object that collects such fundamental ``rank-like'' information about the tensor, in a precise sense that allows several different characterizations. Going back decades to fundamental work in representation theory, symplectic geometry and invariant theory \cite{mumfordGeometricInvariantTheory1994, nessStratificationNullCone1984, kempfLengthVectorsRepresentation1979, brion1987momentMapImage}, the relevance of moment polytopes has recently become apparent in several areas:
\begin{itemize}
\item in algebraic complexity theory as potential ``obstructions'' in geometric complexity theory (GCT) \cite{burgisserGeometricComplexity2011} (through understanding inclusions and separations between moment polytopes),
\item as the basis for the construction of elements in Strassen's asymptotic spectrum called the \emph{quantum functionals} \cite{DBLP:conf/focs/Strassen86, strassen1988asymptotic, christandlUniversalPointsAsymptotic2021} (the subject of Strassen's duality theorem for asymptotic rank and the matrix multiplication exponent),
\item in quantum information as entanglement polytopes, which describe reachable quantum marginals \cite{walterEntanglementPolytopes2013}, and in particular the solution to the single-particle quantum marginal and $N$-representability problems \cite{christandlmitchison,daftuar2005quantum,klyachkoQuantumMarginalProblem2004,klyachko2006quantum,altunbulakPauliPrincipleRevisited2008}, and which underpin asymptotic entanglement theory through the quantum functionals \cite{christandlUniversalPointsAsymptotic2021,jensenAsymptoticSpectrumLOCC2020},
\item in optimization through a class of optimization algorithms called scaling algorithms \cite{garg2019operatorScaling,cole2018operatorScaling,burgisser2018alternatingMinimization,burgisser2018tensorScaling,burgisserTheoryNoncommutativeOptimization2019,hirai2023interiorPointManifolds}.
\end{itemize}

Despite their fundamental role and tremendous interest from many mathematical and computational angles, much is still unknown about moment polytopes. In particular, they are notoriously hard to compute. For tensors beyond $\C^2 \otimes \C^2 \otimes \C^2$ \cite{han2004compatibleConditionsEntanglement,sawicki2013threeQubits} and $\C^2\ot\C^2\ot\C^2\ot\C^2$ \cite{walterEntanglementPolytopes2013} only sporadically have they been determined \cite{franz2002,vergneInequalitiesMomentCone2017,buloisAlgorithmComputeKronecker2025a}.
Moreover, very little is known about the inclusions and separation between them and about their ``operational'' meaning, which is particularly relevant for aforementioned applications.

Based on a characterization of moment polytopes by Franz \cite{franz2002}, we introduce a new algorithm to compute moment polytopes of tensors and more general group representations.
Our algorithm computes for the first time all moment polytopes of tensors in $\C^3 \ot \C^3 \ot \C^3$ with certainty in seconds and those in $\C^4 \ot \C^4 \ot \C^4$ with high probability.
Hence, our algorithm reaches an order of magnitude beyond the previously known results in the three-partite case (from dimension $2\cdot2\cdot2=8$ to $4\cdot4\cdot4=64$).
In particular, this allows us (for the first time) to compute the moment polytopes of several tensors of interest, such as the $2\times2$ matrix multiplication tensor.

The algorithm offers rigorous guarantees and is optimized for practical computation.
The basic algorithm comes in three variants:
a deterministic algorithm that computes the moment polytope of a tensor;
a faster randomized variant that is always correct when successful, but may fail to come to a solution with bounded probability;
and a randomized algorithm that is correct with bounded probability.
Additionally, we describe a practically oriented probabilistic variation of the algorithm without rigorous guarantees,
alongside two corresponding probabilistic verification algorithms, the first of which is always correct when successful but may fail to come to a solution,
and the second of which is correct with bounded probability.
(Moreover, we can boost the above three success probabilities arbitrarily.)
A crucial part of the algorithm is covered by Gr\"obner bases algorithms, which we apply over various fields.

A new tool in the ``moment polytope toolbox'', our algorithms, and in particular the resulting concrete description of all moment polytopes in these shapes, are a starting point for proving new structural results on moment polytopes (indeed we will discuss several).

\paragraph{New results.} We summarize here our main results, which we elaborate on in the rest of the introduction:
\begin{itemize}
    \item We develop a new algorithm for the computation of moment polytopes, highly optimized for practicality.
    We describe several (deterministic, probabilistic, heuristic) variations as well as a verification algorithm, and prove rigorous guarantees on correctness and success probabilities.
    Our algorithm is very general and applies to moment polytopes for general reductive groups acting by linear maps on finite-dimensional vector spaces.
    \item Using these algorithms and using an orbit classification due to Nurmiev \cite{nurmievOrbitsInvariantsCubic2000}, we compute exactly all moment polytopes of tensors in $\C^3 \ot \C^3 \ot \C^3$.
    In particular, we reveal all inclusions among them.
        The full descriptions of the computed polytopes are given in \cref{table:333 vertex data} and available online in \cite{vandenBerg2025momentPolytopesGithub}.
    \item We compute with high probability the moment polytopes of tensors in $\C^4 \ot \C^4 \ot \C^4$. In particular we compute it for the $2 \times 2$ matrix multiplication tensor, and note that it is not maximal. We give the computed vertices in \cref{table:444 vertex data}.
    \item The newly computed moment polytopes allow direct computation of the quantum functionals, which gives obstructions for asymptotic restrictions and entanglement transformations.
    \item As an indication of further applications, our computed polytopes led (in subsequent work) to several new structural results regarding matrix multiplication moment polytopes, tensor networks (matrix product states), moment polytope inclusion and explicit non-free tensors, which we discuss in \cref{subsec:intro-outlook} below.
\end{itemize}

\subsection{Tensors, their moment polytopes, and applications}

We give here a brief overview of the context and background of this work on tensors and moment polytopes in various areas.

\paragraph{Tensors as quantum states.}
In quantum information theory, tensors describe pure quantum states in multipartite finite-dimensional quantum systems.
For example, a tensor $T \in \C^a \ot \C^b \ot \C^c$ of norm one is a pure quantum state of the three local (also called \emph{marginal}) systems $\C^a$, $\C^b$ and $\C^c$.
The tensor $T$ describes the global state, including entanglement between the local systems.
The local state in each system is described by a Hermitian linear operator on the local system.
These linear operators are called \emph{marginal} density matrices (akin to marginal distributions in probability theory), and they can be defined as taking the partial trace of $TT^* \in \C^{a\times a} \ot \C^{b\times b} \ot \C^{c \times c}$ with respect to the other two systems. For example, the marginal density matrix of the first system is a Hermitian matrix of shape $a \times a$.

A central goal in quantum information theory is to establish \emph{entanglement monotones}.
These are functions that do not increase under operations that are ``local''. %
Examples of such operations are: acting by unitary matrices on the local systems (LU), LU operations with classical communication (LOCC), and LOCC operations with nonzero success probability (SLOCC).
Mathematically, SLOCC operations correspond to applying matrices $A,B$ and $C$ on the local systems, via $S = (A \ot B \ot C)T$.

Moment polytopes are fundamental entanglement monotones, and hence are also called entanglement polytopes~\cite{walterEntanglementPolytopes2013}.
They succinctly describe constraints on the set of states that a tensor $T$ can be
transformed into by SLOCC operations and taking limits.
That is, we act by invertible matrices $A$, $B$ and $C$ on $T$ but allow also the limits of tensors obtained in this way. We denote the set of all such tensors with $\overline{\G \cdot T}$, where $\G \cdot T \coloneqq \{(A\ot B \ot C)T \mid (A,B,C) \in \GL_a \times \GL_b \times \GL_c\}$, $\GL_n$ denotes the invertible $n \times n$ matrices, and the bar indicates that we include limit points.
The eigenvalues of the three Hermitian marginal density matrices of a tensor in $\C^a\ot\C^b\ot\C^c$ are real,
and when the tensor has unit norm they also sum to one (hence form a probability distribution).
Because we can diagonalize using LU operations, these eigenvalues classify the marginal density matrices.
Denote with $r_\textnormal{A}(T)$ the eigenvalues of the first marginal density matrix, sorted from large to small.
Similarly define $r_\textnormal{B}(T)$ and $r_\textnormal{C}(T)$.
Then we can define the \emph{moment polytope} %
\begin{align}
\label{eq:intro moment polytope geometric}
    \Delta(T) =
    \Big\{ \big( r_\textnormal{A}(S), r_\textnormal{B}(S), r_\textnormal{C}(S) \big)
    \ \Big|\
        S \in \overline{\GL \cdot T}, \norm{S} = 1
    \Big\}
    \subseteq \R^{a} \times \R^{b} \times \R^{c},
\end{align}
which, surprisingly, indeed is a (convex, compact) polytope.

One particular motivation for studying moment polytopes comes from matrix product states (MPS) \cite{ciracMatrixProductStates2021,christandlTensorNetworkRepresentations2020a,acuavivaMinimalCanonicalForm2023}.
Consider a system with $k$ sites arranged on a circle, and give each pair of adjacent systems one maximally entangled pair of dimension $n$; call the resulting tensor $T$.
Then MPS on $k$ sites with bond dimension at most $n$ are exactly those tensors which can be obtained from $T$ by SLOCC.
Thus, the moment polytope of $T$ characterizes the collections of one-body marginal density operators that can be realized (or approximately arbitrarily closely) using MPS of the given bond dimension.

One may also define the set of $(r_\textnormal{A}(S), r_\textnormal{B}(S), r_\textnormal{C}(S))$ with $S$ ranging over all states in $\C^a \ot \C^b \ot \C^c$.
The quantum marginal problem is to decide, given such a triple, whether it is inside this set or not. This set is also a polytope, known as the \emph{Kronecker polytope} due to its close connection to Kronecker coefficients \cite{christandlmitchison, klyachko2006quantum,christandl2007nonzeroKroneckerCoefficients}.
Moreover, it is known that a generic tensor has this polytope as their moment polytope \cite{brion1987momentMapImage}.

As moment polytopes characterize the jointly achievable marginals of a quantum state, they can be used to witness many-particle entanglement from single-particle data~\cite{walterEntanglementPolytopes2013}. This test has been used in experiments~\cite{aguilarExperimentalDeterminationMultipartite2015,zhaoExperimentalDetectionEntanglement2017}, and is relevant in the understanding of Pauli’s principle~\cite{altunbulakPauliPrincipleRevisited2008, SchillingChristandlGross2012, SchillingAltunbulakKnechtLopesWhitfieldChristandlGrossReiher2018}.
Up until now, such experiments have been limited to qubits, since the polytopes of larger dimensional tensors were not known; with our algorithm, we have been able to extend knowledge of these polytopes to three qutrits and further.
The test could readily be used experimentally in order to witness new types of entanglement.

\paragraph{Algebraic complexity theory.}

In algebraic complexity theory, tensors correspond to bilinear computational problems \cite{burgisser1996algebraic}.
Examples include, for every $n$, the \emph{matrix multiplication tensor} $\MM_{n} \in \C^{n^2}\ot\C^{n^2}\ot\C^{n^2}$ describing the multiplication of two $n \times n$ matrices, and for every $a,b$ the \emph{polynomial multiplication tensor} $\mathsf{P}_{a,b} \in \C^{a} \ot \C^{b} \ot \C^{a+b-1}$
describing the multiplication of two univariate polynomials of degrees $a-1$ and $b-1$, and the \emph{unit tensor} $\unit{r} \coloneqq  \sum_{i=1}^r e_i \ot e_i \ot e_i \in \C^{r}\ot\C^r\ot\C^r$ describing element-wise multiplication of two vectors of length $r$.
The complexity of these problems can be measured by the number of required bilinear multiplications between the two inputs, which is called the \emph{rank} of the tensor.
We write $T \leq S$ and say $T$ is a \emph{restriction} of $S$ whenever $T = (A \ot B \ot C)S$ for some matrices $A$, $B$ and $C$ of suitable sizes.
Whenever there exists a restriction $T \leq S$, this means we can compute $T$ using as many bilinear multiplications between inputs as is required for $S$.
Therefore, the rank $\tensorrank(T)$ of $T$ is the smallest $r \in \N$ such that $T \leq \unit{r}$.

A central open problem is the asymptotic complexity of matrix multiplication.
The goal is to determine the smallest (in the sense of infimum) real number $\omega$ such that $\tensorrank(\MM_{n}) = \OO(n^{\omega})$.
This number is called the \emph{matrix multiplication exponent} \cite{strassenRankOptimalComputation1983}.
The best known upper bound on $\omega$ is $2.3721339\ldots$ \cite{alman2024asymmetryyieldsfastermatrix}, and the best current lower bound is the trivial $\omega \geq 2$.

An important property of matrix multiplication is that it is self-similar.
If one splits both matrices up into four blocks, their product can be computed via block matrix multiplication.
This property is observed in the tensor by the fact that taking the Kronecker product of two matrix multiplication tensors results in another matrix multiplication tensor: $\MM_{m} \ot \MM_{\ell} = \MM_{m\ell}$.
As a consequence, knowing the asymptotics of $\tensorrank(\MM_2^{\ot n})$ in $n$ allows us to determine~$\omega$.
We define the \emph{asymptotic rank} as $\astack{\tensorrank}{\sim}(T) \coloneqq \lim_{n\to\infty} \tensorrank(T^{\ot n})^{1/n} = \inf_{n} \tensorrank(T^{\ot n})^{1/n}$, where the second equality holds by Fekete's lemma.
A simple computation shows $\astack{\tensorrank}{\sim}(\MM_2) = 2^\omega$.
In fact, in this characterization of $\omega$, $\MM_2$ can be replaced with any matrix multiplication tensor of fixed shape (with the right-hand side adjusted accordingly).

Properties of $T^{\ot n}$ are notoriously difficult to ascertain.
The moment polytope of $T$ describes some of these properties of representation-theoretic nature.
These come in the form of discrete data given by triples of integer vectors $(\lambda,\mu,\nu) \in \N^a\times\N^b\times\N^c$, each of which has non-negative non-increasing entries summing to $n$."
There exist natural projections of $T^{\ot n}$ to certain tensors $\big[T^{\ot n}\big]{}_{\lambda,\mu,\nu}$ (in the isotypic components of the Schur--Weyl decomposition of $(\C^a\ot\C^b\ot\C^c)^{\ot n}$) with strong representation-theoretic properties.
The moment polytope of $T$ describes the $(\lambda,\mu,\nu)$ such that this projection of $T^{\ot n}$ is non-zero, each normalized to a triple of probability distributions:
\begin{align}
    \label{eq:intro moment polytope rep theory}
    \Delta(T)
    =
    \overline{
    \bigg\{\ \Big(\frac{\lambda}{n}, \frac{\mu}{n}, \frac{\nu}{n}\Big)
    \quad \Big| \quad
    \text{for } n > 0 \text{ and } (\lambda,\mu,\nu)
    \ \text{ such that }\ \big[T^{\ot n}\big]{}_{\lambda,\mu,\nu} \neq 0\ \bigg\}
    },
\end{align}
where the closure is the Euclidean closure.

B\"urgisser and Ikenmeyer posed the problem of determining the moment polytopes of important computational problems, specifically those of $\MM_{n}$ and $\unit{r}$ \cite{burgisserGeometricComplexity2011}.
B\"urgisser, Christandl and Ikenmeyer proved that points which are uniform on two of the three subsystems are contained in the moment polytope of the unit tensor \cite[Theorem~1]{burgisserNonvanishingKroneckerCoefficients2011}.
We make progress on these fundamental questions by computing instances of these moment polytopes explicitly, which we discuss in a moment (\cref{{subsec:intro-333}}). These have led to new structural results for general dimensions (\cref{subsec:intro-outlook}).

\paragraph{Strassen's asymptotic spectrum and the quantum functionals.}

An important application of moment polytopes in algebraic complexity theory is for the construction of the \emph{quantum functionals} \cite{christandlUniversalPointsAsymptotic2021}, which combines the geometric and representation-theoretic perspectives.
The quantum functionals are a family of functions mapping $k$-tensors to $\R_{\geq 0}$, which map $\unit{r}$ to~$r$, are monotone under restriction, multiplicative under Kronecker products and additive under taking direct sums.
The collection of all functions with these properties form the \emph{asymptotic spectrum} of $k$-tensors \cite{strassen1988asymptotic}.
The landmark result by Strassen tells us that given a tensor $T$, its asymptotic rank is equal to the supremum of $f(T)$ for all $f$ in the asymptotic spectrum.
In fact, the asymptotic spectrum describes in its entirety the existence of asymptotic restrictions and degenerations between $k$-tensors, or in the context of quantum information, the existence of asymptotic SLOCC transformations. There is also an analogous theory for (asymptotic) LOCC~\cite{jensenAsymptoticSpectrumLOCC2020,vranaFamilyMultipartiteEntanglement2023}.

It has proven to be a challenge to describe the asymptotic spectrum explicitly.
A breakthrough was the discovery of the quantum functionals $F_\theta$ as elements in the asymptotic spectrum \cite{christandlUniversalPointsAsymptotic2021}.
They are defined for 3-tensors $T$ as $F_\theta(T) \coloneqq 2^{E_\theta(T)}$ with
$E_\theta(T) \coloneqq \max_{(p_1,p_2,p_3) \in \Delta(T)} \theta_1 H(p_1) + \theta_2 H(p_2) + \theta_3 H(p_3)$, where $(\theta_1,\theta_2,\theta_3)$ is any probability distribution and $H$ denotes the Shannon entropy.
Quantum functionals have been used to show barrier results for the techniques used to prove upper bounds on the matrix multiplication exponent \cite{christandl2021barriers}.
It is unknown whether the quantum functionals make up the entire asymptotic spectrum of 3-tensors. If the answer is ``yes'', this would in particular imply the matrix multiplication exponent equals $2$.

The quantum functionals are maximizations of concave functions on the moment polytope, and can hence be computed in polynomial time using standard convex optimization techniques given efficient access to the moment polytope.
Our computational results allow us to compute directly the quantum functionals (and other quantities that can be derived from the moment polytope such as $G$-stable rank \cite{derksen2022gStableRank}) for all tensors in $\C^3 \ot \C^3 \ot \C^3$.
This reveals obstructions for the existence of asymptotic restrictions between these tensors.

\paragraph{Scaling problems.}

In the context of scaling problems, moment polytopes are considered more generally.
The main goal is to determine whether some points are included in some moment polytope, a computational task that includes many applications \cite{burgisserTheoryNoncommutativeOptimization2019}.
Scaling algorithm achieve this task via gradient descent-like optimization over the group in question (in our case the group is $\G_a\times\G_b\times\G_c$).
Most notably, this framework led to efficient algorithms for operator scaling and from that a celebrated polynomial time algorithm for non-commutative polynomial identity testing \cite{allenzhu2018operatorScaling,cole2018operatorScaling,garg2019operatorScaling,burgisserTheoryNoncommutativeOptimization2019}.
For many other potential applications current upper bounds are not sufficient to show polynomial complexity, but in notable cases we believe efficient algorithms are possible.
The efficiency of these algorithms is controlled, among other parameters, by properties of moment polytopes and related quantities.
The main parameter in question is called the \emph{weight margin}, and there is a related quantity called the \emph{gap constant} \cite[Definition~1.16 \& Remark~3.20]{burgisserTheoryNoncommutativeOptimization2019}.
Better bounds on these parameters can lead to better upper bounds on the runtime of scaling algorithms. Our algorithms are a first step in determining these quantities computationally in special cases, which have the potential to lead to better bounds.

\subsection{New algorithms for computing moment polytopes}

We present an algorithm for computing the moment polytope of a tensor $T \in \C^a\ot\C^b\ot\C^c$ based on the description of moment polytopes by Franz~\cite{franz2002}.
Our algorithm advances the computational state-of-the art significantly.
The moment polytopes of tensors in $\C^2\ot\C^2\ot\C^2$ and $\C^2\ot\C^2\ot\C^2\ot\C^2$ were computed via \cref{eq:intro moment polytope rep theory} and a complete understanding of the underlying invariant theory~\cite{walterEntanglementPolytopes2013}; but such an understanding is not available in higher dimensions.
Scaling algorithms~\cite{burgisser2018tensorScaling,burgisserTheoryNoncommutativeOptimization2019} provide suitable membership oracles for moment polytopes which are effective in practice but are not known to run in polynomial time in all parameters in general (in particular it is unknown for tensors).
In any case, these optimization-based techniques do not yield a description of the moment polytopes in terms of vertices or inequalities.

We now discuss our algorithm at a high level (in the tensor setting).
Franz describes moment polytopes in terms of the support of the tensor after applying lower-triangular matrices to the three factors.
Denote by $\supp(T)$ the set of triples $(e_i,e_j,e_k) \in \R^a \times \R^b \times \R^c$ such that $T_{i,j,k} \neq 0$ and denote by $\weylchamber$ the set of triples of vectors with non-increasing entries (called \emph{dominant} vectors) in $\R^a \times \R^b \times \R^c$.
Write $\conv Q$ for the convex hull of a set $Q$.
Then we define the \emph{Borel polytope} of a tensor $S$ as
\begin{align}
\label{equation:franz-tensors intro}
    \DeltaB(S)
    \coloneqq
    \bigcap_{\substack{(A, B, C) \in \G \\\text{lower triangular}}} \conv \supp \big( (A \ot B \ot C) S \big)\ \cap\ \weylchamber
\end{align}
where $\G \coloneqq \GL_a \times \GL_b \times \GL_c$.
Borel polytopes have descriptions akin to \cref{eq:intro moment polytope rep theory,eq:intro moment polytope geometric} as well~\cite{burgisser2018tensorScaling,franks2022minimallengthorbitclosure}.
From the rep\-re\-sen\-ta\-tion-theoretic description it is possible to deduce
that for every~$S$ in a dense subset of the orbit~$\GL \cdot T$, we have~$\DeltaB(S) = \Delta(T)$.
In fact, this dense subset is exactly described by the non-vanishing of a certain set of polynomials, and hence equality holds for a nonempty Zariski-open subset of $\GL \cdot T$.

Franz's description leads to an algorithm for computing moment polytopes.
First generate a random element $S \in \GL \cdot T$, and then iterate over all possible supports, for each support checking whether it is attainable by a lower-triangular action on~$S$. This last step can be achieved by solving a polynomial system, and can be done using symbolic methods such as Gr\"obner basis computation.
The result will then equal $\Delta(T)$ with high probability.
The random element may also be described symbolically; in this way $\Delta(T)$ may be computed with certainty.

However, this approach quickly becomes infeasible due to the exponential number of possible supports, and cannot go much beyond previous methods.
The crucial insight is to instead focus on the inequalities defining $\Delta(T)$.
The inequalities defining $\Delta(T)$ (e.g.\ the inequalities that are tight on some face of $\Delta(T)$)
must all be defining for at least one of the finitely many terms occuring in the intersection in \cref{equation:franz-tensors intro}.
We can characterize such inequalities combinatorially.
The first step of our algorithm computes all of them and stores them into a finite set $\ineqs$.
We call an inequality \emph{attainable} for $S$ whenever
there exists lower triangular matrices $(A,B,C)$ such that all elements of the support $\supp( (A \ot B \ot C)S)$ satisfy the inequality.
For step two of our algorithm we iterate over $\ineqs$ and keep all $h \in \ineqs$ that are attainable. The resulting inequalities $\ineqs_S$ will define $\DeltaB(S)$, after the straightforward intersection with $\weylchamber$.
This describes the basic outline of the algorithm.

We note that both the enumeration of $\ineqs$ and the required Gr\"obner basis computations present a bottleneck for computation, and we provide optimizations to be able to compute the moment polytopes of tensors in $\C^4 \ot \C^4 \ot \C^4$.
A notable example is a heuristic which performs the Gr\"obner basis computations instead over finite fields of some random (large) prime characteristic.
To ensure correctness, we also provide a verification algorithm that determines (deterministically or probabilistically) whether $\Delta(T)$ equals a given polytope, which also integrates (tensor) scaling algorithms \cite{burgisser2018tensorScaling,burgisserTheoryNoncommutativeOptimization2019}.

\subsection{Explicit moment polytopes of \texorpdfstring{$3\times3\times3$ and $4\times4\times4$}{3x3x3 and 4x4x4} tensors}\label{subsec:intro-333}

We use our algorithms to compute all moment polytopes for tensors in $\C^3 \ot \C^3 \ot \C^3$.
We find that there are 28 moment polytopes (29 when we count the empty polytope), up to a cyclic symmetry. See \cref{table:333 vertex data} for the vertex descriptions.

For this computation we use a classification of all tensor orbits of this shape by Nurmiev \cite{nurmievOrbitsInvariantsCubic2000, ditraniClassificationRealComplex2023}.
This is not a finite classification, but indeed has families with continuous parameters.
To deal with the continuously parametrized families, we prove that certain families have the largest possible moment polytope.
Before our work, only this largest polytope had been computed \cite{franz2002}, and it was also not known which tensors had it as their moment polytope.
Our computations in particular reveal all inclusion relations between the moment polytopes in this format (see \cref{fig:c333-unstable-moment-polytope-inclusions-intro}). %

\begin{figure}
    \centering
\makebox[\textwidth][c]{
    \begin{tikzpicture}[
        node/.style = {circle, draw, fill=red!20, minimum size=7mm, inner sep=1pt},
        edgedotted/.style = {-Stealth,shorten >=10pt, shorten <=10pt,
                      },
        nodesq/.style = {draw, fill=blue!20, minimum size=7mm, inner sep=1pt},
        nodesqg/.style = {draw, fill=green!20, minimum size=7mm, inner sep=1pt},
        >=stealth, shorten >=1pt, auto,
        rotate=-90,xscale=0.9,yscale=1.2,every node/.style={scale=0.95}
      ]
      \node[node] (1) at (0, 0)   {$\Tnurmiev_1$};
      \node[node] (2) at (0, 1)   {$\Tnurmiev_2$};
      \node[node] (3) at (-1, 2)  {$\Tnurmiev_3$};
      \node[node] (4) at (1, 2)   {$\Tnurmiev_4$};
      \node[node] (5) at (-1, 3)  {$\Tnurmiev_5$};
      \node[node] (6) at (1, 3)   {$\Tnurmiev_6$};
      \node[node] (7) at (-2, 4)  {$\Tnurmiev_7$};
      \node[node] (8) at (0, 4)   {$\Tnurmiev_8$};
      \node[node] (9) at (2, 4)   {$\Tnurmiev_9$};
      \node[node] (10) at (-2, 5) {$\Tnurmiev_{10}$};
      \node[node] (11) at (0, 5)  {$\Tnurmiev_{11}$};
      \node[node] (12) at (2, 5)  {$\Tnurmiev_{12}$};
      \node[node] (13) at (-2, 6) {$\Tnurmiev_{13}$};
      \node[node] (14) at (0, 6)  {$\Tnurmiev_{14}$};
      \node[node] (15) at (2, 6)  {$\Tnurmiev_{15}$};
      \node[node] (16) at (0, 7)  {$\Tnurmiev_{16}$};
      \node[node] (17) at (-2, 7) {$\Tnurmiev_{17}$};
      \node[node] (18) at (2, 7)  {$\Tnurmiev_{18}$};
      \node[node] (19) at (-1, 8) {$\Tnurmiev_{19}$};
      \node[node] (20) at (0, 9)  {$\Tnurmiev_{20}$};
      \node[node] (21) at (0, 10) {$\Tnurmiev_{21}$};
      \node[node] (22) at (1, 8)  {$\Tnurmiev_{22}$};
      \node[node] (23) at (0, 11) {$\Tnurmiev_{23}$};
      \node[node] (24) at (0, 12) {$\Tnurmiev_{24}$};
      \node[node] (25) at (0, 13) {$\Tnurmiev_{25}$};
      \node[nodesq] (gen) at (-3.3, 0) {$\,\unit{3}\,$};
      \node[nodesq] (DW) at (-3.3, 2.5) {$\,\Tdet+\TW\,$};
      \node[nodesq] (De111) at (-3.3, 4.4) {$\,\Tdet+e_{111}\,$};
      \node[nodesq] (D) at (-3.3, 6) {$\Tdet$};
      \path[->] (1) edge (2);
      \path[->] (2) edge (3) edge (4);
      \path[->] (3) edge (5) edge (6);
      \path[->] (4) edge (5) edge (6);
      \path[->] (5) edge (7) edge (8) edge (9);
      \path[->] (6) edge (7);
      \path[->] (6) edge (8);
      \path[->] (7) edge (10) edge (11) edge (12);
      \path[->] (8) edge (10) edge (12);
      \path[->] (8) edge (11);
      \path[->] (9) edge (12);
      \path[->] (10) edge (13) edge (14);
      \path[->] (11) edge (14) edge (15);
      \path[->] (12) edge (13) edge (15);
      \path[->] (13) edge (16) edge (17) edge (18);
      \path[->] (14) edge (16) edge (17);
      \path[->] (15) edge (16);
      \path[->] (16) edge (19) edge (22);
      \path[->] (17) edge (19);
      \path[->] (18) edge (19);
      \path[->] (19) edge (20);
      \path[->] (20) edge (21);
      \path[->] (21) edge (23);
      \path[->, bend right=10] (22) edge (23);
      \path[->] (23) edge (24);
      \path[->] (24) edge (25);
      \path[->]  (DW) edge (De111);
      \path[->]  (De111) edge (D);
      \path[->]  (gen) edge (1);
      \path[->]  (gen) edge (DW);
      \path[->]  (DW) edge (7);
      \path[->]  (DW) edge (8);
      \path[->]  (De111) edge (10);
      \path[->]  (D) edge (17);
    \end{tikzpicture}}
    \caption{Overview of inclusions among the moment polytopes of tensors in $\C^3 \ot \C^3 \ot \C^3$, up to cyclic permutations of the factors.
        An arrow is drawn from polytope $P$ to polytope $Q$ if $P \supseteq Q'$ for a $Q'$ that can be obtained from $Q$ by a permutation of the three factors.
        Nodes are labeled by representative tensors with this moment polytope.
        The tensor $\Tnurmiev_i$ is tensor $i$ from \cref{table:c333 unstable tensors info}.
        The others are defined by the following tensors:
        $\unit{3} \coloneqq e_{111} + e_{222} + e_{333}$,
        $\Tdet \coloneqq e_1 \wedge e_2 \wedge e_3$, and $\TW \coloneqq e_{112}+e_{121}+e_{211}$ (where $e_{ijk} \coloneqq e_i \ot e_j \ot e_k$).
    }
    \label{fig:c333-unstable-moment-polytope-inclusions-intro}
\end{figure}
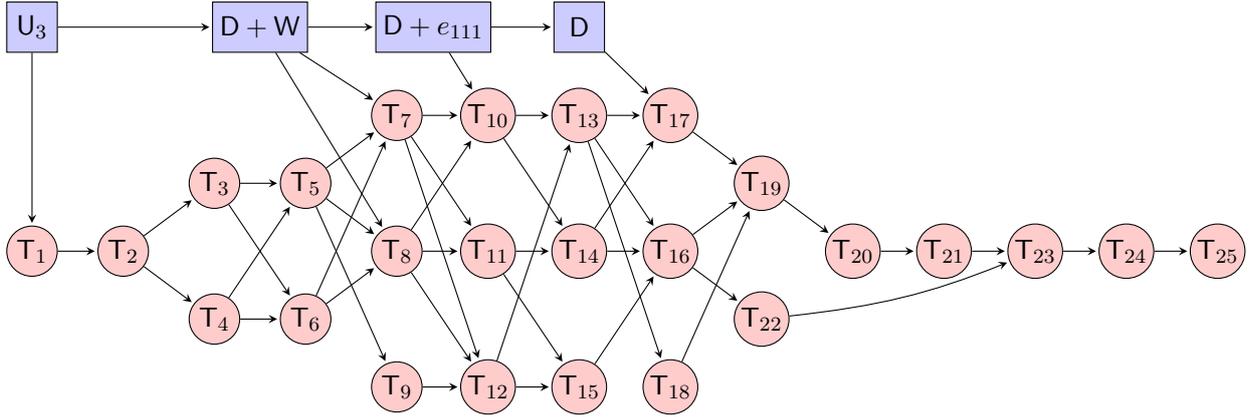

Secondly, we use our algorithms to probabilistically compute moment polytopes of tensors in $\C^4 \ot \C^4 \ot \C^4$, see \cref{table:444 vertex data}. Most notably, we compute moment polytope of the $2 \times 2$ matrix multiplication tensor and note that it is not maximal. (In subsequent work
we prove this non-maximality for $n \times n$ matrix multiplication tensors for every $n \geq 2$, which we discuss in \cref{subsec:intro-outlook}.)

The vertex descriptions of all moment polytopes that we computed are, besides in \cref{table:333 vertex data,table:444 vertex data}, also available in a machine-readable format at \cite{vandenBerg2025momentPolytopesGithub}.

\subsection{Algorithms for moment polytopes for general group actions}
\label{subsection:intro-general-moment-polytopes}

Our algorithm generalizes naturally to a broad range of groups and representations. %
We can replace the group $\GL_a \times \GL_b \times \GL_c$ with any reductive algebraic group,
such as any subgroup $G$ of $\GL_n$ defined by polynomial equations that is closed under taking complex conjugates.
This includes for instance all classical Lie groups and their products.
We can replace the representation $\C^a\ot\C^b\ot\C^c$ by any rational representation of $G$. %

Areas of application include symmetric tensors (polynomials) in the setting of algebraic complexity theory \cite{burgisserGeometricComplexity2011} and quiver representations \cite{chindris2024hivetypepolytopesquivermultiplicities,vergne2023momentconemembershipquivers}.
Another famous example is Horn's problem \cite{MR1654578,MR1671451,MR2177198,berline2016hornInequalitiesGeometric} which may be seen as a special case of the quantum marginal problem \cite{daftuar2005quantum,ChristandlSahinogluWalter2018}.
The single-particle $N$-representability problem also fits into this framework \cite{klyachko2006quantum,altunbulakPauliPrincipleRevisited2008}.
See also \cite{burgisserTheoryNoncommutativeOptimization2019}, which describes the generalization of tensor scaling \cite{burgisser2018tensorScaling} in much the same spirit.

Almost all optimizations we develop generalize.
For accessibility and clarity, we present the algorithm for tensors specifically. Throughout the text we will indicate  where necessary what the general picture looks like.
\subsection{Outlook}\label{subsec:intro-outlook}

We believe our algorithm for computing moment polytopes will be of interest for the discovery of relevant patterns in examples and towards generating new conjectures, and that this addition to the ``moment polytope toolbox'', alongside scaling algorithms, will be a useful tool for future work on moment polytopes.
In particular, our algorithm brings moment polytope computation ``up to speed'' with general methods for Kronecker polytope computation, which is currently known up to $\C^4 \ot \C^4 \ot \C^4$ \cite{vergneInequalitiesMomentCone2017} (however, several special cases for Kronecker polytopes of larger dimension are known as well, which we discuss in \cref{section:kronecker polytopes}).%
\footnote{\cite{buloisAlgorithmComputeKronecker2025a} further determined the (irredundant) inequalities for the Kronecker polytope for shape $5 \times 5 \times 5$ during preparation of this paper. }

We also note that moment polytope for tensors can have an exponential number of vertices and inequalities, as was observed by \cite{ressayrePersonalCommuncation,burgisser2018tensorScaling,garg2017operatorScalingBrascampLieb,burgisserTheoryNoncommutativeOptimization2019}.
Therefore, no algorithm can compute complete descriptions of moment polytopes of tensors efficiently.
Rather than asymptotic complexity, our algorithm improves over previous methods in terms of practicality, with an ``experimental mathematics'' goal in mind of computationally generating a large set of examples from which we can extract general results.
Indeed, we demonstrate this in our two complementary papers:

\begin{itemize}
\item In \cite{vandenBerg2025mmPolytope}, inspired by our computational results on the moment polytope of the $2\times2$ matrix multiplication tensor $\MM_2$, we prove that $\Delta(\MM_n)$ is not equal to the largest moment polytope for every $n \geq 2$.
In fact, we find explicit points which are contained in $\Delta(\unit{r})$ but not in $\Delta(\MM_n)$ (for suitable $n,r$), which imply obstructions for degenerations from $\MM_n$ to $\unit{r}$ (in other words, upper bounds on the subrank of $\MM_n$).
This makes progress on the question of B\"urgisser and Ikenmeyer \cite{burgisserGeometricComplexity2011} to determine $\Delta(\unit{r})$ and $\Delta(\MM_n)$, and in particular implements for the first time their intended objective: to use moment polytopes to obtain complexity-theoretic obstructions.

\item
In the context of quantum information, the results in \cite{vandenBerg2025mmPolytope} also show that for $k \geq 3$ and any $n \geq 2$, matrix product states (MPS) obey interesting constraints on top of those that are inherent from being the marginal distributions of a (pure) quantum state.
Our techniques and results may also lead to new insights in other connectivity scenarios for tensor networks~\cite{christandlTensorNetworkRepresentations2020a,christandl2023resourcetheorytensornetworks, Christandl2024Tensor}.
\item In \cite{vandenBerg2025nonFreeTensor}, we used insights from the computed moment polytopes in $\C^3\ot\C^3\ot\C^3$ to determine that two tensors in that format are not \emph{free}, which means that under no local basis changes the support of the tensor contains two elements that only differ in a single coordinate.
This proof subsequently led to the first construction of non-free tensors in $\C^n\ot\C^n\ot\C^n$ for every $n \geq3$.
Generic tensors are non-free (for any $n\geq 4$), but no explicit non-free tensors were known before our work.
The proof relies on a connection between freeness and moment polytopes discovered primarily as a result of our computations.
\end{itemize}

\subsection{Overview of the paper}

In \cref{section:tensor moment polytopes} we provide an introduction to the theory of moment polytopes specialized to the setting of tensors. We treat the three main descriptions of moment polytopes.
We describe in \cref{section:algorithms for computing tensor moment polytopes} our algorithm for computing moment polytopes.
In \cref{section:implementation and optimization} we make concrete several aspects of the algorithm and describe the optimizations used to make it practical.
In \cref{section:C333} we explain the classification of tensors in $\C^3\ot\C^3\ot\C^3$ and compute all their moment polytopes.
Lastly, in \cref{section:algorithms for 4x4x4} we heuristically extend our algorithms to make probabilistic computation of tensors in $\C^4 \ot \C^4 \ot \C^4$ possible. To enable rigorous guarantees on the probability of correctness of the heuristic, we also describe a verification procedure.

\section{Moment polytopes of tensors}
\label{section:tensor moment polytopes}

In this section we provide an introduction to the theory of moment polytopes.
The material is based on \cite{nessStratificationNullCone1984, brion1987momentMapImage, franz2002, burgisser2018tensorScaling, burgisserTheoryNoncommutativeOptimization2019}.
We treat the three main descriptions of the moment polytope of a tensor, which we call the \emph{geometric description}, the \emph{representation-theoretic description} and the \emph{support description}.
This section does not contain new results, however both the specialization of all descriptions to tensors and the unification of the three descriptions into a single source have not been available in the literature before.
We also put for the first time the description of a strongly related polytope, which we call the \emph{Borel polytope} as coined for the geometric description in \cite{burgisser2018tensorScaling}, into this unified framework.
We note in particular that it had also appeared in \cite{franz2002} via a description using supports.

After introducing the setting in \cref{subsection:setting}, we go over the main descriptions of the moment polytopes in \cref{subsection:definitions and overview}.
In \cref{subsection:tensor generic element} we do the same for the Borel polytopes.
\Cref{section:moment polytopes basic properties} contains important properties of moment polytopes in relation to restriction and degeneration, and \cref{section:kronecker polytopes} introduces Kronecker polytopes.

\begin{remark}
All theory presented here generalizes greatly.
We restrict our attention to the tensor setting for accessibility.
The generalization concerns polynomial representations of reductive algebraic groups (which includes all rational representations of products of general linear groups).
See also \cite{burgisserTheoryNoncommutativeOptimization2019} for an accessible introduction to this generalization for rational representations of $\GL_n$.
We refer to \cite{nessStratificationNullCone1984,brion1987momentMapImage,franz2002} for the general theory.
See e.g.\ \cite{procesi2007,onishchik2012,borel2012} for introductions to the theory of algebraic groups.
\end{remark}

\subsection{Setting and notation}
\label{subsection:setting}

For clarity of exposition, we limit our discussion to 3-tensors, that is, tensors with three indices or factors.
All concepts generalize to tensors with any fixed number of factors in a straightforward way.
Thus let $\tensor \in \C^{a} \ot \C^{b} \ot \C^{c}$ be a 3-tensor.
We call $(a,b,c)$ the \emph{shape} of $T$ and sometimes denote it as $a \times b \times c$.
The spaces $\C^{a}$, $\C^{b}$ and $\C^c$ are referred to as \emph{factors} or \emph{marginal systems}.
Denote by $\GL_a$ the group of invertible $a \times a$ matrices.
The group\footnote{The integers $a,b$ and $c$ are suppressed in the notation, but will always be clear from context.}
\begin{align}
    \GL \coloneqq \GL_a \times \GL_b \times \GL_c
\end{align}
acts on tensors by simultaneous basis change on each tensor factor, that is,
$ (A,B,C) \cdot T \coloneqq (A \ot B \ot C)T. $
Relevant subgroups are the special linear subgroup $\Sl \coloneqq \SL_a \times \SL_b \times \SL_c$ and the unitary subgroup $\K \coloneqq \U_a \times \U_b \times \U_c$, where $\SL_a$ and $\U_a$ denote the $a\times a$ matrices with determinant~$1$ and unitary $a\times a$ matrices respectively.

We will work with both the Euclidean and Zariski topologies on (finite dimensional) vector spaces. With $\norm{\cdot}$ we will always denote the Euclidean norm on $\C^n$. For a subset $X$ of the vector space, we denote with $\overline{X}$ the Euclidean closure.
With a \emph{variety} we mean a closed set in the Zariski topology.
A variety is called \emph{irreducible} if it is not the union of two closed sets that are proper subsets of the variety.
As is customary in algebraic geometry, we say a statement is ``true for generic $v \in X$'' if and only if the statement is true for all elements $v \in U \cap X$, where $U$ is some Zariski-open set and $X \cap U$ is a dense subset of $X$ (equivalently $X \cap U \neq \varnothing$ if $X$ is an irreducible variety).
We call such a property a \emph{generic property} on $X$.

For us, polytopes will always be convex and compact. That is, a polytope is a convex hull $\conv\{q_1,\ldots,q_\ell\}$ of a finite set of points $q_i \in \R^n$.
When $q_1,\ldots,q_\ell \in \Q^n$ we say the polytope is \emph{rational}.

Throughout this paper we will freely identify $\R^a \times \R^b \times \R^c$ with $\R^{a+b+c}$. For example, the vector $\big((1,0),(1,0),(1,0)\big) \in \R^2\times\R^2\times\R^2$ will be identified with %
$(1,0\sep1,0\sep1,0) \in \R^{6}$, where the vertical bars indicate the original separation of the systems.

\subsection{Moment polytopes}

\label{subsection:definitions and overview}

We now give the geometric and representation-theoretic definitions of the moment polytope of a tensor $T \in \C^a\ot\C^b\ot\C^c$, as well as Franz's description.
Equality of these three descriptions is stated in \cref{theorem:tensor moment polytopes nm} and \cref{theorem:tensor moment polytopes franz}.

\subsubsection{Geometric description}
The first description of the moment polytope, which we call the \emph{geometric description}, has its origins in symplectic geometry.
It is defined as follows.
By grouping the last two tensor factors, we can consider $T \in \C^a \ot (\C^b \ot \C^c)$ as a matrix $T_1$ mapping $\C^a$ to $\C^b \ot \C^c$.
Left-multiplying with its conjugate transpose, we obtain a Hermitian matrix $T_1^* T_1^{\phantom*} \in \C^{a \times a}$.
The two other ways to group the three factors lead to the matrices  $T_2^* T_2^{\phantom*} \in \C^{b \times b}$ and $T_3^* T_3^{\phantom*} \in \C^{c \times c}$.
Denote the set of Hermitian $a \times a$ matrices with $\Herm_a$.
The \emph{moment map} $\mu \colon \C^a\ot\C^b\ot\C^c \setminus\{0\} \to \Herm_a \times \Herm_b \times \Herm_c$ associates to a non-zero $T$ the above three Hermitian matrices, normalized using their traces:
\begin{align}
\label{equation:tensor moment map}
    \mu(T)
    \coloneqq
    \bigg( \frac{T_1^*T_1^{\phantom*}}{\Tr\!\big[T_1^*T_1^{\vphantom*}\big]}, \frac{T_2^*T_2^{\vphantom*}}{\Tr\!\big[T_2^*T_2^{\vphantom*}\big]}, \frac{T_3^*T_3^{\phantom*}}{\Tr\!\big[T_3^*T_3^{\vphantom*}\big]}
    \bigg).
\end{align}
These three matrices are called the \emph{marginals} of $T$.
Their eigenvalues are real and non-negative, and sum to 1 due to the normalization.
Note that $\Tr[T_i^*T_i^{\phantom*}] = \norm{T}^2$ does not depend on $i$. %
For any Hermitian matrix $M \in \C^{n \times n}$, let $\spec(M) = (\lambda_1, \lambda_2, \ldots, \lambda_n) \in \R^n$, where $\lambda_1 \geq \lambda_2 \geq \cdots \geq \lambda_n$ denote the eigenvalues of~$M$ ordered non-increasingly.
Define
\[
\spec(\mu(T)) \coloneqq
\big(\spec(\mu_1(T)) \ \big|\ \spec(\mu_2(T)) \ \big|\ \spec(\mu_3(T))\big) \in \R^{a+b+c}.
\]
Then the geometric expression for the moment polytope of $T$ is given by
\begin{align}
    \label{equation:geometric description}
    \Delta^\Dgeom(T)
    \coloneqq
    \Big\{\, \spec \bigl( \mu(T') \bigr)
    \ \ \Big|\ \
    T' \in \overline{\G \cdot T} \setminus\{0\}
    \,\Big\}
    \quad\subseteq \R^{a+b+c},
\end{align}
where $\overline{\G \cdot T}$ denotes the Euclidean closure of $\G \cdot T$.
One can show $\mu$ equals the gradient of the map $(A,B,C) \mapsto \log \norm{(A,B,C) \cdot T}$ at the identity element~$(I_a,I_b,I_c)$ \cite{burgisser2018tensorScaling,burgisserTheoryNoncommutativeOptimization2019}.
This map is known as the \emph{Kempf--Ness function} \cite{kempfLengthVectorsRepresentation1979}.

\subsubsection{Representation-theoretic description}
\label{subsection:representation-theoretic description}

To present the representation theoretic viewpoint, we use some basic concepts from representation theory (see one of \cite{fulton1991representation, etingof2011introductionrepresentationtheory, procesi2007} for an introduction).
Recall that a \emph{polynomial representation} of $\G$ is a vector space $V$ together with a group homomorphism $\rho \colon \G \to \GL(V)$ that can be written in coordinates as a polynomial map.
A \emph{rational representation} of $\G$, or simply \emph{$\G$-representation}, additionally allows multiplication with inverse powers of the determinants $\det(A)$, $\det(B)$ and $\det(C)$ for $(A,B,C) \in \G$.
Finite-dimensional irreducible rational representations of $\G$ are classified up to isomorphism.
Define the set of dominant vectors in $\R^m$ as $\weylchamber_m \coloneqq \big\{\lambda \in \R^{m} \mid \lambda_{1} \geq \lambda_{2} \geq \cdots \geq \lambda_{m}\big\}$. Define
\begin{align}
\label{equation:tensor weyl chamber}
    \weylchamber
    \coloneqq \weylchamber_a \times \weylchamber_b \times \weylchamber_c,
    \qquad\text{and} \qquad
    \weylchamber_\F
    \coloneqq \weylchamber \cap (\F^a \times \F^b \times \F^c)
    \quad\text{for any } \F \subseteq \R.
\end{align}
Then every irreducible representation of $\G$ corresponds to an unique element
$(\lambda, \mu,\nu) \in \weylchamber_\Z$ and each element of $\weylchamber_\Z$ has an associated irreducible representation $V_{\lambda,\mu,\nu}$.
Any $\G$-representation $W$ decomposes as a direct sum of irreducible representations
    $W \cong
    \bigoplus_{(\lambda,\mu,\nu) \in \weylchamber_\Z} (V_{\smash{\lambda,\mu,\nu}})^{\oplus m_{\lambda,\mu,\nu}}$, with $m_{\lambda,\mu,\nu} \in \N$.
The subspace of $W$ that gets mapped to $(V_{\smash{\lambda,\mu,\nu}})^{\oplus m_{\lambda,\mu,\nu}}$ under the isomorphism is called the \emph{isotypic component} of type $(\lambda,\mu,\nu)$ of $W$.

Our $\G$-representation of interest is the tensor representation $V = \C^{a} \ot \C^{b} \ot \C^{c}$, where the entries of $(A,B,C) \cdot T \coloneqq (A\ot B \ot C)T$ can be written as homogeneous degree 3 polynomials in the entries of $A,B$ and $C$.
Let $W \coloneqq V^{\ot n}$ with $n \in \N$, which is a polynomial representation of $\G$ via the map $(A,B,C) \mapsto (A \ot B \ot C) \ot \cdots \ot (A \ot B \ot C)$.%
\footnote{By grouping the tensor factors as $W \cong (\C^a)^{\ot n} \ot (\C^b)^{\ot n} \ot (\C^c)^{\ot n}$, one may regard the elements of $W$ as 3-tensors as well, and the $n$-th tensor power is then called is then called the \emph{$n$-th Kronecker power} of the tensor.}
The multiplicities $m_{\lambda,\mu,\nu}$ are non-zero only if
$(\lambda,\mu,\nu) \in \weylchamber_{\N}$ and $\sum_i \lambda_i = \sum_i \mu_i = \sum_i \nu_i = n$.
Let $\ISO_{\lambda,\mu,\nu}$ be the unique projection along the direct sum onto the isotypic subspace of type $(\lambda,\mu,\nu)$.%
\footnote{The projection $T \mapsto \ISO_{\lambda,\mu,\nu} T^{\ot n}$ can be interpreted as $m_{\lambda,\mu,\nu}$ equivariant degree $n$ polynomial maps to $V_{\lambda,\mu,\nu}$, which are known as \emph{covariants} of weight $(\lambda,\mu,\nu)$ \cite{walterEntanglementPolytopes2013}.}
Then the representation-theoretic description of the moment polytope equals
\begin{align}
\label{equation:representation theoretic description}
    \Delta^\Drepr(T)
    \coloneqq
    \overline{
    \bigg\{\ \Big(\frac{\lambda}{n}, \frac{\mu}{n}, \frac{\nu}{n}\Big)
    \quad \Big| \quad
    n > 0
    \ \text{ and }\
    \ISO_{\lambda,\mu,\nu} T^{\ot n} \neq 0\ \bigg\}
    }
    \quad\subseteq \R^{a+b+c},
\end{align}
where we take the closure with respect to the Euclidean topology.%
\footnote{The condition that $\ISO_{\lambda,\mu,\nu} T^{\ot n} \neq 0$ is equivalent to $V_{\lambda,\mu,\nu}^*$ having non-zero multiplicity in the coordinate ring of $\overline{\G \cdot T}$ \cite{walterEntanglementPolytopes2013}.
}
\subsubsection{Support description}
\label{subsubsection:description using supports}

Denote with $T_{i,j,k}$ the coefficients of $T$ in the standard basis, i.e.\ the coefficients satisfying $T = \sum_{i,j,k} T_{i,j,k}\, e_i\ot e_j \ot e_k$, where $e_\ell$ denotes the standard basis vector of the correct dimension.
The \emph{support} of $T$ is the selection of indices $(i,j,k)$ at which $T_{i,j,k}$ is non-zero.
We embed these indices as vectors $(e_i,e_j,e_k) \in \R^a \times \R^b \times \R^c$,
and write
\begin{align}
    \supp T \coloneqq \big\{(e_i,e_j,e_k) \mid T_{i,j,k} \neq 0\big\} \subseteq \R^a \times \R^b \times \R^c.
\end{align}
We call elements $(e_i,e_j,e_k) \in \R^a \times \R^b \times \R^c$ \emph{weights}, and we denote the set of all weights as
\begin{align}
\label{equation:tensor weights}
    \Omega \coloneqq \Omega_a \times \Omega_b \times \Omega_c,
    \qquad \text{ where }\
    \Omega_m \coloneqq \{e_1,\ldots,e_m\} \subseteq \R^m.
\end{align}

The description of the moment polytope due to Franz \cite{franz2002} makes use of a strongly related polytope, which we will call the \emph{Borel polytope} of $T$ and for which we introduce the support description now.
In \cref{subsection:tensor generic element} we provide geometric and representation-theoretic characterizations of this polytope.\footnote{The name ``Borel polytope'' was first used in \cite{burgisser2018tensorScaling} for the geometric characterization. It refers to the fact that the subgroup of lower triangular matrices is known as a \emph{Borel subgroup} in the general setting. (The standard choice of a Borel subgroup is the set of upper triangular matrices, in which case the set of lower triangular matrices is referred to as the \emph{opposite Borel subgroup}.)}
Denote with $\G_{\lowertriangular} \subseteq \G$ the subgroup of triples of lower triangular matrices. Then the support description of the Borel polytope is given by
\begin{align}
\label{equation:franz-tensors}
    \DeltaB^\Dsupp(T)
    \coloneqq
    \bigcap_{\substack{(A,B,C) \in \G_{\lowertriangular}}} \conv \supp\! \big( (A \ot B \ot C) T \big) \ \cap\ \weylchamber
    \quad\subseteq \R^{a+b+c}.
\end{align}
It is immediately clear that $\DeltaB^\Dsupp(T)$ is a rational polytope: there are only finitely many possibly supports, so $\DeltaB^\Dsupp(T)$ equals the intersection of a finite set of rational polytopes.

The Borel polytope of $T$  is contained in, but does always equal the moment polytope of $T$. However, equality holds for generic $\Tgeneric \in \GL \cdot T$, which we discuss below.

\subsubsection{Equality of the characterizations}

We arrive at the central theorems.
They state the equality of the objects defined above, provided we choose a generic element in the orbit of $T$ to evaluate the support description.
These theorems are originally stated more generally for irreducible varieties $\variety \subseteq \C^a\ot\C^b\ot\C^c$.\footnotemark{} Or indeed, for irreducible varieties in any finite dimensional representation of a reductive group.
\footnotetext{The geometric description of the moment polytope of $\variety$ is given by $\{\spec(\mu(T)) \mid T \in \variety\setminus\{0\}\}$,
and the representation-theoretic description is the same as \cref{equation:representation theoretic description} but adding an existential quantifier over $T \in \variety$. The support description is the same as \cref{equation:franz-tensors}, but with $T$ taken generically from $\variety$.}

We begin with the theorem due to Ness, Mumford and Brion.
\begin{theorem}[{\cite{nessStratificationNullCone1984,brion1987momentMapImage}}, special case]
\label{theorem:tensor moment polytopes nm}
    Let $T$ be a tensor. Then
    \begin{align}
        \Delta^\Dgeom(T) = \Delta^\Drepr(T).
    \end{align}
    We call it the \emph{moment polytope} of $T$ and denote it with $\Delta(T)$.
    It is a rational polytope.
\end{theorem}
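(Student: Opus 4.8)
The plan is to prove $\Delta^\Dgeom(T)=\Delta^\Drepr(T)$ by establishing a two-way inclusion, with the heart of the argument being the Kempf--Ness / Hilbert--Mumford machinery relating the moment map to the asymptotic growth of covariants along one-parameter subgroups. First I would recall the standard dictionary: the moment map $\mu$ defined in \eqref{equation:tensor moment map} is, as noted in the excerpt, the gradient at the identity of the Kempf--Ness function $(A,B,C)\mapsto\log\norm{(A,B,C)\cdot T}$. Its image spectrum $\spec(\mu(T'))$ over $T'$ in the orbit closure can therefore be computed by optimizing, over tori, linear functionals against the log-norm function; by the convexity of the Kempf--Ness function along geodesics in $\GL/\K$ this optimization is governed by the weights appearing in $T'$. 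Concretely, for a dominant rational direction $\eta$, membership of a boundary face in $\Delta^\Dgeom(T)$ is controlled by the minimal $\langle\eta,\cdot\rangle$-value over $\supp$ of a generic Borel translate of $T$, which is the Borel-polytope picture of \eqref{equation:franz-tensors}. This ties the geometric side to combinatorial data.

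Next I would connect that combinatorial data to the representation-theoretic side. The key classical input (Ness's stratification theorem together with Brion's theorem on moment map images) is that a dominant rational point $(\lambda/n,\mu/n,\nu/n)$ lies in $\Delta^\Drepr(T)$ — i.e.\ $\ISO_{\lambda,\mu,\nu}T^{\ot n}\neq 0$, equivalently $V_{\lambda,\mu,\nu}^*$ occurs in the coordinate ring of $\overline{\G\cdot T}$ as recorded in the footnote — precisely when $(\lambda/n,\mu/n,\nu/n)$ is an achievable spectrum, i.e.\ lies in $\Delta^\Dgeom(T)$. One direction (rep-theoretic $\subseteq$ geometric) is the easier one: if a nonzero highest-weight vector of weight $(\lambda,\mu,\nu)$ exists in the coordinate ring of the orbit closure, one produces, via a limiting/Borel-degeneration argument, a point $T'\in\overline{\G\cdot T}$ whose moment map has the prescribed spectrum, so the normalized weight lies in $\Delta^\Dgeom(T)$; since $\Delta^\Dgeom(T)$ is closed and $\Delta^\Drepr(T)$ is the closure of such rational points, this gives $\Delta^\Drepr(T)\subseteq\Delta^\Dgeom(T)$. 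For the reverse inclusion one uses that the closed convex set $\Delta^\Dgeom(T)$ is cut out by its supporting halfspaces in rational dominant directions, and for each such direction the Hilbert--Mumford criterion applied to the corresponding one-parameter subgroup produces a nonvanishing covariant of an appropriate weight on that face, placing the rational extreme points of that face into $\Delta^\Drepr(T)$; taking convex hulls and closures yields $\Delta^\Dgeom(T)\subseteq\Delta^\Drepr(T)$.

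Finally, rationality and the polytope property: both descriptions agree with the Borel polytope $\DeltaB^\Dsupp(\Tgeneric)$ of a generic orbit element intersected appropriately (this is the content alluded to in \cref{subsection:tensor generic element} and made precise in \cref{theorem:tensor moment polytopes franz}), and that object is manifestly a finite intersection of rational polytopes, hence itself a rational polytope; alternatively, $\Delta^\Drepr(T)$ is by construction the Euclidean closure of a set of rational points contained in the bounded region $\weylchamber\cap\{\text{coordinates summing to }1\text{ in each block}\}$, and convexity is inherited from the convexity of the Kempf--Ness function, so the closure is a rational polytope.

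\textbf{Main obstacle.} The step I expect to be hardest is the reverse inclusion $\Delta^\Dgeom(T)\subseteq\Delta^\Drepr(T)$: extracting, from a boundary supporting hyperplane of the moment-map image in an arbitrary rational direction, an actual nonvanishing isotypic projection $\ISO_{\lambda,\mu,\nu}T^{\ot n}$ of the right weight. This is exactly where the Ness--Mumford stratification of the null cone and Brion's multiplicity-function argument do the real work — relating the slope of the Kempf--Ness function along a destabilizing one-parameter subgroup to the leading weight of an $\SL$-covariant — and where one must be careful that the extreme points one obtains are genuinely the rational points whose closure recovers the full face. Since the statement is explicitly invoked as a special case of \cite{nessStratificationNullCone1984,brion1987momentMapImage}, I would ultimately cite those results for this step rather than reprove the stratification, and focus the written proof on specializing their hypotheses to the $\GL_a\times\GL_b\times\GL_c$ action on $\C^a\ot\C^b\ot\C^c$ and verifying the normalization conventions match \eqref{equation:tensor moment map}.
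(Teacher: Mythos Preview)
The paper does not prove this theorem at all: it is stated as a known result with the citations \cite{nessStratificationNullCone1984,brion1987momentMapImage} and no argument is given in the text. So there is nothing to compare your proposal against. Your sketch is a reasonable outline of the Ness--Mumford/Brion argument (and you yourself conclude that you would ultimately cite those papers for the hard direction), which is exactly what the paper does from the outset. If you intend your write-up to match the paper, simply cite the result; if you want to include a proof sketch for expository purposes, what you have is broadly accurate but goes beyond what the paper provides.
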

Denote with $\G_{\uppertriangular} \subseteq \G$ the subgroup of upper triangular matrices.
The theorem due to Franz is then as follows.
\begin{theorem}[{\cite{franz2002}}, special case\footnote{This result also appeared in \cite[Corollary~2.14]{burgisser2018tensorScaling} in a slightly different form, see \cref{theorem:tensor randomization}.
}]
\label{theorem:tensor moment polytopes franz}
    Let $T$ be a tensor. For any $T' \in \GL \cdot T$ we have
    \begin{align}
        \DeltaB^\Dsupp(T') &\subseteq \Delta(T).
    \intertext{
    Taking $\Tgeneric$ generically from $\G \cdot T$, or generically from $\G_{\uppertriangular} \cdot T$, we have
    }
        \DeltaB^\Dsupp(\Tgeneric) &= \Delta(T).
    \end{align}
\end{theorem}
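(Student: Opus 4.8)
The plan is to connect the support description $\DeltaB^\Dsupp$ to the representation-theoretic description via a ``highest weight vector'' / leading-term argument, and then to transfer the equality from the Borel polytope to the moment polytope by a genericity argument. First I would recall the representation-theoretic meaning of the Borel polytope: for a tensor $S$, the rational point $(\lambda/n,\mu/n,\nu/n)$ lies in (the rep-theoretic version of) $\DeltaB(S)$ precisely when $S^{\ot n}$ has a nonzero component on which the opposite Borel $\G_{\lowertriangular}$ acts by the character of weight $(\lambda,\mu,\nu)$ — equivalently, when the isotypic projection $\ISO_{\lambda,\mu,\nu}$ composed with a suitable highest-weight projection does not annihilate $S^{\ot n}$. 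The key linear-algebra fact underlying the support description is that for a fixed weight functional $h$ defining a face, the quantity $\min_{(A,B,C)\in\G_{\lowertriangular}}\max\{\,\langle h,w\rangle : w\in\supp((A\ot B\ot C)S)\,\}$ computes exactly the extreme value of $h$ on the weights appearing with nonzero coefficient after a generic lower-triangular change of basis; by the theory of the leading monomial (with respect to the order induced by $h$) this matches the weights of the $B$-submodule generated by $S$, which is what the rep-theoretic description sees. This gives $\DeltaB^\Dsupp(S)=\DeltaB^\Drepr(S)$ for every $S$, and then one quotes the fact (stated in the excerpt, from \cite{brion1987momentMapImage,burgisser2018tensorScaling}) that $\DeltaB^\Drepr$ agrees with the full moment polytope $\Delta(T)$ for generic $S$ in the orbit.

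Next I would handle the inclusion $\DeltaB^\Dsupp(T')\subseteq\Delta(T)$ for \emph{every} $T'\in\GL\cdot T$, not just the generic one. This should follow because the moment polytope $\Delta$ is constant on the orbit $\GL\cdot T$ (it only depends on $\overline{\GL\cdot T}$, which is the same for all $T'$ in the orbit), together with the general principle that $\DeltaB^\Dsupp(S)\subseteq\Delta(S)$ always holds — the Borel polytope is cut out by \emph{more} constraints (one convex hull for each lower-triangular group element) than the moment polytope, since the moment polytope can be described as $\bigcap_{(A,B,C)\in\G}\conv\supp((A\ot B\ot C)S)\cap\weylchamber$ over the \emph{full} group $\G\supseteq\G_{\lowertriangular}$ (or directly from the $\K$-action description). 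Intersecting over a larger group yields a smaller set, so $\DeltaB^\Dsupp(S)\subseteq\Delta^\Dsupp(S)=\Delta(S)=\Delta(T)$.

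For the equality $\DeltaB^\Dsupp(\Tgeneric)=\Delta(T)$, I would argue that the set of $S\in\GL\cdot T$ for which $\DeltaB^\Dsupp(S)=\Delta(T)$ contains a nonempty Zariski-open subset, both when $\GL\cdot T$ is given its own variety structure and when we restrict to the sub-orbit $\G_{\uppertriangular}\cdot T$ (which is dense in $\GL\cdot T$, since $\G_{\uppertriangular}$ together with $\K$ or with $\G_{\lowertriangular}$ generates a dense subset of $\G$ by the Bruhat/Iwasawa-type decomposition). Concretely: each inequality $h$ in the finite candidate set $\ineqs$ is ``attainable for $S$'' iff a certain polynomial system in the entries of $(A,B,C)$ and $S$ has a solution; attainability is a constructible condition in $S$, and for the specific inequalities that are tight on faces of $\Delta(T)$ one shows attainability holds on a dense open set (this is the genericity input coming from the rep-theoretic description, where nonvanishing of the relevant covariant is a Zariski-open condition). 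Taking the finite intersection of these open sets over $h\in\ineqs$ gives the required dense open subset, and genericity of $\Tgeneric$ places it inside.

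The main obstacle I expect is the first step — rigorously identifying $\min_{\G_{\lowertriangular}}\max_{\supp}$ with the rep-theoretic data, i.e.\ showing that the combinatorial ``attainability'' of a face-defining inequality $h$ on a generic lower-triangular translate of $S$ is equivalent to the nonvanishing of the corresponding highest-weight covariant of $S^{\ot n}$. This is exactly the content of Franz's theorem and it rests on a careful leading-term analysis (choosing a monomial order refining $h$, understanding how the opposite Borel acts on $S$ and on tensor powers, and matching the weight of the leading term with $h$-extremal support elements); since the excerpt permits me to assume results stated earlier, I would cite \cref{theorem:tensor moment polytopes nm} for the $\Delta^\Dgeom=\Delta^\Drepr$ identification and reduce the genuinely new work to the Borel-polytope version of this leading-term argument, but that argument is where all the real content lies and where I would spend the bulk of a full write-up.
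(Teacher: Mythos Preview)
The paper does not give its own proof of this theorem; it is cited as a special case of Franz's result, and the surrounding material (Proposition~\ref{theorem:tensor borel polytopes} and Lemma~\ref{theorem:tensor randomization}, both also cited) together yield the statement. Your first paragraph correctly identifies that route: establish $\DeltaB^\Dsupp=\DeltaB^\Drepr$ and then invoke the comparison $\DeltaB^\Drepr(T')\subseteq\Delta^\Drepr(T)$ with equality for generic $T'$.

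Your second paragraph, however, contains a genuine error. You assert that the moment polytope admits the description $\Delta(S)=\bigcap_{(A,B,C)\in\G}\conv\supp((A\ot B\ot C)S)\cap\weylchamber$ and then conclude $\DeltaB^\Dsupp(S)\subseteq\Delta(S)$ because ``intersecting over a larger group yields a smaller set.'' But this logic produces the \emph{opposite} inclusion: since $\G_{\lowertriangular}\subseteq\G$, the intersection over $\G$ imposes more constraints and is therefore contained in the intersection over $\G_{\lowertriangular}$, i.e.\ your formula would give $\Delta(S)\subseteq\DeltaB^\Dsupp(S)$. Combined with the inclusion you are trying to prove, that would force $\DeltaB^\Dsupp(S)=\Delta(S)$ for \emph{every} $S$ in the orbit, contradicting precisely the need for genericity in the second half of the theorem. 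The claimed full-group intersection formula for $\Delta(S)$ is not stated in the paper and is in fact false for this reason.

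The correct argument for the inclusion stays on the representation-theoretic side (as your first paragraph already suggests): if $\HWV_{\lambda,\mu,\nu}(T')^{\ot n}\neq 0$ then $\ISO_{\lambda,\mu,\nu}(T')^{\ot n}\neq 0$, since the highest-weight space of weight $(\lambda,\mu,\nu)$ sits inside the isotypic component of that type; and because $\ISO_{\lambda,\mu,\nu}$ is $\G$-equivariant, $\ISO_{\lambda,\mu,\nu}(T')^{\ot n}\neq 0$ if and only if $\ISO_{\lambda,\mu,\nu}T^{\ot n}\neq 0$ for any $T'\in\G\cdot T$. This gives $\DeltaB^\Drepr(T')\subseteq\Delta^\Drepr(T)$ directly, and the genericity statement then comes from the nonvanishing of the finitely many highest-weight-vector polynomials $f_i$ described in Lemma~\ref{theorem:tensor randomization}.
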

Combining the theorems and taking $\Tgeneric$ as in \cref{theorem:tensor moment polytopes franz}, we obtain three distinct descriptions of the moment polytope of a tensor $T$:
\begin{align}
    \Delta^\Dgeom(T) = \Delta^\Drepr(T) = \DeltaB^\Dsupp(\Tgeneric).
\end{align}

Observe also that because $\C^a \ot \C^b \ot \C^c$ is a polynomial representation of $\G$,
any polynomial $f$ on $\C^a \ot \C^b \ot \C^c$ induces a polynomial $(A,B,C) \mapsto f((A\ot B \ot C)T)$ on $\G$.
It follows that any generic property on $\G_{\uppertriangular} \cdot T$ induces a generic property on $\G_{\uppertriangular}$.
This implies the following corollary.

\begin{corollary}
\label{corollary:moment polytope for generic group elements}
For generic $(A,B,C) \in \G$
we have that $\Delta(T) = \DeltaB^\Dsupp\big((A \ot B \ot C) T\big)$.
The same is true for generic $(A,B,C) \in \G_{\uppertriangular}$.
\end{corollary}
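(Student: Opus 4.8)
The plan is to derive \cref{corollary:moment polytope for generic group elements} directly from \cref{theorem:tensor moment polytopes franz} together with the ``transport of genericity'' observation stated just before the corollary. Concretely, the proof should run as follows.

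\textbf{Step 1: Identify the generic property to transport.} By \cref{theorem:tensor moment polytopes franz}, there is a Zariski-open dense subset $U \subseteq \G \cdot T$ (and similarly a Zariski-open dense subset $U' \subseteq \G_{\uppertriangular}\cdot T$) such that $\DeltaB^\Dsupp(\Tgeneric) = \Delta(T)$ for every $\Tgeneric \in U$. Here I would remark that ``$\DeltaB^\Dsupp(S) = \Delta(T)$'' really is a Zariski-open condition on $S \in \G\cdot T$: since $\DeltaB^\Dsupp(S) \subseteq \Delta(T)$ always (first part of the theorem) and both are rational polytopes, equality fails only if some vertex of $\Delta(T)$ is cut off, which happens on a Zariski-closed set (the vanishing locus of the relevant covariants governing the non-emptiness of the support after a lower-triangular action). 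Thus the set where equality holds is genuinely a generic property on the irreducible variety $\overline{\G\cdot T}$, restricted to the dense open $\G\cdot T$.

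\textbf{Step 2: Pull the property back along the orbit map.} The orbit map $\pi\colon \G \to \G\cdot T$, $(A,B,C)\mapsto (A\ot B\ot C)T$, is a morphism of varieties because $\C^a\ot\C^b\ot\C^c$ is a polynomial representation of $\G$; hence for any polynomial $f$ on $\C^a\ot\C^b\ot\C^c$, the map $(A,B,C)\mapsto f((A\ot B\ot C)T)$ is a polynomial on $\G$, exactly as noted in the paragraph preceding the corollary. Therefore $\pi^{-1}(U)$ is Zariski-open in $\G$, and it is nonempty (since $U$ is nonempty and $\pi$ is surjective onto $\G\cdot T$); as $\G$ is an irreducible variety, $\pi^{-1}(U)$ is dense. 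For every $(A,B,C) \in \pi^{-1}(U)$ we have $(A\ot B\ot C)T \in U$, hence $\DeltaB^\Dsupp\big((A\ot B\ot C)T\big) = \Delta(T)$. This gives the first claim. For the second claim one repeats the argument with the morphism $\pi|_{\G_{\uppertriangular}}\colon \G_{\uppertriangular}\to \G_{\uppertriangular}\cdot T$ and the open set $U'$; here one uses that $\G_{\uppertriangular}$ is itself an irreducible variety (a product of Borel subgroups, each isomorphic as a variety to an affine space times a torus), so that the nonempty open preimage is again dense.

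\textbf{Main obstacle.} The only genuinely delicate point is the one flagged in Step 1: verifying that ``$\DeltaB^\Dsupp(S)=\Delta(T)$'' is a \emph{Zariski}-open (not merely generic-in-some-weaker-sense) condition, so that pulling back along a morphism preserves it. The containment direction is free, and equality of the rational polytopes is governed by finitely many inequalities, each of which is attained or not according to the (non-)vanishing of certain covariants evaluated at $S$ — this is precisely the combinatorial/Gröbner picture the paper develops later. Once this is granted, everything else is the standard fact that the preimage of a nonempty Zariski-open set under a dominant morphism from an irreducible variety is nonempty Zariski-open, hence dense. I would phrase the corollary's proof in one short paragraph invoking \cref{theorem:tensor moment polytopes franz} and this transport principle, citing \cite{franz2002,burgisser2018tensorScaling} for the openness statement rather than re-deriving it.
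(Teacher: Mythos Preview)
Your proposal is correct and follows essentially the same approach as the paper: invoke \cref{theorem:tensor moment polytopes franz} to get a generic property on the orbit, then pull it back along the polynomial orbit map to get a generic property on $\G$ (resp.\ $\G_{\uppertriangular}$), using irreducibility to conclude density. The only remark is that your ``main obstacle'' is over-thought: by the paper's convention (\cref{subsection:setting}), ``generic'' already means ``on a nonempty Zariski-open subset,'' so \cref{theorem:tensor moment polytopes franz} directly hands you the Zariski-open $U$ --- there is no need to re-argue openness via covariants or the Gr\"obner picture.
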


\subsection{Borel polytopes}
\label{subsection:tensor generic element}

In this section we make precise the generic tensors $\Tgeneric \in \G \cdot T$
from \cref{theorem:tensor moment polytopes franz} and the generic group elements of \cref{corollary:moment polytope for generic group elements}.
First we provide the geometric and representation-theoretic descriptions of the Borel polytope.

\subsubsection{Geometric description}

Recall the moment map $\mu$ defined in \cref{equation:tensor moment map}.
Let $(U_1, U_2, U_3) \in \K \subseteq \G$ be unitaries which diagonalize (in a sorted manner) the three Hermitian marginal matrices in $\mu(T)$.
One can verify that
$
    \mu \big( (U_1 \ot U_2 \ot U_3)T \big)
    = \spec\bigl(\mu(T)\bigr).
$
We may embed $\weylchamber$ (\cref{equation:tensor weyl chamber}) into $\Herm_a \times \Herm_b \times \Herm_c$ by mapping to triples of diagonal matrices.
Leaving this embedding implicit, the above implies
\begin{align}
    \label{equation:polytope spec}
    \Delta^\Dgeom(T) &= \Big\{\ \mu\big(T'\big) \ \ \big|\ \ T' \in \overline{\G \cdot T} \setminus\{0\}\, \Big\} \ \cap\  \weylchamber.
\end{align}
By the QL-decomposition,\footnote{This is the QR decomposition but using lower triangular matrices instead. The existence is shown in the same way, using the Gram--Schmidt process on the columns of the matrix, but instead starting with the right column and moving left. Alternatively, letting $F$ be the matrix with $1$s on the anti-diagonal and~$0$s everywhere else, then a QR-decomposition $A F = QR$ to a QL-decomposition $A = (QF)(FRF)$ for~$A \in \G$.}
we have $\G = \K \cdot \G_{\lowertriangular}$.
The geometric Borel polytope is defined in the same way, but disallowing the unitaries that diagonalize the marginals.
It is given by
\begin{align}
    \label{equation:polytope spec borel}
    \DeltaB^\Dgeom(T)
    &\coloneqq
    \Big\{\ \mu\big(T'\big) \ \ \big|\ \ T' \in \overline{\G_{\lowertriangular} \cdot T} \setminus\{0\} \, \Big\}
    \ \cap\ \weylchamber.
\end{align}

\subsubsection{Representation-theoretic description}
\label{subsubsection:borel polytope representation theoretic}

Any $\G$-representation has a weight decomposition: a decomposition into one-dimensional irreducible representations of the subgroup of (triples of) diagonal matrices \cite{fulton1991representation, etingof2011introductionrepresentationtheory, procesi2007}.
An element of one of these subspaces is called a \emph{weight vector}, and its \emph{weight} is some element of $\Z^a\times\Z^b\times\Z^c$.
Each irreducible representation of type $(\lambda,\mu,\nu)\in\weylchamber_\Z$ contains an unique-up-to-scaling vector of weight $(\lambda,\mu,\nu)$, which is called a \emph{highest weight vector}.

For $V = \C^a\ot\C^b\ot\C^c$ the weight vectors are given by the standard basis elements $e_i \ot e_j \ot e_k$ (and the set of associated weights is given by $\Omega$ as defined in \cref{equation:tensor weights}).

Now consider the $\G$-representation $W \coloneqq V^{\ot n}$.
Denote with $\HWV_{\lambda,\mu,\nu}$ the projection onto the subspace of highest weight vectors of weight $(\lambda,\mu,\nu)$.
The representation-theoretic description of the Borel polytope is then given by
\begin{align}
    \DeltaB^\Drepr(T)
    \coloneqq
    \overline{
    \bigg\{\ \Big(\frac{\lambda}{n}, \frac{\mu}{n}, \frac{\nu}{n}\Big)
    \quad \Big| \quad
    n > 0 \ \text{ and }\
    \HWV_{\lambda,\mu,\nu} T^{\ot n} \neq 0\ \bigg\}
    }.
\end{align}

\subsubsection{Equality of the characterizations}

We have the following theorem analogous to \cref{theorem:tensor moment polytopes nm}.

\begin{proposition}[Borel polytopes {\cite{franz2002,guillemin2005convexityProperties,guillemin2006convexityBorel,burgisser2018tensorScaling}}]
\label{theorem:tensor borel polytopes}
    Let $T$ be a tensor. Then
    \begin{align}
        \DeltaB^\Dgeom(T)
        = \DeltaB^\Drepr(T)
        = \DeltaB^\Dsupp(T).
    \end{align}
    We call it the \emph{Borel polytope} of $T$ and denote it with $\DeltaB(T)$.
    It is a rational polytope.
\end{proposition}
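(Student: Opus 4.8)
The plan is to deduce the three stated equalities from the general theory of moment polytopes for (opposite) Borel subgroups of reductive groups, specialising the results of Franz and of Guillemin--Sjamaar to $\G = \GL_a\times\GL_b\times\GL_c$ acting on $V = \C^a\ot\C^b\ot\C^c$ with the opposite Borel $\G_{\lowertriangular}$. There are two essentially independent tasks: (i) the equality $\DeltaB^\Dgeom(T) = \DeltaB^\Drepr(T)$, a Borel analogue of \cref{theorem:tensor moment polytopes nm}, which I would get by a shifting trick reducing the non-reductive group $\G_{\lowertriangular}$ to the reductive group $\G$; and (ii) the coincidence of these with the support description $\DeltaB^\Dsupp(T)$, which I would get from Franz's combinatorial analysis of how lower-triangular elements move supports.

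For (i): fix a rational $\G$-representation $U$ containing a vector $v$ whose stabiliser in $\G$ is exactly $\G_{\lowertriangular}$ (e.g.\ take $U$ irreducible with strictly dominant highest weight and $v$ its lowest weight vector, so $\G\cdot[v]$ is the full flag variety $\G/\G_{\lowertriangular}$). For $N\in\N$ consider the irreducible $\G$-variety $\mathbb{Y}_N \coloneqq \overline{\G\cdot(T\ot v^{\ot N})}\subseteq V\ot U^{\ot N}$. By the general form of \cref{theorem:tensor moment polytopes nm} noted in its footnote (it holds for irreducible subvarieties of any finite-dimensional representation of a reductive group), the geometric and representation-theoretic moment polytopes of $\mathbb{Y}_N$ agree, and I would compare each side with $\DeltaB(T)$ in the limit $N\to\infty$. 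On the representation-theoretic side, the highest-weight data of $\mathbb{Y}_N^{\ot n}$ splits as the rigid contribution of $v^{\ot Nn}$ (a single weight, linear in $N$) together with exactly the data $\HWV_{\lambda,\mu,\nu}T^{\ot n}$ of $V^{\ot n}$; renormalising by $Nn$ and letting $N\to\infty$ turns the $v$-contribution into a fixed translate, and intersecting with $\weylchamber$ recovers $\DeltaB^\Drepr(T)$. On the geometric side, the moment map of $V\ot U^{\ot N}$ is $\mu$ on the $V$-factor plus $N$ times the flag-variety moment map on the $U$-factor; as $N\to\infty$ the second term forces any competing point to lie over the unique fixed point of the flag-variety moment map, i.e.\ to have $V$-component in $\overline{\G_{\lowertriangular}\cdot T}$, so what survives is precisely $\DeltaB^\Dgeom(T)$. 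Matching the two limits yields $\DeltaB^\Dgeom(T) = \DeltaB^\Drepr(T)$; alternatively one can invoke the Borel convexity theorem of Guillemin--Sjamaar directly for $\overline{\G\cdot T}$ after checking its hypotheses.

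For (ii): I would run Franz's combinatorial argument. The key point is that for $(A,B,C)\in\G_{\lowertriangular}$ the operator $(A\ot B\ot C)^{\ot n}$ on $V^{\ot n}$ is weight non-increasing in the dominance order, since the nilpotent part of a lower-triangular matrix is assembled from weight-lowering operators; hence if $h$ is a highest weight vector of weight $(\lambda,\mu,\nu)$ then $(\lambda,\mu,\nu)$ is a dominance-maximal weight occurring in $(A\ot B\ot C)^{\ot n}h$, whereas no vector of strictly higher weight ever makes $(\lambda,\mu,\nu)$ dominance-maximal. Feeding this into a separating-hyperplane / linear-programming argument over the finitely many possible supports $\supp((A\ot B\ot C)T)$, one shows that $(\lambda/n,\mu/n,\nu/n)$ lies in $\conv\supp\big((A\ot B\ot C)T\big)$ for \emph{every} $(A,B,C)\in\G_{\lowertriangular}$ exactly when $\HWV_{\lambda,\mu,\nu}T^{\ot n}\neq 0$. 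Intersecting with $\weylchamber$ and taking the Euclidean closure converts this equivalence into $\DeltaB^\Drepr(T) = \DeltaB^\Dsupp(T)$; rationality of $\DeltaB(T)$ is then immediate from the observation preceding the proposition that $\DeltaB^\Dsupp(T)$ is rational.

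The step I expect to be the main obstacle is the equivalence in (ii): one must rule out accidental cancellations at the weight $(\lambda,\mu,\nu)$ between the image under $(A\ot B\ot C)^{\ot n}$ of a genuine highest weight vector and the images of higher-weight components of $T^{\ot n}$ that lower-triangular elements can push down onto that weight, and conversely must exhibit, for each such $(A,B,C)$, an explicit support element witnessing membership; this is where the convexity duality and the passage to the closure do the real work, and it is the technical heart of Franz's paper. The shifting-trick reduction in (i) is conceptually transparent but also requires care in controlling the $N\to\infty$ limits of the moment-map image and of the highest-weight data uniformly — however, both are precisely what the cited references supply in the general reductive setting, so the remaining work is the routine verification that $V=\C^a\ot\C^b\ot\C^c$ and $\G_{\lowertriangular}$ meet their hypotheses.
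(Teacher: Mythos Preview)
Your proposal is correct and mirrors the paper's treatment: the paper does not give a self-contained proof of this proposition but instead attributes the equality $\DeltaB^\Drepr(T)=\DeltaB^\Dsupp(T)$ to Franz's combinatorial argument (exactly your task (ii)) and the equality $\DeltaB^\Dgeom(T)=\DeltaB^\Drepr(T)$ to Brion's proof of \cref{theorem:tensor moment polytopes nm} and to Guillemin--Sjamaar, with an alternative via the shifting trick in \cite{burgisser2018tensorScaling} (your task (i)). Your $N\to\infty$ limiting version of the shifting trick is slightly more elaborate than necessary---in the cited references a single sufficiently dominant shift already suffices, since the contribution of $v$ to both the moment map and the highest-weight data is a fixed translation that can be subtracted off exactly rather than asymptotically---but this is a cosmetic difference and your alternative of citing Guillemin--Sjamaar directly is precisely what the paper does.
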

The second equality was essentially first shown in \cite{franz2002}, and can be obtained almost directly from Franz' proof of his characterization.
The equality of $\DeltaB^\Dgeom(T)$ and $\DeltaB^\Dsupp(T)$ was proven independently and via different techniques in \cite{guillemin2005convexityProperties}\footnote{To be precise, they write in \cite[Theorem~2.3]{guillemin2005convexityProperties} instead of $\conv \supp T'$ the moment polytope corresponding to the action of diagonal matrices on $T'$, which was proven by Atiyah to be a convex polytope \cite{atiyah1982convexity}. It is not hard to show these two polytopes are equal \cite{burgisserTheoryNoncommutativeOptimization2019}.}, with the equality of all three polytopes being observed more explicitly in \cite{guillemin2006convexityBorel}.
They also observed the first equality follows essentially directly from the proof of \cref{theorem:tensor moment polytopes nm} as given by Brion in \cite{brion1987momentMapImage}.
We observed these equalities independently of \cite{guillemin2006convexityBorel} and moreover realized the connection with Franz' proof. In particular, no analog of \cref{theorem:tensor moment polytopes franz} is given in \cite{guillemin2006convexityBorel} (but we note this connection is likely well-known to experts).
An alternative proof of the first equality is given in \cite[Prop.~2.11 and Prop.~2.15]{burgisser2018tensorScaling}, where the term
\emph{Borel polytope} was coined as well, see also \cite{cole2018operatorScaling,franks2022minimallengthorbitclosure}.
See \cite{hiraiGradientDescentUnbounded2024} for an alternative treatment of $\DeltaB^\Dsupp(T)$ and $\Delta^\Dgeom(T)$ from a convex optimization perspective.

\subsubsection{Genericity conditions}

The requirements on $\Tgeneric \in \G \cdot T$ such that $\DeltaB(\Tgeneric) = \Delta(T)$ are obtained via the representation-theoretic description.
Denote with $\<\cdot,\cdot>$ the standard Euclidean inner product on $(\C^a\ot\C^b\ot\C^c)^{\ot n}$.

\begin{lemma}[{\cite{franz2002,burgisser2018tensorScaling}}]
\label{theorem:tensor randomization}
    Let $T \in \C^a\ot\C^b\ot\C^c$.
    For any $T' \in \GL \cdot T$ we have
    \begin{align}
        \DeltaB^\Drepr(T') \subseteq \Delta^\Drepr(T).
    \end{align}
    For generic $\Tgeneric \in \G \cdot T$ and for generic $\Tgeneric \in \G_{\uppertriangular} \cdot T$ we have
    \begin{align}
        \DeltaB^\Drepr(\Tgeneric) = \Delta^\Drepr(T).
    \end{align}
    More precisely, $\Delta^\Drepr(T)$ is a convex polytope and
    the generic properties are both defined by the non-vanishing of the degree-$n_i$ polynomials
    \begin{align}
        f_i \colon \C^a \ot \C^b \ot \C^c \to \C, \quad f_i\big(S\big) \coloneqq \big\langle w_i, S^{\ot n_i} \big\rangle,
    \end{align}
    where $w_i \in (\C^a\ot\C^b\ot\C^c)^{\ot n_i}$ is a highest weight vector of weight $(\lambda_i,\mu_i,\nu_i)$ and $q_i = \big(\frac{\lambda_i}{n_i}, \frac{\mu_i}{n_i},\frac{\nu_i}{n_i}\big)$ ranges over the vertices of $\Delta^\Drepr(T)$.
\end{lemma}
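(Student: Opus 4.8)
The plan is to reduce everything to the representation-theoretic picture and combine it with standard genericity arguments. First I would establish the containment $\DeltaB^\Drepr(T') \subseteq \Delta^\Drepr(T)$ for every $T' \in \GL \cdot T$. The key point is that $\HWV_{\lambda,\mu,\nu}$ is the composition of $\ISO_{\lambda,\mu,\nu}$ with the projection onto highest weight vectors inside the isotypic component; hence $\HWV_{\lambda,\mu,\nu} T'^{\ot n} \neq 0$ forces $\ISO_{\lambda,\mu,\nu} T'^{\ot n} \neq 0$. So it suffices to show the latter is equivalent to $\ISO_{\lambda,\mu,\nu} T^{\ot n} \neq 0$, i.e.\ that non-vanishing of the isotypic projection is a $\GL$-orbit invariant. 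This follows because $\ISO_{\lambda,\mu,\nu}$ is $\GL$-equivariant (it is a projection in a direct sum decomposition into $\GL$-subrepresentations), so $\ISO_{\lambda,\mu,\nu} (g \cdot T)^{\ot n} = g^{\ot n} \cdot \ISO_{\lambda,\mu,\nu} T^{\ot n}$, which is zero iff $\ISO_{\lambda,\mu,\nu} T^{\ot n}$ is, since $g^{\ot n}$ is invertible. This gives $\DeltaB^\Drepr(T') \subseteq \Delta^\Drepr(T)$.

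Next I would make the reverse inclusion (for generic $\Tgeneric$) precise via the polynomials $f_i$. Since $\Delta^\Drepr(T)$ is a rational convex polytope (by \cref{theorem:tensor moment polytopes nm}, it equals $\Delta^\Dgeom(T)$, a rational polytope), enumerate its vertices $q_i = (\lambda_i/n_i, \mu_i/n_i, \nu_i/n_i)$ with $(\lambda_i,\mu_i,\nu_i) \in \weylchamber_\Z$ integral of size $n_i$ --- clearing denominators if needed. For each vertex, $q_i \in \Delta^\Drepr(T)$ means $\ISO_{\lambda_i,\mu_i,\nu_i} T^{\ot n_i} \neq 0$ (up to replacing $n_i$ by a multiple, using that the lattice points of the polytope that are achieved form a saturated-enough set; here one should be slightly careful and perhaps argue with a multiple $m n_i$, invoking that the set of achievable $(\lambda/n,\mu/n,\nu/n)$ is dense in $\Delta^\Drepr(T)$ and that achievability is preserved under scaling $T^{\ot n} \mapsto T^{\ot mn}$ via the Cartan component). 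Concretely, one picks a highest weight vector $w_i$ of weight $(\lambda_i,\mu_i,\nu_i)$ in $(\C^a\ot\C^b\ot\C^c)^{\ot n_i}$; then $f_i(S) = \langle w_i, S^{\ot n_i}\rangle$ is a degree-$n_i$ polynomial, and $f_i(S) \neq 0$ implies $\HWV_{\lambda_i,\mu_i,\nu_i} S^{\ot n_i} \neq 0$, hence $q_i \in \DeltaB^\Drepr(S)$. So on the locus $U \coloneqq \{S : f_i(S) \neq 0 \text{ for all } i\}$, all vertices of $\Delta^\Drepr(T)$ lie in $\DeltaB^\Drepr(S)$, and by convexity of $\DeltaB^\Drepr(S)$ (it is a polytope) and the already-proven containment, $\DeltaB^\Drepr(S) = \Delta^\Drepr(T)$.

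It then remains to see that $U$ meets $\G \cdot T$ (and $\G_{\uppertriangular} \cdot T$) in a nonempty Zariski-open, hence dense, subset. Nonemptiness of $U \cap \GL \cdot T$ is exactly the statement that for a \emph{generic} element of the orbit closure all these covariants are simultaneously non-vanishing; this is where the representation theory does real work. The argument: for each $i$, the covariant $T^{\ot n_i} \mapsto \HWV_{\lambda_i,\mu_i,\nu_i}$ being non-vanishing on $\overline{\GL\cdot T}$ (equivalently $V^*_{\lambda_i,\mu_i,\nu_i}$ appearing in $\C[\overline{\GL\cdot T}]$, which holds since $q_i \in \Delta(T)$) means the polynomial $f_i$ does not vanish identically on $\GL \cdot T$; since an irreducible variety is not a finite union of proper subvarieties, the common non-vanishing locus is nonempty and Zariski-open in $\GL \cdot T$. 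For $\G_{\uppertriangular} \cdot T$ one uses that $f_i$ restricted to the orbit map $\G_{\uppertriangular} \to \C^a\ot\C^b\ot\C^c$ is a polynomial in the (upper-triangular) matrix entries, and that $\G_{\uppertriangular}$ is irreducible and its orbit is dense enough — more precisely, by the QL/QR decomposition $\GL = \K \cdot \G_{\uppertriangular}$ and the $\K$-invariance of the relevant conditions one transfers non-vanishing from the full orbit to the upper-triangular orbit. The main obstacle I anticipate is precisely this last transfer and the book-keeping about replacing $n_i$ by a suitable multiple so that a highest weight vector of the exact weight $(\lambda_i,\mu_i,\nu_i)$ exists and $f_i$ is genuinely nonzero on the orbit; the underlying facts are all in \cite{franz2002,burgisser2018tensorScaling}, so the proof is largely a matter of assembling them, with the one subtlety being to match "vertex of a rational polytope" with "weight of an actual highest weight vector in some tensor power."
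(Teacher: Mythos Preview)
The paper does not prove this lemma; it is stated with attribution to \cite{franz2002,burgisser2018tensorScaling} and used as input. So there is no in-paper proof to compare against, and your sketch has to be assessed on its own terms.

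Your overall strategy is correct and is essentially how the cited references argue: the containment via $\G$-equivariance of $\ISO_{\lambda,\mu,\nu}$, and then the reverse inclusion by choosing one covariant per vertex and intersecting their non-vanishing loci. The book-keeping you flag --- producing for each vertex $q_i$ an actual integer level $n_i$ and a highest weight vector $w_i$ with $f_i$ not identically zero on the orbit --- is indeed the substantive point; it rests on the semigroup $\{(\lambda,\mu,\nu,n) : \ISO_{\lambda,\mu,\nu}T^{\ot n}\neq 0\}$ generating a rational polyhedral cone (a consequence of finite generation of the coordinate ring of $\overline{\G\cdot T}$), so that every extreme ray is realised at some integer level.

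There is, however, one genuine gap. Your transfer from $\G \cdot T$ to $\G_{\uppertriangular} \cdot T$ via ``$\K$-invariance of the relevant conditions'' does not go through: the condition $f_i(S) = \langle w_i, S^{\ot n_i}\rangle \neq 0$ is \emph{not} $\K$-invariant, because $\K$ does not preserve highest weight vectors. The correct argument bypasses $\K$ entirely. If $\ISO_{\lambda,\mu,\nu} T^{\ot n} \neq 0$, this is a nonzero vector in a direct sum of copies of $V_{\lambda,\mu,\nu}$; in any such irreducible, the action of the upper-triangular unipotent matrices (the elements of $\G_{\uppertriangular}$ with ones on the diagonal) raises every nonzero vector to one with nonzero highest-weight component. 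Hence already some $b \in \G_{\uppertriangular}$ satisfies $\HWV_{\lambda,\mu,\nu}(b\cdot T)^{\ot n} \neq 0$, and one then chooses $w_i$ in the highest-weight space so that the pairing is nonzero at $b\cdot T$. Irreducibility of $\G_{\uppertriangular}$ then gives density of the non-vanishing locus, as you say.
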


We do not know what these polynomials are, but in computing~$\Delta(T)$ we learn their normalized weights: these correspond to the vertices of the polytope.
We discuss this in more detail in \cref{subsection:tensor attainability}.

\subsection{Restriction and degeneration}
\label{section:moment polytopes basic properties}
We go over some basic facts about moment polytopes in relation to restriction and degeneration.
We say a tensor $T \in \C^a\ot\C^b\ot\C^c$ \emph{restricts} to a tensor $S \in \smash{\C^{\overline{a}} \ot \C^{{\overline{b}}} \ot \C^{{\overline{c}}}}$ whenever there exist triples of matrices $(A,B,C) \in \smash{\C^{\overline{a} \times a} \times \C^{{\overline{b}} \times b} \times \C^{{\overline{c}} \times c}}$ (not necessarily invertible) such that $(A \ot B \ot C)T = S$. We then write $T \geq S$.
When $T \geq S$ and $S \geq T$, we say $T$ and $S$ are equivalent and write $T \sim S$.
Whenever $S \in \C^a\ot\C^b\ot\C^c$, we have that $T \sim S$ if and only if $S$ lies in the $\G$-orbit of $T$.
We say $T$ \emph{degenerates} to $S$ if $\overline{\G \cdot T}$ contains a tensor equivalent to $S$ and write $T \degengeq S$.
Restriction implies degeneration, and both are transitive relations.
We say a tensor is \emph{concise} whenever it cannot be embedded into a smaller space.
That is, $T$ is concise when it is not equivalent to a tensor $S \in \smash{\C^{\overline{a}}\ot\C^{\overline{b}}\ot\C^{\overline{c}}}$ with $\overline{a} < a$ or $\overline{b} < b$ or $\overline{c} < c$.

Moment polytopes behave well under equivalence of tensors.
Denote with $0_{\smash{\hspace{0.10em}{\overline{a}},{\overline{b}},{\overline{c}}}} \in \C^{\overline{a}} \ot \C^{\overline{b}} \ot \C^{\overline{c}}$ the zero tensor.
Given $p = (p_1,p_2,p_3) \in \Delta(T)$, we denote the padding of $p$ with zeros by
\begin{equation}
    (p_1,p_2,p_3) \oplus (0_{\hspace{0.05em}\overline{a}} \sep 0_{\hspace{0.05em}\smash{\overline{b}}} \sep 0_{\hspace{0.05em}\overline{c}}) \coloneqq (p_1 \oplus 0_{\hspace{0.05em}\overline{a}} \sep p_2 \oplus 0_{\hspace{0.05em}\smash{\overline{b}}} \sep p_3 \oplus 0_{\hspace{0.05em}\overline{c}}) \in \R^{a+{\overline{a}}}\times\R^{b+{\overline{b}}}\times\R^{c+{\overline{c}}}.
\end{equation}

\begin{lemma}[Padding with zeros]
\label{lemma:padding with zeros}
$\Delta(T \oplus 0_{\smash{\hspace{0.10em}\overline{a},{\overline{b}},{\overline{c}}}}) = \Delta(T) \oplus (0_{\hspace{0.05em}\overline{a}} \sep 0_{\hspace{0.05em}\smash{\overline{b}}} \sep 0_{\hspace{0.05em}\overline{c}})$.
\end{lemma}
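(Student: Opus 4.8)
The plan is to use the support description of the Borel polytope from \cref{equation:franz-tensors} together with \cref{theorem:tensor moment polytopes franz}, since it makes the behavior of the zero-padding essentially transparent. First I would observe that for any tensor $T \in \C^a\ot\C^b\ot\C^c$, the padded tensor $T \oplus 0_{\smash{\overline a, \overline b, \overline c}}$ lives in $\C^{a+\overline a}\ot\C^{b+\overline b}\ot\C^{c+\overline c}$ and has exactly the same support as $T$, now regarded via the inclusion $\R^a\times\R^b\times\R^c \hookrightarrow \R^{a+\overline a}\times\R^{b+\overline b}\times\R^{c+\overline c}$ that appends zeros. The key structural point is that the group $\G_{\lowertriangular}$ for the larger format contains the block-lower-triangular matrices whose top-left blocks form an element of the smaller $\G_{\lowertriangular}$; acting by such a block matrix on $T \oplus 0$ only mixes within the first $a$ (resp.\ $b$, $c$) coordinates and leaves the tensor of the form $S \oplus 0$ where $S = (A\ot B\ot C)T$. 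Conversely, any lower triangular action on $T\oplus 0$ that we care about can be reduced to this block form since the extra coordinates carry no support to begin with (the lower-triangular blocks acting on the zero part and the off-diagonal blocks contribute nothing to the support, as the padded coordinates are annihilated).

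Concretely, I would argue the two inclusions separately. For ``$\supseteq$'': take a generic $\Tgeneric \in \G\cdot T$ computing $\Delta(T) = \DeltaB^\Dsupp(\Tgeneric)$; then $\Tgeneric \oplus 0$ is in the orbit closure (indeed orbit, up to the obvious block embedding $\G \hookrightarrow \GL_{a+\overline a}\times\cdots$) of $T\oplus 0$, and by genericity considerations (or by direct computation using the $\G_{\lowertriangular}$-definition) one checks $\DeltaB^\Dsupp(\Tgeneric\oplus 0) \supseteq \DeltaB^\Dsupp(\Tgeneric)\oplus(0_{\overline a}\sep 0_{\overline b}\sep 0_{\overline c})$; in fact, restricting the intersection in \cref{equation:franz-tensors} for $\Tgeneric\oplus 0$ to the block-lower-triangular subgroup reproduces exactly the intersection for $\Tgeneric$, giving $\DeltaB^\Dsupp(\Tgeneric\oplus 0) \subseteq \DeltaB^\Dsupp(\Tgeneric)\oplus(0\sep 0\sep 0)$ as well when one also observes the remaining factors of the intersection and the intersection with $\weylchamber$ only enforce that the padded coordinates vanish. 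For ``$\subseteq$'' I would use that the moment polytope is a polytope contained in $\weylchamber$ of the larger format; one notes that for any nonzero marginal of a tensor of the form $S\oplus 0$, the partial traces annihilate the padded block, so the last $\overline a$ (resp.\ $\overline b,\overline c$) eigenvalues are zero — hence $\Delta(T\oplus 0)$ lies entirely in the coordinate subspace $\R^a\times\{0\}\times\R^b\times\{0\}\times\R^c\times\{0\}$ intersected with $\weylchamber$, and within that subspace it must equal $\Delta(T)$ because the moment map of $S\oplus 0$ computed in the larger space matches that of $S$ in the smaller space. Combining the two inclusions gives the claim.

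I expect the main obstacle to be the bookkeeping around which genericity/topology the argument is most cleanly phrased in — in particular making rigorous that $\DeltaB^\Dsupp(\Tgeneric\oplus 0)$ really does compute $\Delta(T\oplus 0)$, since a priori $\Tgeneric\oplus 0$ need not be generic in $\G\cdot(T\oplus 0)$ (it lies in a proper subvariety). The clean way around this is to work directly with the geometric description $\Delta^\Dgeom$ from \cref{equation:geometric description}: since $\overline{\G\cdot(T\oplus 0)} \supseteq \overline{(\G\cdot T)\oplus 0}$ but also every element of $\overline{\G\cdot(T\oplus 0)}\setminus\{0\}$ has a marginal supported only on the first block (this is a closed condition preserved under limits, as the padded rows/columns of each $T_i^*T_i$ stay zero under the block-preserving action and hence in the closure), one gets $\mu(T')$ with the prescribed zero pattern for all such $T'$; then $\spec(\mu(T'))$ ranges over exactly $\Delta^\Dgeom(T)\oplus(0\sep 0\sep 0)$. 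This reduces everything to: (i) the padded coordinates are always zero on $\overline{\G\cdot(T\oplus 0)}$ (a short closure argument), and (ii) within the zero-padded subspace the moment map of $S\oplus 0$ equals that of $S$ (an immediate computation with partial traces). With $\Delta^\Dgeom = \Delta^\Drepr = \Delta$ from \cref{theorem:tensor moment polytopes nm}, the lemma follows.
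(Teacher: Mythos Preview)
The paper states this lemma without proof, so there is nothing to compare against directly. Your geometric approach via $\Delta^\Dgeom$ is the right framework, but the argument for the $\subseteq$ inclusion contains a genuine error.

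You claim that ``every element of $\overline{\G\cdot(T\oplus 0)}\setminus\{0\}$ has a marginal supported only on the first block,'' justifying this by saying the padded rows and columns of $T_i^*T_i$ ``stay zero under the block-preserving action and hence in the closure.'' This is false: the action of the full group $\GL_{a+\overline a}\times\GL_{b+\overline b}\times\GL_{c+\overline c}$ on $T\oplus 0$ is \emph{not} block-preserving. For instance, if $A$ has a nonzero entry in row $a{+}1$, column $1$, then $(A\ot I\ot I)(T\oplus 0)$ generically has nonzero first marginal in the $(a{+}1)$-th row and column. What \emph{is} true is that each marginal has rank at most $a$ (resp.\ $b$, $c$), since every orbit element is of the form $(A'\ot B'\ot C')T$ with $A'\in\C^{(a+\overline a)\times a}$ the first $a$ columns of an invertible matrix, and rank is lower-semicontinuous so the bound persists in the closure. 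Consequently the \emph{sorted} spectrum $\spec(\mu(S))$ has its last $\overline a,\overline b,\overline c$ entries equal to zero --- which is what you actually need --- but the marginals themselves are not block-supported.

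Even after this correction you still owe an argument that the nonzero part of the spectrum lands in $\Delta(T)$, and your step (ii) does not supply it because $S$ is not of the form $S'\oplus 0$. The fix: for $S=(A\ot B\ot C)(T\oplus 0)$ in the orbit, QR-factor the first $a$ columns of $A$ as $U_A\bigl(\begin{smallmatrix}R_A\\0\end{smallmatrix}\bigr)$ with $U_A$ unitary and $R_A\in\GL_a$, and similarly for $B,C$; then $(U_A^*\ot U_B^*\ot U_C^*)S = \bigl((R_A\ot R_B\ot R_C)T\bigr)\oplus 0$, so $\spec(\mu(S))\in\Delta(T)\oplus(0_{\overline a}\sep 0_{\overline b}\sep 0_{\overline c})$. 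Since $\spec\circ\mu$ is continuous on nonzero tensors and $\Delta(T)$ is closed, this extends from the orbit to its closure. With this repair your proof goes through.
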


\begin{proposition}[Equivalence]
    \label{proposition:moment polytope embedding}
    Let $T$ and $S$ be equivalent tensors.
    Then $\Delta(T)$ and $\Delta(S)$ are equal up to padding with zeros.
\end{proposition}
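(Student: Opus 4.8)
The plan is to reduce the general equivalence statement to the special case where $S$ lives inside $\C^a \ot \C^b \ot \C^c$ (i.e.\ $S$ is in the $\G$-orbit of $T$) combined with \cref{lemma:padding with zeros}. First I would recall that if $T \sim S$ with both concise, then $S$ and $T$ have the same shape, and $S$ lies in the $\G$-orbit of $T$ by the remark made just before the statement ($T \sim S$ with $S \in \C^a\ot\C^b\ot\C^c$ iff $S \in \G \cdot T$). In that situation, $\overline{\G \cdot T} = \overline{\G \cdot S}$, so the geometric description \cref{equation:geometric description} gives $\Delta^\Dgeom(T) = \Delta^\Dgeom(S)$ immediately, and hence $\Delta(T) = \Delta(S)$ by \cref{theorem:tensor moment polytopes nm}. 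So the heart of the argument is to handle the non-concise case, i.e.\ to show that two equivalent tensors differ, up to shape, only by padding with zeros, and that the moment polytope is insensitive to that padding.

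The key steps, in order: (1) Show that every tensor $T$ is equivalent to a concise tensor $T_0$ obtained by restricting to the supports of the three ``flattening'' images — concretely, if $T_1 \colon \C^a \to \C^b \ot \C^c$ has rank $a_0 < a$, choose a basis change on $\C^a$ so that $T$ only uses the first $a_0$ coordinates, and similarly for the other two factors; then $T = T_0 \oplus 0_{\overline a, \overline b, \overline c}$ after an appropriate $\G$-action, with $T_0$ concise and $(\overline a, \overline b, \overline c) = (a - a_0, b - b_0, c - c_0)$. (2) Apply \cref{lemma:padding with zeros} to conclude $\Delta(T) = \Delta(T_0) \oplus (0_{\overline a} \mid 0_{\overline b} \mid 0_{\overline c})$, so $\Delta(T)$ equals $\Delta(T_0)$ up to padding. (3) Observe that $T \sim S$ implies $T_0 \sim S_0$ for the associated concise tensors (conciseness and the equivalence relation are compatible: a restriction between $T$ and $S$ descends to one between $T_0$ and $S_0$ since padding zeros can be inserted or deleted without affecting the existence of matrices witnessing restriction). (4) Since $T_0$ and $S_0$ are both concise and equivalent, they have the same shape and $S_0 \in \G \cdot T_0$, so by the concise case above $\Delta(T_0) = \Delta(S_0)$. (5) Chaining the three equalities: $\Delta(T)$ equals $\Delta(T_0) = \Delta(S_0)$, which equals $\Delta(S)$, each ``equality'' being up to padding with zeros; since ``equal up to padding with zeros'' is transitive, we conclude $\Delta(T)$ and $\Delta(S)$ are equal up to padding with zeros.

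I expect the main obstacle to be step (3): making precise and verifying that the equivalence $T \sim S$ descends to an equivalence $T_0 \sim S_0$ of the concise cores, in a way that does not implicitly assume what we want to prove. The cleanest route is probably to argue directly that the concise core is well-defined up to $\G$-equivalence and padding: if $(A \ot B \ot C) T = S$ and $(A' \ot B' \ot C') S = T$, then restricting each side to the relevant coordinate subspaces shows $A$ (restricted/corestricted to the support spaces) is invertible, forcing $a_0 = a_0'$ and giving an honest element of $\GL_{a_0}$ relating the cores, and similarly for the other two factors. An alternative, perhaps smoother, path avoids cores entirely: prove directly from the geometric description that $\Delta(\cdot)$ is invariant under the restriction relation in the following weak sense — if $T \geq S$ then $\Delta(S)$ (padded up to the shape of $T$) is contained in $\Delta(T)$, because a restriction $S = (A\ot B\ot C)T$ with possibly non-square $A,B,C$ can be realized as a limit of invertible maps composed with coordinate projections, so $\overline{\G\cdot S}$ maps into (the closure of) the image of $\overline{\G\cdot T}$ under block-diagonal embeddings, and then the spectra of the marginals only shrink in the padded sense. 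Applying this in both directions ($T \geq S$ and $S \geq T$) yields containment both ways and hence equality up to padding, sidestepping the need to isolate the concise core. I would likely present the core-based argument as the main proof and mention the direct argument as a remark.
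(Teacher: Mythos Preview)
The paper does not actually give a proof of this proposition; it is stated as a basic fact alongside \cref{lemma:padding with zeros} and \cref{proposition:degeneration monotone}, all of which are left without proof in \cref{section:moment polytopes basic properties}. So there is no ``paper's own proof'' to compare against here.

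Your proposal is correct and is the natural way to fill in the argument. Two small remarks. First, your step~(3) is easier than you make it sound: once you have $T \sim T_0$ and $S \sim S_0$ (which holds by construction, since padding/unpadding with zeros are mutually inverse restrictions), transitivity of $\sim$ gives $T_0 \sim T \sim S \sim S_0$ immediately; there is nothing to ``descend''. The genuine content sits in step~(4), namely that two concise equivalent tensors must have the same shape. That follows from the observation that the flattening ranks (the ranks of $T_1,T_2,T_3$) are invariant under restriction and hence under equivalence, and for a concise tensor they equal $(a,b,c)$. Once the shapes agree, the paper's own remark that ``$T \sim S$ with $S \in \C^a\ot\C^b\ot\C^c$ iff $S \in \G\cdot T$'' finishes it. Second, your alternative route via monotonicity under restriction is essentially \cref{proposition:degeneration monotone} (restriction implies degeneration) applied in both directions together with \cref{lemma:padding with zeros}; it is a perfectly good proof, arguably cleaner, and aligns with how the paper organizes the surrounding material.
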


\begin{remark}
\label{remark:tensor moment polytope embedding}
By \cref{proposition:moment polytope embedding}, we can write statements such as $\Delta(T) = \Delta(S)$, $\Delta(T) \supseteq \Delta(S)$ and $p \in \Delta(T)$ even when $T$ and $S$ are tensors of different dimensions and $p$ is a vector not in $\R^a \times \R^b \times \R^c$.
We understand the statements to concern the polytopes and points after appropriate padding or removal of zeros on each of the three systems, such that both live in the same space.

\end{remark}

Secondly, degeneration implies inclusion of moment polytopes.

\begin{proposition}[Degeneration]
\label{proposition:degeneration monotone}
If $T \degengeq S$ then $\Delta(T) \supseteq \Delta(S)$.
\end{proposition}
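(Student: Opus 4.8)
The plan is to work entirely with the geometric description $\Delta^\Dgeom$ from \cref{equation:geometric description}, for which monotonicity under degeneration is almost immediate. First I would unwind the definition of $T \degengeq S$: by assumption there is a tensor $S' \in \overline{\G \cdot T}$ with $S' \sim S$. By \cref{proposition:moment polytope embedding} we have $\Delta(S') = \Delta(S)$ up to padding with zeros, so (invoking the convention of \cref{remark:tensor moment polytope embedding}) it suffices to show $\Delta(S') \subseteq \Delta(T)$.

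The key step is that $\overline{\G \cdot T}$ is $\G$-stable. Indeed $\G \cdot T$ is $\G$-stable by construction, and for each fixed $g \in \G$ the map $T' \mapsto g \cdot T'$ is a linear isomorphism of $\C^a\ot\C^b\ot\C^c$, hence a homeomorphism, so it maps the Euclidean closure of $\G \cdot T$ into the closure of $g \cdot (\G \cdot T) = \G \cdot T$; thus $g \cdot \overline{\G \cdot T} = \overline{\G \cdot T}$ for all $g$. Consequently, from $S' \in \overline{\G \cdot T}$ we get $\G \cdot S' \subseteq \overline{\G \cdot T}$, and since $\overline{\G \cdot T}$ is Euclidean-closed we obtain
\[
    \overline{\G \cdot S'} \subseteq \overline{\G \cdot T}.
\]

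Finally I would apply the definition \cref{equation:geometric description} together with \cref{theorem:tensor moment polytopes nm}, which identifies $\Delta = \Delta^\Dgeom = \Delta^\Drepr$. From the inclusion of orbit closures above,
\[
    \Delta(S') = \Delta^\Dgeom(S') = \big\{\, \spec(\mu(T')) \;\big|\; T' \in \overline{\G \cdot S'}\setminus\{0\} \,\big\}
    \subseteq \big\{\, \spec(\mu(T')) \;\big|\; T' \in \overline{\G \cdot T}\setminus\{0\} \,\big\} = \Delta^\Dgeom(T) = \Delta(T),
\]
and combining with $\Delta(S') = \Delta(S)$ (up to padding) gives $\Delta(T) \supseteq \Delta(S)$, as desired.

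I do not expect any serious obstacle here: the only point requiring a word of justification is the $\G$-stability (and closedness) of $\overline{\G \cdot T}$, which is standard. One should also be a little careful about the excision of the zero tensor in the definition of $\Delta^\Dgeom$, but this causes no issue since removing $\{0\}$ commutes with the inclusion of orbit closures. If one instead preferred a representation-theoretic argument, one could note that $\overline{\G \cdot S'} \subseteq \overline{\G \cdot T}$ induces a surjection of coordinate rings $\C[\overline{\G\cdot T}] \twoheadrightarrow \C[\overline{\G\cdot S'}]$, so every irreducible $V_{\lambda,\mu,\nu}^*$ occurring in the latter also occurs in the former, which yields the same inclusion via \cref{equation:representation theoretic description}; but the geometric route is cleaner.
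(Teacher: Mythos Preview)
Your argument is correct and is precisely the standard one: the paper states \cref{proposition:degeneration monotone} as a basic fact without proof, and the intended justification is exactly the geometric one you give---orbit-closure inclusion $\overline{\G\cdot S'}\subseteq\overline{\G\cdot T}$ followed by reading off $\Delta^\Dgeom$ from \cref{equation:geometric description}. Nothing to add.
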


Moreover, almost all restrictions of a tensor $T$ to a tensor $S$ of an entry-wise smaller shape $({\overline{a}},{\overline{b}},{\overline{c}})$ will result in a moment polytope $\Delta(S)$ given by intersecting $\Delta(T)$ with $\R^{{\overline{a}}} \times \R^{{\overline{b}}} \times \R^{{\overline{c}}}$.

\begin{proposition}[{\cite[Corollary~3.7]{burgisser2018tensorScaling}}]
    \label{proposition:tensor moment polytope generic restriction}
    Let~$T \in \C^a \ot \C^b \ot \C^c$ and $({\overline{a}},{\overline{b}},{\overline{c}}) \leq (a,b,c)$ entry-wise.
    Denote with $P_{\hspace{0.10em}\overline{a}} \ot P_{\hspace{0.20em}\smash{\overline{b}}} \ot P_{\hspace{0.10em}\overline{c}}$ the restriction from $\C^a\ot\C^b\ot\C^c$ to $\C^{{\overline{a}}} \ot \C^{{\overline{b}}} \ot \C^{{\overline{c}}}$ that projects onto the first ${\overline{a}},{\overline{b}}$ and ${\overline{c}}$ coordinates in the respective system.
    Then for generic $(A,B,C) \in \G$,
    \[
        \Delta\big( (P_{\hspace{0.10em}\overline{a}} \ot P_{\hspace{0.20em}\smash{\overline{b}}} \ot P_{\hspace{0.10em}\overline{c}})(A \ot B \ot C)T \big) = \Delta(T) \cap \big(\R^{{\overline{a}}} \times \R^{{\overline{b}}} \times \R^{{\overline{c}}} \big).
    \]
    In particular, there exists $S \in \C^{\overline{a}} \ot \C^{\overline{b}} \ot \C^{\overline{c}}$ such that $T \geq S$ and $\Delta( S ) = \Delta(T) \cap (\R^{{\overline{a}}} \times \R^{{\overline{b}}} \times \R^{{\overline{c}}})$.
\end{proposition}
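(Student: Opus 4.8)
The plan is to combine Franz's support description from \cref{theorem:tensor moment polytopes franz} with the fact that composing projections with generic group elements still yields a generic element of a suitable orbit closure. The key observation is that the restriction $(P_{\hspace{0.10em}\overline{a}} \ot P_{\hspace{0.20em}\smash{\overline{b}}} \ot P_{\hspace{0.10em}\overline{c}})(A \ot B \ot C)T$ can be analyzed through its support after further lower-triangular action, and that the support of a ``truncated'' tensor is simply the intersection of the original support with the coordinate block $\{e_1,\ldots,e_{\overline{a}}\} \times \{e_1,\ldots,e_{\overline{b}}\} \times \{e_1,\ldots,e_{\overline{c}}\}$.

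First I would fix generic $(A,B,C) \in \G$ and set $S \coloneqq (P_{\hspace{0.10em}\overline{a}} \ot P_{\hspace{0.20em}\smash{\overline{b}}} \ot P_{\hspace{0.10em}\overline{c}})(A \ot B \ot C)T$, a tensor in $\C^{\overline{a}} \ot \C^{\overline{b}} \ot \C^{\overline{c}}$. By \cref{theorem:tensor moment polytopes franz} applied in the smaller space, it suffices to show that $S$ (for generic $(A,B,C)$) is a \emph{generic} element of $\G_{\overline{a},\overline{b},\overline{c}} \cdot S$ in the sense needed for the equality $\DeltaB^\Dsupp(S) = \Delta(S)$, and separately to identify $\Delta(S)$ with $\Delta(T) \cap (\R^{\overline a}\times\R^{\overline b}\times\R^{\overline c})$. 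For the latter, I would use the support description: a lower-triangular matrix $\overline A \in \GL_{\overline a}$ can be lifted to a lower-triangular $A' \in \GL_a$ that acts as $\overline A$ on the first $\overline a$ coordinates and fixes (say, identity on) the rest, and crucially $P_{\overline a} A' = \overline A P_{\overline a}$ as maps when restricted appropriately — so $\supp((\overline A \ot \overline B \ot \overline C)S)$ is obtained from $\supp((A' \ot B' \ot C')(A\ot B \ot C)T)$ by intersecting with the coordinate block. Taking the intersection over all lower-triangular triples and then with $\weylchamber$, one obtains $\DeltaB^\Dsupp(S) = \DeltaB^\Dsupp((A\ot B\ot C)T) \cap (\R^{\overline a}\times\R^{\overline b}\times\R^{\overline c})$, since each slice operation commutes with the intersection (a face/slice of an intersection of polytopes is the intersection of the slices). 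By \cref{corollary:moment polytope for generic group elements}, $\DeltaB^\Dsupp((A\ot B\ot C)T) = \Delta(T)$ for generic $(A,B,C)$, giving the desired identity.

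The genericity bookkeeping is where I expect the main obstacle, and it has two layers. First, one must check that the composed map $(A,B,C) \mapsto (P \ot P \ot P)(A\ot B\ot C)T$ sends a generic point of $\G$ to a generic point of the orbit closure $\overline{\G_{\overline a,\overline b,\overline c} \cdot S_0}$ for the relevant fixed $S_0$ — i.e., that the genericity conditions from \cref{theorem:tensor randomization} (non-vanishing of the highest-weight-vector polynomials $f_i$ attached to the vertices of $\Delta(S_0)$) pull back to a nonempty Zariski-open condition on $(A,B,C) \in \G$. This is a composition-of-polynomials argument: each $f_i \circ (P\ot P\ot P)$ is a polynomial in the entries of $(A,B,C)$, so the only thing to verify is that it is not \emph{identically} zero on $\G$, which follows because the image of the projection map is dense enough — concretely, because some restriction $T \geq S_0$ exists realizing the generic slice, so the polynomial does not vanish on all of $\G$. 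Second, one must reconcile ``generic $(A,B,C)$'' appearing at two scales: the genericity needed for $\DeltaB^\Dsupp((A\ot B\ot C)T) = \Delta(T)$ and that needed for $S$ to be support-generic in the small space; since both are nonempty Zariski-open conditions on the irreducible variety $\G$, their intersection is again nonempty Zariski-open, so a single generic choice works. The ``in particular'' clause is then immediate: pick any such generic $(A,B,C)$ and set $S \coloneqq (P_{\hspace{0.10em}\overline{a}} \ot P_{\hspace{0.20em}\smash{\overline{b}}} \ot P_{\hspace{0.10em}\overline{c}})(A \ot B \ot C)T$; this $S$ satisfies $T \geq S$ by construction and $\Delta(S) = \Delta(T) \cap (\R^{\overline a}\times\R^{\overline b}\times\R^{\overline c})$ by what was just shown.

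A subtle point worth flagging in the write-up: the lift of a lower-triangular $\overline A \in \GL_{\overline a}$ to $\G$ must itself be lower-triangular for the argument to use $\DeltaB^\Dsupp((A\ot B \ot C)T)$ rather than something larger, and one must argue that \emph{every} lower-triangular action on the truncated space arises this way (it does — take block-lower-triangular with the $\overline a \times \overline a$ block equal to $\overline A$), and conversely that lower-triangular actions on the big space, after truncation, induce lower-triangular actions on the small space (also clear, since truncation of a lower-triangular matrix to its leading principal block is lower-triangular and the off-block entries only feed into coordinates that get projected away). These two inclusions together give the exact equality $\DeltaB^\Dsupp(S) = \DeltaB^\Dsupp((A\ot B\ot C)T) \cap (\R^{\overline a}\times\R^{\overline b}\times\R^{\overline c})$, after one also checks $\weylchamber_{\overline a,\overline b,\overline c} = \weylchamber_{a,b,c} \cap (\R^{\overline a}\times\R^{\overline b}\times\R^{\overline c})$ under the natural inclusion, which is a direct consequence of the definition of dominance.
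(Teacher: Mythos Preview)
The paper does not give its own proof of this proposition --- it is quoted from \cite[Corollary~3.7]{burgisser2018tensorScaling} --- so there is nothing in-paper to compare against. Your approach via the support description is a natural self-contained route, and the core identity $\DeltaB^{\Dsupp}(S) = \DeltaB^{\Dsupp}\big((A\ot B\ot C)T\big) \cap (\R^{\overline a}\times\R^{\overline b}\times\R^{\overline c})$, valid for \emph{every} $(A,B,C)$, is correct. One step you gloss over: you need $\conv(\Gamma) \cap (\R^{\overline a}\times\R^{\overline b}\times\R^{\overline c}) = \conv\big(\Gamma \cap (\Omega_{\overline a}\times\Omega_{\overline b}\times\Omega_{\overline c})\big)$ for any $\Gamma \subseteq \Omega$, which is not the same as ``slicing commutes with intersection''. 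It does hold, because a convex combination $\sum_\omega \lambda_\omega \omega$ of weights lies in the subspace only if $\lambda_\omega = 0$ for every $\omega$ with a coordinate index exceeding $\overline a$, $\overline b$ or $\overline c$ (the $\lambda_\omega$ are nonnegative and the offending coordinate equals the sum of those $\lambda_\omega$).

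Your genericity bookkeeping, however, is both unnecessary and circular. You propose to show that the highest-weight polynomials $f_i$ attached to vertices of ``$\Delta(S_0)$'' do not vanish identically when pulled back along $(A,B,C) \mapsto S$, invoking that ``some restriction $T \geq S_0$ exists realizing the generic slice'' --- but that existence is exactly the statement being proved. Worse, as $(A,B,C)$ varies the tensor $S$ need not lie in a fixed $\G_{\overline a}\times\G_{\overline b}\times\G_{\overline c}$-orbit, so there is no single $S_0$ to anchor the $f_i$. The fix is to skip this layer entirely: once you know $\DeltaB^{\Dsupp}(S) = \Delta(T) \cap (\R^{\overline a}\times\R^{\overline b}\times\R^{\overline c})$ for generic $(A,B,C)$ via \cref{corollary:moment polytope for generic group elements}, the sandwich
\[
\DeltaB^{\Dsupp}(S)\ \subseteq\ \Delta(S)\ \subseteq\ \Delta(T) \cap (\R^{\overline a}\times\R^{\overline b}\times\R^{\overline c})\ =\ \DeltaB^{\Dsupp}(S)
\]
forces all terms equal. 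The first inclusion is \cref{theorem:tensor moment polytopes franz}; the second is \cref{proposition:degeneration monotone} (from $T \geq S$) together with $\Delta(S) \subseteq \R^{\overline a}\times\R^{\overline b}\times\R^{\overline c}$. No separate genericity of $S$ is needed, and the ``in particular'' clause follows immediately.
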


\subsection{Kronecker polytopes}
\label{section:kronecker polytopes}

We end this section by introducing a notion called the \emph{Kronecker polytope}.
It is the moment polytope corresponding to not just a single orbit but the entire space $\C^a\ot\C^b\ot\C^c$.
It can also be described in multiple ways, with the geometric and representation-theoretic ones given by
\begin{align}
    \label{equation:generic moment polytope}
    \kronpolgeom{a}{b}{c}
    &\coloneqq
    \Big\{\, \spec\bigl( \mu(T) \bigr)
    \ \ \Big|\ \
    T \in \C^a\ot\C^b\ot\C^c \setminus \{0\}
    \,\Big\}
\qquad\text{and}\\
    \kronpolrepr{a}{b}{c}
    &\coloneqq
    \overline{
    \bigg\{\ \Big(\frac{\lambda}{n}, \frac{\mu}{n}, \frac{\nu}{n}\Big)
    \quad \Big| \quad
    \ISO_{\lambda,\mu,\nu} T^{\ot n} \neq 0 \text{ for a } T \in \C^a\ot\C^b\ot\C^c \bigg\}
    }.
\end{align}

\begin{remark}
The name comes from the observation that, by Schur-Weyl duality, the subspace
$
\ISO_{\lambda,\mu,\nu} \{T^{\ot n} \mid  T \in \C^a\ot\C^b\ot\C^c\}$ is isomorphic to $V_\lambda \otimes V_\mu \otimes V_\nu \otimes ([\lambda] \otimes [\mu ]\otimes [\nu] ])^{S_n}$ as a $\GL \times S_n$ representation,
where $[ \lambda ]$ denote the Specht module of the symmetric group $S_n$ and $([\lambda] \otimes [\mu ]\otimes [\nu] ])^{S_n}$ is the $S_n$-invariant subspace under the diagonal action of $S_n$ on $[\lambda] \otimes [\mu ]\otimes [\nu]$. The dimension $c_{\lambda,\mu,\nu}$ of $([\lambda] \otimes [\mu ]\otimes [\nu] ])^{S_n}$ is known as a \emph{Kronecker coefficient}, and by the above, $\ISO_{\lambda,\mu,\nu} T^{\ot n} \neq 0$ for some $T$ if and only if $c_{\lambda,\mu,\nu} > 0$.
\end{remark}

\begin{proposition}[Kronecker polytope
\cite{christandlmitchison, klyachko2006quantum,christandl2007nonzeroKroneckerCoefficients}.]
    \label{proposition:generic moment polytope nm}
We have that
\begin{align}
\kronpolgeom{a}{b}{c}
=
\kronpolrepr{a}{b}{c}.
\end{align}
We call it the \emph{Kronecker polytope} of shape $(a,b,c)$ and denote it with $\kronpol{a}{b}{c}$. It is a rational polytope.
\end{proposition}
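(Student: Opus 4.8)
The plan is to deduce Proposition~\ref{proposition:generic moment polytope nm} from the already-established equality $\Delta^\Dgeom(T) = \Delta^\Drepr(T)$ of Theorem~\ref{theorem:tensor moment polytopes nm} by taking a union (and then a closure) over all $T$. Indeed, directly from the definitions in \cref{equation:generic moment polytope},
\[
    \kronpolgeom{a}{b}{c} = \bigcup_{T \neq 0} \Delta^\Dgeom(T)
    \qquad\text{and}\qquad
    \kronpolrepr{a}{b}{c} = \overline{\bigcup_{T} \Delta^\Drepr(T)},
\]
since $\ISO_{\lambda,\mu,\nu}T^{\ot n} \neq 0$ for \emph{some} $T$ is exactly the condition appearing inside the closure. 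By Theorem~\ref{theorem:tensor moment polytopes nm} the two unions agree termwise, so $\kronpolgeom{a}{b}{c} = \overline{\kronpolgeom{a}{b}{c}}$ would finish the proof modulo showing the geometric side is already closed. First I would verify that $\bigcup_{T \neq 0}\Delta^\Dgeom(T)$ is closed: $\mu$ is continuous and homogeneous of degree $0$, so $\kronpolgeom{a}{b}{c} = \spec(\mu(S^{a+b+c-1}))$ is the continuous image of the unit sphere (a compact set), hence compact, hence closed. Thus the Euclidean closure on the representation-theoretic side is harmless and the two descriptions coincide as sets.

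It remains to show this common set is a rational polytope. The cleanest route is to observe that $\C^a\ot\C^b\ot\C^c$ is an irreducible variety (it is the whole ambient space), so Theorem~\ref{theorem:tensor moment polytopes nm} applies to it in the more general form stated in the footnote for irreducible varieties $\variety$: the geometric description $\{\spec(\mu(T)) \mid T \in \variety\setminus\{0\}\}$ equals the representation-theoretic one with an existential quantifier over $T \in \variety$, and this common object is a rational polytope. Taking $\variety = \C^a\ot\C^b\ot\C^c$ gives exactly $\kronpolgeom{a}{b}{c} = \kronpolrepr{a}{b}{c}$ and rationality in one stroke. If instead one wants to avoid invoking the general version, one can use Brion's theorem on images of moment maps for projective $G$-varieties: the variety here is $\Proj(\C^a\ot\C^b\ot\C^c)$, which is smooth, projective, and connected, and the image of its moment map intersected with the positive Weyl chamber is a rational convex polytope by \cite{brion1987momentMapImage}. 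Combined with the identification $\spec \circ \mu$ of the preceding paragraph, this yields the polytope claim.

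The main obstacle is really a bookkeeping one rather than a mathematical one: making sure the generic-element subtleties of Theorem~\ref{theorem:tensor moment polytopes franz} play no role here. Since we quantify over \emph{all} $T$ (not a single orbit), there is no ``pick a generic $\Tgeneric$'' step — the union over orbits automatically sees every polytope $\Delta(T)$ including the maximal one, and it is a standard fact (stated in the introduction via \cite{brion1987momentMapImage}) that a generic tensor attains the maximal moment polytope, so $\kronpolgeom{a}{b}{c} = \Delta(\Tgeneric)$ for generic $\Tgeneric$ and in particular the union stabilizes. The only genuine check is the closedness argument above, ensuring the Euclidean closure in the definition of $\kronpolrepr{a}{b}{c}$ does not enlarge the set beyond the geometric description; this follows from compactness of the sphere as indicated. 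Finally, the connection to Kronecker coefficients noted in the remark ($\ISO_{\lambda,\mu,\nu}T^{\ot n}\neq 0$ for some $T$ iff $c_{\lambda,\mu,\nu}>0$) is what makes the term ``Kronecker polytope'' appropriate, but it is not needed for the proof of the stated equality.
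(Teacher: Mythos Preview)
Your proposal is correct. The paper does not actually give a proof of this proposition; it merely cites the literature (``The proof can be obtained with techniques from symplectic geometry \ldots\ or with probabilistic tools from quantum information theory \ldots''). Your argument via the general irreducible-variety form of \cref{theorem:tensor moment polytopes nm} (the footnote version, applied to $\variety = \C^a\ot\C^b\ot\C^c$) is precisely the symplectic-geometry route the paper alludes to, and it dispatches both the equality and the rational-polytope claim in one step. The union-plus-closedness argument you give first is also valid, though slightly roundabout given that the footnote version already covers it.

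One small correction: the unit sphere you want is the one in $\C^{abc}$, which has real dimension $2abc-1$, not $a+b+c-1$. This is only a notational slip and does not affect the compactness argument.
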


The proof can be obtained with techniques from symplectic geometry
\cite{nessStratificationNullCone1984} as in \cite{klyachkoQuantumMarginalProblem2004} (see also \cite{daftuar2005quantum})
or with probabilistic tools from quantum information theory \cite{KeylWerner2001} as in \cite{christandlmitchison, christandl2007nonzeroKroneckerCoefficients, Christandl2006Thesis}.

Note that $\Delta(T) \subseteq \kronpol{a}{b}{c}$ for all $T \in \C^a\ot\C^b\ot\C^c$.
Analogously to \cref{theorem:tensor randomization}, for generic tensors in $\C^a \ot \C^b \ot \C^c$, their moment polytope equals the Kronecker polytope as can be seen from~\cite{brion1987momentMapImage}.
\begin{proposition}[Kronecker polytope]
    \label{proposition:generic moment polytope}
For generic $T_{\textnormal{g}} \in \C^a\ot\C^b\ot\C^c$ we have
\begin{align}
    \kronpol{a}{b}{c}
    =
    \Delta(T_{\textnormal{g}}).
\end{align}
\end{proposition}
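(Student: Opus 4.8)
The plan is to deduce \cref{proposition:generic moment polytope} from \cref{proposition:generic moment polytope nm} together with the representation-theoretic descriptions already in hand, by exhibiting a single Zariski-open dense subset of $\C^a\ot\C^b\ot\C^c$ on which the moment polytope is exactly the Kronecker polytope. First I would recall that $\kronpol{a}{b}{c}$ is a rational polytope (\cref{proposition:generic moment polytope nm}), so it has finitely many vertices, say $q_1,\dots,q_\ell$, each of the form $q_i = (\lambda_i/n_i, \mu_i/n_i, \nu_i/n_i)$ with $(\lambda_i,\mu_i,\nu_i)\in\weylchamber_\N$ and $\ISO_{\lambda_i,\mu_i,\nu_i} T^{\ot n_i}\neq 0$ for \emph{some} tensor $T$. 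By the footnote after \cref{equation:representation theoretic description} (or directly the definition of $\ISO$ as a projection onto an isotypic component), the map $T\mapsto \ISO_{\lambda_i,\mu_i,\nu_i} T^{\ot n_i}$ is a polynomial (in fact degree-$n_i$ homogeneous) map on $\C^a\ot\C^b\ot\C^c$; hence the condition $\ISO_{\lambda_i,\mu_i,\nu_i}T^{\ot n_i} = 0$ cuts out a Zariski-closed subset $Z_i$, and $Z_i\neq \C^a\ot\C^b\ot\C^c$ precisely because some tensor lies outside it. Since $\C^a\ot\C^b\ot\C^c$ is irreducible, the complement $U := \bigcap_{i=1}^{\ell}\big(\C^a\ot\C^b\ot\C^c\setminus Z_i\big)$ is a nonempty Zariski-open, hence dense, subset.

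Now fix any $T_{\textnormal{g}}\in U$. On the one hand, $\Delta(T_{\textnormal{g}})\subseteq \kronpol{a}{b}{c}$ always holds, as remarked before the proposition (every isotypic component that is nonzero on $T^{\ot n}$ is in particular nonzero on \emph{some} tensor of that shape). On the other hand, for each vertex $q_i$ we have $\ISO_{\lambda_i,\mu_i,\nu_i}T_{\textnormal{g}}^{\ot n_i}\neq 0$ by construction of $U$, so by the representation-theoretic description \cref{equation:representation theoretic description} and \cref{theorem:tensor moment polytopes nm} we get $q_i\in\Delta^\Drepr(T_{\textnormal{g}}) = \Delta(T_{\textnormal{g}})$. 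Thus $\Delta(T_{\textnormal{g}})$ is a convex set (indeed a polytope, by \cref{theorem:tensor moment polytopes nm}) containing all vertices of $\kronpol{a}{b}{c}$, so $\kronpol{a}{b}{c} = \conv\{q_1,\dots,q_\ell\}\subseteq \Delta(T_{\textnormal{g}})$. Combining the two inclusions gives $\Delta(T_{\textnormal{g}}) = \kronpol{a}{b}{c}$, and since this holds for all $T_{\textnormal{g}}$ in the dense Zariski-open set $U$, it holds generically.

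The only delicate point is the direction $\kronpol{a}{b}{c}\subseteq\Delta(T_{\textnormal{g}})$, which I have reduced to showing that the finitely many ``vertex non-vanishing'' conditions are simultaneously satisfiable by a single generic tensor --- that is, that each $Z_i$ is a proper subvariety. This is exactly the content of \cref{proposition:generic moment polytope nm}: nonvanishing of the Kronecker-type projection for \emph{some} tensor, which upgrades to nonvanishing for \emph{generic} tensor because a nonzero polynomial vanishes only on a proper subvariety and finite intersections of dense Zariski-opens in an irreducible variety are again dense Zariski-open. One should also double-check that it suffices to control only the vertices rather than all lattice points of $\kronpol{a}{b}{c}$: this follows because $\Delta(T_{\textnormal{g}})$ is automatically convex (it is a polytope by \cref{theorem:tensor moment polytopes nm}), so containing the vertices forces containing their convex hull. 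Alternatively, and even more directly, one can invoke \cite{brion1987momentMapImage} as the excerpt suggests --- the statement that a generic element of an irreducible $\G$-variety has moment polytope equal to the moment polytope of the whole variety is a known consequence of Brion's analysis --- and note that for $\variety = \C^a\ot\C^b\ot\C^c$ this moment polytope is by definition $\kronpol{a}{b}{c}$; the argument above is simply the explicit unwinding of that reference in the tensor setting.
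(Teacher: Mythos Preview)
Your proof is correct and is precisely the explicit unwinding the paper gestures at: the paper itself gives no proof, only the remark that the result follows ``analogously to \cref{theorem:tensor randomization}\ldots as can be seen from~\cite{brion1987momentMapImage},'' and your argument---isolate the vertices, realize each by a non-vanishing isotypic-projection condition, intersect the resulting Zariski-open loci---is exactly that analogy written out.

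One point is worth tightening: you assert that each vertex $q_i$ of $\kronpol{a}{b}{c}$ is of the form $(\lambda_i/n_i,\mu_i/n_i,\nu_i/n_i)$ with $\ISO_{\lambda_i,\mu_i,\nu_i}T^{\ot n_i}\neq 0$ for some $T$, but $\kronpolrepr{a}{b}{c}$ is only defined as the \emph{closure} of such normalized triples, and rationality of the polytope alone does not guarantee that a vertex lies in the pre-closure set (this is not what \cref{proposition:generic moment polytope nm} asserts). The claim is nonetheless true---it follows from the semigroup property of Kronecker coefficients together with finite generation, or directly from the general Brion/Ness--Mumford theory that you already invoke as a fallback (the footnote before \cref{theorem:tensor moment polytopes nm} notes these results hold for any irreducible $\G$-variety, in particular $\variety=\C^a\ot\C^b\ot\C^c$, and the analogue of \cref{theorem:tensor randomization} in that generality supplies realizing highest-weight vectors at the vertices). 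With that justification added, the explicit argument is complete.
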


Therefore we also refer to $\kronpol{a}{b}{c}$ as the \emph{generic polytope} of shape $a \times b \times c$.
For this reason, the Kronecker polytope is also the maximal moment polytope.

There is much previous work on characterizations of the Kronecker polytope in various formats. We provide a brief overview. See also \cite[Chapter~3]{walter2014thesis} for a detailed account.
Various complete mathematical descriptions of the Kronecker polytope of arbitrary formats have been found using different techniques \cite{nessStratificationNullCone1984,berenstein2000coadjointOrbitsMomentPolytope,franz2002,ressayre2010generalizedEigenvalueProblem,ressayre2011generalizedEigenvalueProblemII,vergneInequalitiesMomentCone2017}.
They, among other techniques, have been used to determine explicit descriptions of the irredundant inequalities (i.e.\ as concrete lists of numbers) for the following formats: $3 \times 3 \times 3$ \cite{franz2002}, $2 \times 2 \times 4$ \cite{bravyi2004requirementsQuantumStates}, $2 \times 2 \times \cdots \times 2$ \cite{higuchi2003qubitReducedStates},
$2 \times 2 \times 2 \times 8$,$\ $
$2 \times 2 \times 2 \times 2 \times 16$,$\ $
$3 \times 3 \times 9$, $\ $
$2 \times n \times 2n$,$\ $
$2\times 2\times 3 \times 12$ \cite{klyachkoQuantumMarginalProblem2004} (based on \cite{berenstein2000coadjointOrbitsMomentPolytope} and the connection to Kronecker coefficients),
and $4 \times 4 \times 4$ \cite{vergneInequalitiesMomentCone2017} (using techniques related to those in \cite{ressayre2010generalizedEigenvalueProblem}).
\cite{buloisAlgorithmComputeKronecker2025a} determined those for $5 \times 5 \times 5$ (which appeared as a preprint after our paper \cite{vandenBerg2025mmPolytope} first appeared on arXiv).
For all results, the formats with system-wise lesser or equal dimensions can also be obtained, as can permutations of the formats.

\section{Algorithms for moment polytopes of tensors}
\label{section:algorithms for computing tensor moment polytopes}

In this section we introduce our algorithms for computing moment polytopes as well as Borel polytopes.
They are directly based on the description of moment polytopes due to Franz \cite{franz2002}, which describes the moment polytope in terms of supports, see \cref{subsection:definitions and overview}. For Borel polytopes we describe a deterministic algorithm (\cref{theorem:borel algorithm}). For moment polytopes we describe
\begin{enumerate}
\item a probabilistic algorithm, which outputs the correct answer with at least a bounded probability (\cref{theorem:probabilistic algorithm});
\item a deterministic algorithm, whose output is always correct (\cref{theorem:symbolic algorithm}); and
\item a randomized algorithm, whose output is always correct when successful but which may output \texttt{Failure} with at most a bounded desired probability (\cref{theorem:symbolic algorithm randomized}).
\end{enumerate}
In each case, we can boost the success probability as much as desired.

We begin with an outline of these algorithms in \cref{subsection:algorithm outline}, and introduce a condition central to our algorithms called \emph{attainability}.
In \cref{subsection:tensor attainability} we make this condition algorithmic.
The proofs of correctness of the algorithms are given in \cref{subsection:algorithm proofs}.
Some implementation details are left to \cref{section:implementation and optimization}.
In \cref{section:algorithms for 4x4x4} we build upon the algorithms presented here, in particular providing a practically-relevant heuristic and a verification algorithm.

\begin{remark}
As discussed at the beginning of \cref{section:tensor moment polytopes}, the theory vastly generalizes.
This also applies to our algorithm. We specialize to the tensor case for accessibility.
We can replace the group $\GL_a \times \GL_b \times \GL_c$ by any reductive algebraic group $G$,
and the representation $\C^a\ot\C^b\ot\C^c$ by any rational representation of $G$.
See also \cite{burgisserTheoryNoncommutativeOptimization2019}, which describes the generalization of the tensor scaling algorithm \cite{burgisser2018tensorScaling} in much the same spirit.
\end{remark}

\subsection{Outline of the algorithms}
\label{subsection:algorithm outline}

In this subsection we give a high-level description of our deterministic and randomized algorithms for computing moment polytopes.
The basis will be a deterministic algorithm for computing Borel polytopes, which we will describe first.
The proofs for correctness of the algorithms will be given in \cref{subsection:algorithm proofs}.
Let $T \in \C^a \ot \C^b \ot \C^c$.
Write $\Omega = \{(e_i,e_j,e_k)\}_{i,j,k} \subseteq \R^a \times \R^b \times \R^c$ for the set of weights.
Recall Franz' description of the Borel polytope (\cref{equation:franz-tensors}), which for convenience we repeat here:
\begin{align}
    \label{eq:franz borel polytope again}
    \DeltaB(T) = \bigcap_{(A,B,C) \in \G_{\lowertriangular}} \conv \supp((A \ot B \ot C) T) \cap \weylchamber.
\end{align}
Since there is only a finite set of possible supports $S \subseteq \Omega$, we know the intersection only has finitely many terms.
We can thus write
\begin{align}
    \label{eq:borel polytope finite terms}
    \DeltaB(T)
    =
    \conv S_1 \cap \cdots \cap \conv S_s\ \cap\ \weylchamber
\end{align}
for some $s \in \N$, where for each $i$, $S_i = \supp \bigl((A_i \ot B_i \ot C_i) T\bigr) \subseteq \Omega$ for some $(A_i, B_i, C_i) \in \G_{\lowertriangular}$. The following definition will be central in our algorithm.
\begin{definition}[Attainability]
    \label{definition:attainability}
    Let $T$ be a tensor and let $S \subseteq \Omega$.
    We call $S$ \emph{attainable for $T$} whenever there exist $(A, B, C) \in \G_{\lowertriangular}$
    such that $\supp((A \ot B \ot C) T) \subseteq S$.
\end{definition}
Given $S \subseteq \Omega$, one way to determine attainability is by symbolic methods, which we describe in \cref{subsection:tensor attainability}.
An immediate possible algorithm iterates over all $S \subseteq \Omega$ and determines the ones that are attainable, perhaps making use of the fact that attainability is preserved under taking supersets.
However, due to the sheer number of possible supports, this method becomes impractical quickly.

Instead, we will use the attainability condition to determine inequalities for $\Delta(T)$.
This is based on \cref{lemma:tensor attainability} below. First we introduce some terminology.
We identify inequalities $h$ with closed half-spaces $H_h \subseteq \R^a\times\R^b\times\R^c$. The set $H_h$ consists of the points that \emph{satisfy} the inequality $h$.
An inequality $h$ is \emph{valid} for a set $X$ if $X \subseteq H_h$.
A set of inequalities $\{h_1,\ldots,h_s\}$ \emph{defines} a polytope $P$ if $P = H_{h_1} \cap \cdots \cap H_{h_s}$. Any polytope has a defining set of inequalities.
Define the set of weights $\Omega_h \subseteq \Omega$ that satisfies an inequality $h$ as $\Omega_h \coloneqq H_h \cap \Omega$.

\begin{lemma}
\label{lemma:tensor attainability}
Let $T \in \C^a \ot \C^b \ot \C^c$ be a tensor and $\Omega$ the set of weights. %
Then there exists a finite set of inequalities $\ineqs$ such that for all $S \subseteq \Omega$, some subset $\ineqs_S \subseteq \ineqs$ defines $\conv S$.
Moreover, if $\ineqs$ is any such set and we let
$\ineqs_T \coloneqq \{h \in \ineqs \mid \Omega_h \textnormal{ is attainable for } T\}$, then
\begin{align}
\label{eq:weight matrix enumeration}
\DeltaB(T) = \bigcap_{h \in \ineqs_T} H_h\ \cap\ \weylchamber.
\end{align}
\end{lemma}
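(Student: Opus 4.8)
The key observation is that $\conv\supp((A\ot B\ot C)T)$ is always the convex hull of a subset of the finite weight set $\Omega$, so as $(A,B,C)$ ranges over $\G_\lowertriangular$ only finitely many distinct polytopes $\conv S_1,\dots,\conv S_s$ occur, each the convex hull of some $S_i\subseteq\Omega$ (this is \cref{eq:borel polytope finite terms}). First I would produce the set $\ineqs$: for each subset $S\subseteq\Omega$ fix a finite defining set of inequalities for the polytope $\conv S$, and let $\ineqs$ be the (finite) union of all these over all $S\subseteq\Omega$. By construction, for every $S\subseteq\Omega$ there is a subset $\ineqs_S\subseteq\ineqs$ defining $\conv S$, which is the first assertion. (One can be more economical — e.g.\ take only inequalities spanned by affinely independent subsets of $\Omega$, of which there are finitely many — but any finite $\ineqs$ with the stated property will do, and the lemma is phrased for an arbitrary such $\ineqs$.)

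For the formula \cref{eq:weight matrix enumeration}, fix any finite $\ineqs$ with the stated property and set $\ineqs_T=\{h\in\ineqs\mid \Omega_h\text{ attainable for }T\}$. I would prove the two inclusions separately. For ``$\subseteq$'': let $h\in\ineqs_T$, so there is $(A,B,C)\in\G_\lowertriangular$ with $\supp((A\ot B\ot C)T)\subseteq\Omega_h=H_h\cap\Omega$, hence $\conv\supp((A\ot B\ot C)T)\subseteq H_h$; since $\DeltaB(T)$ is contained in this term of the intersection \cref{eq:franz borel polytope again}, we get $\DeltaB(T)\subseteq H_h$, and of course $\DeltaB(T)\subseteq\weylchamber$, giving one containment.

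For ``$\supseteq$'', equivalently $\bigcap_{h\in\ineqs_T}H_h\cap\weylchamber\subseteq\conv S_i$ for each $i$ in \cref{eq:borel polytope finite terms}: fix $i$ and recall $S_i=\supp((A_i\ot B_i\ot C_i)T)$. Let $\ineqs_{S_i}\subseteq\ineqs$ be a subset defining $\conv S_i$, so $\conv S_i=\bigcap_{h\in\ineqs_{S_i}}H_h$. It suffices to show each $h\in\ineqs_{S_i}$ lies in $\ineqs_T$. Since $h$ is valid for $\conv S_i$, in particular $S_i\subseteq H_h$, so $S_i\subseteq H_h\cap\Omega=\Omega_h$; but then $\supp((A_i\ot B_i\ot C_i)T)=S_i\subseteq\Omega_h$ witnesses that $\Omega_h$ is attainable for $T$, i.e.\ $h\in\ineqs_T$. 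Therefore $\bigcap_{h\in\ineqs_T}H_h\subseteq\bigcap_{h\in\ineqs_{S_i}}H_h=\conv S_i$, and intersecting with $\weylchamber$ over all $i$ yields $\bigcap_{h\in\ineqs_T}H_h\cap\weylchamber\subseteq\DeltaB(T)$.

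\textbf{Main obstacle.} The argument is essentially bookkeeping once the right objects are in place; the only subtlety — the ``hard part'', such as it is — is getting the definition of $\ineqs$ and $\Omega_h$ to line up exactly: one must note that the relevant notion of attainability for a half-space $h$ is attainability of the \emph{weight set} $\Omega_h=H_h\cap\Omega$ (not of $H_h$ itself), and that ``$h$ valid for $\conv S$'' is equivalent to ``$S\subseteq\Omega_h$'' because $S$ consists of weights. With this identification both inclusions are immediate, so I anticipate no real difficulty; the proof is short.
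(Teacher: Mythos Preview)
Your proposal is correct and matches the paper's own proof essentially line for line: the paper also constructs $\ineqs$ as the union over all $S\subseteq\Omega$ of a chosen defining set $\ineqs_S$ for $\conv S$, and then proves both inclusions exactly as you do (for ``$\supseteq$'' by showing every $h\in\ineqs_{S_i}$ lies in $\ineqs_T$ since $S_i\subseteq\Omega_h$ is attained, and for ``$\subseteq$'' by noting each $h\in\ineqs_T$ has $\DeltaB(T)\subseteq\conv\supp((A\ot B\ot C)T)\subseteq H_h$).
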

\begin{proof}
For any $S \subseteq \Omega$, choose a finite set of inequalities $\ineqs_S$ that defines $\conv S$.
Then take $\ineqs = \bigcup_{S \subseteq \Omega} \ineqs_S$.

By \cref{eq:borel polytope finite terms}, we have that $\DeltaB(T) = \bigcap_{h \in \ineqs_{S_1} \cup \cdots \cup \ineqs_{S_s}} H_h \cap \weylchamber$, where the~$S_i$ are the attainable supports for~$T$.
Take $h \in \ineqs_{S_i}$. The set $\Omega_h$ is attainable for $T$ because $S_i \subseteq \Omega_h$ and $S_i$ is attainable for $T$.
Hence $h \in \ineqs_T$.
This shows that $\DeltaB(T) \supseteq \bigcap_{h \in \ineqs_T} H_h \cap \weylchamber$.
Now take $h \in \ineqs_T$.
Then there exist $(A,B,C) \in \G_{\lowertriangular}$ such that $S \coloneqq \supp ((A \ot B \ot C) T) \subseteq \Omega_h$. This implies $\conv S$ is a term in \cref{eq:franz borel polytope again} and hence $\DeltaB(T) \subseteq \conv S$.
This shows that $\DeltaB(T) \subseteq \bigcap_{h \in \ineqs_T} H_h \cap \weylchamber$.
\end{proof}

A set of inequalities $\ineqs$ as in \cref{lemma:tensor attainability} could for instance be computed by iterating over all possible $S \subseteq \Omega$ and writing down defining inequalities.
In \cref{subsection:intro combinatorial description} we explain how to determine a minimal such set much faster, without having to iterate over all $2^{abc}$ subsets of $\Omega$.
In the rest of this section we assume we have access to a set $\ineqs$ as in \cref{lemma:tensor attainability}.

Our algorithm for computing Borel polytopes takes the set $\ineqs$ and determines the set $\ineqs_T$.
Determining attainability is done using \cref{algorithm:tensor checking-inequalities}, which we will discuss in \cref{subsection:tensor attainability}.

\begin{algorithmbreak}{Borel polytope computation.}
\label{algorithm:tensor algorithm borel}%
 \textbf{Input:} A tensor $T \in \C^a \ot \C^b \ot \C^c$. \\
 \textbf{Output:} A set of inequalities defining the Borel polytope $\DeltaB(T)$ of $T$.  \\
 \textbf{Algorithm:}
\begin{algorithmic}[1]
    \State Let $\ineqs$ be a set of inequalities as in \cref{lemma:tensor attainability}.
    \Statex
    \State Initialize $\ineqs_{T} \gets \varnothing$.
    \For{$h \in \ineqs$}
        \State Determine attainability of $\Omega_h$ for $T$ (\cref{algorithm:tensor checking-inequalities}).
        \If{attainable}
            \State Add $h$ to $\ineqs_{\Tgeneric}$.
        \EndIf
    \EndFor
    \Statex
    \State \Return The set $\ineqs_T$ along with a set of inequalities defining $\weylchamber$.
\end{algorithmic}
\end{algorithmbreak}

\begin{theorem}
    \label{theorem:borel algorithm}
    Let $T \in \C^a\ot\C^b\ot\C^c$ be a tensor.
    \Cref{algorithm:tensor algorithm borel} with input $T$ outputs a defining set of inequalities for $\DeltaB(T)$.
\end{theorem}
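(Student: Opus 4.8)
The plan is to deduce this essentially immediately from \cref{lemma:tensor attainability}. The set $\ineqs$ fixed in line~1 of \cref{algorithm:tensor algorithm borel} satisfies, by construction, the hypothesis of that lemma (for every $S \subseteq \Omega$ some subset of $\ineqs$ defines $\conv S$), so the lemma applies and yields
\[
    \DeltaB(T) \;=\; \bigcap_{h \in \ineqs_T} H_h \;\cap\; \weylchamber,
    \qquad
    \ineqs_T \;=\; \bigl\{\, h \in \ineqs \;\big|\; \Omega_h \text{ is attainable for } T \,\bigr\}.
\]
Thus it suffices to check that \cref{algorithm:tensor algorithm borel} returns precisely this set $\ineqs_T$ --- together with a defining set of inequalities for $\weylchamber$, which line~8 supplies verbatim.

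For that, I would argue that the loop in lines~3--7 computes exactly $\{h \in \ineqs \mid \Omega_h \text{ attainable for } T\}$: the loop ranges over all of the finite set $\ineqs$ (hence the algorithm terminates), and it adds a given $h$ to the accumulator if and only if the attainability subroutine reports that $\Omega_h$ is attainable for $T$. The only substantive ingredient here is the correctness of that subroutine, namely that \cref{algorithm:tensor checking-inequalities} correctly decides attainability of $\Omega_h$ for $T$ in the sense of \cref{definition:attainability}; I would invoke this from its correctness statement, to be established in \cref{subsection:tensor attainability}. Granting it, the accumulated set equals $\ineqs_T$, and combining with the displayed identity from \cref{lemma:tensor attainability} shows the returned inequalities define $\DeltaB(T)$, as claimed.

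The hard part is therefore not in this theorem, which is bookkeeping on top of \cref{lemma:tensor attainability}, but is pushed into two places treated elsewhere: (i) producing a concrete --- ideally minimal --- set $\ineqs$ as in \cref{lemma:tensor attainability} without enumerating all $2^{abc}$ subsets of $\Omega$, handled by the combinatorial description in \cref{subsection:intro combinatorial description}; and (ii) the correctness of the attainability check in \cref{algorithm:tensor checking-inequalities}, which reduces attainability of $\Omega_h$ for $T$ to the solvability of a polynomial system and is the genuinely nontrivial point. In the write-up I would state the reduction to \cref{lemma:tensor attainability} cleanly and make these two dependencies explicit.
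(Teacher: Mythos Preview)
Your proposal is correct and matches the paper's proof essentially line for line: the paper invokes \cref{lemma:attainability} for the correctness of \cref{algorithm:tensor checking-inequalities} and then \cref{lemma:tensor attainability} to conclude that the returned inequalities define $\DeltaB(T)$. Your additional remarks about termination and the dependencies pushed to \cref{subsection:intro combinatorial description} and \cref{subsection:tensor attainability} are accurate elaborations but not needed for the proof itself.
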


Now we turn our attention again to moment polytopes.
\Cref{theorem:tensor moment polytopes franz} states that for generic $\Tgeneric \in \G_{\uppertriangular} \cdot T$, we have that $\Delta(T) = \DeltaB(\Tgeneric)$.
Although the polynomials defining the generic property are unknown to us, we may, for instance, generate $\Tgeneric$ randomly.
An upper bound on the required randomness is given in the following theorem.

\begin{theorem}
    \label{theorem:probabilistic algorithm}
    Let $T \in \C^a\ot\C^b\ot\C^c$ be a tensor.
    Let $n \coloneqq a+b+c$, $m \coloneqq \max\{a,b,c\}$, $\ell \coloneqq n^{3n/2}3^{n^2-n}$ and $M \coloneqq 6 (\ell+1)^n ( \ell^n m )^{3 m^2}$.
    Generate $(A,B,C) \in \G_{\uppertriangular}$ by sampling the entries uniformly random from $\{1,\ldots,M\}$.
    Then \cref{algorithm:tensor algorithm borel} with input $(A \ot B \ot C)T$ outputs a defining set of inequalities for $\DeltaB((A \ot B \ot C)T) \subseteq \Delta(T)$.
    With probability at least $1/2$, equality holds.\footnote{For this theorem as well as the probabilistic statements below, the probability can be boosted from $1/2$ to any desired probability (less than 1). In this case, by running the algorithm multiple times and taking the output polytope which is the largest. Alternatively, the proofs we provide can be easily adjusted to obtain any desired probability as well.}
\end{theorem}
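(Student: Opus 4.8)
The plan is to separate the deterministic content of the statement from its probabilistic content. The deterministic part is immediate from results already in place: for \emph{any} $(A,B,C)\in\GL$ the tensor $(A\ot B\ot C)T$ lies in $\GL\cdot T$, so by \cref{theorem:borel algorithm} the output of \cref{algorithm:tensor algorithm borel} on input $(A\ot B\ot C)T$ is a defining set of inequalities for $\DeltaB\big((A\ot B\ot C)T\big)$, and \cref{theorem:tensor moment polytopes franz} gives $\DeltaB\big((A\ot B\ot C)T\big)\subseteq\Delta(T)$. The algorithm is deterministic once its input is fixed, so the only remaining task is to show that, with probability at least $1/2$ over the sampling, the random input produces a tensor for which \emph{equality} $\DeltaB\big((A\ot B\ot C)T\big)=\Delta(T)$ holds.

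For the probabilistic part I would run a Schwartz--Zippel argument on the affine space $\mathbb A$ of triples of upper-triangular matrices, noting that $\dim\mathbb A=\binom{a+1}{2}+\binom{b+1}{2}+\binom{c+1}{2}\le 3m^2$ and that $\G_{\uppertriangular}$ is the Zariski-dense open subset of $\mathbb A$ where all diagonal entries are nonzero. By \cref{corollary:moment polytope for generic group elements} equality holds for generic $(A,B,C)\in\G_{\uppertriangular}$; by \cref{theorem:tensor randomization}, together with the remark in the text that a generic property on $\G_{\uppertriangular}\cdot T$ pulls back along the polynomial map $(A,B,C)\mapsto(A\ot B\ot C)T$ to a generic property on $\G_{\uppertriangular}$, the failure locus is contained in the zero set of finitely many nonzero polynomials $g_i(A,B,C)\coloneqq\big\langle w_i,\,\big((A\ot B\ot C)T\big)^{\ot n_i}\big\rangle$, one for each of the $V$ vertices $q_i=(\lambda_i/n_i,\mu_i/n_i,\nu_i/n_i)$ of $\Delta(T)$, where $w_i$ is a highest weight vector of weight $(\lambda_i,\mu_i,\nu_i)$. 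Since the entries of $(A\ot B\ot C)T$ are polynomials of total degree $3$ in the entries of $(A,B,C)$, each $g_i$ has total degree at most $3n_i$. Because every sampled coordinate is at least $1$, hence nonzero, the sampled triple automatically has nonzero diagonal entries and therefore automatically lies in $\G_{\uppertriangular}$ — no probability is lost to "landing outside the group" — so Schwartz--Zippel applied to each $g_i$ and a union bound give $\Pr\big[\DeltaB((A\ot B\ot C)T)\subsetneq\Delta(T)\big]\le\sum_{i=1}^V \deg(g_i)/M\le 3V\max_i n_i/M$.

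It remains to bound $\max_i n_i$ and $V$, which is exactly where the explicit quantities $\ell$ and $M$ come from. For the level: the highest weight vectors occurring in the coordinate ring of $\overline{\G\cdot T}$ form a finitely generated algebra (Hadziev/Grosshans), whose generators occur in bounded degree by an effective invariant-theory degree bound, and every extremal ray of the associated weight monoid — equivalently every vertex of $\Delta(T)$ — contains such a generator; hence each vertex $q_i$ is already witnessed ($\HWV_{\lambda_i,\mu_i,\nu_i}T^{\ot n_i}\neq 0$) at a level $n_i$ no larger than that degree bound. Reconciling this with the combinatorial side — each vertex of $\Delta(T)$ is also a basic feasible solution of a linear system assembled from the small-integer-coefficient inequality set $\ineqs$ of \cref{lemma:tensor attainability} (see \cref{subsection:tensor attainability}) together with the $\pm1$-inequalities defining $\weylchamber$, so Cramer's rule and Hadamard's inequality bound its common denominator — yields precisely $\ell=n^{3n/2}3^{n^2-n}$ and simultaneously places every vertex in $\tfrac1\ell\Z^{a+b+c}\cap[0,1]^{a+b+c}$, so $V\le(\ell+1)^n$. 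Plugging in gives failure probability at most $3\ell(\ell+1)^n/M$, which is below $1/2$ (indeed below $1/6$) for $M=6(\ell+1)^n(\ell^n m)^{3m^2}$; the large slack in the exponent $3m^2$ is there to absorb the considerably larger elimination-theoretic degree bounds that arise when the attainability tests of \cref{algorithm:tensor checking-inequalities} are actually executed symbolically on the random input, rather than the idealized polynomials $g_i$ above. Boosting $1/2$ to any target probability below $1$ is then immediate, either by running the algorithm several times and returning the largest polytope produced, or by repeating the same estimate with a larger constant in place of $6$.

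I expect the main obstacle to be making the level bound $\ell$ both explicit and provably valid: bounding the \emph{denominator} of a vertex of $\Delta(T)$ is routine linear algebra once the coefficients of $\ineqs$ are fixed, but certifying that the vertex is actually \emph{realized} at a controlled level requires the finite-generation and degree-bound input for the algebra of highest weight vectors of $\C[\overline{\G\cdot T}]$, and carrying an explicit such bound through — and checking it is dominated by $n^{3n/2}3^{n^2-n}$ — is the delicate step. A secondary obstacle is the bookkeeping showing that the symbolic attainability subroutine returns the generic answer whenever $(A,B,C)$ avoids an explicitly degree-bounded hypersurface, which is what ultimately forces the value of $M$.
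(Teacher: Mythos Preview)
Your plan --- split off the deterministic content via \cref{theorem:borel algorithm} and \cref{theorem:tensor moment polytopes franz}, then Schwartz--Zippel on the polynomials $g_i$ of \cref{theorem:tensor randomization}, bound the number of vertices by $(\ell+1)^n$, and union-bound --- is exactly the paper's, except that the paper outsources the per-vertex estimate to \cite[Cor.~2.7]{burgisser2018tensorScaling} and the denominator bound to \cite[Prop.~7.23]{burgisserTheoryNoncommutativeOptimization2019} rather than rederiving them.

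The gap is in your degree bound for $g_i$. You use $\deg g_i\le 3n_i\le 3\ell$, but $\ell=n^{3n/2}3^{n^2-n}$ is a bound on the \emph{denominator} of the vertex $q_i$ only (this is what \cite[Prop.~7.23]{burgisserTheoryNoncommutativeOptimization2019} proves, essentially via Cramer/Hadamard on the integer inequality system). The denominator and the witnessing level are not the same thing: the weight semigroup need not be saturated, so a vertex with denominator $\ell'$ may only be realized by a non-vanishing highest weight vector at level $k\ell'$ for some large $k$. The Derksen-type bound you correctly name does control $n_i$, but it is not dominated by $\ell$ as you hope in your final paragraph; it is of order $(3\ell' m)^{3m^2}$, and that is precisely what \cite[Cor.~2.7]{burgisser2018tensorScaling} supplies and why the factor $(\ell^n m)^{3m^2}$ appears in $M$. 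Your remark that the exponent $3m^2$ is there to absorb elimination-theoretic degree bounds arising from the attainability tests in \cref{algorithm:tensor checking-inequalities} is a misattribution: those tests are deterministic once the input tensor is fixed (\cref{lemma:attainability}) and play no role whatsoever in the probability analysis. With the corrected per-vertex degree bound $3(3\ell m)^{3m^2}$ in place of $3\ell$, your union bound recovers the paper's conclusion.
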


It is also possible to handle the generic property symbolically, which will lead to a deterministic algorithm.
Namely, in \cref{subsection:tensor attainability} we will describe the orbit $\G_{\uppertriangular} \cdot T$ symbolically.
This allows us to find an algorithm that, given $h \in \ineqs$, determines whether $\Omega_h$ is attainable for all $T_h \in U_h \cdot T$, where $U_h \subseteq \G_{\uppertriangular}$ is some non-empty Zariski-open subset depending on $h$.
Then a generic $\Tgeneric \in \G_{\uppertriangular} \cdot T$ will both satisfy $\Delta(T) = \DeltaB(\Tgeneric)$ and $\Tgeneric \in U_h$ for all $h \in \ineqs$, which follows from the fact that finitely many non-empty Zariski-open subsets always intersect in irreducible varieties.
We obtain the following algorithm for computing $\Delta(T)$.

\begin{algorithmbreak}{Deterministic moment polytope computation.}
\label{algorithm:tensor algorithm}%
 \textbf{Input:} A tensor $T \in \C^a \ot \C^b \ot \C^c$. \\
 \textbf{Output:} A set of inequalities defining the moment polytope $\Delta(T)$ of $T$. \\
 \textbf{Algorithm:}
\begin{algorithmic}[1]
    \State Let $\ineqs$ be a set of inequalities as in \cref{lemma:tensor attainability}.
    \Statex
    \State Initialize $\ineqs_{\Tgeneric} \gets \varnothing$.
    \For{$h \in \ineqs$}
        \State Determine attainability of $\Omega_h$ for all $T_h \in U_h \cdot T$ for non-empty Zariski-open $U_h \subseteq \G_{\uppertriangular}$ (\cref{algorithm:tensor checking-inequalities symbolic}).
        \If{attainable}
            \State Add $h$ to $\ineqs_{\Tgeneric}$.
        \EndIf
    \EndFor
    \Statex
    \State \Return The set $\ineqs_{\Tgeneric}$ along with a set of inequalities defining $\weylchamber$.
\end{algorithmic}
\end{algorithmbreak}

\begin{theorem}
    \label{theorem:symbolic algorithm}
    Let $T \in \C^a\ot\C^b\ot\C^c$ be a tensor.
    \Cref{algorithm:tensor algorithm} with input $T$ outputs a defining set of inequalities for $\Delta(T)$.
\end{theorem}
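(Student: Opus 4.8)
\textbf{Proof plan for \cref{theorem:symbolic algorithm}.}

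The plan is to combine \cref{theorem:borel algorithm} (correctness of the Borel-polytope algorithm) with the symbolic version of \cref{theorem:tensor moment polytopes franz}, reducing the statement to two claims: (i) \cref{algorithm:tensor checking-inequalities symbolic} behaves as advertised, namely for each $h \in \ineqs$ it correctly decides whether $\Omega_h$ is attainable for \emph{all} $T_h$ in some non-empty Zariski-open subset $U_h \subseteq \G_{\uppertriangular}$; and (ii) a single generic $\Tgeneric \in \G_{\uppertriangular}\cdot T$ simultaneously lies in every $U_h$ and satisfies $\DeltaB(\Tgeneric) = \Delta(T)$. Granting these, the output of \cref{algorithm:tensor algorithm} is exactly $\{h \in \ineqs \mid \Omega_h \text{ attainable for } \Tgeneric\}$ together with a defining system for $\weylchamber$, which by \cref{lemma:tensor attainability} is a defining set of inequalities for $\DeltaB(\Tgeneric)$, and by (ii) this equals $\Delta(T)$.

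For step (ii) I would argue as follows. By \cref{theorem:tensor moment polytopes franz}, the set of $\Tgeneric \in \G_{\uppertriangular}\cdot T$ with $\DeltaB(\Tgeneric) = \Delta(T)$ is a generic property on the orbit $\G_{\uppertriangular}\cdot T$; equivalently (using that $\C^a\ot\C^b\ot\C^c$ is a polynomial representation of $\G$, as in the discussion preceding \cref{corollary:moment polytope for generic group elements}) this pulls back to a non-empty Zariski-open $U_0 \subseteq \G_{\uppertriangular}$ of group elements $(A,B,C)$ for which $\DeltaB((A\ot B\ot C)T) = \Delta(T)$. Each $U_h$ is by construction a non-empty Zariski-open subset of $\G_{\uppertriangular}$. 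The group $\G_{\uppertriangular}$ (a product of Borel subgroups of general linear groups) is an irreducible variety, so the finite intersection $U_0 \cap \bigcap_{h\in\ineqs} U_h$ is again non-empty and Zariski-open; pick any $\Tgeneric = (A,B,C)T$ with $(A,B,C)$ in this intersection. Then $\Tgeneric \in U_h$ for every $h$, so \cref{algorithm:tensor checking-inequalities symbolic} returns "attainable" for $h$ iff $\Omega_h$ is attainable for $\Tgeneric$; hence $\ineqs_{\Tgeneric}$ as computed equals $\{h \in \ineqs \mid \Omega_h \text{ attainable for } \Tgeneric\}$, which defines $\DeltaB(\Tgeneric)$ by \cref{lemma:tensor attainability}, and this is $\Delta(T)$ since $(A,B,C) \in U_0$.

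The subtle point — and the part most deserving of care — is (i), the correctness of the symbolic attainability test: we must verify that the elimination/Gröbner computation underlying \cref{algorithm:tensor checking-inequalities symbolic} really does produce a uniform answer valid on a \emph{non-empty Zariski-open} set of parameters, rather than merely on some constructible set that could in principle be empty or fail to be dense. Concretely, attainability of $\Omega_h$ for $T_h = (A,B,C)T$ is the solvability of a polynomial system in the lower-triangular matrix entries with coefficients polynomial in the entries of $(A,B,C)$; treating the latter as indeterminates, one computes a Gröbner basis over the function field $\C(A,B,C)$ (or performs the relevant elimination), and the set of parameter values where the specialized system has the same solvability behaviour is the complement of the vanishing locus of finitely many "bad" polynomials (leading coefficients / denominators appearing in the computation) — hence Zariski-open, and non-empty because those polynomials are not identically zero. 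This uniform-specialization argument is exactly what is deferred to \cref{subsection:tensor attainability}, so here I would simply invoke it; in the write-up the honest move is to state that \cref{algorithm:tensor checking-inequalities symbolic} has the claimed property and cite the later section for the proof, then assemble (i) and (ii) as above. The remainder is the bookkeeping already carried out in the proof of \cref{lemma:tensor attainability}.
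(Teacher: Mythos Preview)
Your proposal is correct and follows essentially the same approach as the paper: use irreducibility of $\G_{\uppertriangular}$ to find a common point in the non-empty Zariski-open set $U_0$ (the paper calls it $V$) where $\DeltaB = \Delta(T)$ and in all the $U_h$ produced by the symbolic attainability test, then apply \cref{lemma:tensor attainability}. Your careful discussion of point~(i) anticipates exactly what the paper proves as \cref{lemma:attainability symbolic} via \cref{proposition:groebner function field}, and the paper likewise defers that argument to \cref{subsection:tensor attainability}.
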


However, the symbolic approach of~\cref{algorithm:tensor algorithm} quickly becomes infeasible in practice.
Luckily, it is not necessary to run \cref{algorithm:tensor checking-inequalities symbolic} for all inequalities $h$ in the typically very large set $\ineqs$.
Namely, we can first generate $(A,B,C) \in \G_{\uppertriangular}$ and compute the Borel polytope $P \coloneqq \DeltaB((A \ot B \ot C)T)$ using \cref{algorithm:tensor algorithm borel}.
Then we know $P \subseteq \Delta(T)$ by \cref{theorem:tensor moment polytopes franz}.
If we can show that inequalities defining $P$ are also valid for $\Delta(T)$, we prove that $\Delta(T) \subseteq P$, and hence $\Delta(T) = P$.
Using this observation, we obtain a randomized algorithm that is always correct when successful and much faster than \cref{algorithm:tensor algorithm} in practice.

To reduce the number of inequalities to check, we make sure that the inequalities defining $P$ do not contain any redundancies; for a set of inequalities $\ineqs_P$ defining a polytope $P$, we call $h \in \ineqs_P$ \emph{redundant} if $P = \bigcap_{\ineqs_P \setminus \{h\}} H_h$. We say a subset of $\ineqs_P$ is \emph{irredundant} if it defines $P$ and contains no redundant inequalities.

\begin{algorithmbreak}{Randomized moment polytope computation.}
\label{algorithm:tensor algorithm randomized}%
\textbf{Input:} A tensor $T \in \C^a \ot \C^b \ot \C^c$ and randomization $(A,B,C) \in \G_{\uppertriangular}$. \\
 \textbf{Output:} A set of inequalities defining the moment polytope $\Delta(T)$ of $T$, or \textnormal{\texttt{Failure}}. \\
 \textbf{Algorithm:}
\begin{algorithmic}[1]
    \State Let $\ineqs$ be a set of inequalities as in \cref{lemma:tensor attainability}.
    \Statex
    \State Run \cref{algorithm:tensor algorithm borel} with input $T' \!=\! (A\ot B \ot C)T$ and let $\widehat{\ineqs}_{\Tgeneric}$ be the output, defining $\DeltaB(T')$.
    \State Replace $\widehat{\ineqs}_{\Tgeneric}$ by an irredundant subset. \label{line:randomized alg make irredundant}
  \State Remove from  $\widehat{\ineqs}_{\Tgeneric}$ the inequalities valid for $\weylchamber$.
    \Statex \label{line:determine attainability symbolically}
    \For{$h \in \widehat{\ineqs}_{\Tgeneric}$}
        \State Determine attainability of $\Omega_h$ for all $T_h \in U_h \cdot T$, with $U_h \subseteq \G_{\uppertriangular}$ non-empty Zariski-open (\cref{algorithm:tensor checking-inequalities symbolic}).
        \If{not attainable}
            \State \Return \textnormal{\texttt{Failure}}.
        \EndIf
    \EndFor
    \Statex
    \State \Return The set $\ineqs_{\Tgeneric}$ along with a set of inequalities defining $\weylchamber$.
\end{algorithmic}
\end{algorithmbreak}

\begin{theorem}
    \label{theorem:symbolic algorithm randomized}
    Let $T \in \C^a\ot\C^b\ot\C^c$ be a tensor and $(A,B,C) \in \G_{\uppertriangular}$.
    \Cref{algorithm:tensor algorithm randomized} with input $T$ and $(A,B,C)$ outputs a defining set of inequalities for $\Delta(T)$, or \textnormal{\texttt{Failure}}.
\end{theorem}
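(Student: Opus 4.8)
The plan is to show that every possible output of the algorithm is valid: either it returns \texttt{Failure}, which is always permitted, or it returns a set of inequalities, in which case we must check that they define $\Delta(T)$. So assume the algorithm does not return \texttt{Failure}. The key conceptual point is a two-sided sandwich. On one side, the Borel polytope $P := \DeltaB(T')$ with $T' = (A\ot B\ot C)T$ satisfies $P \subseteq \Delta(T)$ by \cref{theorem:tensor moment polytopes franz} (applied to the arbitrary element $T' \in \GL\cdot T$); this uses only that $(A,B,C)\in\G_{\uppertriangular}\subseteq\G$. On the other side, I claim that if the for-loop completes without failure, then $\Delta(T) \subseteq P$, which together with the first inclusion gives $\Delta(T) = P$, and then the returned inequalities (the irredundant subset $\widehat{\ineqs}_{\Tgeneric}$, with the $\weylchamber$-inequalities stripped out and re-added at the end) indeed define $\Delta(T)$.

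First I would set up the bookkeeping: after line~\ref{line:randomized alg make irredundant}, $\widehat{\ineqs}_{\Tgeneric}$ is an irredundant defining set for $P = \DeltaB(T')$; after the next line, removing the inequalities valid for $\weylchamber$ leaves a set $\widehat{\ineqs}_{\Tgeneric}'$ such that $P = \bigcap_{h\in\widehat{\ineqs}_{\Tgeneric}'} H_h \cap \weylchamber$. (Every moment/Borel polytope is contained in $\weylchamber$, so dropping the $\weylchamber$-defining inequalities and intersecting back with $\weylchamber$ is harmless — this is the same normalization as in \cref{algorithm:tensor algorithm borel}.) Then the for-loop runs \cref{algorithm:tensor checking-inequalities symbolic} on each remaining $h$. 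Not returning \texttt{Failure} means: for every $h \in \widehat{\ineqs}_{\Tgeneric}'$, the set $\Omega_h$ is attainable for all $T_h \in U_h\cdot T$ for some non-empty Zariski-open $U_h \subseteq \G_{\uppertriangular}$.

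Now for the core inclusion $\Delta(T)\subseteq P$. Since $\G_{\uppertriangular}$ is irreducible, the finite intersection $U := \bigcap_{h\in\widehat{\ineqs}_{\Tgeneric}'} U_h$ is non-empty and Zariski-open; intersecting further with the non-empty Zariski-open set from \cref{theorem:tensor moment polytopes franz} on which $\DeltaB^\Dsupp(\Tgeneric) = \Delta(T)$ (restricted to $\G_{\uppertriangular}\cdot T$, as in \cref{corollary:moment polytope for generic group elements}), we get a non-empty Zariski-open $U' \subseteq \G_{\uppertriangular}$. Pick any $(A',B',C')\in U'$ and set $\Tgeneric := (A'\ot B'\ot C')T$. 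Then $\Delta(T) = \DeltaB(\Tgeneric)$, and simultaneously $\Omega_h$ is attainable for $\Tgeneric$ for each $h \in \widehat{\ineqs}_{\Tgeneric}'$. By the exact argument in the proof of \cref{lemma:tensor attainability} — attainability of $\Omega_h$ for $\Tgeneric$ produces $(A'',B'',C'')\in\G_{\lowertriangular}$ with $\supp((A''\ot B''\ot C'')\Tgeneric)\subseteq\Omega_h$, so $\conv\supp((A''\ot B''\ot C'')\Tgeneric)$ is a term in the intersection \cref{eq:franz borel polytope again} for $\Tgeneric$ and hence $\DeltaB(\Tgeneric)\subseteq H_h$ — we conclude $\DeltaB(\Tgeneric) \subseteq \bigcap_{h\in\widehat{\ineqs}_{\Tgeneric}'} H_h$. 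Intersecting with $\weylchamber$ (which always contains $\DeltaB(\Tgeneric)$) gives $\DeltaB(\Tgeneric) \subseteq \bigcap_{h\in\widehat{\ineqs}_{\Tgeneric}'} H_h \cap \weylchamber = P$. Hence $\Delta(T) = \DeltaB(\Tgeneric) \subseteq P \subseteq \Delta(T)$, so all are equal and the returned set of inequalities defines $\Delta(T)$.

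The main obstacle — and the only place where genuine care is needed — is the interface between the \emph{syntactic} inequalities $h$ that the algorithm manipulates and the \emph{geometric} attainability condition on the weight set $\Omega_h = H_h\cap\Omega$: one must check that the inequalities returned by \cref{algorithm:tensor algorithm borel} are indeed members of a fixed set $\ineqs$ as in \cref{lemma:tensor attainability} (so that "$\Omega_h$ attainable" is the right thing to test, and so that the hypotheses of \cref{algorithm:tensor checking-inequalities symbolic} are met), and that passing to an irredundant subset and stripping the $\weylchamber$-inequalities does not break this. I would handle this by noting that irredundancy and the $\weylchamber$-removal only delete elements of $\widehat{\ineqs}_{\Tgeneric}$, never create new ones, so $\widehat{\ineqs}_{\Tgeneric}' \subseteq \ineqs$ still holds, and the correctness of \cref{algorithm:tensor checking-inequalities symbolic} (established in \cref{subsection:tensor attainability}) applies verbatim to each $h \in \widehat{\ineqs}_{\Tgeneric}'$. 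The remaining ingredients — irreducibility of $\G_{\uppertriangular}$, finite intersections of dense Zariski-opens being dense, and $\DeltaB \subseteq \weylchamber$ — are all standard or already recorded above.
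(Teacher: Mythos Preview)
Your proposal is correct and follows essentially the same approach as the paper: establish $\DeltaB(T') \subseteq \Delta(T)$ from the Franz/Borel polytope theory, then use non-failure of the symbolic attainability checks to pick a generic $\Tgeneric$ in the (non-empty, by irreducibility of $\G_{\uppertriangular}$) intersection of the $U_h$'s with the Franz genericity locus, giving $\Delta(T) = \DeltaB(\Tgeneric) \subseteq \bigcap_h H_h \cap \weylchamber = P$. Your version is more explicit about the bookkeeping around the irredundant subset and the $\weylchamber$-stripping, which the paper leaves implicit, but the argument is the same.
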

Note that when the output of \cref{algorithm:tensor algorithm randomized} is \texttt{Failure}, it may still be true that the Borel polytope of $(A \ot B \ot C)T$ equals $\Delta(T)$.
This can be the case because not every inequality in a defining set of inequalities needs to be attainable.
For instance, $\Delta((A \ot B \ot C)T)$ and $\Delta((A' \ot B' \ot C')T)$ may be equal while not having the same terms in \cref{eq:borel polytope finite terms}. Indeed, the choice of an irredundant set in \cref{line:randomized alg make irredundant} may be important.

The rest of this section is dedicated to explaining the algorithms for attainability and proving the above theorems.
Substantial effort is still required to translate these algorithms into a practical program.
We describe in \cref{section:implementation and optimization} the implementation details, as well as optimizations we apply to make running times tractable.

\begin{remark}
\label{remark:attainability general setting}
In the general setting of linear actions of reductive groups (as in \cref{subsection:intro-general-moment-polytopes}), $\Omega \subseteq \Z^n$ corresponds to the set of weights (written in integer coordinates) of the group with respect to some chosen fixed maximal torus.
Let $V$ be the representation of interest.
We write $\sum_{\omega \in \Omega} v_\omega$ for the corresponding weight decomposition of any vector $v \in V$, and $\supp v \coloneqq \{\omega \mid v_\omega \neq 0\}$.
We also fix a suitable Borel subgroup $B$ and write $B^-$ for the opposite Borel subgroup. Let $\weylchamber$ denote the positive Weyl chamber.
Then Franz' description of the Borel polytope reads
$\DeltaB(v) = \bigcap_{b \in B^-} \conv \supp(b \cdot v) \cap \weylchamber$ \cite{franz2002}.

Attainability of $S \subseteq \Omega$ for $v \in V$ is then defined as the existence of $b \in B^-$ such that $\supp(b \cdot v) \subseteq S$.
The above algorithms then generalize as written, replacing $T$ by $v$ and $\G_{\uppertriangular}$ by $B$. We discuss general attainability more in \cref{remark:general polynomial system}.
\end{remark}

\subsection{Determining attainability}
\label{subsection:tensor attainability}

We now fix an inequality $h$ and some tensor $T \in \C^a\ot\C^b\ot\C^c$.
We aim to determine attainability of $\Omega_h$ for $T$, as well as for generic $\Tgeneric \in \G \cdot T$ which we discuss later. That is, we want to verify
the existence of $(A,B,C) \in \G_{\lowertriangular}$ such that
\begin{align}
    \supp \big((A\ot B\ot C)T\big) \subseteq \Omega_{h} \coloneqq \{ \omega \in \Omega \mid \omega \in H_h\}.
\end{align}
Since the support is independent of scaling by diagonal matrices, we may assume the diagonal entries of $A,B$ and $C$ are all equal to one.
Write $T_\omega$ for the entry $T_{i,j,k}$ of $T$ corresponding to the weight $\omega = (e_i \sep e_j \sep e_k)$.
We can reformulate the above condition by requiring that $((A\ot B\ot C)T)_\omega = 0$ for all $\omega \in  \Omega \setminus \Omega_h$.

The entries $((A\ot B\ot C)T)_\omega$ are polynomials in the entries of $A, B$ and $C$ for fixed $T$.
Hence, attainability is equivalent to the existence of a common zero among the polynomials in the set
\begin{align}
\label{definition:tensor polysystem}
    \tensorpolysystem^T\!(h)
    \coloneqq \big\{ \big((\mathcal A\ot \mathcal B\ot \mathcal C)T\big){}_\omega \ \big|\  \omega \in \Omega \setminus \Omega_h \big\}
\end{align}
where $\mathcal A,\mathcal B$ and $\mathcal C$ denote symbolic lower triangular matrices with ones on the diagonal.

\begin{example}
Let $T = \sum_i e_i \ot e_i \ot e_i \in \C^3\ot\C^3\ot\C^3$ and $h = \scalebox{0.75}{$\left(
\begin{array}{rrr|rrr|rrr}
-11 &  -2 & 16 &  10 &  1 & -8 &  1 &  10 & -8
\end{array}
\right)$}$.
We write the symbolic lower triangular matrices $\mathcal A,\mathcal B$ and $\mathcal C$ using variables $x_i$, $y_i$ and $z_i$ respectively.
We depict resulting symbolic tensor $(\mathcal A \ot \mathcal B \ot \mathcal C)T$ below, denoted as a tuple of the slices along the first index.
Then the polynomial system $\tensorpolysystem^T\!(h)$ is given by the highlighted entries.
\begin{align}
\begin{gathered}
    \left(
    \left[\begin{smallmatrix}
        1 \\
        x_1 & 1 \\
        x_2 & x_3 & 1
    \end{smallmatrix}\right]
    \ot
    \left[\begin{smallmatrix}
        1 \\
        y_1 & 1 \\
        y_2 & y_3 & 1
    \end{smallmatrix}\right]
    \ot
    \left[\begin{smallmatrix}
        1 \\
        z_1 & 1 \\
        z_2 & z_3 & 1
    \end{smallmatrix}\right]
    \right)\, T
    =\\
    \left[
    \scalebox{0.8}{$
    \begin{array}{lll}
        1   & z_1    & \cellcolor{gray!20}z_2     \\
        \cellcolor{gray!20}y_1 & y_1z_2 & \cellcolor{gray!20}y_1z_2  \\
        \cellcolor{gray!20}y_2 & \cellcolor{gray!20}y_2z_1 & \cellcolor{gray!20}y_2z_2
    \end{array}$}%
    \,\,\right|\left.%
    \scalebox{0.8}{$
    \begin{array}{lll}
        x_1    & x_1z_1          & x_1z_2           \\
        x_1y_1 & x_1y_1z_2 + 1   & \cellcolor{gray!20}x_1y_1z_2 + z_3  \\
        \cellcolor{gray!20}x_1y_2 & x_1y_2z_1 + y_3 & \cellcolor{gray!20}x_1y_2z_2 + y_3z_3
    \end{array}$}%
    \,\,\right|\left.\!\!%
    \scalebox{0.8}{$
    \begin{array}{lll}
        x_2    & x_2z_1             & x_2z_2           \\
        x_2y_1 & x_2y_1z_2 + x_3    & x_2y_1z_2 + x_3z_3  \\
        x_2y_2 & x_2y_2z_1 + x_3y_3 & x_2y_2z_2 + x_3y_3z_3 + 1
    \end{array}$}
    \!\right]
    \end{gathered}
\end{align}
\end{example}

Finding a common zero in a system of polynomials is sometimes called the \emph{consistency problem} \cite[Chapter 1 \textsection 2]{CoxLittleOShea}.
This task can be achieved in multiple ways, most notably using Gr\"obner basis computations \cite{AdamsLoustaunauGroebner} and homotopy continuation
\cite{burgisser2013condition, bates2024numericalnonlinearalgebra}.
We will proceed with the former. We discuss why in more detail in \cref{remark:homotopy continuation}.

We note that the consistency problem is known to be a difficult problem in general. We comment more on this at the start of \cref{subsection:filtering inequalities}.

\begin{remark}
    \label{remark:general polynomial system}
    In the general setting (see \cref{remark:attainability general setting}), the lower triangular matrices with ones on the diagonal correspond to the (maximal) unipotent subgroup $N^-$ of the opposite Borel subgroup.
    Since the action of the maximal torus does not affect the support of $v$, we may also in the general case consider just the action of $N^-$.
    This action of $N^-$ on the representation is given by a polynomial map in the coordinates of the ambient space of the opposite Borel subgroup.
    Let $\mathcal B \in \C[x_1,\ldots,x_m]$ denote a symbolic element of this ambient space.
    Because $N^-$ is an algebraic group (i.e.\ a variety in the ambient space), there exist polynomials $\{f_i\}_i$ on $\C^m$ such that their common zero set defines $N^-$.

    For an inequality, we define
    $\polysystem$ as in \cref{definition:tensor polysystem}, but with the equations defining the unipotent subgroup added as well:
    \begin{align}
        \polysystem \coloneqq
        \big\{ (\mathcal B \cdot v)_\omega \mid \omega \in \Omega \setminus \Omega_h \big \} \cup \{f_i\}_i.
    \end{align}
    Then this is a polynomial system too, and the consistency problem corresponds correctly to the existence of an element in the unipotent subgroup that transforms $v$ to the desired support.
\end{remark}

\subsubsection{Gr\"obner bases}

Let $x_1,\ldots,x_m$ be the variables used in the symbolic lower triangular matrices $A,B$ and $C$ with ones on the diagonal.
Then the polynomial system (\cref{definition:tensor polysystem}) satisfies $\tensorpolysystem^T\!(h) \subseteq \C[x_1,\ldots,x_m]$.
To solve the consistency problem, we need to determine the existence of a $v \in \C^m$ such that $f(v) = 0$ for all $f \in \tensorpolysystem^T\!(h)$.
By Hilbert's weak Nullstellensatz (see \cite[Chapter~4]{CoxLittleOShea} for an introduction), this is equivalent to the ideal
\begin{align}
\label{eq:tensor polysystem ideal}
    \bigr\langle\tensorpolysystem^T\!(h)\bigr\rangle_{\C} \coloneqq \bigg\{\sum_{f\in\tensorpolysystem^T\!(h)} r_f f \quad \Big|\quad  r_f \in \C[x_1,\ldots,x_m]\ \bigg\}
\end{align}
in $\C[x_1,\ldots,x_m]$ not being equal to $\C[x_1,\ldots,x_m]$, or equivalently, $1 \notin \langle\tensorpolysystem^T\!(h)\rangle_\C$.

Containment of polynomials in an ideal $I$ can be solved by computing a Gr\"obner basis of $I$.
We follow the definition in \cite{AdamsLoustaunauGroebner}.

\begin{definition}[Gr\"obner basis]
\label{definition:groebner-basis}
    Let $\F$ be a field.
    Fix a total order $\leq$ on the subset of monomials in $\F[x_1,\ldots,x_m]$,
    such that for all monomials $q,p,r$ it holds that $q \leq p \Leftrightarrow qr \leq pr$ and
    $q \leq qr$.

    Let $I \subseteq \F[x_1,\ldots,x_m]$ be an ideal.
    A finite set of non-zero polynomials $G \subseteq I$ is called a \emph{Gr\"obner basis for $I$ over $\mathbb F$ with respect to $\leq$} if and only if $G$ generates $I$ and for all non-zero $f \in I$
    there exists $g \in G$ such that the leading term of $g$ (with respect to $\leq$) divides the leading term of $f$.

    A Gr\"obner basis is \emph{reduced} whenever each $g \in G$ has leading coefficient equal to 1, and no term in $g$ is divisible by the leading term of $g'$ for any $g' \in G\setminus\{g\}$.
\end{definition}

The first algorithm for computing (reduced) Gr\"obner bases was described by Buchberger \cite{BuchbergerThesis}.
Other algorithms include Faug\`ere's $\textnormal F_4$ and $\textnormal F_5$ algorithms \cite{FaugereF4,FaugereF5}.
We will not describe algorithms for computing Gr\"obner bases here, and instead refer to \cite{CoxLittleOShea,AdamsLoustaunauGroebner}.
Every ideal has a (reduced) Gr\"obner basis (by the existence of these algorithms).
Reduced Gr\"obner bases are unique for a fixed monomial order.

For our purposes, the following straightforward result is central.
\begin{proposition}
\label{proposition:reduced groebner basis trivial ideal}
Let $G$ be a reduced Gr\"obner basis for an ideal $I$. Then $1 \in I \Leftrightarrow G = \{1\}$.
\end{proposition}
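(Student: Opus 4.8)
The plan is to prove the two implications separately, in both cases simply by unwinding \cref{definition:groebner-basis}.

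For the direction $G=\{1\}\Rightarrow 1\in I$: a Gr\"obner basis generates its ideal, so $I=\langle G\rangle=\langle 1\rangle=\F[x_1,\ldots,x_m]$, and in particular $1\in I$. (This direction uses only that $G$ generates $I$, not that it is reduced.)

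For the direction $1\in I\Rightarrow G=\{1\}$: since $1$ is a non-zero element of $I$ and $G$ is a Gr\"obner basis for $I$, there is some $g\in G$ whose leading term divides the leading term of $1$. The leading term of the constant polynomial $1$ is the monomial $1=x_1^0\cdots x_m^0$, and by the order axioms in \cref{definition:groebner-basis} (namely $q\le qr$ for all monomials $q,r$) this monomial $1$ is the minimal one, hence divisible only by itself. Therefore the leading monomial of $g$ is $1$, so $g$ is a non-zero constant; as $G$ is reduced, $g$ has leading coefficient $1$, so $g=1$.

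It then remains to rule out any further element: if $g'\in G$ with $g'\neq g$, the reducedness condition says no term of $g'$ is divisible by the leading term of $g=1$; but every monomial is divisible by $1$, so $g'$ has no terms, i.e.\ $g'=0$, contradicting the requirement that a Gr\"obner basis consist of non-zero polynomials. Hence $G=\{g\}=\{1\}$. I do not expect a real obstacle here; the only points needing a touch of care are that the leading term of $1$ is the monomial $1$ and that this monomial is divisible only by itself (a consequence of the monomial-order axioms), together with the convention, built into \cref{definition:groebner-basis}, that Gr\"obner bases contain only non-zero polynomials.
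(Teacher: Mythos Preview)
Your proof is correct and follows essentially the same approach as the paper's: use the Gr\"obner basis property to find $g\in G$ whose leading term divides the leading term of $1$, conclude $g=1$ via reducedness, and then use that $1$ divides every monomial to rule out further elements. You simply spell out a few steps (e.g.\ why the leading monomial of $g$ must be $1$, and why $g'=0$ would contradict non-zeroness) that the paper leaves implicit.
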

\begin{proof}
    The direction from right to left follows from the fact that $G$ is a basis for $I$.
    For the other direction,
    simply note that the only polynomials that divide the leading term of $1$ are the constant polynomials $c \in \F \setminus \{0\}$, so $1 \in I$ implies $1 \in G$. Since $1$ divides all monomials, $G$ cannot contain any other polynomials, and thus $G = \{1\}$.
\end{proof}

We will soon require a more detailed understanding of Gr\"obner bases. In particular, we will use the characterization of Gr\"obner bases due to Buchberger \cite{BuchbergerThesis} in terms of so-called \emph{S-polynomials}, which we define now.
We follow the treatment in \cite{AdamsLoustaunauGroebner}.
Fix a monomial ordering.
Let $f,g \in \F[x_1,\ldots,x_r]$ be polynomials. Let their leading terms (with respect to the chosen monomial ordering) be $\alpha \prod_i x_i^{a_i}$ and $\beta \prod_i x_i^{b_i}$ respectively, where $\alpha,\beta \in \F$. Set $\gamma_i \coloneqq \max\{a_i,b_i\}$.
Then we define the \emph{$S$-polynomial} of $f$ and $g$ as
\begin{align}
    S(f,g) \coloneqq \frac{\prod_i x_i^{\gamma_i-a_i}}{\alpha} f \ +\ \frac{\prod_i x_i^{\gamma_i-b_i}}{\beta} g.
\end{align}

Next, we need the concept of a multivariate polynomial reduction.
Let $f,r,g \in \F[x_1,\ldots,x_m]$.
We say \emph{$f$ reduces to $r$ modulo $g$ in one step}, and write $f \xrightarrow{g} r$, whenever the leading term of $g$ divides a term of $f$ and
$r = f - X g$,
where $X$ equals this term of $f$ divided by the leading term of $g$.

Now let $G = \{g_1,\ldots,g_t\} \subseteq \F[x_1,\ldots,x_m]$.
We say \emph{$f$ reduces to $r$ modulo $G$}, and write $f \xrightarrow{G} r$, whenever there exists a sequence of indices $k_1,\ldots,k_q \in \{1,\ldots,t\}$ and polynomials $r_i \in \F[x_1,\ldots,x_m]$ such that
\begin{align}
    f = r_1
    \ \xrightarrow{g_{k_1}}\
    r_2
    \ \xrightarrow{g_{k_2}}\
    r_3
    \ \xrightarrow{g_{k_3}}\
    \cdots
    \ \xrightarrow{g_{k_{q-1}}}\
    r_{q}
    \ \xrightarrow{g_{k_q}}\
    r.
\end{align}

Then the characterization of Gr\"obner basis using S-polynomials is as follows.

\begin{proposition}[Buchberger's criterion {\cite{BuchbergerThesis}}, as presented in
{\cite[Thm.~1.7.4]{AdamsLoustaunauGroebner}}]
\label{proposition:buchbergers criterion}
    Let $G = \{g_1,\ldots,g_t\}\subseteq \F[x_1,\ldots,x_r]$.
    Then $G$ is a Gr\"obner basis for the ideal generated by $G$ if and only if for all $i\neq j$ we have
    $S(g_i,g_j) \xrightarrow{G} 0$.%
\end{proposition}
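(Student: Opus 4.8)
The plan is to prove the two directions separately, the forward one being routine and the reverse one being the classical syzygy argument. For ``$\Rightarrow$'': since each $S(g_i,g_j)$ is, by its very definition, an $\F[x_1,\ldots,x_r]$-combination of $g_i$ and $g_j$, it lies in $I\coloneqq\langle G\rangle$. I would first record the standard consequence of the Gr\"obner-basis property (\cref{definition:groebner-basis}) that \emph{every} element of $I$ reduces to $0$ modulo $G$: for nonzero $f\in I$ some $\mathrm{lt}(g_k)$ divides $\mathrm{lt}(f)$, so the one-step reduction cancelling $\mathrm{lt}(f)$ produces $f'=f-Xg_k\in I$ with strictly smaller leading monomial; as a monomial order is a well-order on monomials (Dickson's lemma), iterating terminates, necessarily at $0$. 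Applying this with $f=S(g_i,g_j)$ yields $S(g_i,g_j)\xrightarrow{G}0$ for all $i\neq j$.

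For ``$\Leftarrow$'', assume all $S(g_i,g_j)$ reduce to $0$ modulo $G$; I want to show every nonzero $f\in I$ has leading term divisible by some $\mathrm{lt}(g_i)$. Write $f=\sum_i h_i g_i$ and, among all such representations, pick one minimizing $\delta\coloneqq\max_i \mathrm{lm}(h_i)\,\mathrm{lm}(g_i)$ (the minimum exists since $\le$ is a well-order). One always has $\mathrm{lm}(f)\le\delta$, and if $\mathrm{lm}(f)=\delta$ then $\delta=\mathrm{lm}(h_i)\,\mathrm{lm}(g_i)$ for some $i$, whence $\mathrm{lt}(g_i)\mid\mathrm{lt}(f)$ and we are done; so the entire content is to rule out $\mathrm{lm}(f)<\delta$. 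In that case the degree-$\delta$ parts of the summands $h_ig_i$ with $\mathrm{lm}(h_ig_i)=\delta$ must cancel, and replacing those $h_i$ by their leading terms produces an $\F$-linear combination $\sum_{i\in I_\delta}c_i m_i g_i$ (with $c_i\in\F$, $m_i$ monomials, $m_i\,\mathrm{lm}(g_i)=\delta$) whose leading monomial is $<\delta$. Here I would invoke the standard cancellation lemma (see \cite[\S1.7]{AdamsLoustaunauGroebner}): such a telescoping combination equals a combination of the S-polynomials $S(g_i,g_j)$, $i,j\in I_\delta$, each multiplied by the monomial $\delta/\mathrm{lcm}(\mathrm{lm}(g_i),\mathrm{lm}(g_j))$, so every monomial occurring in it is $<\delta$. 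Now feed in the hypothesis: a one-step reduction only introduces monomials $\le$ the current leading monomial, so $S(g_i,g_j)\xrightarrow{G}0$ means $S(g_i,g_j)=\sum_k a_{ijk}g_k$ with $\mathrm{lm}(a_{ijk})\,\mathrm{lm}(g_k)<\delta$ for all $k$; substituting these and re-collecting terms converts $f=\sum_i h_ig_i$ into a representation with $\max_i\mathrm{lm}(h_i')\,\mathrm{lm}(g_i)<\delta$, contradicting the minimality of $\delta$. Hence $\mathrm{lm}(f)=\delta$, so $G$ is a Gr\"obner basis.

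The main obstacle will be the bookkeeping in the reverse direction: one must check that isolating and collecting the cancelling degree-$\delta$ part is legitimate, that the cancellation lemma really expresses it through S-polynomials with monomial coefficients of exactly the right degree, and that both the reductions applied to the $S(g_i,g_j)$ and the final resubstitution strictly decrease $\delta$ — all of which hinge on the monomial-order axioms ($q\le qr$ and compatibility with products) that make $\le$ a well-order on monomials and thereby guarantee termination. Since in this paper \cref{proposition:buchbergers criterion} is only ever used through \cref{proposition:reduced groebner basis trivial ideal} (detecting whether the reduced Gr\"obner basis equals $\{1\}$), a fully acceptable alternative is simply to cite \cite[Thm.~1.7.4]{AdamsLoustaunauGroebner} rather than to reproduce this classical argument.
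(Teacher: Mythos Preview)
The paper does not prove this proposition at all: it is stated as a classical result with a citation to \cite{BuchbergerThesis} and \cite[Thm.~1.7.4]{AdamsLoustaunauGroebner}, and no proof is supplied. Your outlined argument is the standard textbook proof and is correct; your closing remark that one may simply cite the reference is exactly what the paper does. One small inaccuracy: the paper uses \cref{proposition:buchbergers criterion} not only via \cref{proposition:reduced groebner basis trivial ideal} but also directly in the proofs of \cref{corollary:groebner field extension} and \cref{proposition:groebner function field}, where the stability of the $S$-polynomial reductions under field extension and under generic specialization is the point.
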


Suppose we are given a set of polynomials $F \subseteq \F[x_1,\ldots,x_m]$.
An important corollary of the above is that Gr\"obner bases of the ideal generated by $F$ are invariant under field extensions $\F' \supseteq \F$.
Denote the ideal generated by $F$ over $\F$ as
$
    \<F>_{\F} \coloneqq \big\{\sum_{f\in F} r_f f \ |\  r_f \in \F[x_1,\ldots,x_m]\ \big\},
$
and similarly define $\<F>_{\smash{\F'}}$.
Because the conditions of \cref{proposition:buchbergers criterion} remain true under field extensions, we obtain the following corollary.

\begin{corollary}
    \label{corollary:groebner field extension}
    Let $\F \subseteq \F'$ be fields and $F \subseteq \F[x_1,\ldots,x_m]$.
    If $G$ is a (reduced) Gr\"obner basis for $\<F>_\F$ over $\F$, then $G$ is also a (reduced) Gr\"obner basis for $\<F>_{\F'}$ over $\F'$.
\end{corollary}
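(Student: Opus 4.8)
The plan is to reduce everything to Buchberger's criterion (\cref{proposition:buchbergers criterion}) and to observe that all the objects and operations it involves are insensitive to the field extension $\F \subseteq \F'$. First I would fix the monomial order (the same one is used over both fields, since the set of monomials in $x_1,\ldots,x_m$ and the admissible orders on it do not depend on the coefficient field). Let $G = \{g_1,\ldots,g_t\} \subseteq \F[x_1,\ldots,x_m]$ be a Gr\"obner basis for $\<F>_\F$ over $\F$; in particular $G$ generates $\<F>_\F$ and, since $F \subseteq \<G>_\F \subseteq \<G>_{\F'}$, we get $\<F>_{\F'} \subseteq \<G>_{\F'}$; conversely each $g_i$ is an $\F$-linear (hence $\F'$-linear) combination of elements of $F$ with polynomial coefficients, so $\<G>_{\F'} \subseteq \<F>_{\F'}$. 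Thus $G$ also generates $\<F>_{\F'}$ over $\F'$.

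The core of the argument is that the hypothesis $S(g_i,g_j) \xrightarrow{G} 0$ carries over verbatim. The key point is that the $S$-polynomial $S(g_i,g_j)$ is computed purely from the leading terms of $g_i$ and $g_j$: the exponent vectors $\gamma_k - a_k$ and $\gamma_k - b_k$ depend only on the support of the leading monomials (which does not change when we view $g_i, g_j$ over a larger field, as the leading coefficients $\alpha,\beta$ are nonzero elements of $\F$, hence nonzero in $\F'$), and the coefficients $1/\alpha$, $1/\beta$ lie in $\F$ already. So $S(g_i,g_j)$ is literally the same polynomial in $\F[x_1,\ldots,x_m] \subseteq \F'[x_1,\ldots,x_m]$ whether computed over $\F$ or $\F'$. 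Likewise, a one-step reduction $f \xrightarrow{g} r$ with $f,g \in \F[x_1,\ldots,x_m]$ produces $r = f - Xg$ where $X$ is a term of $f$ divided by the leading term of $g$; the divisibility condition on monomials is field-independent, and the quotient of a term of $f$ by the leading term of $g$ again has its coefficient in $\F$. Hence the entire reduction sequence witnessing $S(g_i,g_j) \xrightarrow{G} 0$ over $\F$ is a valid reduction sequence over $\F'$. By \cref{proposition:buchbergers criterion} applied over $\F'$, $G$ is a Gr\"obner basis for $\<F>_{\F'}$ over $\F'$.

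For the \emph{reduced} version: reducedness asks that each $g \in G$ have leading coefficient $1$ and that no term of $g$ be divisible by the leading term of any $g' \in G \setminus \{g\}$. The leading coefficient is a fixed element of $\F$, so ``equals $1$'' is unchanged; and divisibility of one monomial by another is a purely combinatorial condition on exponent vectors, hence field-independent. Therefore $G$ reduced over $\F$ implies $G$ reduced over $\F'$, completing the proof. I do not anticipate a genuine obstacle here; the only thing to be careful about is the bookkeeping that leading monomials and the associated exponent arithmetic truly do not see the field — which is exactly why the $S$-polynomial and reduction machinery was recalled in the preceding paragraphs.
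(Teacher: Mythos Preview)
Your proof is correct and follows exactly the approach the paper indicates: the paper simply remarks that ``the conditions of \cref{proposition:buchbergers criterion} remain true under field extensions'' and states the corollary, and you have spelled out precisely this argument in full detail (including the check that $G$ generates $\langle F\rangle_{\F'}$ and the reducedness part, which the paper leaves implicit).
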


Whenever our tensor $T$ has rational entries we have $\tensorpolysystem^T\!(h) \subseteq \Q[x_1,\ldots,x_m]$.
\Cref{corollary:groebner field extension} implies that the Gr\"obner basis of the ideal $\langle\tensorpolysystem^T\!(h)\rangle_{\Q}$ over $\Q$ equals the Gr\"obner basis of $\langle\tensorpolysystem^T\!(h)\rangle_{\C}$ over $\C$.
This is important because Gr\"obner basis algorithms require exact arithmetic, which is not available to computers for uncountable fields such as~$\C$.
The tensors we are interested in are all specified using integer entries (cf.\ \cref{section:C333,example:mm2 derandomization}), and the above holds.
Whenever this is not the case we can instead consider the extended field $\F \supseteq \Q$ which extends $\Q$ algebraically and/or transcendentally by all entries of $T$. Because $T$ has finitely many entries, this field will be representable symbolically in computer algebra systems. We denote it by $\F = \Q\big(\{T_\omega \mid \omega \in \Omega\}\big)$. Note that $\Q(\{T_\omega \mid \omega \in \Omega\}) \subseteq \C$.

Finally, we consider the case where the field is a function field $\F(z_1,\ldots,z_s)$. This field is defined as the set of formal rational functions in the variables $z_1,\ldots,z_s$, that is,
\begin{align}
    \F(z_1,\ldots,z_s) \coloneqq
    \Bigl\{
        \ \frac{p}{q} \ \Big|\ p,q \in \F[z_1,\ldots,z_s],q \neq 0\,
    \Bigr\}.
\end{align}
This enables us to compute Gr\"obner bases for polynomial systems $F$ where the coefficients of the polynomials are dependent on parameters $z_1,\ldots,z_s$, i.e.\ $F \subseteq \F[z_1,\ldots,z_s][x_1,\ldots,x_m] \subseteq \F(z_1,\ldots,z_s)[x_1,\ldots,x_m]$.
This is precisely how we will in \cref{subsection:attainability algorithms} describe the generic element $\Tgeneric \in \G \cdot T$ symbolically.

The situation is subtle however, as the resulting (reduced) Gr\"obner basis will in general not remain a (reduced) Gr\"obner basis when plugging in explicit values for $z_1,\ldots,z_s$ from $\F$.
However, this is true for generic values.

\begin{proposition}
\label{proposition:groebner function field}
    Let $\F' = \F(z_1,\ldots,z_s)$ denote the function field on variables $z_1,\ldots,z_s$.
    Let $F,G \subseteq \F'[x_1,\ldots,x_m]$ be a finite sets, with $G$ the reduced Gr\"obner basis for $\<F>_{\F'}$ over $\F'$.
    Then there exists a non-empty Zariski-open subset $U \subseteq \F^s$ such that for all $u \in U$,
    \begin{enumerate}
        \item
        \label{item:generic groebner can substitute}
        we can in any $f \in F$ and $g \in G$ substitute $z_i$ by $u_i$ for all $i$ to obtain $f^{(u)}, g^{(u)} \in \F[x_1,\ldots,x_m]$, and
        \item
        \label{item:generic groebner is basis}
        $G^{(u)} \coloneqq \{g^{(u)} \mid g \in G\}$ is a reduced Gr\"obner basis for $\<f^{(u)} \mid f \in F>$ over $\F$.
        \item
        \label{item:generic groebner non trivial}
        If $G \neq \{1\}$ then also $G^{(u)} \neq \{1\}$.
    \end{enumerate}
\end{proposition}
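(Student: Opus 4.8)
The plan is to use that $G$ already satisfies Buchberger's criterion over the function field $\F'$ (\cref{proposition:buchbergers criterion}) and then argue that all the data witnessing it — the leading monomials, the $S$-polynomials, and the reductions $S(g_i,g_j)\xrightarrow{G}0$ — are preserved after the substitution $z\mapsto u$ as long as $u$ avoids a proper Zariski-closed subset of $\F^s$. The guiding principle is that the combinatorial skeleton of any Gröbner computation (which leading monomial divides which term at each step) is stable under generic specialization of the parameters, and the finitely many polynomials one must exclude are exactly the ``denominators and pivots'' appearing along the way.

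First I would dispose of well-definedness and leading terms. Every $f\in F$ and $g\in G$ has finitely many coefficients, each a rational function in $z_1,\dots,z_s$; away from the common zero set of their denominators the substitutions $f^{(u)},g^{(u)}\in\F[x_1,\dots,x_m]$ are defined, which is part~\ref{item:generic groebner can substitute}. Since $G$ is \emph{reduced}, the leading coefficient of each $g\in G$ is $1$, hence nonzero after substitution, and all monomials above $\mathrm{lm}(g)$ already have coefficient $0$ in $g$; so $\mathrm{lm}(g^{(u)})=\mathrm{lm}(g)$ with leading coefficient still $1$. In particular the monomial shifts defining $S(g_i,g_j)$ depend only on $\mathrm{lm}(g_i)$ and $\mathrm{lm}(g_j)$, so $S(g_i^{(u)},g_j^{(u)})=S(g_i,g_j)^{(u)}$ for all such $u$.

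Next, for each pair $i\ne j$ I would fix one reduction $S(g_i,g_j)=r_1\xrightarrow{g_{k_1}}r_2\xrightarrow{g_{k_2}}\cdots\xrightarrow{g_{k_q}}0$ over $\F'$, which exists because $G$ is a Gröbner basis of $\langle F\rangle_{\F'}$. Each step has the form $r_{\ell+1}=r_\ell-c_\ell\mathfrak m_\ell\,g_{k_\ell}$ with $c_\ell\in\F'$ nonzero (here $c_\ell\mathfrak m_\ell$ is the term of $r_\ell$ being cancelled divided by $\mathrm{lt}(g_{k_\ell})=\mathrm{lm}(g_{k_\ell})$). I would enlarge the excluded locus to also contain the zero sets of the numerators of all the $c_\ell$ (over all pairs and all steps) and the denominators of all coefficients of all intermediate $r_\ell$. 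For $u$ outside this still-proper locus, substitution commutes with each reduction step, $c_\ell(u)\ne0$ keeps $c_\ell(u)\mathfrak m_\ell$ an honest term of $r_\ell^{(u)}$, and $\mathrm{lt}(g_{k_\ell}^{(u)})=\mathrm{lm}(g_{k_\ell})$ by the previous paragraph; hence the same chain witnesses $S(g_i^{(u)},g_j^{(u)})\xrightarrow{G^{(u)}}0$, and \cref{proposition:buchbergers criterion} shows $G^{(u)}$ is a Gröbner basis of $\langle G^{(u)}\rangle_\F$. Adjoining the denominators occurring in fixed expressions $g=\sum_{f\in F}p_{g,f}f$ and $f=\sum_{g\in G}q_{f,g}g$ over $\F'$ to the excluded locus forces $\langle G^{(u)}\rangle_\F=\langle f^{(u)}\mid f\in F\rangle$, so $G^{(u)}$ is a Gröbner basis of the specialized ideal; it remains reduced because leading coefficients stay $1$, the terms of $g^{(u)}$ form a subset of those of $g$ for every $g$, and the leading monomials of all specialized elements of $G$ are unchanged, so no term of any $g^{(u)}$ becomes divisible by the leading monomial of another element of $G^{(u)}$. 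This gives part~\ref{item:generic groebner is basis}. For part~\ref{item:generic groebner non trivial}: if $G\ne\{1\}$ then no element of $G$ is a nonzero constant (a constant in a reduced Gröbner basis must be $1$, and $1\in G$ forces $G=\{1\}$ by reducedness), so $\mathrm{lm}(g)\ne1$, hence $\mathrm{lm}(g^{(u)})=\mathrm{lm}(g)\ne1$ and $g^{(u)}\ne1$ for every $g\in G$; thus $1\notin G^{(u)}$ and $G^{(u)}\ne\{1\}$. Taking $U$ to be the complement of the union of all the finitely many excluded hypersurfaces — nonempty because $\F$ is infinite (it contains $\Q$ throughout our applications) — completes the proof.

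The step I expect to be the main obstacle is the third paragraph, specifically the claim that the chosen sequence of reduction steps over $\F'$ can still be performed over $\F$ after substitution. This is where genericity is genuinely needed: if a coefficient that was reduced away over $\F'$ vanishes at $u$, the corresponding reduction — and with it the conclusion — can fail outright. It is precisely this phenomenon that dictates which finite collection of polynomials has to be excluded, and a small amount of bookkeeping is then required to confirm that each of these polynomials is nonzero, so that their product defines a \emph{proper} closed subset of $\F^s$.
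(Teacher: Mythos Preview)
Your proposal is correct and follows essentially the same approach as the paper's proof sketch: fix the finitely many reduction chains witnessing Buchberger's criterion for $G$ and the finitely many membership certificates between $F$ and $G$, then exclude the hypersurfaces where any denominator vanishes or any relevant coefficient (such as your $c_\ell$) specializes to zero, so that leading monomials, reductions, and ideal equalities all survive substitution. Your version is in fact more detailed than the paper's sketch—particularly in spelling out why reducedness is preserved and why $G\neq\{1\}$ implies $G^{(u)}\neq\{1\}$ via the stability of leading monomials—but the underlying idea is identical.
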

\begin{proof}[Proof sketch]
    The crucial observation is that there is only a finite number of polynomials (over $\F'$) appearing in the reductions $S(g_i,g_j)\ \smash{\overset{G}{\to}}\ 0$ that characterize Gr\"obner bases $G$ via \cref{proposition:buchbergers criterion}, as well as in those polynomials witnessing $f \in \<G>\!_{\F'}$ and $g \in \<F>\!_{\F'}$ for each $g \in G, f \in F$.

    This means we can assign generic values $u_1,\ldots,u_s$ to the variables $z_1,\ldots,z_s$ such that we never divide by zero nor change leading terms in any of the polynomials mentioned above.
    We observe that with this choice, all reductions $S(g_i,g_j)\ \smash{\overset{G}{\to}}\ 0$ give rise to reductions $S(g_i^{(u)},g_j^{(u)})\ \smash{\overset{\smash{G^{(u)}}}{\to}}\ 0$,
    which implies that $\smash{G^{(u)}}$ is a Gr\"obner basis
    by \cref{proposition:buchbergers criterion}.
    Moreover, $\smash{G^{(u)}}$ remains reduced.

    Next, note that the equations witnessing $f \in \<G>\!_{\F'}$ remain valid after substitution and so $f^{(u)} \in \langle G^{(u)}\rangle_{\F}$ for each $f \in F$. Similarly for $g \in \<F>\!_{\F'}$.
    It follows that $G^{(u)}$ is a reduced Gr\"obner basis for $\smash{\langle f^{(u)} \mid f \in F\rangle}$.
    Finally, \cref{item:generic groebner non trivial} follows because leading terms remain the same.
\end{proof}

\subsubsection{Algorithms for attainability}
\label{subsection:attainability algorithms}

Given a tensor $T$ and inequality $h$ we can now use any algorithm for computing Gr\"obner bases to determine attainability of $\Omega_h$ for $T$.
As mentioned before, the entries of $T$ can always be taken to sit inside the field extension $\F = \Q\big(\{T_\omega \mid \omega \in \Omega\}\big)$, satisfying $\Q \subseteq \F \subseteq \C$.

\begin{algorithmbreak}{Determine attainability for a tensor $T$.}
\label{algorithm:tensor checking-inequalities}%
\textbf{Input:} A tensor $T \in \C^a\ot\C^b\ot\C^c$ and an inequality $h \in \R^{a+b+c}$. \\
\textbf{Output:} \texttt{True} or \texttt{False} \\
\textbf{Algorithm:}
\begin{algorithmic}[1]
    \State Determine the field extension $\F = \Q\big(\{T_\omega \mid \omega \in \Omega\}\big) \subseteq \C$, such that $T \in \F^a\ot\F^b\ot\F^c$.
    \State Create symbolic lower triangular matrices $\mathcal A,\mathcal B,\mathcal C$ with ones on the diagonal.
    \State Setup the polynomial system $\tensorpolysystem^T\!(h) \coloneqq \big\{ \big((\mathcal A\ot \mathcal B\ot \mathcal C)T\big){}_\omega \mid \omega \in \Omega \setminus \Omega_h \big\}$.
    \State Compute the reduced Gr\"obner basis $G$ of $\smash{\tensorpolysystem^T\!(h)}$ over $\F$.
    \State \Return \texttt{True} \textbf{if} $G \neq \{1\}$ \textbf{else} \texttt{False}
\end{algorithmic}
\end{algorithmbreak}

\begin{lemma}
\label{lemma:attainability}
    Let $T \in \C^a\ot\C^b\ot\C^c$ and $h \in \R^{a+b+c}$.
    If \Cref{algorithm:tensor checking-inequalities} with input $(T,h)$ outputs \textnormal{\texttt{True}}, then $\Omega_h$ is attainable for $T$. Otherwise, $\Omega_h$ is not attainable for $T$.
\end{lemma}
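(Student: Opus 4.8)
The plan is to establish a chain of equivalences linking three statements: (i) $\Omega_h$ is attainable for $T$; (ii) the polynomial system $\tensorpolysystem^T\!(h)$ has a common zero in $\C^m$; and (iii) the reduced Gr\"obner basis $G$ computed by the algorithm satisfies $G \neq \{1\}$. Since the algorithm outputs \texttt{True} exactly when (iii) holds, and the claim is that this happens exactly when (i) holds, proving (i) $\Leftrightarrow$ (ii) $\Leftrightarrow$ (iii) finishes the proof. Termination is immediate: reduced Gr\"obner bases exist and are computable (e.g.\ by Buchberger's algorithm).

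First I would prove (i) $\Leftrightarrow$ (ii). By \cref{definition:attainability}, $\Omega_h$ is attainable for $T$ iff there exist $(A,B,C) \in \G_{\lowertriangular}$ with $\supp((A \ot B \ot C)T) \subseteq \Omega_h$, equivalently with $\big((A \ot B \ot C)T\big){}_\omega = 0$ for every $\omega \in \Omega \setminus \Omega_h$. As noted just before the algorithm, the support of a tensor is unchanged under rescaling by an invertible diagonal matrix; since every invertible lower-triangular matrix factors as such a diagonal matrix times a lower-triangular matrix with unit diagonal, we may assume without loss of generality that $A,B,C$ have ones on the diagonal. Conversely, any assignment of complex values to the off-diagonal entries of such matrices yields an invertible triple, hence a valid element of $\G_{\lowertriangular}$. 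Writing $\mathcal A, \mathcal B, \mathcal C$ with the symbolic variables $x_1,\dots,x_m$ and recalling that $\big((\mathcal A \ot \mathcal B \ot \mathcal C)T\big){}_\omega$ is a polynomial in these variables, attainability of $\Omega_h$ for $T$ is therefore equivalent to the polynomials in $\tensorpolysystem^T\!(h) = \big\{ \big((\mathcal A \ot \mathcal B \ot \mathcal C)T\big){}_\omega \mid \omega \in \Omega \setminus \Omega_h\big\}$ having a common zero in $\C^m$.

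Next I would prove (ii) $\Leftrightarrow$ (iii). The entries of $T$, and hence all coefficients appearing in $\tensorpolysystem^T\!(h)$, lie in $\F = \Q\big(\{T_\omega \mid \omega \in \Omega\}\big) \subseteq \C$, so $\tensorpolysystem^T\!(h) \subseteq \F[x_1,\dots,x_m]$. Since $\C$ is algebraically closed, Hilbert's weak Nullstellensatz gives that $\tensorpolysystem^T\!(h)$ has no common zero in $\C^m$ iff $1 \in \langle \tensorpolysystem^T\!(h)\rangle_\C$. By \cref{corollary:groebner field extension}, the reduced Gr\"obner basis $G$ of $\langle\tensorpolysystem^T\!(h)\rangle_\F$ over $\F$ --- which is precisely what the algorithm computes --- is also the reduced Gr\"obner basis of $\langle\tensorpolysystem^T\!(h)\rangle_\C$ over $\C$; and by \cref{proposition:reduced groebner basis trivial ideal}, $1 \in \langle\tensorpolysystem^T\!(h)\rangle_\C$ iff $G = \{1\}$. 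Combining: the algorithm outputs \texttt{True} iff $G \neq \{1\}$ iff $1 \notin \langle\tensorpolysystem^T\!(h)\rangle_\C$ iff $\tensorpolysystem^T\!(h)$ has a common complex zero iff $\Omega_h$ is attainable for $T$, which is the statement.

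\textbf{Main obstacle.} There is no genuinely hard step here; the entire argument is bookkeeping. The only points that need to be spelled out carefully are (a) that restricting to unit-diagonal matrices loses no generality, since support is a diagonal-scaling invariant, and that the free symbolic parameters may therefore range over all of $\C$ while still corresponding to elements of $\G_{\lowertriangular}$; and (b) that the relevant Nullstellensatz is over $\C$ rather than over $\F$, so that the correctness of doing the Gr\"obner computation over the small field $\F$ hinges entirely on the field-extension invariance recorded in \cref{corollary:groebner field extension}. A minor subtlety worth a sentence is that ``common zero in $\C^m$'' is the correct notion for attainability: a solution over $\overline{\F}$ suffices and $\overline{\F} \subseteq \C$, while conversely any $\C^m$-solution already produces a legitimate $(A,B,C) \in \G_{\lowertriangular}$, so the two notions coincide for our purposes.
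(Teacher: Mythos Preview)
Your proposal is correct and follows exactly the same chain of equivalences as the paper's proof: output \texttt{True} $\Leftrightarrow$ $G\neq\{1\}$ $\Leftrightarrow$ $1\notin\langle\tensorpolysystem^T\!(h)\rangle_\F$ (via \cref{proposition:reduced groebner basis trivial ideal}) $\Leftrightarrow$ $1\notin\langle\tensorpolysystem^T\!(h)\rangle_\C$ (via \cref{corollary:groebner field extension}) $\Leftrightarrow$ a common complex zero exists (weak Nullstellensatz) $\Leftrightarrow$ attainability. You spell out the unit-diagonal reduction and the $\F$-versus-$\C$ point a bit more explicitly than the paper does, but the argument is the same.
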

\begin{proof}
    By \cref{proposition:reduced groebner basis trivial ideal},
    \cref{algorithm:tensor checking-inequalities} outputs \texttt{True} if and only if  $1 \notin \langle \F^T(h)\rangle_\F$.
    By \cref{corollary:groebner field extension}, $1 \notin \langle \tensorpolysystem^T\!(h)\rangle_\F$ if and only if $1 \notin \langle\F^T(h)\rangle_\C$.
    By Hilbert's weak Nullstellensatz \cite[Thm~4.1.1]{CoxLittleOShea},  $1 \notin \langle\tensorpolysystem^T\!(h)\rangle\!_\C$ is equivalent to the existence of $v \in \C^m$ such that $f(v) = 0$ for all $f \in \tensorpolysystem^T\!(h)$.
    This is equivalent to the existence of $(A,B,C) \in \G_{\lowertriangular}$ such that $\supp((A\ot B\ot C)T) \subseteq \Omega_h$.
\end{proof}

We may alter this algorithm to determine instead attainability of $\Omega_h$ for all $T_h \in U_h \cdot T$,
where $U_h \subseteq \G_{\uppertriangular}$ is some non-empty Zariski-open subset.
We describe the procedure.
Denote with $\F(\G_{\uppertriangular})$ the rational function field of the coordinate functions of $\G_{\uppertriangular}$.
That is, we introduce formal variables $z_1,\ldots,z_s$ for each possibly non-zero entry of the upper triangular matrices in $\G$, and let $\F(\G_{\uppertriangular}) \coloneqq \F(z_1,\ldots,z_s)$ be the function field.

Given the tensor $T$, let $T_{\textnormal{s}}$ be the symbolic tensor describing the polynomial action of $\G_{\uppertriangular}$ on $T$ using the introduced variables. The entries of $T_{\textnormal{s}}$ are degree $\leq 3$ polynomials in $\F(\G_{\uppertriangular})$ using the variables $z_1,\ldots,z_s$ .
Then act with the symbolic matrices $(\mathcal A, \mathcal B, \mathcal C)$
describing the polynomial action of $\G_{\lowertriangular}$ with ones on the diagonal on $T_{\textnormal{s}}$, resulting in the tensor
$(\mathcal A \ot \mathcal B \ot \mathcal C) T_{\textnormal{s}}$.
The entries of this tensor are elements of $\F(\G_{\uppertriangular})[x_1, \dotsc, x_m]$, where $x_1,\ldots,x_m$ are the variables of $(\mathcal A, \mathcal B, \mathcal C)$.

Let $h$ be an inequality and $\tensorpolysystem^{T_{\textnormal{s}}}(h) \subseteq \F(\G_{\uppertriangular})[x_1, \dotsc, x_m]$ the corresponding polynomial system as defined in \cref{definition:tensor polysystem}.
Then the algorithm is as follows.

\begin{algorithmbreak}{Determine attainability for all $T_h \in U_h \cdot T$, with $\varnothing \neq U_h \subseteq \G_{\protect\uppertriangular}$ Zariski-open.}
\label{algorithm:tensor checking-inequalities symbolic}%
\textbf{Input:} A tensor $T \in \C^a\ot\C^b\ot\C^c$ and an inequality $h \in \R^{a+b+c}$. \\
\textbf{Output:} \texttt{True} or \texttt{False} \\
\textbf{Algorithm:}
\begin{algorithmic}[1]
    \State Determine the field extension $\F = \Q\big(\{T_\omega \mid \omega \in \Omega\}\big) \subseteq \C$, such that $T \in \F^a\ot\F^b\ot\F^c$.
    \State Let $T_{\textnormal{s}}$ describe the polynomial action of $\G_{\uppertriangular}$ on $T$.
    \State Create symbolic lower triangular matrices $\mathcal A,\mathcal B,\mathcal C$ with ones on the diagonal.
    \State Setup the polynomial system $\tensorpolysystem^{T_{\textnormal{s}}}(h) \coloneqq \big\{ \big((\mathcal A\ot \mathcal B\ot \mathcal C)T_{\textnormal{s}}\big){}_\omega \mid \langle \omega \in \Omega \setminus \Omega_h \big\}$.
    \State Compute the reduced Gr\"obner basis $G$ of $\smash{\tensorpolysystem^{T_{\textnormal{s}}}(h)}$ over $\F(\G_{\uppertriangular})$.
    \State \Return \texttt{True} \textbf{if} $G \neq \{1\}$ \textbf{else} \texttt{False}
\end{algorithmic}
\end{algorithmbreak}

\begin{lemma}
    \label{lemma:attainability symbolic}
    Let $T \in \C^a\ot\C^b\ot\C^c$ and $h \in \R^{a+b+c}$.
    If \cref{algorithm:tensor checking-inequalities symbolic} with input $(T,h)$ outputs \textnormal{\texttt{True}}, then there exists a non-empty Zariski-open set $U_h \subseteq \G_{\uppertriangular}$ such that $\Omega_h$ is attainable for all $T_h \in U_h \cdot T$.
    Otherwise, there exists a non-empty Zariski-open set $U_h \subseteq \G_{\uppertriangular}$ such that $\Omega_h$ is not attainable for all $T_h \in U_h \cdot T$.
\end{lemma}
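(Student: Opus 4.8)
The plan is to reduce the statement about the symbolic computation over the function field $\F(\G_{\uppertriangular})$ to the explicit (non-symbolic) attainability result of \cref{lemma:attainability}, by specializing the parameters $z_1,\ldots,z_s$ generically. First I would note that the polynomial system $\tensorpolysystem^{T_{\textnormal{s}}}(h)$ has coefficients in $\F(\G_{\uppertriangular}) = \F(z_1,\ldots,z_s)$, and that for a generic point $u \in \F^s$, substituting $z_i \mapsto u_i$ recovers exactly the system $\tensorpolysystem^{T_u}(h)$, where $T_u = (A_u \ot B_u \ot C_u) T$ is the element of $\G_{\uppertriangular} \cdot T$ obtained by plugging $u$ into the symbolic upper-triangular matrices. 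Applying \cref{proposition:groebner function field} to $F = \tensorpolysystem^{T_{\textnormal{s}}}(h)$ and its reduced Gr\"obner basis $G$, there is a non-empty Zariski-open $U \subseteq \F^s$ such that for all $u \in U$: (i) the substitution is well-defined, (ii) $G^{(u)}$ is the reduced Gr\"obner basis of $\langle \tensorpolysystem^{T_u}(h)\rangle$ over $\F$, and (iii) $G \neq \{1\}$ implies $G^{(u)} \neq \{1\}$. I will also want the converse direction of (iii): if $G = \{1\}$ then $1 \in \langle F\rangle_{\F(\G_{\uppertriangular})}$, and clearing denominators in the witnessing identity shows $1 \in \langle \tensorpolysystem^{T_u}(h)\rangle_\F$ for every $u$ outside the zero set of the finitely many denominators appearing, so $G^{(u)} = \{1\}$ generically as well.

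Next I would translate genericity on $\F^s$ to genericity on $\G_{\uppertriangular}$. The parametrization $u \mapsto T_u$ corresponds to the (dominant, in fact here just a reparametrization of the) map $\F^s \to \G_{\uppertriangular}$ sending $u$ to the upper-triangular triple with those entries; a non-empty Zariski-open $U \subseteq \F^s$ pulls back to, or rather pushes forward to, a non-empty Zariski-open $U_h \subseteq \G_{\uppertriangular}$, using that $\G_{\uppertriangular}$ is irreducible and that the coordinate-entry parametrization identifies a dense open subset of $\G_{\uppertriangular}$ (where the diagonal entries are non-zero, which is automatic for upper-triangular matrices in $\G$, so in fact the parametrization is an isomorphism onto $\G_{\uppertriangular}$ up to the choice of coordinates). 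Then for $T_h = T_u$ with $u \in U$: if \cref{algorithm:tensor checking-inequalities symbolic} returns \texttt{True}, i.e.\ $G \neq \{1\}$, then $G^{(u)} \neq \{1\}$, so by \cref{proposition:reduced groebner basis trivial ideal} and Hilbert's weak Nullstellensatz (exactly as in the proof of \cref{lemma:attainability}) $\Omega_h$ is attainable for $T_h$; and if it returns \texttt{False}, then $G = \{1\}$, so $G^{(u)} = \{1\}$ on a (possibly smaller but still non-empty) Zariski-open set, whence $\Omega_h$ is not attainable for those $T_h$. Intersecting the relevant open sets gives the single $U_h$ claimed.

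I expect the main obstacle to be the bookkeeping around \cref{proposition:groebner function field}: that proposition as stated guarantees $G^{(u)}$ is a reduced Gr\"obner basis of the substituted ideal, but I need to be careful that the substituted generators $f^{(u)}$ really are the polynomials $\tensorpolysystem^{T_u}(h)$ and not merely generators of the same ideal up to units — this is fine because substitution is a ring homomorphism applied entrywise to $(\mathcal A \ot \mathcal B \ot \mathcal C) T_{\textnormal{s}}$, so $(\tensorpolysystem^{T_{\textnormal{s}}}(h))^{(u)} = \tensorpolysystem^{T_u}(h)$ on the nose. The only genuinely delicate point is the ``\texttt{False}'' case: \cref{proposition:groebner function field} only asserts things when $G \neq \{1\}$, so for $G = \{1\}$ I must argue separately (via clearing denominators in the identity $1 = \sum r_f f$ over $\F(\G_{\uppertriangular})$) that $1$ remains in the ideal after generic substitution. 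This is routine but worth spelling out. Everything else is a direct repackaging of \cref{lemma:attainability}'s proof with the extra layer of a generic specialization.
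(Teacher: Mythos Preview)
Your proposal is correct and follows essentially the same approach as the paper: apply \cref{proposition:groebner function field} to specialize the symbolic Gr\"obner basis, intersect with the determinant-nonvanishing locus to pass from $\F^s$ to $\G_{\uppertriangular}$, and invoke the argument of \cref{lemma:attainability}. One minor remark: your caution about the \texttt{False} case is slightly unnecessary, since item~\ref{item:generic groebner is basis} of \cref{proposition:groebner function field} already applies regardless of whether $G = \{1\}$ (substituting into the constant polynomial $1$ yields $G^{(u)} = \{1\}$, which item~\ref{item:generic groebner is basis} asserts is the reduced Gr\"obner basis of the substituted ideal), so the paper's ``analogous'' dismissal is justified---though your clearing-denominators argument is a perfectly good alternative.
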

\begin{proof}
    Let $G$ be as computed by the algorithm. Suppose that the output is \texttt{True}, i.e.\ $G \neq \{1\}$.
    The polynomial system $\tensorpolysystem^{T_{\textnormal{s}}}(h)$ consists of polynomials with coefficients in $\F(\G_{\uppertriangular}) = \F(z_1,\ldots,z_s)$.

    By \cref{proposition:groebner function field}, there exists a non-empty Zariski-open subset $\widehat{U}_h \subseteq \F^s$ such that for all $u \in \widehat{U}_h$ we can obtain the reduced Gr\"obner basis $G^{(u)}$ for $\{f^{(u)} \mid f \in \tensorpolysystem^{T_{\textnormal{s}}}(h)\}$ by substituting $u$ into the coefficients of the polynomials in $G$. Moreover, $G^{(u)} \neq \{1\}$.

    We identified $\F^s$ with the set of triples of upper triangular matrices.
    Hence the set $\widehat{U}_h$ gives rise to a non-empty Zariski-open subset $U_h$ of $\G_{\uppertriangular}$ after intersecting the (Zariski-open) set where the determinant does not vanish.

    Write $u_{A,B,C} \in \F^s$ for the element corresponding to $(A,B,C) \in U_h$.
    It follows that for $(A,B,C) \in U_h$, $G^{(u_{A,B,C})} \neq \{1\}$ is a Gr\"obner basis for $\{f^{(u_{A,B,C})} \mid f \in \tensorpolysystem^{T_{\textnormal{s}}}(h)\} = \tensorpolysystem^{(A,B,C) \cdot T}(h)$.
    Then it follows that $\Omega_h$ is attainable for $(A,B,C) \cdot T$ by the same argument as in the proof of \cref{lemma:attainability}.
    We conclude $\Omega_h$ is attainable for all $T_h \in U_h \cdot T$.
    The argument for output \texttt{False} is analogous.
\end{proof}

\subsection{Proofs of correctness}
\label{subsection:algorithm proofs}

We are now ready to prove the correctness of our algorithms.

\begin{proof}[Proof of correctness of the Borel polytope algorithm (\cref{theorem:borel algorithm})]
By \cref{lemma:attainability}, the attainability algorithm (\cref{algorithm:tensor checking-inequalities}) correctly determines attainability of $\Omega_h$ for $T$.
By \cref{lemma:tensor attainability}, the resulting set $\ineqs_T$ together with the inequalities for $\weylchamber$ defines $\DeltaB(T)$.
\end{proof}

To prove \cref{theorem:probabilistic algorithm} we can use a Schwartz--Zippel type argument, provided we know bounds on the degrees of the polynomials defining the generic condition of $\Tgeneric$. These polynomials are given in \cref{theorem:tensor randomization}, with the degree being equal to the numbers $n_i$ which scale the vertices of $\Delta(T)$ to integer values.
Bounds on $n_i$ are known, most notably those obtained via the degree bounds from \cite{derskenDegreeBounds2000}.
In \cite{burgisser2018tensorScaling, burgisserTheoryNoncommutativeOptimization2019}
these bounds were used to obtain upper bounds on $n_i$ and with that the probability one of the polynomials $f_i$ would not vanish.%
\footnote{These bounds were used in \cite[Theorem~1.13]{burgisser2018tensorScaling} and \cite[Theorem 7.24]{burgisserTheoryNoncommutativeOptimization2019} to determine the required integer randomness for tensor scaling algorithm to scale to points in $\Delta(T)$ with probability at least $1/2$ (see also \cref{subsection:tensor scaling}).
This randomness is in fact precisely the same randomness we require \cite[Section~2.5]{burgisser2018tensorScaling}, although we consider the entire polytope at once instead of single rational points.}
We can apply an union bound to obtain a (somewhat crude) bound on the randomness we require.

\begin{proof}[Proof for the probabilistic algorithm (\cref{theorem:probabilistic algorithm})]
By \cref{theorem:borel algorithm}, we know the output equals $\DeltaB((A \ot B \ot C)T)$.
It remains to show $\DeltaB((A \ot B \ot C)T) = \Delta(T)$ with probability at least 1/2.

Define again $n \coloneqq a+b+c$ and $m \coloneqq \max\{a,b,c\}$.
Let $p = (\lambda,\mu,\nu)/\ell' \in \Delta(T)$ be a rational point, with $(\lambda,\mu,\nu) \in \weylchamber$ having integer entries and $\ell' \in \N$.
Define $M' = C \cdot 3 ( 3 \ell' m)^{3m^2}$ for $C \in \N_{> 0}$.
Generate $(A,B,C) \in \G_{\uppertriangular}$ by taking the entries uniformly at random from $\{1,\ldots,M'\}$.
Then \cite[Corollary~2.7]{burgisser2018tensorScaling} states\footnote{The statement is given for $C = 2$, but extends easily to arbitrary $C$. This follows from the fact that their result is obtained via an application of the Schwartz--Zippel lemma to a polynomial arising from a highest weight vector of weight $(\lambda,\mu.\nu)$, like the ones defined in \cref{theorem:tensor randomization}.} that $p \in \DeltaB((A \ot B \ot C)T)$ with probability at least $1-1/C$.

Now take $p \in \Delta(T)$ equal to any vertex. Then $p$ is rational.
By \cite[Proposition~7.23]{burgisserTheoryNoncommutativeOptimization2019}\footnote{Although our setting is slightly different, their proof applies to our setting without modification.}, the size $\ell'$ of the smallest integer such that $p\ell'$ has integer entries is upper bounded by $\ell \coloneqq n^{3n/2} 3^{n^2 - n}$.
This means that if we define $M \coloneqq C \cdot 3 ( 3 \ell m)^{3m^2}$ and generate $(A,B,C) \in \G$ by taking the entries uniformly at random from $\{1,\ldots,M\}$, we have that $p \in \DeltaB((A \ot B \ot C)T)$ with probability at least $1-1/C$.

We finish the proof by using a very rough upper bound on the number of vertices of $\Delta(T)$.
Namely, there are at most $(\ell+1)^n$ rational points in $\Q^n$ such that each entry is of the form $x/\ell$ for some $x \in \{0,\ldots,\ell\}$. By the above, this set includes all possible vertices of $\Delta(T)$. Taking $C = 2(\ell+1)^n$, the union bound gives that
\begin{align}
    \mathbb P \big(\DeltaB((A \ot B \ot C)T) \neq \Delta(T) \big)
    &= \mathbb P \big( p \notin \DeltaB((A \ot B \ot C)T) \text{ for a vertex } p \text{ of } \Delta(T) \big)
    \\&\leq \sum_{p \in \Delta(T) \colon p \text{ is a vertex}} \mathbb P \big( p \notin \DeltaB((A \ot B \ot C)T) \big)
    \\&\leq (\ell+1)^n\cdot 1/C
    = 1/2.
\end{align}
So with probability at least $1/2$, indeed $\DeltaB\bigl((A \ot B \ot C)T\bigr) = \Delta(T)$.
\end{proof}

\begin{proof}[Proof for the deterministic algorithm (\cref{theorem:symbolic algorithm})]
    Let $\ineqs_{\Tgeneric}$ be as computed by the algorithm.
    We will use that $\G_{\uppertriangular}$ is an irreducible variety, which follows from the fact that it is an open subset of the set of triples of upper triangular matrices, which is an affine space and hence irreducible.

    For any $h \in \ineqs$ let $U_h$ be the open subset obtained from \cref{lemma:attainability symbolic} (regardless of whether the output was \texttt{True} or \texttt{False}).
    Let $V \subseteq \G_{\uppertriangular}$ be the non-empty Zariski-open subset such that $\DeltaB(\Tgeneric) = \Delta(T)$ for all $\Tgeneric \in V$.
    Then $V \cap \bigcap_{h \in \ineqs} U_h \neq \varnothing$, because $\G_{\uppertriangular}$ is irreducible.

    Let $\Tgeneric \in V \cap \bigcap_{h \in \ineqs} U_h$.
    Then $\ineqs_{\Tgeneric} = \{h \in \ineqs \mid \Omega_h \text{ is attainable for } \Tgeneric\}$.
    Hence by \cref{lemma:tensor attainability}, the output of the algorithm equals $\DeltaB(\Tgeneric)$.
    Since $\Tgeneric \in V$, we know $\DeltaB(\Tgeneric) = \Delta(T)$, as desired.
\end{proof}

\begin{proof}[Proof for the randomized algorithm (\cref{theorem:symbolic algorithm randomized})]
    Let $V \subseteq \G_{\uppertriangular}$ again be the non-empty Zariski-open subset such that $\DeltaB(\Tgeneric) = \Delta(T)$ for all $\Tgeneric \in V$.
    Let $\widehat{\ineqs}_{\Tgeneric}$ be as computed by the algorithm.
    By \cref{theorem:borel algorithm}, $\DeltaB((A \ot B \ot C) T) = \bigcap_{h \in \widehat{\ineqs}_{\Tgeneric}} H_h \cap \weylchamber$.
    By \cref{theorem:tensor borel polytopes}, $\DeltaB((A \ot B \ot C) T) \subseteq \Delta(T)$.

    If the output of the algorithm is not \texttt{Failure}, the set $\widehat{\ineqs}_{\Tgeneric}$ contains inequalities $h$ such that $\Omega_h$ is attainable for all $\Tgeneric \in V \cap \bigcap_{h \in \widehat{\ineqs}_{\Tgeneric}} U_h$, where $U_h$ is as in \cref{lemma:attainability symbolic}.
    This set is non-empty because $\G_{\uppertriangular}$ is an irreducible variety.
    It follows that $\Delta(T) = \DeltaB(\Tgeneric) \subseteq \bigcap_{h \in \widehat{\ineqs}_{\Tgeneric}} H_h \cap \weylchamber = \Delta((A \ot B \ot C)T)$.
    We conclude $\Delta(T) = \Delta((A \ot B \ot C)T)$.
\end{proof}

\begin{remark}
    After generalizing the algorithms to the general setting, see \cref{remark:attainability general setting,remark:general polynomial system}, it is straightforward to generalize the above correctness proofs as well.
    Here we use that the Borel subgroup $B$ (that replaces $\G_{\uppertriangular}$ in the algorithm) is always connected and therefore irreducible as a variety \cite{procesi2007,borel2012}.
    Only the precise randomization requirements will be different, but they are still obtainable from degree bounds of invariants \cite{derskenDegreeBounds2000} combined with an application of the Schwartz--Zippel lemma, see also \cite{burgisserTheoryNoncommutativeOptimization2019}.
\end{remark}

\section{Implementation and optimization}
\label{section:implementation and optimization}

In this section we discuss implementation details of and optimizations for the algorithms presented in \cref{section:algorithms for computing tensor moment polytopes}.
Most importantly, in \cref{subsection:intro combinatorial description} we make concrete how to effectively enumerate a minimal set of inequalities $\ineqs$ which is ``complete'', in that it satisfies the condition in \cref{lemma:tensor attainability}.
Afterwards, we discuss in \cref{subsection:filtering inequalities} multiple ways to reduce the number of inequalities $h \in \ineqs$ for which attainability of $\Omega_h$ needs to be checked.

In \cref{subsection:tensor implementation} we discuss further implementation details, such as the software we use, and we present some statistics (such as number of inequalities, number of attainability conditions to check, etc.) obtained from running our algorithms.

\subsection{Enumerating a complete set of inequalities}
\label{subsection:intro combinatorial description}

Given integers $a,b$ and $c$ we want to compute a minimal set of inequalities $\ineqs$ containing sufficiently many elements to describe the moment polytope of any tensor in $\C^a\ot\C^b\ot\C^c$, as in \cref{lemma:tensor attainability}.
We begin with some terminology and notation.
To allow for computation, we will represent inequalities concretely as vectors.
A \emph{(linear) inequality} on $\R^{n}$ is given by a tuple $(h,z) \in \R^{n} \times \R$.
Whenever $z = 0$, we call $(h,z)$ a \emph{homogeneous inequality}, and write is simply as $h$.
Associated to an inequality $(h,z)$ is the half-space $H_{(h,z)} \coloneqq \{p \in \R^{n} \mid \<p,h> \geq z\}$, where $\<\cdot,\cdot>$ denotes the standard inner product.
Note that for any $\alpha >0$, $H_{(h,z)} = H_{(\alpha h,\alpha z)}$.
Any $(h,z) \in \R^n \times \R$ can also be interpreted as an \emph{equality}, with associated hyperplane $\{p \in \R^{n} \mid \<p,h> = z\} = H_{(h,z)} \cap H_{(-h,-z)}$.
We denote the \emph{affine span} of a (finite) subset of $\R^n$ by $\aff \{\omega_1,\ldots,\omega_\ell\} \coloneqq \big\{\sum_i \beta_i \omega_i \mid \beta_i \in \R, \sum_i\beta_i = 1 \big\}$.
The dimension of this affine span is equal to the dimension of the linear subspace $\textnormal{span} \{\omega_1-\omega_1,\ldots,\omega_\ell-\omega_1\}$.
The dimension of $\conv \{\omega_1,\ldots,\omega_\ell\}$ is the same as the dimension $\aff \{\omega_1,\ldots,\omega_\ell\}$.

We are interested in polytopes in $\aff \Omega \subseteq \R^{a+b+c}$, where $\Omega = \{(\,e_i\sep e_j \sep e_k\,)\}_{i,j,k}$ denotes the set of weights.
Indeed, $\conv S \subseteq \aff \Omega$ for all $S \subseteq \Omega$, so $\DeltaB(T) \subseteq \aff \Omega$.
This affine subspace $\aff \Omega$ is defined by the equalities
\begin{align}
\label{equation:affine span equalities}
    (h_1,z_1) \coloneqq
    \bigl(
    (\,\bm1\sep \bm0 \sep \bm0\,)
    , 1\bigr)
    ,\
    (h_2,z_2) \coloneqq
    \bigl(
    (\,\bm0\sep \bm1 \sep \bm0\,)
    ,1  \bigr)
    ,\
    (h_3,z_3) \coloneqq
    \bigl(
    (\,\bm0\sep \bm0\sep \bm1\,)
    , 1\bigr)
    ,
\end{align}
where $\bm0$ and $\bm1$ denote the all-zeros and all-ones vectors of the appropriate dimensions respectively.
It follows that $H_{(h,z) + (\alpha h_i,\alpha z_i)} \cap \aff \Omega = H_{(h,z)} \cap \aff \Omega$ for any $\alpha \in \R$ and $i$.
In particular,
we can take $\alpha = -z$ and consider the equivalent (in $\aff \Omega$) homogeneous inequality $(h',0) = (h-zh_1,0) = (h,z) + (\alpha h_1,\alpha z_1)$.
Going forward, all inequalities will be taken to be homogeneous, and written as $h \in \R^{a+b+c}$ (this also explains our notation of inequalities in \cref{section:algorithms for computing tensor moment polytopes}).

A set of vectors $S$ is \emph{affinely independent} whenever $v \notin \aff (S\setminus\{v\})$ for all $v \in S$. This is equivalent to linear independence whenever $\aff S$ does not contain zero.
\begin{lemma}[Affine and linear independence]
\label{lemma:affine and linear independence}
    Suppose $S = \{\omega_1,\ldots,\omega_r\} \subseteq \R^\gdim$ satisfies $0 \notin \aff S$.
    Then $\omega_1,\ldots,\omega_r$ are affinely independent if and only they are linearly independent.
\end{lemma}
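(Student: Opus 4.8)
The plan is to work directly with the linear-algebraic reformulations of the two notions. Recall that $\omega_1,\dots,\omega_r$ are linearly independent precisely when the only solution of $\sum_i \beta_i \omega_i = 0$ is $\beta = 0$, and affinely independent precisely when the only solution of $\sum_i \beta_i \omega_i = 0$ subject to the extra constraint $\sum_i \beta_i = 0$ is $\beta = 0$. I would first record that this coefficient-vector description of affine independence matches the formulation $v \notin \aff(S\setminus\{v\})$ quoted just before the lemma: a violation $v = \sum_{i\neq v}\gamma_i\omega_i$ with $\sum_{i\neq v}\gamma_i = 1$ rearranges into a nontrivial coefficient vector summing to zero, and conversely. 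With these descriptions in hand, one direction is immediate: linear independence constrains a strictly larger set of coefficient vectors than affine independence does, so it implies affine independence — and this implication uses no hypothesis on $0$.

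For the converse I would assume affine independence together with $0 \notin \aff S$, take an arbitrary linear dependence $\sum_i \beta_i \omega_i = 0$, and split according to the value of $s \coloneqq \sum_i \beta_i$. If $s = 0$, then $\beta$ is an affine dependence, so affine independence forces $\beta = 0$. If $s \neq 0$, then dividing by $s$ produces coefficients $\beta_i/s$ with $\sum_i (\beta_i/s) = 1$ and $\sum_i (\beta_i/s)\,\omega_i = 0$, which exhibits $0$ as an affine combination of the elements of $S$, contradicting $0 \notin \aff S$. Hence only the first case can occur, $\beta = 0$, and the $\omega_i$ are linearly independent.

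The argument is short and I do not anticipate a real obstacle; the only point needing care is being explicit about which definition of affine independence is in force and checking that it coincides with the $v \notin \aff(S\setminus\{v\})$ version, so that the lemma as stated is actually what gets proved. Everything else is a one-line case split on whether the coefficients of a purported linear dependence sum to zero.
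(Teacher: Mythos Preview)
Your proof is correct and follows essentially the same approach as the paper: both argue that a nontrivial linear relation either has coefficients summing to zero (yielding an affine dependence) or can be normalized to exhibit $0$ as an affine combination, contradicting $0 \notin \aff S$. The paper phrases the converse as the contrapositive (linearly dependent $\Rightarrow$ affinely dependent), but the content is identical.
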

\begin{proof}
    Linear independence clearly implies affine independence.
    Suppose $S$ is linearly dependent, so $\sum_{i} \beta_i \omega_i = 0$ for some scalars $\beta_i$.
    We know $\sum_i \beta_i = 0$, as otherwise one could consider the affine combination $\sum_{i} \frac{\beta_i}{\sum_j \beta_j} \omega_i = 0$, which contradicts $0 \notin \aff S$.
    Without loss of generality assume that $\beta_1 \neq 0$, and scale the $\beta_i$ such that $\beta_1 = -1$. Then $\sum_{i>1}\beta_i = 1$ and $\sum_{i>1} \beta_i \omega_i = \omega_1$, showing affine dependence.
\end{proof}
We know $0 \notin \aff \Omega$, so affine and linear independence are equivalent for sets of weights.

\subsubsection{Characterizing inequalities by affinely independent weights}

Let $d \coloneqq \dim \aff \Omega = a+b+c-3$.
Recall that our goal is to find a set $\ineqs$ such that for each $S \subseteq \Omega$ there exists a finite subset $\ineqs_S \subseteq \ineqs$ satisfying $\conv S = \bigcap_{h \in \ineqs_S} H_h \cap \weylchamber$ (as in \cref{lemma:tensor attainability}).
By the following lemma, we may assume that each $h \in \ineqs_S$ is tight on $d$ affinely independent weights. That is,
$\{\omega \in \Omega \mid \<h,\omega> = 0\} = \Omega_h \cap \Omega_{-h}$ %
contains $d$ affinely independent elements.
Note that this is not hard to see whenever $\dim \conv S = d$. %
We also note that the result is likely well-known to experts in polytope theory (cf.\ \cite[Theorem~2.15]{ziegler2012lectures}).

\begin{lemma}[Defining inequalities]
    \label{proposition:defining-inequalities}
    Let $S \subseteq \Omega \subseteq \R^n$ be finite subsets with $0 \notin \aff\Omega$. Set $\pdim \coloneqq \dim \aff \Omega$.
    Then $\conv S$ is a finite intersection of half-spaces in $\aff \Omega$ each with boundary of the form $\aff\{\omega_1,\ldots,\omega_\pdim\}$, $\omega_i \in \Omega$.

    In other words,
    there exists a finite set $\ineqs_S \subseteq \R^{n}$
    such that
    \begin{align}
        \conv S = \bigcap_{h \in \ineqs_S} \big\{ p \in \aff \Omega \mid \<p, h> \geq 0\big\},
    \end{align}
    and for each $h \in \ineqs_S$ there exist
    affinely independent $\omega_1, \ldots, \omega_\pdim \in \Omega_h \cap \Omega_{-h}$. %

    Moreover, if $h' \in \ineqs_S$ satisfies
    $\Omega_h \cap \Omega_{-h} = \Omega_{h'} \cap \Omega_{-h'}$,
    then $h = \pm h'$.
\end{lemma}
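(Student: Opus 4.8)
The plan is to define $\ineqs_S$ explicitly as a set of representatives (one per ray) of all vectors $h \in \linspan\Omega\setminus\{0\}$ such that $S \subseteq H_h$ and $\Omega_h\cap\Omega_{-h}$ contains $\pdim$ affinely independent weights. Since there are only finitely many $\pdim$-element subsets of $\Omega$, there are only finitely many candidate boundary hyperplanes $\aff\{\omega_1,\dots,\omega_\pdim\}$, each bounding two half-spaces, so $\ineqs_S$ is automatically finite and each of its elements has by construction $\pdim$ affinely independent weights on its boundary. One checks that for $h\in\linspan\Omega\setminus\{0\}$ the boundary $\{p\in\aff\Omega : \langle p,h\rangle = 0\}$ has dimension exactly $\pdim-1$: it is $\geq\pdim-1$ since it contains $\pdim$ affinely independent points, and it cannot be all of $\aff\Omega$, as that would force $h\perp\linspan\Omega$, hence $h=0$. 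So this boundary equals $\aff\{\omega_1,\dots,\omega_\pdim\}$ for suitable weights $\omega_i$, as required. It then remains to prove (i) $\conv S = \bigcap_{h\in\ineqs_S}\{p\in\aff\Omega : \langle p,h\rangle\geq0\}$ and (ii) the uniqueness clause; we may assume $S\neq\varnothing$. Restricting to $h\in\linspan\Omega$ is harmless, since replacing any $h$ by its orthogonal projection onto $\linspan\Omega$ leaves $H_h\cap\aff\Omega$ unchanged ($\aff\Omega\subseteq\linspan\Omega$); this restriction is used only in (ii).

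For (i), the inclusion ``$\subseteq$'' is immediate since each $H_h$ is convex and contains $S$. For ``$\supseteq$'' I would show that every $p\in\aff\Omega\setminus\conv S$ is cut off by some $h\in\ineqs_S$, splitting into two cases. If $p\in\aff S\setminus\conv S$, then inside the affine space $\aff S$ the polytope $\conv S$ is full-dimensional, so by the Weyl--Minkowski description it equals the intersection of its closed facet half-spaces, whence $p$ lies strictly on the far side of $\aff F$ (from $\conv S$) for some facet $F$; writing $k:=\dim\aff S$, the flat $\aff F$ has dimension $k-1$ and is spanned by $k$ affinely independent vertices of $\conv S$, all lying in $S\subseteq\Omega$. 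If instead $p\notin\aff S$, I would use the ``equality'' hyperplanes containing all of $\aff S$. In both cases the crux is a \emph{thickening step}: starting from a weight-spanned flat $A$ (namely $\aff F$, resp.\ $\aff S$) of dimension $<\pdim$, adjoin weights one at a time to reach a $(\pdim-1)$-dimensional weight hyperplane $\hat H$ of $\aff\Omega$ with $\hat H\cap\aff S = A$ (resp.\ with $p\notin\hat H$). At each step, as long as the current flat $W$ satisfies $\dim\aff(W\cup\aff S)<\pdim$ (resp.\ $\dim\aff(W\cup\{p\})<\pdim$), the fact that $\Omega$ affinely spans $\aff\Omega$ provides a weight outside that flat, which we adjoin; the modular identity $\dim(U\vee W)=\dim U+\dim W-\dim(U\cap W)$ then yields, at the end, $\dim(\hat H\cap\aff S)=(\pdim-1)+k-\pdim=k-1=\dim A$, so $\hat H\cap\aff S=A$, while in the other case $p\notin\hat H$ is preserved directly. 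Taking $H_h$ to be the half-space with boundary $\hat H$ lying on the side of $\conv S$ (when $A=\aff F$), or either half-space (when $A=\aff S$, since then $S\subseteq A\subseteq\hat H$), produces $h\in\ineqs_S$ with $p\notin H_h$, as needed.

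I expect the thickening step to be the main obstacle, since it is precisely where the non-full-dimensional case $\dim\conv S<\pdim$ must be handled; everything else is bookkeeping or an invocation of the standard Weyl--Minkowski theorem. The two slightly delicate points inside it are (a) that the greedy extension never stalls — exactly the statement that a flat of dimension $<\pdim$ cannot contain $\Omega$, true because $\Omega$ affinely spans $\aff\Omega$ — and (b) that adjoining a weight outside $\aff(W\cup\aff S)$ cannot enlarge $\hat H\cap\aff S$ beyond $A$, which is a one-line affine-combination argument, sealed by the dimension count above.

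Finally, for (ii), suppose $h,h'\in\ineqs_S$ with $\Omega_h\cap\Omega_{-h}=\Omega_{h'}\cap\Omega_{-h'}$. This common set contains $\pdim$ affinely independent weights, which by \cref{lemma:affine and linear independence} (using $0\notin\aff\Omega$) are $\pdim$ linearly independent vectors; let $L$ be their $\pdim$-dimensional linear span. Both $h$ and $h'$ vanish on every element of $\Omega_h\cap\Omega_{-h}$ (such an $\omega$ satisfies $\langle\omega,h\rangle\geq0$ and $\langle\omega,-h\rangle\geq0$), hence on $L$, so $h,h'\in L^{\perp}$. On the other hand $h,h'\in\linspan\Omega$, and $\dim\linspan\Omega=\pdim+1$ (again by \cref{lemma:affine and linear independence}: a maximal affinely independent subset of $\Omega$ has $\pdim+1$ elements and is linearly independent). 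Since $L\subseteq\linspan\Omega$, the space $L^{\perp}\cap\linspan\Omega$ is one-dimensional, so $h'=\lambda h$ for some $\lambda\neq0$, i.e.\ $h=\pm h'$ as half-spaces.
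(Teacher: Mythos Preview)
Your proof is correct and takes a genuinely different organizational route from the paper's. The paper fixes once and for all an adapted affine basis $\{\omega_0,\ldots,\omega_d\}\subseteq\Omega$ with $\aff S=\aff\{\omega_0,\ldots,\omega_k\}$ and $\aff\Omega=\aff\{\omega_0,\ldots,\omega_d\}$; the equality hyperplanes are then simply $P_i=\aff(W\setminus\{\omega_{k+i}\})$, and every facet of $\conv S$ is thickened \emph{uniformly} by appending the same tail $\omega_{k+1},\ldots,\omega_d$, so no per-flat greedy extension or modular-law bookkeeping is needed. Your approach instead defines $\ineqs_S$ as \emph{all} admissible half-spaces and proves a separation statement, thickening each flat independently; this is a bit longer but arguably more transparent about why the non-full-dimensional case works. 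One concrete advantage on your side: you actually prove the ``Moreover'' uniqueness clause via the dimension count $\dim\linspan\Omega=d+1$, whereas the paper's proof simply does not address it. Two very small points: your handling of $S=\varnothing$ is asserted rather than argued (it follows easily by intersecting the $d+1$ coordinate hyperplanes of an affine basis of $\Omega$), and the conclusion $h=\pm h'$ in (ii) strictly speaking only yields $h'=\lambda h$ for some $\lambda\neq 0$; the sharpening to $\pm$ depends on the normalization of representatives, which you left implicit.
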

\begin{proof}
 Set $k \coloneqq \dim \aff S$.
Select affinely independent $W \coloneqq \{\omega_0,\ldots, \omega_\pdim\} \subseteq \Omega$ such that
$\aff S = \aff \{\omega_0, \ldots, \omega_k\}$ and $\aff \Omega = \aff \{\omega_0,\ldots, \omega_\pdim\}$.
Then $P_i \coloneqq \aff\, (W \setminus \{\omega_{k+i}\})$ is a hyperplane in $\aff \Omega$ for all $i \geq 1$. Furthermore, $\omega_{k+i} \notin P_i$ by affine independence of $W$.
As a result the dimension of $\bigcap_{i=1}^{d-k} P_i$ is $k$, as each term in the intersection reduces the dimension $d$ of $\aff \Omega$ by one.
Since $S \subseteq P_i$ for all $i$, it follows that $\aff S = \smash{\bigcap_{i=1}^{d-k}} P_i$.
Now extend $P_i$ to a hyperplane $P_i'$ in $\R^\gdim$ by adding $\gdim-d$ points to $W \setminus \{\omega_i\}$ such that the result consists of affinely independent points.
We can choose these points such that $\aff \Omega \cap P_i' = P_i$. Because $0 \notin \aff \Omega$, we may also choose one of these points to be 0.
Now let $h_i \in \R^\gdim$ be a normal vector of $P_i'$.
We have that $P_i = \aff \Omega \cap \{p \in \R^\gdim \mid \<p,h_i> = 0\}$, and thus
 $\aff S = \bigcap_{i} \{p \in \aff \Omega \mid \<p,h_i> = 0\}$.

Next let $\{ \conv \{\nu_1^i, \ldots, \nu_{\ell_i}^i\} \}_i$ be the facets of $\conv S$, where $\nu_j^i \in S$ and~$\ell_i \geq k$ for all $i$.
Assume without loss of generality that~$\smash{\nu_1^i, \ldots, \nu_k^i}$ are affinely independent.
Then the set $\{\nu_1^i, \ldots, \nu_k^i,\omega_{k+1},\ldots,\omega_\pdim\}$ consists of $d$ affinely independent vectors in $\aff \Omega$, and hence its affine span is a hyperplane in $\aff \Omega$.
As before, extend $F_i$ to a hyperplane in $\R^\gdim$
going though 0.
Let $f_i$ be a corresponding normal vector pointing in the direction of $S$.
When restricting to $\aff S$, $f_i$ is a normal vector of the facet, so
$
    \conv S = \bigcap_i \big\{ p \in \aff S \mid \<p, f_i> \geq 0 \big\}.
$
Taking $\ineqs_S \coloneqq \{\pm h_i\}_i \cup \{f_i\}_i$ then finishes the proof.
\end{proof}

The inequality enumeration algorithm enumerates all inequalities that are tight on $d$ affinely independent weights.
Because $d = \dim \aff \Omega$,
this requirement defines uniquely two half-spaces $H_{h} \cap \aff \Omega$ and $H_{-h} \cap \aff \Omega$ in $\aff \Omega$.
Namely, we can take the inequality $h$ to be an orthogonal vector to $\omega_1,\ldots,\omega_d$ as well as to the vectors
\begin{align}
\label{equation:tensor trivial inequalities}
    (\,\bm1 \sep -\bm1 \sep \bm0\,),\quad
    (\,\bm0 \sep \bm1 \sep -\bm1\,)
    \in \R^{a+b+c},
\end{align}
which are two homogeneous equalities that are valid for all elements in $\aff \Omega$.
This imposes $(a+b+c-3)+2 = a+b+c-1$ linear conditions on $h \in \R^{a+b+c}$, and thus uniquely defines $h$ up to scaling by some $\alpha \in \R$.
Hence, we obtain two inequalities $h$ and $-h$.

It will be useful to formulate the above more sharply in terms of matrices.
We call a matrix $M \in \N^{r \times (a+b+c)}$ with row $i$ equal to $\omega_i \in \Omega$ a \emph{weight matrix}.
The weights $\omega_1,\ldots,\omega_r$ are linearly independent precisely when the rank of this matrix equals $r$.
As shorthand notation, we denote weight matrices as
\begin{align}
    \label{eq:weight matrices}
    M = (\omega_1;\ldots;\omega_r)
    \coloneqq
    \left(\ \begin{matrix}
        \omega_{1,1} & \ldots & \omega_{1,a+b+c} \\\hline
        \omega_{2,1} & \ldots & \omega_{2,a+b+c} \\\hline
            & \vdots & \\\hline
        \omega_{r,1} & \ldots & \omega_{r,a+b+c}
    \end{matrix}\ \right).
\end{align}
Now suppose $r = d$ and $\rank M = d$. Let $M_{\textnormal{e}}$ be the matrix obtain from $M$ by appending two rows equal to the vectors in \cref{equation:tensor trivial inequalities}.
These vectors are orthogonal to $\aff \Omega$ and to each other, so $\rank M_{\textnormal{e}} = d+2 = a+b+c-1$.
The kernel of $M_{\textnormal{e}}$ is then one-dimensional, spanned by some vector $h$.
The weight matrix $M$ therefore defines the two unique inequalities $h$ and $-h$.

Furthermore, since the matrix $M_{\textnormal{e}}$ only has integer entries, we know its kernel is spanned by a vector with integer entries.
We choose $h \in \Z^{a+b+c}$ such that the absolute values of its entries have greatest common divisor~$1$.
This eliminates the ambiguity resulting from the fact that inequalities are defined up to positive scaling.

\begin{example}
Let $a = b = c = 3$. The matrix $M_{\textnormal{e}}$ below has weights in its first $d = 6$ rows, and has rank $d+2=8$.
The element $h$ spans its kernel. This weight matrix uniquely defines the two inequalities $h$ and $-h$.
\begin{align*}
M_{\textnormal{e}} = &\scalebox{0.8}{$\left(
\begin{array}{rrr|rrr|rrr}
\phantom{-}1 & \phantom{-}0 & \phantom{-}0  &  1 & 0 & 0  &  1 & 0 & 0 \\
1 & 0 & 0  &  0 & 1 & 0  &  0 & 1 & 0 \\
0 & 1 & 0  &  1 & 0 & 0  &  0 & 0 & 1 \\
0 & 1 & 0  &  0 & 1 & 0  &  1 & 0 & 0 \\
0 & 1 & 0  &  0 & 0 & 1  &  0 & 1 & 0 \\
0 & 0 & 1  &  0 & 0 & 1  &  0 & 0 & 1 \\\hline
1 & 1 & 1  & -1 &-1 &-1  &  0 & 0 & 0 \\
0 & 0 & 0  &  1 & 1 & 1  & -1 &-1 &-1 \\
\end{array}\right)$}\\
h =
&\scalebox{0.8}{$\hspace{0.40em}
\left(
\begin{array}{rrr|rrr|rrr}
\mathrlap{-11}\phantom{-1} &  \mathrlap{-2}\phantom{-1} & \mathrlap{16}\phantom{-1} &
\mathrlap{10}\phantom{-1} &  \mathrlap{\phantom{-}1}\phantom{-1} & \mathrlap{-8}\phantom{-1} &
\mathrlap{\phantom{-}1}\phantom{-1} &  \mathrlap{10}\phantom{-1} & \mathrlap{-8}\phantom{-1}
\end{array}
\right)$}
\end{align*}
\end{example}

\subsubsection{Inequality enumeration algorithm}

We arrive at the following algorithm for enumerating the set of possible inequalities $\ineqs$.
Order the set $\Omega$ by some total order $\preccurlyeq$ (later we will use the reverse-lexicographic order).
Kernels are invariant under row permutations, so we can order the rows of the weight matrices with respect to this ordering.
We write $\mathcal M$ for the set of ordered weight matrices with $d$ linearly independent rows.

\begin{algorithmbreak}{Inequality enumeration.}
\label{algorithm:tensor weight-matrix-enumeration}%
\textbf{Input:} A set of weights $\Omega \subseteq \R^{a+b+c}$. \\
\textbf{Output:} A set of inequalities $\ineqs \subseteq \Z^{a+b+c}$. \\
\textbf{Algorithm:}
\begin{algorithmic}[1]
    \State Initialize $\mathcal M \gets \{ (\omega) \mid \omega \in \Omega\}$. \Comment{\textit{Enumerate weight matrices}}
    \State Let $d \coloneqq \dim \aff \Omega = a+b+c-3$.
    \For{$r = 2,\ldots,d$}
        \State Update $\mathcal M \gets \big\{ (\omega_{1};\ldots;\omega_{r-1};\omega_{r}) \mid (\omega_{1};\ldots;\omega_{r-1}) \in \mathcal M, \omega_{r-1} \preccurlyeq \omega_r\big\}$.
        \State Filter $\mathcal M \gets \big\{ M \in \mathcal M \mid \rank\,  M = r \big\}$.
    \EndFor
    \Statex
    \State Initialize $\ineqs \gets \varnothing$. \Comment{\textit{Determine inequalities}}
    \For{$M \in \mathcal M$}
        \State Construct $M_{\textnormal{e}}$ from $M$ by appending to $M$ the rows $(\,\bm1 \sep -\bm1 \sep \bm0\,)$ and $(\,\bm0\sep\bm1\sep-\bm1\,)$.
        \State Compute a non-zero $h \in \Z^{a+b+c}$ with $M_{\textnormal{e}} h = 0 $ using row reduction.
        \State Scale $h$ such that the absolute values of its entries are co-prime.
        \State Add the output $+h$ and $-h$ to $\ineqs$.
    \EndFor
    \Statex
    \State \Return $\ineqs$
\end{algorithmic}
\end{algorithmbreak}

\begin{lemma}
\label{lemma:weight enumeration algorithm}
Let $\ineqs$ be the output of \cref{algorithm:tensor weight-matrix-enumeration} with as input a set of weights $\Omega$.
Then for each $S \subseteq \Omega$, there exists $\ineqs_S \subseteq \ineqs$ such that $\conv S = \bigcap_{h \in \ineqs_S} H_h$.
\end{lemma}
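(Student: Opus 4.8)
The plan is to reduce the claim to \cref{proposition:defining-inequalities} and then to match the inequalities it produces with elements of the set $\ineqs$ output by \cref{algorithm:tensor weight-matrix-enumeration}. Fix $S\subseteq\Omega$ and write $n:=a+b+c$ and $d:=\dim\aff\Omega=n-3$. Applying \cref{proposition:defining-inequalities} with ambient space $\R^{n}$ (in which $\aff\Omega$ has dimension $d$) produces a finite set $\ineqs_S'\subseteq\R^{n}$ with $\conv S=\bigcap_{h\in\ineqs_S'}\{p\in\aff\Omega\mid\langle p,h\rangle\geq 0\}$ such that each $h\in\ineqs_S'$ is tight on $d$ affinely independent weights $\omega_1,\dots,\omega_d\in\Omega_h\cap\Omega_{-h}$, i.e.\ $\langle h,\omega_i\rangle=0$; moreover each such $h$ is non-constant on $\aff\Omega$, as is visible from the explicit construction in the proof of \cref{proposition:defining-inequalities}. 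It then suffices to show that each $h\in\ineqs_S'$ can be replaced by $\pm g$ for some $g\in\ineqs$ without altering the half-space $\{p\in\aff\Omega\mid\langle p,h\rangle\geq 0\}$; the set $\ineqs_S$ of these (suitably signed) $g$'s, one for each $h\in\ineqs_S'$, is then as required.

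So fix $h\in\ineqs_S'$ with tight weights $\omega_1,\dots,\omega_d$. Since $0\notin\aff\Omega$, \cref{lemma:affine and linear independence} upgrades affine independence to linear independence, so after sorting the $\omega_i$ by $\preccurlyeq$ the weight matrix $M=(\omega_1;\dots;\omega_d)$ has rank $d$ and hence lies in the set $\mathcal M$ built by \cref{algorithm:tensor weight-matrix-enumeration}: a short induction on $r$ shows every ordered rank-$d$ weight matrix is produced, since each of its length-$r$ prefixes again has rank $r$ and its $r$-th row is $\succcurlyeq$ its $(r-1)$-st. For this $M$ the algorithm forms $M_{\textnormal{e}}$ by appending the rows $t_1=(\,\bm{1}\sep-\bm{1}\sep\bm{0}\,)$ and $t_2=(\,\bm{0}\sep\bm{1}\sep-\bm{1}\,)$. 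These two vectors are orthogonal to $\linspan\Omega$ (on which $\sum_i u_i=\sum_i v_i=\sum_i w_i$) and therefore to the rows of $M$, and they are linearly independent, so $\rank M_{\textnormal{e}}=d+2=n-1$, the kernel of $M_{\textnormal{e}}$ is a line, and the algorithm outputs $\pm g$ for an integer generator $g$ of this line.

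It remains to compare $h$ with $g$. Because $\langle t_j,\omega\rangle=0$ for every $\omega\in\Omega$, the perturbed vector $\tilde h:=h+s_1t_1+s_2t_2$ remains tight on $\omega_1,\dots,\omega_d$ for all scalars $s_1,s_2$; since the Gram matrix of $t_1,t_2$ is invertible, I can choose the unique $s_1,s_2$ with $\tilde h\perp t_1,t_2$, and then $\tilde h\in\ker M_{\textnormal{e}}$, so $\tilde h=\lambda g$ for some scalar $\lambda$. As $t_1,t_2$ vanish on $\aff\Omega$ we have $\langle p,\tilde h\rangle=\langle p,h\rangle$ for all $p\in\aff\Omega$; in particular $\tilde h\neq 0$ because $h$ is non-constant on $\aff\Omega$, so $\lambda\neq 0$ and $\{p\in\aff\Omega\mid\langle p,h\rangle\geq 0\}=H_{\sgn(\lambda)\,g}\cap\aff\Omega$ with $\sgn(\lambda)\,g\in\ineqs$. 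Collecting these signed generators into $\ineqs_S$ gives $\conv S=\bigcap_{g\in\ineqs_S}H_g\cap\aff\Omega$; since $\conv S\subseteq\aff\Omega$ this is exactly the asserted identity (with the right-hand side intersected with $\aff\Omega$, as is implicit throughout this section).

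The delicate point is the comparison of $h$ and $g$ in the last step: one must perturb $h$ by a combination of the two ``trivial'' equalities $t_1,t_2$ so that it lands \emph{exactly} in $\ker M_{\textnormal{e}}$ while leaving unchanged both the half-space it cuts out of $\aff\Omega$ and its tightness on $\omega_1,\dots,\omega_d$, and one must know the perturbed vector is nonzero so that a sign can be assigned to it. The other ingredients — that the enumeration produces all rank-$d$ weight matrices and that $\rank M_{\textnormal{e}}=n-1$ — are routine linear algebra that I would only sketch.
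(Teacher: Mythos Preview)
Your proof is correct and follows essentially the same route as the paper: apply \cref{proposition:defining-inequalities}, upgrade affine to linear independence via \cref{lemma:affine and linear independence}, perturb $h$ by a combination of the two trivial vectors $t_1,t_2$ so that it lies in $\ker M_{\textnormal{e}}$ while leaving the half-space on $\aff\Omega$ unchanged, and then identify the result (up to scaling and sign) with the integer generator $\pm g$ output by the algorithm. You are somewhat more explicit than the paper about the nonvanishing of the perturbed vector and the resulting sign choice, but the structure is the same.
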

\begin{proof}
Let $S \subseteq \Omega$.
Observe that $0 \notin \aff \Omega$ and
let $\ineqs_{S}$ be the set of inequalities obtained from \cref{proposition:defining-inequalities} applied to $S$.
We have to show that we may without loss of generality take $\ineqs_{S} \subseteq \ineqs$.

Let $q_1 \coloneqq (\,\bm1 \sep -\bm1 \sep \bm0\,) \in \R^{a+b+c}$ and
$q_2 \coloneqq (\,\bm0 \sep \bm1 \sep -\bm1\,)\in \R^{a+b+c}$.
We may assume each $h \in \ineqs_{S}$ satisfies $\<q_1,h> = \<q_2,h> = 0$,
which follows from the fact that
$h' = h + \beta_1 h_1 + \beta_2 h_2$ satisfies $\<p,h'> = \<p,h>$ for all $p \in \aff \Omega$ and $\beta_1,\beta_2 \in \R$.

Now take $h \in \ineqs_{S}$ and let $d = a+b+c-3$.
By \cref{proposition:defining-inequalities}, we know that $\<\omega_i,h> = 0$ for affinely independent $\{\omega_1,\cdots,\omega_d\} \subseteq \Omega$. Index the set in such a way that $\omega_1 \preccurlyeq \ldots \preccurlyeq \omega_d$.
By \cref{lemma:affine and linear independence}, $\{\omega_1,\ldots,\omega_d\}$ is linearly independent and hence
the weight matrix $(\omega_1;\ldots;\omega_r)$ (see \cref{eq:weight matrices} for notation) has rank $r$ for all $r \leq d$.
It follows that $(\omega_1;\ldots;\omega_r) \in \mathcal M$ with $\mathcal M$ as computed in \cref{algorithm:tensor weight-matrix-enumeration}.
Moreover, $(\omega_1;\ldots;\omega_d;h_0;h_1)\, h = 0$.

Since the entries of $(\omega_1;\ldots;\omega_r;h_0;h_1)$ are integer, we may scale $h$ such that it lies in $\Z^{a+b+c}$ and the absolute values of its entries are co-prime.
We can perform this scaling for all elements of $\ineqs_{S}$. This means we may indeed choose $\ineqs_{S}$ such that $\ineqs_{S} \subseteq \ineqs$.
\end{proof}

\begin{remark}
\label{remark:general enumeration}
We briefly sketch how to extend \cref{algorithm:tensor weight-matrix-enumeration} to the general case (see \cref{remark:attainability general setting}).
We might not have $0 \notin \aff \Omega$, and we cannot apply \cref{lemma:affine and linear independence}. Hence inequalities must be taken to be affine, that is, equal to elements $(h,z) \in \R^{a+b+c} \times \R$.
In \cref{algorithm:tensor weight-matrix-enumeration}, affine independence of $\{\omega_1,\ldots,\omega_r\}$ can be determined by adding to the weight matrix $M = (\omega_1;\cdots;\omega_r)$ a column with all entries equal to $-1$, and checking if the rank of the resulting matrix equals $r$.
The kernel entry corresponding to the added column then corresponds to the scalar $z$ of the inequality.

For uniqueness of $(h,z) \in \ineqs$, instead of adding the homogeneous equations given in \cref{equation:tensor trivial inequalities} as rows, one can append to $M$ as rows the $\codim \aff \Omega$ many affine equalities defining $\aff \Omega$ (e.g.\ the equalities from \cref{equation:affine span equalities}).
The rest of the algorithm remains the same.
\end{remark}

\subsubsection{Permutation symmetry reduction}

In \Cref{algorithm:tensor weight-matrix-enumeration} we compute integer elements in the kernel of a large number of matrices.%
\footnote{A rough upper bound is $\binom{|\Omega|}{d} = \binom{abc}{a+b+c-3}$.}
In practice, this is a computationally expensive task.
We can greatly reduce the number of such computations by exploiting the symmetries in the set of weights $\Omega$.
Recall that $\Omega = \Omega_a \times \Omega_b \times \Omega_c$ where $\Omega_a = \{e_1,\ldots,e_a\} \subseteq \R^a$.
Let $S_n$ denote the symmetric group on $n$ symbols.
Then $\Omega$ is closed under the action of any $\pi \in S_a \times S_b \times S_c$, where $\pi \cdot (e_i \sep e_j \sep e_k) = (e_{\pi_1(i)} \sep e_{\pi_2(j)} \sep e_{\pi_3(k)})$.
Define $\weylgroup \coloneqq  S_a \times S_b \times S_c$.\footnote{The notation comes from the general setting described in \cref{remark:general weyl group symmetries} below.}
This same group then acts on the set of weight matrices by permuting the columns.

Of course, permuting the columns will simply permute the entries of the kernel elements in the same way.
By the above, we would only need to compute the kernel element for a single weight matrix $M$ in its $\weylgroup$-orbit. Afterwards, we can find all kernel elements by permuting the kernel element according to $\weylgroup$.

Additionally, kernel elements are invariant under row permutations, as we exploited in \cref{algorithm:tensor weight-matrix-enumeration}.
Hence, we will define in \cref{definition:tensor weight-matrix-normal-form} a subset of \emph{standard} weight matrices, with the property that any weight matrix can be permuted---using any row permutations, and system column permutations from $\weylgroup$---to a standard weight matrix. Then we can reduce the search for kernel elements to just those of the standard weight matrices. The definition was chosen for efficiency of checking the condition.

\begin{definition}[Standard weight matrix]
    \label{definition:tensor weight-matrix-normal-form}
    Define the \emph{cost function}
    $\cost \colon \Omega \to \N^3$ by setting $\cost((e_i \sep e_j \sep e_k)) \coloneqq (i,j,k)$.
    Let $\preceq$ be the lexicographic order on
    $\N^{a}\times \N^b \times \N^c \supset \Omega$.
    We set a maximum over the empty set to equal 0.
    We call a weight matrix $(\omega_1;\ldots;\omega_r)$ \emph{standard} when
    \begin{enumerate}[label=(\arabic*)]
        \item $\omega_1 \succeq \omega_2 \succeq \cdots \succeq \omega_r$,
        \item \label{item:standard weight matrix definition cost}
        $\displaystyle \cost(\omega_\ell)_1 \leq 1 + \max_{1\leq \ell' < \ell} \cost(\omega_{\ell'})_1$ for all $\ell \in [r]$, and similarly for the other two systems.
    \end{enumerate}
\end{definition}
In plain language, a weight matrix is standard when its rows are ordered with respect to the reverse lexicographic ordering and the cost of every system at a row only increases by at most 1 beyond the maximum of the rows above.
The former fixes the ordering of the rows, and the latter removes the permutation freedom in every system.

\begin{example}
The following weight matrix for $\C^4 \ot \C^3 \ot \C^3$ is standard. The weight matrices of $\C^2 \ot \C^3 \ot \C^2$ to its right are not: the top weight matrix does not have lexicographically ordered rows, and the bottom one has a system cost increase of $2$ in the second subsystem.
\begin{equation*}
\scalebox{0.8}{$
\left(\begin{array}{cccc|ccc|ccc}
1 & 0 & 0 & 0 & 1 & 0 & 0 & 1 & 0 & 0 \\
0 & 1 & 0 & 0 & 0 & 1 & 0 & 0 & 1 & 0 \\
0 & 1 & 0 & 0 & 0 & 1 & 0 & 0 & 0 & 1 \\
0 & 0 & 1 & 0 & 0 & 0 & 1 & 1 & 0 & 0 \\
0 & 0 & 0 & 1 & 1 & 0 & 0 & 0 & 0 & 1 \\
0 & 0 & 0 & 1 & 0 & 1 & 0 & 1 & 0 & 0 \\
\end{array}\right)$}
\qquad\qquad
\scalebox{0.8}{$
\begin{array}{c}
\left(\begin{array}{cc|ccc|cc}
1 & 0 & 1 & 0 & 0 & 1 & 0 \\
0 & 1 & 0 & 1 & 0 & 0 & 1 \\
0 & 1 & 1 & 0 & 0 & 0 & 1 \\
\end{array}\right)
\\ \\
\left(\begin{array}{cc|ccc|cc}
1 & 0 & 1 & 0 & 0 & 1 & 0 \\
0 & 1 & 0 & 0 & 1 & 1 & 0 \\
\end{array}\right)
\end{array}$}
\end{equation*}
\end{example}

\begin{lemma}[Standard weight matrix]
\label{lemma:tensor standard-weight-matrix}
Every weight matrix $(\omega_1;\ldots;\omega_r)$ can be permuted into a standard weight matrix using row permutations, and column permutations from $\weylgroup = S_a \times S_b \times S_c$.
\end{lemma}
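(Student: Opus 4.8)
The plan is to produce the required permutations by an explicit greedy construction that builds the target standard matrix one row at a time from the top, fixing the new row order and the column relabelling of each system simultaneously. Write $(m_1,m_2,m_3)=(a,b,c)$, and for each system $s\in\{1,2,3\}$ maintain a partial \emph{order-consistent relabelling} $\rho_s$: an injective partial map $\{1,\dots,m_s\}\to\{1,\dots,m_s\}$ whose domain records the system-$s$ labels already used and which sends the $k$-th label ever added to its domain to the value $k$. Starting with all $\rho_s$ empty, I would run $r$ steps. At step $\ell$, for every not-yet-placed original row $\omega=(e_i\sep e_j\sep e_k)$ I form its \emph{tentative relabelling} $(e_{\bar\rho_1(i)}\sep e_{\bar\rho_2(j)}\sep e_{\bar\rho_3(k)})$, where $\bar\rho_s$ extends $\rho_s$ by sending any label outside its domain to the smallest unused value $|\mathrm{dom}\,\rho_s|+1$; I then choose a row whose tentative relabelling is largest for the order $\preceq$ of \cref{definition:tensor weight-matrix-normal-form} (ties broken arbitrarily), declare it the new row $\omega'_\ell$, and commit the implied extensions of the $\rho_s$.

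Two verifications are routine. First, the output is indeed a row permutation together with a $\weylgroup$-column permutation of $M$: the $\omega'_\ell$ are the $r$ rows of $M$ in a new order, each $\rho_s$ is injective (fresh labels get fresh values) and can be completed arbitrarily to a bijection of $\{1,\dots,m_s\}$ whose extra values label rows occurring in no $\omega'_\ell$, so $\pi:=(\rho_1,\rho_2,\rho_3)\in\weylgroup$ applied to $M$, followed by the reordering, produces $(\omega'_1;\dots;\omega'_r)$. Second, condition~(2) of \cref{definition:tensor weight-matrix-normal-form} holds: one checks the invariant that at the start of step $\ell$ the assigned system-$s$ values fill the initial segment $\{1,\dots,p_s\}$ with $p_s:=|\mathrm{dom}\,\rho_s|$, and that $p_s$ is attained among $\omega'_1,\dots,\omega'_{\ell-1}$ (in the row that introduced it); hence $\cost(\omega'_\ell)_s\le p_s$ if $\omega'_\ell$ reuses a label and $=p_s+1$ if it introduces a fresh one, so in both cases $\cost(\omega'_\ell)_s\le 1+\max_{\ell'<\ell}\cost(\omega'_{\ell'})_s$, the empty maximum at $\ell=1$ forcing $\omega'_1=(e_1\sep e_1\sep e_1)$.

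The substantive step, which I expect to be the main obstacle, is condition~(1), that $\omega'_1\succeq\omega'_2\succeq\dots\succeq\omega'_r$. For this I would establish a monotonicity statement: committing the relabelling extensions at step $\ell$ can only weakly decrease, for $\preceq$, the tentative relabelling of every row still in the pool. Indeed, adjoining a fresh system-$s$ label leaves $\bar\rho_s$ unchanged on labels already assigned and on the label just adjoined, and increases it by exactly one on every other still-fresh system-$s$ label (one more label now precedes it); thus each of $\bar\rho_1(i),\bar\rho_2(j),\bar\rho_3(k)$ for a pooled row weakly increases, so the row's indicator vector weakly decreases for $\preceq$. Since the pool shrinks between steps $\ell$ and $\ell+1$ and $\omega'_\ell$ was chosen $\preceq$-maximal over the step-$\ell$ pool, it follows that $\omega'_{\ell+1}$ --- a maximum over a subset of rows each of whose tentative relabelling is now $\preceq$-smaller --- satisfies $\omega'_{\ell+1}\preceq\omega'_\ell$. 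This gives condition~(1) and completes the argument.
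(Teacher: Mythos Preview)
Your proof is correct and takes a genuinely different route from the paper's. The paper first sorts the rows to satisfy condition~(1), then sweeps top-to-bottom once per system: whenever a row violates condition~(2) in system~$s$, it swaps the offending column with the column whose index is $1+\max$ of the costs above; both columns are necessarily zero in all earlier rows, so those rows are untouched, the new row sits between its predecessor and its old value in~$\preceq$, and one re-sorts the rows below before continuing. A final observation is that later system-sweeps do not disturb earlier systems because any required re-sorting only permutes rows that already agree in the earlier systems.

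Your greedy construction is conceptually cleaner: it builds the standard matrix in a single pass, and the monotonicity lemma (committing a step can only push remaining tentative relabellings down in~$\preceq$) neatly dispatches condition~(1) without the somewhat delicate ``re-sort below, then argue later sweeps don't undo earlier work'' bookkeeping in the paper. The paper's proof, on the other hand, is closer to an implementable procedure and makes it transparent that each system can be handled independently, which is what is later exploited for the outer-standard refinement in \cref{lemma:tensor outer-standard-weight-matrix}. Both arguments are of comparable length.
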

\begin{proof}%
We give an algorithm to permute $M = (\omega_1;\ldots;\omega_r)$ into a standard weight matrix.
First, permute the rows such that it satisfies the first condition.
Then iterate over the rows from top to bottom. We consider only the first system for now.
Suppose we come across a row $\omega_\ell = (e_i \sep e_j \sep e_k)$ that does not satisfy the second condition for the first system.
We then know the columns $m_\ell \coloneqq 1 + \max_{1\leq \ell' < \ell} \cost(\omega_{\ell'})_1$ and $i$ of $M$ contain only zeros in the first $\ell-1$ rows.
We swap these columns, after which $M$ satisfies the second condition for the first system up to row $\ell$.
The first $\ell-1$ rows stay the same, and this new row $\omega_\ell'$ satisfies $\omega_{\ell-1} \succeq \omega_{\ell}' \succeq \omega_\ell$. Hence the first condition is satisfied until row $\ell$.
Permute the rows below to satisfy the first condition, and continue iterating.

After going from top to bottom, we satisfy the first condition, and the second condition for the first system.
We iterate again from top to bottom, this time considering the second system.
We do exactly as before. This leaves the first system invariant. Namely, the column permutations do not affect the first system, and the row transpositions only occur when the rows are not anymore sorted.
Because of the lexicographic ordering, this can only happen when the rows are equal in the first subsystem.
We conclude that after reaching the bottom again, we satisfy the first condition and the second condition for the first two system. We do it once more for the third system, and we are done.
\end{proof}

Using this definition, we can update our inequality enumeration algorithm.
Order the weights $\{\omega^1, \ldots, \omega^\ell\} = \Omega$ according to $\preceq$.
Now let $\mathcal M$ be all standard weight matrices with $d$ affinely independent rows.
We can run the kernel computation procedure on all elements of $\mathcal M$.
Write $\ineqs^{\setminus \textnormal{perm}}$ for the set of outcomes.
Then we determine $\ineqs$ by letting the $\weylgroup$ permute all elements of $\ineqs^{\setminus \textnormal{perm}}$.

Since $\ineqs$ gets large and $\ineqs^{\setminus \textnormal{perm}}$ can contain many permutation symmetries, the following procedure is used to construct $\ineqs$. It is based on the fact that $S_a = (1,a)S_{a-1} \cup (2,a)S_{a-1} \cup \cdots \cup (a,a)S_{a-1}$.
It will be useful to indicate the system to which a permutation applies with a subscript. For example, $(i,j)_\ell$ with applies the transposition $(i,j)$ to the $i$-th and $j$-th columns among the columns corresponding to the $\ell$-th system.

\begin{algorithmbreak}{Inequality enumeration, using permutation symmetries.}
\label{algorithm:tensor weight-matrix-enumeration-no-weyl}%
\textbf{Input:} A set of weights $\Omega = \{\omega^1,\ldots,\omega^\ell\} \subseteq \Z^{a+b+c}$ ordered reversed-lexicographically. \\
\textbf{Output:} A set of inequalities $\ineqs \subseteq \Z^{a+b+c}$. \\
\textbf{Algorithm:}
\begin{algorithmic}[1]
    \State Initialize $\mathcal M \gets \{ (\omega^1),\ldots, (\omega^\ell)\}$. \Comment{\textit{Enumerate weight matrices}}
    \State Compute $d \coloneqq \dim \aff \Omega = a+b+c-3$.
    \For{$r = 2,\ldots,d$}
        \State Update $\mathcal M \gets \big\{ (\omega_{i_1};\ldots;\omega_{i_{r-1}};\omega_{i_r}) \mid (\omega_{i_1};\ldots;\omega_{i_{r-1}}) \in \mathcal M, i_{r-1} < i_r\big\}$.
        \State Filter $\mathcal M \gets \big\{ M \in \mathcal M \mid M \textnormal{ is standard}\big\}$.
        \State Filter $\mathcal M \gets \big\{ M \in \mathcal M \mid \rank\,  M = r \big\}$.
    \EndFor \label{line:weight matrix loop}
    \Statex
    \State Initialize $\ineqs^{\setminus \textnormal{perm}} \gets \varnothing$. \Comment{\textit{Determine inequalities}}
    \For{$M \in \mathcal M$}
        \State Construct $M_{\textnormal{e}}$ from $M$ by appending to $M$ the rows $(\,\bm1 \sep -\bm1 \sep \bm0\,)$ and $(\,\bm0\sep\bm1\sep-\bm1\,)$.
        \State Compute a non-zero $h \in \Z^{a+b+c}$ with $M_{\textnormal{e}} h = 0 $ using row reduction.
        \State Scale $h$ such that the absolute values of its entries are co-prime.
        \State Add the output $\pm h$ to $\ineqs^{\setminus \textnormal{perm}}$.
    \EndFor
    \Statex
    \State Initialize $\ineqs \gets \ineqs^{\setminus \textnormal{perm}}$ \Comment{\textit{Restore $\weylgroup$ symmetries}}
    \For{$m \in \{a,a-1,\ldots,1\}$}
        \State Update $\ineqs \gets (1,m)_1 \ineqs \cup (2,m)_1\ineqs \cup \ldots \cup (m,m)_1\ineqs$
    \EndFor
    \State Do the above for the second and third system as well.\label{line:symmetry restoration loop}
    \Statex
    \State \Return $\ineqs$
\end{algorithmic}
\end{algorithmbreak}

\begin{lemma}
\label{lemma:inequality enumeration weyl permutations}
Given input $\Omega = \{(\,e_i\sep e_j \sep e_k\,)\}_{i,j,k} \subseteq \Z^{a+b+c}$, the outputs $\ineqs$ of algorithm \cref{algorithm:tensor weight-matrix-enumeration} and $\ineqs'$ of \cref{algorithm:tensor weight-matrix-enumeration-no-weyl} are equal.
\end{lemma}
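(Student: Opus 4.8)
The plan is to express both outputs as unions of a common elementary object and then reduce the equality to a statement about $\weylgroup$-orbits. For a weight matrix $M$ with $d \coloneqq a+b+c-3$ linearly independent rows, write $\Phi(M) \coloneqq \{h,-h\}$ for the (well-defined up to sign) pair of primitive integer vectors spanning the one-dimensional kernel of the extended matrix $M_{\mathrm e}$, i.e.\ $M$ with the two rows $q_1 \coloneqq (\,\bm1\sep-\bm1\sep\bm0\,)$ and $q_2 \coloneqq (\,\bm0\sep\bm1\sep-\bm1\,)$ appended. By inspection of \cref{algorithm:tensor weight-matrix-enumeration} its output is $\ineqs = \bigcup_M \Phi(M)$, the union over all ordered weight matrices $M$ with $d$ linearly independent rows. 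For \cref{algorithm:tensor weight-matrix-enumeration-no-weyl}, I would first argue---by the same reasoning as in the proof of \cref{lemma:weight enumeration algorithm}, together with the two easy observations that every prefix of a standard weight matrix is again standard and that any subfamily of a linearly independent family of rows is linearly independent---that the set $\mathcal M$ produced by its first loop is exactly the set of \emph{standard} weight matrices with $d$ linearly independent rows, so that $\ineqs^{\setminus\textnormal{perm}} = \bigcup_{M\text{ standard}} \Phi(M)$. The lemma then reduces to two claims: (a) the final loop of \cref{algorithm:tensor weight-matrix-enumeration-no-weyl} returns $\weylgroup\cdot\ineqs^{\setminus\textnormal{perm}}$, and (b) $\weylgroup\cdot\ineqs^{\setminus\textnormal{perm}} = \ineqs$.

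For (b) the key point is an equivariance property of $\Phi$. A permutation of the rows of $M$ does not change the row space of $M_{\mathrm e}$, hence leaves $\Phi(M)$ unchanged; and a column permutation $\pi \in \weylgroup$ fixes each of $q_1, q_2$, so it permutes the columns of $M_{\mathrm e}$ in the obvious way, permutes the spanning kernel vector accordingly, and---since permutations preserve integrality and the greatest common divisor of the absolute values of the entries---satisfies $\Phi(\pi\cdot M) = \pi\cdot\Phi(M)$. In both cases the rank, hence the number of linearly independent rows, is unchanged, and since row and column permutations commute we get $\Phi(\pi\cdot\rho\cdot M) = \pi\cdot\Phi(M)$ for every row permutation $\rho$ and every $\pi\in\weylgroup$. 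Granting this, ``$\ineqs\subseteq\weylgroup\cdot\ineqs^{\setminus\textnormal{perm}}$'' follows from \cref{lemma:tensor standard-weight-matrix}: any ordered rank-$d$ weight matrix $M$ can be brought to a standard matrix $M' = \pi\cdot\rho\cdot M$ with $\pi\in\weylgroup$, whence $\Phi(M) = \pi^{-1}\cdot\Phi(M')\subseteq\weylgroup\cdot\ineqs^{\setminus\textnormal{perm}}$; conversely, for standard $M'$ and $\pi\in\weylgroup$ the matrix $\pi\cdot M'$ is again a rank-$d$ weight matrix, and after sorting its rows it appears among the matrices enumerated by \cref{algorithm:tensor weight-matrix-enumeration}, so $\pi\cdot\Phi(M') = \Phi(\pi\cdot M')\subseteq\ineqs$, giving ``$\supseteq$''.

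For (a), I would track the effect of the symmetry-restoration loop. Running $m = a, a-1, \dots, 1$ and replacing the current set $X$ by $\bigcup_{i=1}^m (i,m)_1 X$ produces, after all these passes, $\{\,(i_1,1)_1(i_2,2)_1\cdots(i_a,a)_1 : 1\le i_m\le m\,\}\cdot\ineqs^{\setminus\textnormal{perm}}$; by the coset decomposition $S_a = \bigsqcup_{i=1}^a (i,a)S_{a-1}$ iterated down to $S_1 = \{e\}$ (equivalently, the factorial factorization of the symmetric group, together with the fact that $S_a$ is closed under inversion), this index set of products is precisely $S_a$, so the pass for the first system produces the orbit of $\ineqs^{\setminus\textnormal{perm}}$ under permutations of the first block of columns. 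Doing this for all three systems, and using that the three block-permutation actions commute, yields $(S_a\times S_b\times S_c)\cdot\ineqs^{\setminus\textnormal{perm}} = \weylgroup\cdot\ineqs^{\setminus\textnormal{perm}}$. Combining (a) and (b) gives $\ineqs = \ineqs'$.

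All of the individual steps are routine; the one place that genuinely requires care is keeping the bookkeeping of row orderings cleanly separated from that of the column permutations. Both algorithms only ever store row-sorted weight matrices, whereas \cref{lemma:tensor standard-weight-matrix} and the $\weylgroup$-action both reorder rows as a side effect; the remedy is to establish, once and for all at the start, that $\Phi$ depends only on the multiset of rows of $M$ (together with the two appended rows), after which every ``sort the rows'' step above becomes harmless.
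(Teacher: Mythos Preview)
Your proposal is correct and follows essentially the same approach as the paper's proof: both hinge on \cref{lemma:tensor standard-weight-matrix}, the invariance of the kernel under row permutations, the equivariance $\Phi(\pi\cdot M)=\pi\cdot\Phi(M)$ for $\pi\in\weylgroup$, and the coset decomposition $S_a=\bigcup_{i=1}^a(i,a)S_{a-1}$ iterated to identify the output of the symmetry-restoration loop with $\weylgroup\cdot\ineqs^{\setminus\textnormal{perm}}$. Your organization via the two claims (a) and (b) and the explicit $\Phi$ notation is a bit cleaner than the paper's direct $\subseteq/\supseteq$ argument, but the substance is identical.
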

\begin{proof}
Take $h \in \ineqs$. Then $M_{\textnormal{e}} h = 0$ for some weight matrix $M$ with $d$ rows and of rank $d$.
By \cref{lemma:tensor standard-weight-matrix}, there exists permutations $\sigma \in S_d$ and $\pi \in S_a \times S_b \times S_c$ such that $M_{\sigma,\pi}' h_{\pi} = 0$, where $M_{\sigma,\pi}$ is equal to $M$ with the rows and columns permuted according to $\sigma$ and $\pi$ and where $h_\pi$ is equal to $h$ with the entries permuted according to $\pi$, such that $M_{\sigma,\pi}$ is standard.
Restricting $M_{\sigma,\pi}$ to its first $r$ rows results in a rank $r$ standard weight matrix.
Then it is not hard to see that $h_{\pi} \in \ineqs'$.

Next, we show that $\ineqs'$ is closed under permutation of entries according to $S_a \times S_b \times S_c$, which immediately implies $h \in \ineqs'$.
Namely, by the fact that $S_a = (1,a)S_{a-1} \cup (2,a)S_{a-1} \cup \cdots \cup (a,a)S_{a-1}$ have that $\ineqs^{\setminus\textnormal{perm}}$ (as computed by the algorithm before the last loop) satisfies $\ineqs' = (S_a \times S_b \times S_c)\ineqs^{\setminus\textnormal{perm}}$.
It follows that indeed, $(S_a \times S_b \times S_c) \ineqs' = \ineqs'$.

Finally, we note that $(S_a \times S_b \times S_c) \ineqs = \ineqs$, as permuting the entries of any $h \in \ineqs$ by $\pi \in S_a \times S_b \times S_c$ corresponds to permuting the columns of a corresponding weight matrix $M$ satisfying $M_{\textnormal{e}}h = 0$ by $\pi$ as well. This results in another weight matrix of the same rank, which we may order in reverse-lexicographic order. We conclude $h_\pi \in \ineqs$.

Because $\ineqs^{\setminus \textnormal{perm}} \subseteq \ineqs$, we conclude $\ineqs = \ineqs'$.
\end{proof}

\begin{remark}
\label{remark:general weyl group symmetries}
    In the general setting, the group $\weylgroup$ is equal to the Weyl group of the reductive group $G$ in question.
    Whenever the Weyl group is well-behaved, it is in principle possible to extend the optimization to any representation of $G$.
    Using a slightly generalized definition of standard weight matrices and a more involved proof of the analog of \cref{lemma:tensor standard-weight-matrix}, the above optimization can be applied for all the classical Lie groups and their products.

    For example, for products of $\GL_n$, say the familiar $G = \GL_a \times \GL_b \times \GL_c$, we may generalize \cref{definition:tensor weight-matrix-normal-form} by redefining the cost function $\cost \colon \Omega \to \N^3$ as follows.
    Let $\omega = (\omega^1,\omega^2,\omega^3) \in \Z^a \times\Z^b\times\Z^c$ be a weight. Then define $\cost(\omega)_i$ by
    \begin{align*}
        \cost(\omega)_i \coloneqq \min \underset{k}{\textnormal{argmax}}\ \omega^i_{k}
    \quad\text{ where }\ \underset{k}{\textnormal{argmax}}\ a_k \coloneqq \{k \mid a_k \geq a_{k'} \text{ for all } k'\}.
    \end{align*}
\end{remark}

\subsubsection{Removing system symmetries}

The next optimization also concerns inequality enumeration (\cref{algorithm:tensor weight-matrix-enumeration}).
Whenever $a = b = c$, $\ineqs$ is symmetric under permutation of the systems.
These are additional symmetries not captured by $\weylgroup$ as defined in the previous optimization.
We can update our definition of standard weight matrices with the following additional property.

\begin{definition}[Outer standard weight matrix]
Suppose $\Omega = \Omega_a \times \Omega_a \times \Omega_a$.
We call a standard weight matrix $M$ \emph{outer standard} when the sums of the first column of each system satisfy $M e_1 \geq M e_{1+a} \geq M e_{1+2a}$.
\end{definition}

\begin{lemma}[Outer standard weight matrix]
    \label{lemma:tensor outer-standard-weight-matrix}
    Suppose $\Omega = \Omega_a \times \Omega_a \times \Omega_a$.
    Every weight matrix can be permuted to a outer standard one via row permutations, column permutations from $\weylgroup$, and column permutations permuting the systems.
\end{lemma}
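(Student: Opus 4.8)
The plan is to bootstrap from \cref{lemma:tensor standard-weight-matrix} with one extra pass that sorts the three systems. First I would apply \cref{lemma:tensor standard-weight-matrix} to bring the given weight matrix into a standard weight matrix $M'$ using row permutations and column permutations from $\weylgroup$. For $i\in\{1,2,3\}$ let $f_i$ be the sum of the first column of system $i$ of $M'$ --- equivalently, the number of rows whose $i$-th system component is $e_1$. Choose a permutation $\tau$ of the three systems with $f_{\tau(1)}\ge f_{\tau(2)}\ge f_{\tau(3)}$, apply the corresponding column permutation (permuting whole systems) to $M'$, and then re-run the standardization procedure from the proof of \cref{lemma:tensor standard-weight-matrix} on the result, obtaining a standard weight matrix $M'''$. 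I claim $M'''$ is outer standard.

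The only thing left to check is that $M'''$ has non-increasing first-column sums. The key observation is that, once a weight matrix already contains the weight $(\,e_1\sep e_1\sep e_1\,)$, the standardization procedure never modifies the first column of any system. Indeed, a standard weight matrix has $\omega_1=(\,e_1\sep e_1\sep e_1\,)$: condition~(2) of \cref{definition:tensor weight-matrix-normal-form} with the empty-maximum convention forces $\cost(\omega_1)_i=1$ for every $i$, and $\omega_1$ is the lexicographically largest row. This weight is fixed by any permutation of the systems and is still lexicographically largest, so after permuting systems and re-sorting the rows it sits again in row $1$. Hence, whenever the standardization procedure fixes condition~(2) for system $k$ at a row $\ell$ (necessarily $\ell\ge 2$), the index $m_\ell=1+\max_{\ell'<\ell}\cost(\omega_{\ell'})_k$ is at least $1+\cost(\omega_1)_k=2$, and it swaps columns $m_\ell$ and some $i>m_\ell$, both of index at least $2$; so column $1$ of every system is untouched. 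Since all other operations performed are row permutations, which leave every column sum unchanged, the first-column sums of $M'''$ equal those of the system-permuted $M'$, namely $(f_{\tau(1)},f_{\tau(2)},f_{\tau(3)})$, which is non-increasing by the choice of $\tau$. Finally $M'''$ is genuinely a weight matrix (it has the same rank $d$ as $M'$, since rank is preserved under permuting rows and columns), and only the three allowed operation types were used, so the lemma follows.

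I expect the main obstacle to be exactly this bookkeeping around the standardization procedure: one must verify that the preliminary lexicographic re-sorting keeps $(\,e_1\sep e_1\sep e_1\,)$ on top, and that the subsequent column swaps (which happen inside systems $2$ and $3$ and also reorder the rows below row $\ell$) never reach a first column. All of this is read off from the structure of the proof of \cref{lemma:tensor standard-weight-matrix}; no genuinely new idea is needed, but the argument should be written carefully since it uses a feature of that procedure --- that it never swaps into column $1$ --- which was not highlighted there.
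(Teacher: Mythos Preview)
Your proof is correct and follows essentially the same approach as the paper: both arguments hinge on the observation that once $(\,e_1\sep e_1\sep e_1\,)$ occupies the first row, the standardization procedure of \cref{lemma:tensor standard-weight-matrix} never touches the first column of any system (because $m_\ell\ge 2$), so the first-column sums survive the second standardization pass unchanged. The only cosmetic difference is that the paper secures $\omega_1=(\,e_1\sep e_1\sep e_1\,)$ by a single Weyl-group move before permuting systems, whereas you run the full standardization first; your write-up actually spells out the $m_\ell\ge 2$ argument more explicitly than the paper does.
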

\begin{proof}
    Let the weight matrix be given by $M = (\omega_1; \ldots; \omega_d)$.
    Use column permutations from the Weyl group to ensure $\omega_1 = (e_1 \sep e_1 \sep e_1)$ (note that this is always true for standard weight matrices, as $c_1(\omega_1),c_2(\omega_1),c_3(\omega_1) \leq 1$ by \cref{definition:tensor weight-matrix-normal-form}\ref{item:standard weight matrix definition cost}%
    ).
    Then use the outer permutations to sort the systems such that the sums of the first columns in each system are decreasing: $M e_1 \geq M e_{1+a} \geq M e_{1+2a}$.
    These sums are invariant under row permutations and column permutations from $\weylgroup$ that fix the first columns.
    Now proceed with the procedure in the proof of \Cref{lemma:tensor standard-weight-matrix}.
    It is not hard to see we will never need to permute the first column of each subsystem. As a result, the sums above are left invariant, and we are left with a outer standard weight matrix.
\end{proof}

We update the weight matrix enumeration and symmetry restoration components in \cref{algorithm:tensor weight-matrix-enumeration-no-weyl}.
Denote with $\bm{(}\ell,\ell'\bm{)}$ the transposition of systems $\ell$ and~$\ell'$.
\begin{algorithmbreak}{Inequality enumeration, using permutation and system symmetries.}
\label{algorithm:tensor weight-matrix-enumeration-no-symmetries}%
\textbf{Input:} A set of weights $\Omega = \{\omega^1,\ldots,\omega^\ell\} \subseteq \Z^{a+b+c}$ ordered reversed-lexicographically. \\
\textbf{Output:} A set of inequalities $\ineqs \subseteq \Z^{a+b+c}$. \\
\textbf{Algorithm:}
\begin{algorithmic}[1]
    \Statex Run \cref{algorithm:tensor weight-matrix-enumeration-no-weyl} and output the results, but if $a = b = c$, add the following after \cref{line:weight matrix loop}:
    \Statex $\qquad$ {\footnotesize \phantom{1}\ref*{line:weight matrix loop}$'\phantom{'}$:} Filter $\mathcal M \gets \big\{ M \in \mathcal M \mid M \textnormal{ is outer standard}\big\}$.
    \Statex And the following after \cref{line:symmetry restoration loop}:
    \Statex $\qquad$ {\footnotesize \ref*{line:symmetry restoration loop}$'\phantom{'}$:} Update $\ineqs \gets \bm{(}1,3\bm{)}\ineqs \cup \bm{(}2,3\bm{)}\ineqs \cup \ineqs$
    \Statex $\qquad$ {\footnotesize \ref*{line:symmetry restoration loop}$''$:} Update $\ineqs \gets \bm{(}1,2\bm{)}\ineqs \cup \ineqs$
\end{algorithmic}
\end{algorithmbreak}

\begin{lemma}
\label{lemma:inequality enumeration weyl and system permutations}
Given input $\Omega = \{(\,e_i\sep e_j \sep e_k\,)\}_{i,j,k} \subseteq \Z^{a+b+c}$, the outputs of algorithm \cref{algorithm:tensor weight-matrix-enumeration} and \cref{algorithm:tensor weight-matrix-enumeration-no-symmetries} are equal.
\end{lemma}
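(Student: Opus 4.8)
The plan is to mirror the proof of \cref{lemma:inequality enumeration weyl permutations}, with \emph{outer} standard weight matrices replacing standard ones and with one additional batch of permutation symmetries to account for. First I would dispose of the case where $a$, $b$, $c$ are not all equal: there \cref{algorithm:tensor weight-matrix-enumeration-no-symmetries} is literally \cref{algorithm:tensor weight-matrix-enumeration-no-weyl}, so the claim is exactly \cref{lemma:inequality enumeration weyl permutations}. So assume $a=b=c$ from now on. Write $\weylgroup = S_a\times S_a\times S_a$ and let $\mathcal S\cong S_3$ be the group permuting the three (now equal-sized) system blocks of columns/coordinates; the relevant symmetry group of $\Omega$ is the wreath product $\weylgroup\rtimes\mathcal S$, in which $\weylgroup$ is normal. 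I would record two easy facts: (i) a system-block permutation sends a weight matrix to a weight matrix of the same rank and, since $a=b=c$, fixes the linear span of the two appended rows $(\,\bm1\sep-\bm1\sep\bm0\,)$ and $(\,\bm0\sep\bm1\sep-\bm1\,)$ used to build $M_{\textnormal e}$ (it sends them to signed sums of themselves); (ii) therefore $\ineqs$, the output of \cref{algorithm:tensor weight-matrix-enumeration}, is invariant under $\mathcal S$ as well as under $\weylgroup$ (the latter as in \cref{lemma:inequality enumeration weyl permutations}), hence under all of $\weylgroup\rtimes\mathcal S$.

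For the inclusion $\ineqs\subseteq\ineqs'$ (with $\ineqs'$ the output of \cref{algorithm:tensor weight-matrix-enumeration-no-symmetries}), take $h\in\ineqs$ and a rank-$d$ weight matrix $M$ with $M_{\textnormal e}h=0$. By \cref{lemma:tensor outer-standard-weight-matrix} there is a row permutation together with a column permutation $g\in\weylgroup\rtimes\mathcal S$ carrying $M$ to an outer standard matrix $M'$; applying the same $g$ to coordinates sends $h$ to $h'\coloneqq g\cdot h$, and by fact (i) the span of the appended rows is preserved, so still $M'_{\textnormal e}h'=0$ and $h'$ spans the (one-dimensional) kernel of $M'_{\textnormal e}$. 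Every rank-$r$ truncation of $M'$ is again outer standard, so $M'$ survives the extra "is outer standard" filter added after \cref{line:weight matrix loop} as well as the rank filter, whence $h'\in\ineqs^{\setminus\textnormal{perm}}$.

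It then remains to verify that the restoration loops recover $h$ from $h'$, i.e. that $\ineqs'$ is closed under $\weylgroup\rtimes\mathcal S$. The loop over $m$ inherited from \cref{algorithm:tensor weight-matrix-enumeration-no-weyl} makes $\ineqs'$ closed under $\weylgroup$, exactly by the argument in \cref{lemma:inequality enumeration weyl permutations} using $S_a=\bigcup_i (i,a)S_{a-1}$. Denote the resulting $\weylgroup$-closed set by $\ineqs'_{\weylgroup}$. The two lines added after \cref{line:symmetry restoration loop} then replace it first by $\bm{(}1,3\bm{)}\ineqs'_{\weylgroup}\cup\bm{(}2,3\bm{)}\ineqs'_{\weylgroup}\cup\ineqs'_{\weylgroup}$ and then by $\bm{(}1,2\bm{)}(\cdots)\cup(\cdots)$, so the final set is $\bigcup_{\tau\in X}\tau\,\ineqs'_{\weylgroup}$ with $X=\{e,(1\,3),(2\,3),(1\,2),(1\,2)(1\,3),(1\,2)(2\,3)\}$; a direct computation shows $(1\,2)(1\,3)$ and $(1\,2)(2\,3)$ are the two $3$-cycles, so $X=\mathcal S$. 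Since $\weylgroup$ is normal, each $\tau\,\ineqs'_{\weylgroup}$ is still $\weylgroup$-closed, so the union is closed under all of $\weylgroup\rtimes\mathcal S$; hence $h=g^{-1}\cdot h'\in\ineqs'$, proving $\ineqs\subseteq\ineqs'$. The reverse inclusion is immediate: every element of $\ineqs^{\setminus\textnormal{perm}}$ comes from an outer standard (in particular, valid rank-$d$) weight matrix and so already lies in $\ineqs$ by \cref{lemma:inequality enumeration weyl permutations}, and since $\ineqs$ is closed under $\weylgroup\rtimes\mathcal S$ by fact (ii), every inequality produced by the restoration loops from such elements is again in $\ineqs$; thus $\ineqs'\subseteq\ineqs$, and therefore $\ineqs=\ineqs'$.

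I expect the only genuine friction to be the bookkeeping in the last paragraph: checking that the three specific system transpositions, applied in that order to an already $\weylgroup$-closed set, generate the full action of $\mathcal S\cong S_3$, and verifying that system-block permutations interact correctly with the appended orthogonality rows — both of which rely essentially on $a=b=c$.
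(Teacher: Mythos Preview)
Your proposal is correct and follows exactly the approach the paper indicates (the paper's own proof is a one-line pointer: ``essentially the same as the proof of \cref{lemma:inequality enumeration weyl permutations}, now combined with \cref{lemma:tensor outer-standard-weight-matrix}''), and you have supplied the details the paper omits, including the verification that the two added lines after \cref{line:symmetry restoration loop} really generate all of $S_3$ and that system-block permutations preserve the span of the appended rows.

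One small imprecision: the claim that ``every rank-$r$ truncation of $M'$ is again outer standard'' need not hold (outer standardness is a condition on column sums of the full matrix, which can change under truncation), but it is also not needed---in \cref{algorithm:tensor weight-matrix-enumeration-no-symmetries} the outer-standard filter is applied only once, after the loop, to the full $d$-row matrix, while inside the loop only ordinary standardness is checked, and truncations of a standard matrix are standard.
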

The proof is essentially the same as the proof of \cref{lemma:inequality enumeration weyl permutations}, now combined with \cref{lemma:tensor outer-standard-weight-matrix}.

\subsection{Filtering inequalities}
\label{subsection:filtering inequalities}

For determining attainability (\cref{subsection:attainability algorithms}) we need to solve the consistency problem, which is a hard problem in general.
We briefly summarize what is known on the complexity of this problem.
We saw in \Cref{subsection:tensor attainability} that it is a special case of the ideal membership problem, asking whether $\sum_{f} f g_f = 1$, where the sum is over the elements of the polynomial system $\tensorpolysystem^T\!(h)$ (\cref{definition:tensor polysystem}) and $g_f$ are some polynomials.
It is known that the degrees of $g_f$ are at worst doubly exponential in the number of variables, and the decision problem variant (deciding whether such $g_f$ exist) is an exponential-space complete problem \cite{mayr1982complexityWordProblem, bayer1988complexitySyzygies}.
Better upper bounds for the consistency problem are known only via assuming the generalized Riemann hypothesis, which places it in the second level of the polynomial hierarchy; see \cite{koiran1996hilbertNullstellensatzPolynomialHierachy} for the result over~$\Q$, and~\cite{manssourParametricVersionHilbert2025} for a recent extension to rational function fields, which is related to the approach in~\cref{lemma:attainability symbolic}.

Since Gr\"obner bases solve the ideal membership problem, it comes at no surprise that algorithms computing them have doubly exponential complexity (in the number of variables, or the dimension of the solution variety) in the worst case as well~\cite{mayr1982complexityWordProblem,dubeStructurePolynomialIdeals1990,mayrDimensiondependentBoundsGrobner2013}.
In this section, we present several essential optimizations to circumvent as many Gr\"obner basis computations as possible when running the algorithms from \cref{subsection:algorithm outline}.

\paragraph{Highest weight inclusion}
Let $T$ be a non-zero tensor. Then $(e_1\sep e_1 \sep e_1) \in \Delta(T)$.
This is most easily seen using the representation theoretic description (\cref{equation:representation theoretic description}), as $\ISO_{e_1,e_1,e_1}$ equals the identity on the irreducible representation $V = \C^a\ot\C^b\ot\C^c$.
Thus, we can update $\ineqs \gets \{h \in \ineqs \mid \<(e_1\sep e_1 \sep e_1), h> \geq 0\}$.

\paragraph{Generic polytope inequalities}

Since the Kronecker polytope $\kronpol{a}{b}{c}$ (\cref{equation:generic moment polytope}) contains $\Delta(T)$ for any $T \in \C^a\ot\C^b\ot\C^c$, we know all inequalities that are valid for $\kronpol{a}{b}{c}$ are also valid for $\Delta(T)$.%
\footnote{In fact, for tensors of shape $4 \times 4 \times 4$ attainability verification of these specific inequalities (\cref{algorithm:tensor checking-inequalities}) takes too much time to be practical.}
So if the Kronecker polytope is known, we can restrict our attention to $\ineqs^{\setminus \textnormal{g}} \coloneqq \{ h \in \ineqs \mid \kronpol{a}{b}{c} \not\subseteq H_h\}$ instead.
The inequalities defining $\kronpol{a}{b}{c}$ can be added after filtering for attainability.
We use the generic inequalities describing $\kronpol{3}{3}{3}$ and $\kronpol{4}{4}{4}$ found in \cite{franz2002, vergneInequalitiesMomentCone2017}.

\paragraph{Intermediate polytope construction}
As we are executing \cref{algorithm:tensor algorithm borel}, we gradually gain knowledge about the Borel polytope $\DeltaB(T)$, in the form of valid inequalities. Let $\ineqs_t \subseteq \ineqs$ be the set of inequalities that are valid at step $t$.
Then $\bigcap_{h\in\ineqs_t} H_h \supseteq \Delta(T)$.
In particular, any $h \in \ineqs \setminus \ineqs_t$ that is valid for $\bigcap_{h\in\ineqs_t} H_h$ will also be valid for $\Delta(T)$, and it will be redundant to run the attainability algorithm for it.
In practice, there will be many inequalities in $\ineqs$ that are valid but redundant.  Their potentially lengthy Gr\"obner basis computations can be avoided using intermediate polytope constructions.

In principle, one can compute whether an inequality~$h$ is already implied by the inequalities in~$\ineqs_t$ by determining if~$h$ is in the cone generated by the~$h' \in \ineqs_t$, which is a linear feasibility problem.
However, this is computationally expensive in practice, given the number of~$h$ for which we would like to do so.
Instead, we enumerate the vertices of $\bigcap_{h'\in\ineqs_t} H_{h'}$, and determine redundancy of~$h$ by evaluating it on the vertices, which amounts to a single matrix-matrix multiplication.
We note that vertex enumeration for polyhedra is an NP-hard problem in general~\cite{MR2383757} (more precisely, it is NP-hard in the unbounded case, while for our setting of (bounded) polytopes the complexity is still open), and in particular implementations can be slow in scenarios when there are many redundant inequalities, as is the case for us.

Hence, this optimization needs to be fine-tuned to balance the hardness of the Gr\"obner basis computations with the hardness of vertex enumeration.

We take a heuristic approach.
First, we sort the inequalities based on their maximal absolute value, from small to large.
This is motivated by the fact that all polytopes we have computed are defined by inequalities with small integer entries, so we check these first.
Secondly, we enumerate the vertices of the outer approximation using $\ineqs_t$ only when we have found $C \in \N$ new inequalities, where $C$ is some parameter which can be adjusted.

\begin{remark}[Vertex enumeration with irredundant inequalities]
We use the same idea to speed up (intermediate) polytope construction and finding irredundant set of inequalities for polytopes (such as is done as part of \cref{algorithm:tensor algorithm randomized}).
In general $\ineqs_T$ will contain many redundant inequalities, many with relatively large entries.
This can drastically increase the running times of most vertex enumeration algorithms.
We once again sort $\ineqs_T$ based on the maximal absolute entry.
Then instead of running the vertex enumeration algorithm on all valid inequalities $\ineqs_T$ at once,
we instead construct the polytope in chunks, removing the redundant inequalities from $\ineqs_T$ in each step.
\end{remark}

\paragraph{Point inclusion: maxranks}

Just as we could remove inequalities that exclude the highest weight, we can remove all inequalities from $\ineqs$ that exclude some explicitly known points $p \in \Delta(T)$.
In our setting an easy to compute set of points in $\Delta(T)$ can be derived from the \emph{maxranks} of $T$.
The first maxrank of $T \in \C^{a} \ot \C^{b} \ot \C^{c}$ is defined as follows.
Given a vector $\beta \in \C^a$, we can take the weighted sum of the slices of $T$, which is a matrix of size $b \times c$, and consider its matrix rank.
The first maxrank is equal to the largest rank we can find. We have
\begin{equation}
    \label{definition:maxrank}
    \maxrank_1(T) \coloneqq \max \big\{  \rank\! \big((\beta \ot I_b \ot I_c) T\big)
    \ \big|\ \beta \in \C^{1 \times a} \big\}
\end{equation}
We can do the same for the other two systems to define $\maxrank_2$ and $\maxrank_3$.
Next, denote with $u_r \in \C^m$ the uniform vector of length $r$, defined as $u_r = \sum_{i=1}^r \tfrac1r e_i$.
Then define the points $p_i(r)$ by
\begin{align}
    p_1(r) \coloneqq \big(\, e_1 \sep u_r \sep u_r \,\big), \quad
    p_2(r) \coloneqq \big(\, u_r \sep e_1 \sep u_r \,\big), \quad
    p_3(r) \coloneqq \big(\, u_r \sep u_r \sep e_1 \,\big).
\end{align}
We have that $p_i(r) \in \Delta(T)$ whenever $r \leq \maxrank_i(T)$.
Namely, let $\beta \in \C^{1 \times a}$ be the vector achieving the maximum $r$ in \cref{definition:maxrank}.
Then using Gaussian elimination, we can transform the resulting matrix into $I_r$.
This defines a restriction $T \geq e_1 \ot \big(\sum_{i=1}^r e_i \ot e_i\big)$. It is not hard to its image under $\mu$ equals $(e_1 \sep u_r \sep u_r)$, and the argument is done.
\begin{lemma}%
\label{lemma:maxranks}
    Let $T$ be a $3$-tensor. If $r \leq \maxrank_i(T)$, then $p_i(r) \in \Delta(T)$.\footnote{The converse of \cref{lemma:maxranks} is true as well, but we will not need it.}
\end{lemma}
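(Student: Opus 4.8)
The plan is to exhibit, for each $i$ and each $r \le \maxrank_i(T)$, an explicit restriction of $T$ whose moment polytope contains $p_i(r)$, and then invoke \cref{proposition:degeneration monotone} together with \cref{proposition:moment polytope embedding}. I will treat $i = 1$; the other two cases follow by permuting the roles of the three factors. Write $\beta \in \C^{1 \times a}$ for a vector achieving the maximum in \cref{definition:maxrank}, so that $M \coloneqq (\beta \ot I_b \ot I_c)T \in \C^{b \times c}$ has $\rank M = r$. (If $r < \maxrank_1(T)$, one can still pick a $\beta$ whose contracted matrix has rank exactly $r$, since the set of achievable ranks is an interval: scaling and perturbing $\beta$ realizes every rank between $0$ and $\maxrank_1(T)$.)

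The key steps, in order, are as follows. First, complete $\beta$ to an invertible matrix $A \in \GL_a$ with first row $\beta$; then $(A \ot I_b \ot I_c)T$ has its first slice (along the first index) equal to $M$. Second, by Gaussian elimination there are $B \in \GL_b$ and $C \in \GL_c$ with $B M C\trn = \begin{psmallmatrix} I_r & 0 \\ 0 & 0 \end{psmallmatrix}$; composing with the projection $P_1 \ot I_b \ot I_c$ onto the first coordinate of the first factor, we obtain
\begin{equation}
    (P_1 A \ot B \ot C) T = e_1 \ot \Bigl(\littlesum_{j=1}^r e_j \ot e_j\Bigr) \eqqcolon S \in \C^1 \ot \C^b \ot \C^c,
\end{equation}
so $T \ge S$, hence $T \degengeq S$. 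Third, compute $\Delta(S)$ directly from the geometric description \cref{equation:geometric description}: the tensor $S$ is (up to padding with zeros, via \cref{lemma:padding with zeros}) the tensor $\unit{r} \in \C^r \ot \C^r \ot \C^r$ with two trivial factors removed, and one checks that $\mu(S) = \bigl(e_1 \sep u_r \sep u_r\bigr)$ already, since $S_1 S_1^* $ is rank one (the map $\C^1 \to \C^b \ot \C^c$), while $S_2^* S_2$ and $S_3^* S_3$ are $\tfrac1r$ times a rank-$r$ projection. Thus $p_1(r) \in \Delta(S)$. Fourth, apply \cref{proposition:degeneration monotone} to conclude $\Delta(S) \subseteq \Delta(T)$, and use \cref{remark:tensor moment polytope embedding} to read the inclusion $p_1(r) \in \Delta(T)$ after the appropriate zero-padding.

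The main obstacle I anticipate is a bookkeeping one rather than a conceptual one: being careful that the point $p_1(r)$, which lives in $\R^a \times \R^b \times \R^c$ padded with zeros beyond the $r$-th coordinate in the last two systems, really is the image under $\mu$ of $S$ after the embedding $\C^1 \ot \C^b \ot \C^c \hookrightarrow \C^a \ot \C^b \ot \C^c$, so that \cref{proposition:moment polytope embedding} and \cref{lemma:padding with zeros} line up the ambient spaces correctly. There is also the minor point, flagged above, of justifying that one can realize rank exactly $r$ (not merely $\le \maxrank_1(T)$); this is immediate because for generic scalings the contracted matrix $(\beta \ot I_b \ot I_c)T$ has locally constant rank, and shrinking $\beta$ toward a suitable coordinate direction lets the rank drop through all intermediate values, so every $r \le \maxrank_1(T)$ occurs. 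Everything else is the routine linear algebra already sketched in the paragraph preceding the lemma statement.
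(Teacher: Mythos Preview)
Your approach is essentially the paper's: exhibit the restriction $T \geq e_1 \ot \sum_{j=1}^r e_j \ot e_j$, compute that its moment map hits $(e_1 \sep u_r \sep u_r)$, and invoke monotonicity under degeneration.

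There is one small error. Your claim that ``the set of achievable ranks is an interval'' is false: take $a=1$ and $T = e_1 \ot (e_1 \ot e_1 + e_2 \ot e_2)$, so the only slice is a scalar multiple of $I_2$ and the achievable ranks are $\{0,2\}$. Your justification via ``shrinking $\beta$ toward a suitable coordinate direction'' does not work in general. The fix is immediate and avoids the issue entirely: always choose $\beta$ realizing the \emph{maximal} rank $r' = \maxrank_1(T)$, obtain $T \geq e_1 \ot \sum_{j=1}^{r'} e_j \ot e_j$ as you describe, and then for any $r \leq r'$ compose with the further restriction (projecting the second and third factors onto their first $r$ coordinates) to get $T \geq e_1 \ot \sum_{j=1}^{r} e_j \ot e_j$. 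Everything else in your argument goes through unchanged.
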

We conclude that when running the algorithms, we can replace $\ineqs$ by the set
\begin{align}
    \big\{h \in \ineqs \mid \< p_i(r_i), h> \geq 0 \text{ for all } i \in [3] \text{ and } r_i \leq \maxrank_i(T)\big\}.
\end{align}

\subsubsection{Minimizing the starting support}

In \cref{subsection:tensor attainability} we generated generic elements $\Tgeneric \in \G \cdot T$ either via randomization (say $T'$) or symbolic means (as $T_{\textnormal{s}}$).
In both cases we obtain these tensors by application of upper triangular matrices to $T$.
This tensor then defines the polynomial system we need to solve to verify attainability.
If we can minimize the support of $T'$ and $T_{\textnormal{s}}$ the resulting polynomial system will be simpler, with fewer terms per polynomial.
This makes the Gr\"obner basis computations faster.
We can influence the this support by replacing $T$ with a tensor $T'$ equivalent to it, for instance by minimizing the total distance of its non-zero entries to the corner at index $(1,1,1)$.

\begin{example}
We can replace the tensor in $\C^3\ot\C^3\ot\C^3$ given by (notated by putting its slices side-by-side)
\begin{align}
\unit{3} \coloneqq
\left[
\begin{array}{ccc|ccc|ccc}
1 & \grayzero & \grayzero   & \grayzero & \grayzero & \grayzero   & \grayzero & \grayzero & \grayzero \\
\grayzero & \grayzero & \grayzero   & \grayzero & 1 & \grayzero   & \grayzero & \grayzero & \grayzero \\
\grayzero & \grayzero & \grayzero   & \grayzero & \grayzero & \grayzero   & \grayzero & \grayzero & 1 \\
\end{array}
\right]
\quad\text{with}\quad
\left[
\begin{array}{ccc|ccc|ccc}
\grayzero & \grayzero & \grayzero   & \grayzero & \grayzero & \grayzero   & 1 & \grayzero & \grayzero \\
\grayzero & \grayzero & \grayzero   & \grayzero & 1 & \grayzero   & \grayzero & \grayzero & \grayzero \\
\grayzero & \grayzero & 1   & \grayzero & \grayzero & \grayzero   & \grayzero & \grayzero & \grayzero \\
\end{array}
\right] \sim \unit{3}.
\end{align}
These tensors are clearly equivalent, so their moment polytopes are equal.
These two expressions would lead to the following two supports after applying upper triangular matrices respectively:
\begin{align}
\left[
\begin{array}{ccc|ccc|ccc}
* & * & *   & * & * & *   & * & * & * \\
* & * & *   & * & * & *   & * & * & * \\
* & * & *   & * & * & *   & * & * & * \\
\end{array}
\right]
\quad\text{and}\quad
\left[
\begin{array}{ccc|ccc|ccc}
* & * & *   & * & * & \grayzero   & * & \grayzero & \grayzero \\
* & * & *   & * & * & \grayzero   & \grayzero & \grayzero & \grayzero \\
* & * & *   & \grayzero & \grayzero & \grayzero   & \grayzero & \grayzero & \grayzero \\
\end{array}
\right].
\end{align}
\end{example}
\section{The moment polytopes of all \texorpdfstring{$3\times3\times3$}{3 x 3 x 3} tensors}
\label{section:C333}

In this section, we discuss how, using the algorithm of \cref{section:algorithms for computing tensor moment polytopes}, we determine the moment polytope $\Delta(T)$ of every tensor $T\in V = \C^3 \ot \C^3 \ot \C^3$.

Our approach relies on the orbit classification of $V$ by Nurmiev \cite{nurmievOrbitsInvariantsCubic2000} (building on \cite{vinbergWeylGroupGraded1976}) with a correction by \cite{ditraniClassificationRealComplex2023}. This classification divides $V$ into five parametrized families of orbits. We discuss these in
\cref{subsubsection:333-classification}.
In \cref{subsec:fam123} we prove that for every tensor in families 1, 2 and 3 the moment polytope equals the maximal moment polytope $\kronpol{3}{3}{3}$. In \cref{subsec:fam4}
we discuss how to use the algorithms of \cref{section:algorithms for computing tensor moment polytopes} to compute the moment polytopes for the tensors in families 4 and~5.
The complete description of the vertices of all these polytopes can be found in \cref{table:333 vertex data} and is available online at \cite{vandenBerg2025momentPolytopesGithub}.

\subsection{SL-semistability}

We introduce some well-known machinery which we will require in this section.
Recall that $\Sl \subseteq \G$ is the subset of triples of matrices each with determinant one.
We call a tensor $T \in \C^a\ot\C^b\ot\C^c$
\emph{$\Sl$-semistable} whenever $0 \not\in \overline{\Sl \cdot T}$, and \emph{$\Sl$-unstable} otherwise.
We call $T$ \emph{$\Sl$-polystable} when the $\Sl$-orbit $\Sl \cdot T$ is closed.
For us, the key fact about $\Sl$-semistability is that it is equivalent to inclusion of the uniform point in the moment polytope. Recall that we write $u_m \coloneqq (1/m,\ldots,1/m) \in \C^m$ for the uniform vector.
Then $(u_a \sep u_b \sep u_c) \in \Delta(T)$ if and only if $\mu(S) = (I_a/a, I_b/b, I_c/c)$ for some $S \in \overline{\G \cdot T}$.
We will use the following three results.\footnote{These results are also known in the general setting. We reference to literature in the tensor setting where possible.}

\begin{theorem}[{\cite[Theorem~3.2]{burgisser2018alternatingMinimization}, \cite[Lemma~4.34]{christandlUniversalPointsAsymptotic2021}}%
]
    \label{corollary:uniform marginals}
    Let $T \in \C^a\ot\C^b\ot\C^c \setminus \{0\}$.
    The tensor $T$ is $\Sl$-semistable if and only if $\mu(S) = (I_a/a,I_b/b,I_c/c)$ for some $S \in \overline{\G \cdot T}$.
\end{theorem}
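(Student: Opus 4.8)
\textbf{Proof plan for \Cref{corollary:uniform marginals}.}
The statement is a known equivalence, and I would prove it by relating $\Sl$-semistability to the representation-theoretic description of the moment polytope established in \Cref{theorem:tensor moment polytopes nm}, or alternatively by a direct Kempf--Ness-style argument using the function $(A,B,C) \mapsto \log \norm{(A,B,C) \cdot T}$. Let me sketch both ingredients.

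First I would recall that the uniform point $(u_a \sep u_b \sep u_c)$ lies in $\Delta^\Dgeom(T)$ if and only if there exists $S \in \overline{\G \cdot T} \setminus \{0\}$ with $\spec(\mu(S)) = (u_a \sep u_b \sep u_c)$, which (since the spectrum determines a Hermitian matrix up to conjugation, and the uniform spectrum forces the matrix itself) is equivalent to $\mu(S) = (I_a/a, I_b/b, I_c/c)$ for some $S \in \overline{\G \cdot T}$. So the right-hand side of the equivalence is literally the statement $(u_a \sep u_b \sep u_c) \in \Delta(T)$. Thus the content is: $T$ is $\Sl$-semistable $\iff (u_a \sep u_b \sep u_c) \in \Delta(T)$.

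For the direction ``$\mu(S) = (I/a, I/b, I/c) \Rightarrow \Sl$-semistable'': here the moment map condition means $S$ is a \emph{critical point} of the norm along the $\G$-orbit, and in fact that the norm of $S$ is minimal on $\G \cdot S$; a point with balanced (scalar) marginals is a minimum of $\norm{\cdot}$ on its $\Sl$-orbit closure by convexity of the Kempf--Ness function along geodesics (the marginals being the gradient of $\log\norm{\cdot}$, as noted after \Cref{equation:geometric description}). Since $0 \in \overline{\Sl \cdot T}$ would force the infimum of the norm on $\overline{\Sl \cdot T}$ to be $0$, but $\norm{S} > 0$ is attained in $\overline{\Sl \cdot T}$ (as $S \in \overline{\G\cdot T}$ can be rescaled into $\overline{\Sl \cdot T}$), we conclude $0 \notin \overline{\Sl \cdot T}$. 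For the converse, if $T$ is $\Sl$-semistable then $\overline{\Sl \cdot T}$ contains a closed $\Sl$-orbit (Hilbert--Mumford / Kempf--Ness: the function $\norm{\cdot}$ attains its infimum on $\overline{\Sl \cdot T}$ away from $0$), and the Kempf--Ness theorem says a norm-minimizer $S$ on $\overline{\Sl \cdot T}$ has vanishing moment map for the $\Sl$-action, i.e.\ the traceless parts of all three marginals of $S$ vanish; combined with the trace normalization this gives $\mu(S) = (I_a/a, I_b/b, I_c/c)$, and $S \in \overline{\G \cdot T}$ since $\overline{\Sl \cdot T} \subseteq \overline{\G \cdot T}$.

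The main obstacle, and the step I would spend the most care on, is invoking the Kempf--Ness theorem in precisely the right form: that for a reductive group acting linearly, a nonzero vector has a norm-critical point in its orbit closure iff the orbit closure omits $0$, and that norm-critical points are exactly zeros of the associated moment map. Since the excerpt cites \cite{burgisser2018alternatingMinimization} and \cite{christandlUniversalPointsAsymptotic2021} for this theorem, the cleanest route is to simply appeal to those references: \cite[Theorem~3.2]{burgisser2018alternatingMinimization} handles exactly the tensor case, translating $\Sl$-semistability into the existence of a scaling to uniform marginals. I would therefore present the proof as a short reduction showing the two phrasings coincide (the $\spec$-versus-matrix point above) and then cite the Kempf--Ness-type input rather than reproving it, noting the relevant convexity of the Kempf--Ness function as the conceptual reason the equivalence holds.
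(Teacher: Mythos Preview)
The paper does not give its own proof of this statement; it is stated as a cited result from \cite{burgisser2018alternatingMinimization} and \cite{christandlUniversalPointsAsymptotic2021}. Your proposal correctly identifies this and offers exactly the right reduction (the uniform point lies in $\Delta(T)$ iff some $S$ has scalar marginals) together with the standard Kempf--Ness argument that those references use, so your approach matches the paper's treatment.
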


\begin{theorem}[{\cite[Theorem~3.26]{wallachGeometricInvariantTheory2017}}%
\footnote{This is the well-known Kempf--Ness theorem for the $\Sl$-action on $\C^a \ot \C^b \ot \C^c$.
Here we use that $\mu(S) = (I_a/a, I_b/b, I_c/c)$ is equivalent to being ``critical'' as defined in \cite{wallachGeometricInvariantTheory2017}, by a straightforward computation.}%
]
    \label{theorem:tensor kempf-ness SL}
    Let $T \in \C^a\ot\C^b\ot\C^c \setminus \{0\}$.
    The tensor $T$ is $\Sl$-polystable if and only if $\mu(S) = (I_a/a,I_b/b,I_c/c)$ for some $S \in \G \cdot T$.
\end{theorem}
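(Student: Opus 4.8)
The statement I need to prove is the Kempf--Ness theorem for the $\Sl$-action: a non-zero tensor $T \in \C^a\ot\C^b\ot\C^c$ is $\Sl$-polystable (i.e.\ $\Sl \cdot T$ is closed) if and only if $\mu(S) = (I_a/a, I_b/b, I_c/c)$ for some $S \in \G \cdot T$. As the footnote indicates, this is a known instance of the Kempf--Ness theorem, so the plan is to invoke the general Kempf--Ness framework, verifying the straightforward translation between the condition $\mu(S) = (I_a/a, I_b/b, I_c/c)$ and the condition of being a \emph{critical point} of the norm-square function in the sense of \cite{wallachGeometricInvariantTheory2017}. I would organize the argument around the Kempf--Ness function.

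\textbf{Key steps.} First, I would recall the setup: the group $\Sl = \SL_a \times \SL_b \times \SL_c$ acts linearly and unitarily-compatibly on $V = \C^a\ot\C^b\ot\C^c$ with maximal compact subgroup $\K = \SU_a \times \SU_b \times \SU_c$. For fixed non-zero $T$, consider the norm function $g_{\Sl \cdot T} \colon \Sl \to \R$, $g(A,B,C) = \log\norm{(A\ot B\ot C)T}$ (the Kempf--Ness function restricted to the $\Sl$-orbit). Its gradient at the identity is (a traceless version of) the moment map $\mu(T)$; more precisely, as noted after \cref{equation:tensor moment map}, $\mu$ is the gradient of $(A,B,C)\mapsto\log\norm{(A,B,C)\cdot T}$, and restricting to $\Sl$ projects onto the traceless part, so the critical-point condition for $g$ on the orbit is exactly $\mu(S) - (I_a/a, I_b/b, I_c/c) = 0$, i.e.\ $\mu(S) = (I_a/a, I_b/b, I_c/c)$. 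Second, I would state the Kempf--Ness theorem in the form: (i) $S$ is a critical point of the norm on $\Sl \cdot T$ if and only if it is a point of minimal norm on its orbit; (ii) the orbit $\Sl \cdot T$ is closed if and only if it contains a point of minimal norm (equivalently, a critical point); (iii) minimal-norm points in a closed orbit form a single $\K$-orbit. This is precisely \cite[Theorem~3.26]{wallachGeometricInvariantTheory2017} once translated. Third, I would assemble: if $\Sl \cdot T$ is closed, the norm function attains its infimum on the orbit at some $S \in \Sl \cdot T \subseteq \G \cdot T$, which is then critical, giving $\mu(S) = (I_a/a, I_b/b, I_c/c)$; conversely, if $\mu(S) = (I_a/a, I_b/b, I_c/c)$ for some $S \in \G \cdot T$, then since the $\G$-orbit and $\Sl$-orbit differ only by the action of the (central, scalar) torus which only rescales and does not change whether the orbit is closed, $S$ lies on the $\Sl$-orbit of a scalar multiple of $T$, $S$ is a critical point of the norm on its $\Sl$-orbit, hence a minimal-norm point, hence its $\Sl$-orbit — and therefore that of $T$ — is closed.

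\textbf{Main obstacle.} The genuinely new content is minimal: the main technical point to get right is the ``straightforward computation'' identifying $\mu(S) = (I_a/a, I_b/b, I_c/c)$ with the criticality condition in Wallach's normalization, and carefully handling the distinction between the $\G$-orbit (which appears in the statement) and the $\Sl$-orbit (to which closedness refers) — namely that passing from $\G$ to $\Sl$ only factors out the diagonal scalars $\C^\times \times \C^\times \times \C^\times$ acting by scaling, which preserves both closedness of orbits and, after normalization, the moment-map image. Once these bookkeeping points are settled, the result is a direct citation of \cite[Theorem~3.26]{wallachGeometricInvariantTheory2017}, so no substantial difficulty remains.
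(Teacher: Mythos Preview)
Your proposal is correct and matches the paper's approach: the paper does not give its own proof but simply cites \cite[Theorem~3.26]{wallachGeometricInvariantTheory2017} and remarks in the footnote that the identification of $\mu(S) = (I_a/a, I_b/b, I_c/c)$ with Wallach's ``critical'' condition is a straightforward computation. Your expansion of that computation and your handling of the $\G$-orbit versus $\Sl$-orbit distinction (via scale-invariance of $\mu$ and the fact that $\G/\Sl$ acts by scalars) are exactly the bookkeeping the footnote alludes to.
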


\begin{proposition}[{\cite{burgisserFundamentalInvariants2017}}%
\footnote{In \cite[Proposition~3.10]{burgisserFundamentalInvariants2017} it is proved that a specific non-constant $\Sl$-invariant vanishes on the boundary, but their proof in fact applies to any non-constant $\Sl$-invariant.
Since the vanishing of all non-constant $\Sl$-invariants implies $\Sl$-instability, the result follows.
In \cite[Lem.~3.4]{vandenBerg2025mmPolytope} we provide a complete proof.}%
]
    \label{lemma:orbit border}
    Let $T \in \C^a \ot \C^b \ot \C^c$. If $T$ is $\Sl$-polystable,
    then every element in the boundary $\overline{\G \cdot T} \setminus (\G \cdot T)$ is $\Sl$-unstable.
\end{proposition}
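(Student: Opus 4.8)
The plan is to combine the Kempf--Ness theorem (\cref{theorem:tensor kempf-ness SL}) with \cref{corollary:uniform marginals} to reduce the claim to a statement about critical points. Suppose $T$ is $\Sl$-polystable, and let $S \in \overline{\G \cdot T} \setminus (\G \cdot T)$ be a boundary element. Since $S \in \overline{\G \cdot T}$, we have $\overline{\G \cdot S} \subseteq \overline{\G \cdot T}$. Aiming for a contradiction, suppose $S$ is $\Sl$-semistable; I would then like to conclude that $S$ is in fact $\Sl$-polystable, and that its $\Sl$-orbit closure meets the $\Sl$-orbit of $T$, forcing $S \in \G \cdot T$.

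First I would recall the structure of $\Sl$-orbit closures: for any $S$, the closure $\overline{\Sl \cdot S}$ contains a unique closed $\Sl$-orbit, and $S$ is $\Sl$-polystable exactly when that closed orbit is $\Sl \cdot S$ itself. If $S$ is $\Sl$-semistable (so $0 \notin \overline{\Sl \cdot S}$), the unique closed orbit in $\overline{\Sl \cdot S}$ is the orbit of some nonzero $S'$; by \cref{theorem:tensor kempf-ness SL} applied to $S'$ (which is $\Sl$-polystable, being a closed orbit) we get $\mu(S'') = (I_a/a, I_b/b, I_c/c)$ for some $S'' \in \Sl \cdot S'$. Now $S'' \in \overline{\Sl \cdot S} \subseteq \overline{\G \cdot S} \subseteq \overline{\G \cdot T}$, so $T$ degenerates to a critical point; since $T$ is $\Sl$-polystable, \cref{theorem:tensor kempf-ness SL} says the critical points in $\overline{\G \cdot T}$ that lie in orbit closures are all in $\G \cdot T$. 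The key point I would establish is that the $\G$-orbit of a critical point is closed in the $\G$-orbit closure, so $S'' \in \G \cdot T$, and then $S \in \overline{\G \cdot S''} = \G \cdot S'' = \G \cdot T$ (using that $\G \cdot S''$ is closed), contradicting $S \in \overline{\G \cdot T} \setminus (\G \cdot T)$.

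Alternatively, and perhaps more cleanly, I would follow the approach indicated in the paper's footnote: invoke \cite{burgisserFundamentalInvariants2017} directly. Every non-constant $\Sl$-invariant polynomial $f$ on $\C^a\ot\C^b\ot\C^c$ vanishes on the boundary $\overline{\G \cdot T} \setminus (\G \cdot T)$ whenever $T$ is $\Sl$-polystable --- the idea being that if $T$ is $\Sl$-polystable then $\G \cdot T$ is (up to scaling) a single $\Sl$-orbit union dilations, the invariant $f$ is (quasi-)constant on $\Sl \cdot T$, and on the boundary the scaling degrees of freedom force $f$ to take a different value, hence (being continuous and $\Sl$-invariant with the only possible limit being $0$ after rescaling) $f$ vanishes there. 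Since $\Sl$-semistability of a point is precisely the non-vanishing of \emph{some} non-constant $\Sl$-invariant (by the Hilbert--Mumford / Nagata theory of the null cone), the vanishing of all such invariants on the boundary gives exactly $\Sl$-instability of every boundary element.

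The main obstacle I anticipate is making rigorous the claim that every non-constant $\Sl$-invariant vanishes on $\overline{\G\cdot T}\setminus(\G\cdot T)$ for $\Sl$-polystable $T$ --- in particular handling the one-dimensional scaling torus $\C^\times \subseteq \G$ correctly, since $\G = \C^\times \cdot \Sl$ only up to a finite kernel, and an $\Sl$-invariant need not be $\G$-invariant (it transforms by a character under scaling). One must argue that on $\overline{\G\cdot T}$ the $\Sl$-invariants separate the closed $\Sl$-orbits and that the boundary contains no closed $\Sl$-orbit other than possibly $\{0\}$ after rescaling; the polystability of $T$ is what rules out the boundary carrying a ``new'' closed semistable orbit, because such an orbit would already sit inside $\overline{\Sl\cdot(cT)}$ for some scalar $c$ and hence coincide with $\Sl\cdot(cT)$, putting it back in $\G\cdot T$. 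I would lean on \cite[Proposition~3.10]{burgisserFundamentalInvariants2017} (and the complete proof promised in \cite[Lem.~3.4]{vandenBerg2025mmPolytope}) for this step rather than reprove it from scratch.
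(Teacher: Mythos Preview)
Your second approach --- deferring to \cite{burgisserFundamentalInvariants2017} for the fact that every non-constant $\Sl$-invariant vanishes on $\overline{\G\cdot T}\setminus(\G\cdot T)$ --- is exactly what the paper does: the proposition is stated without a proof in the text, and the footnote sketches precisely this invariant-theoretic argument.

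Your first approach, however, has a genuine gap. The final step asserts that $\G \cdot S''$ is closed, but $\G$-orbits of nonzero tensors are \emph{never} closed: scaling toward zero shows $0 \in \overline{\G \cdot R}$ for every $R \neq 0$, so $\G \cdot T$ is open (not closed) in its closure. More seriously, the step before --- that every critical point in $\overline{\G \cdot T}$ already lies in $\G \cdot T$ when $T$ is $\Sl$-polystable --- is the heart of the matter and does not follow from \cref{theorem:tensor kempf-ness SL} as stated, which only guarantees \emph{existence} of a critical point in $\G \cdot T$, not that all critical points in the closure lie there. Establishing this step essentially requires the invariant argument from your second approach anyway (a homogeneous $\Sl$-invariant $f$ with $f(T)\neq 0$ is a $\G$-semi-invariant, and one shows its value on a boundary limit must be $0$). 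A correct way to finish, once $S'' \in \G \cdot T$ is known, is not via closedness but via irreducibility: from $S'' \in \overline{\G \cdot S}$ one gets $\overline{\G \cdot S} = \overline{\G \cdot T}$, and uniqueness of the dense orbit forces $\G \cdot S = \G \cdot T$.
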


\subsection{Orbit classification of \texorpdfstring{$3\times 3 \times 3$}{3 x 3 x 3} tensors}
\label{subsubsection:333-classification}

To determine the moment polytopes for all tensors in $V = \C^3 \ot \C^3 \ot \C^3$ we will use the following classification \cite{nurmievOrbitsInvariantsCubic2000, ditraniClassificationRealComplex2023}.
The classification obtains every $\Sl$-orbit in $\C^3\ot\C^3\ot\C^3$ as the orbit of a tensor
$T_{\textnormal{ss}} + T_{\textnormal{u}}$, where $(T_{\textnormal{ss}}, T_{\textnormal{u}}) \in \bigcup_{i=1}^5 \mathcal S_i \times \mathcal U_i$ with $\mathcal S_i, \mathcal U_i \subseteq \C^3\ot\C^3\ot\C^3$ as defined below.
Each set $\mathcal S_i$ contains only $\Sl$-semistable tensors, and each set $\mathcal U_i$ is finite and contains only $\Sl$-unstable tensors.
We will write $e_{i,j,k} \coloneqq e_i \ot e_j \ot e_k \subseteq \C^3\ot\C^3\ot\C^3$, and use $e_{ijk} = e_{i,j,k}$ as a shorthand. %

\begin{definition}
Define the tensors
\begin{align}
\label{equation:333 vs}
  v_1 \coloneqq e_{111} + e_{222} + e_{333}, \qquad
  v_2 \coloneqq e_{123} + e_{231} + e_{312}, \qquad
  v_3 \coloneqq e_{132} + e_{213} + e_{321}.
\end{align}
Define the sets
\begin{align}
\begin{split}
\label{definition:333 semistable sets}
\mathcal S_1
&\coloneqq \bigl\{  \familyparam_1 v_1 + \familyparam_2 v_2 + \familyparam_3 v_3  \mid \familyparam_1,\familyparam_2,\familyparam_3 \in \C \setminus\{0\},\ (\familyparam_1^3 + \familyparam_2^3 + \familyparam_3^3)^3 - (3 \familyparam_1 \familyparam_2 \familyparam_3)^3 \neq 0 \bigr\}
\\
\mathcal S_2
&\coloneqq \bigl\{  \familyparam_1 v_1 + \familyparam_2 v_2  \mid \familyparam_1,\familyparam_2 \in \C \setminus \{0\},\ (\familyparam_1^3 + \familyparam_2^3) \neq 0 \bigr\}
\\
\mathcal S_3
&\coloneqq \bigl\{  \familyparam v_1  \mid \familyparam \in \C \setminus \{0\} \bigr\}
\\
\mathcal S_4
&\coloneqq \bigl\{  \familyparam (v_2 - v_3)  \mid \familyparam \in \C \setminus \{0\} \bigr\}
= \bigl\{  \familyparam e_1 \wedge e_2 \wedge e_3  \mid \familyparam \in \C \setminus \{0\} \bigr\}
\\
\mathcal S_5  &\coloneqq \{0\}.
\end{split}
\end{align}
Define the finite sets $\mathcal U_1, \mathcal U_2, \mathcal U_3$ and $\mathcal U_4$
as in \cref{table:unstable tensors families 1 to 4}.%
\footnote{Nurmiev's~\cite{nurmievOrbitsInvariantsCubic2000} original definition of $\mathcal U_2$ was inaccurate, and was corrected in~\cite{ditraniClassificationRealComplex2023}.}
Define the finite set $\mathcal U_5$ to be the tensors in \cref{table:c333 unstable tensors info} together with their cyclic permutations.%
\footnote{The cyclic permutations of a tensor $\sum_{\ell=1}^r e_{i_\ell,j_\ell,k_\ell}$ are
$\sum_{\ell=1}^r e_{k_\ell,i_\ell,j_\ell}$
and $\sum_{\ell=1}^r e_{j_\ell,k_\ell,i_\ell}$.}
Lastly, for $i \in [5]$ define the set
\begin{align}
\mathcal F_i \coloneqq \bigcup_{(T_{\textnormal{ss}}, T_{\textnormal{u}})\in \mathcal S_i \times \mathcal U_i} \Sl \cdot \big( T_{\textnormal{ss}} + T_{\textnormal{u}}\big),
\end{align}
which we refer to as \emph{family $i$}.
\end{definition}
\begin{table}[H]
\def\arraystretch{1.05}
\small
\setlength\tabcolsep{0.4em}
\makebox[\textwidth][c]{
\begin{tabular}{*{4}{cl}}\toprule
\multicolumn{2}{c}{$\mathcal U_1$} &
\multicolumn{2}{c}{$\mathcal U_2$} &
\multicolumn{2}{c}{$\mathcal U_3$} &
\multicolumn{2}{c}{$\mathcal U_4$} \\\cmidrule(lr){1-2}\cmidrule(lr){3-4}\cmidrule(lr){5-6}\cmidrule(lr){7-8}
No.\ & \multicolumn{1}{c}{Tensor} &
No.\ & \multicolumn{1}{c}{Tensor} &
No.\ & \multicolumn{1}{c}{Tensor} &
No.\ & \multicolumn{1}{c}{Tensor}
\\
\cmidrule(lr){1-8}
1 & 0 &  1 &$e_{132}+e_{213}$   &   1 &$e_{123}+e_{132}+e_{213}+e_{231}$& 1 &$e_{113}+e_{122}+e_{131}+e_{212}+e_{221}+e_{311}$ \\
  &   &  2 &$e_{132}$  &   2 &$e_{123}+e_{132}+e_{213}$&         2 &$e_{113}+e_{131}+e_{311}+e_{222}$ \\
  &   &  3 & 0  &   3 &$e_{123}+e_{132}$&                 3 &$e_{112}+e_{121}+e_{211}$ \\
  &   &     &   &   4 &$e_{123}+e_{231}$&    4 &$e_{111}+e_{222}$ \\
  &   &     &   &   5 &$e_{123}$&    5 &$e_{111}$ \\
  &   &     &   &   6 & 0 &   6 & 0 \\  \bottomrule
\end{tabular}}
\caption{Definition of the finite sets of tensors $\mathcal U_1$, $\mathcal U_2$, $\mathcal U_3$ and $\mathcal U_4$ in $\C^3\ot\C^3\ot\C^3$. %
The set~$\mathcal U_5$ is described in~\cref{table:c333 unstable tensors info}.}
\label{table:unstable tensors families 1 to 4}
\end{table}
\begin{proposition}[{\cite{nurmievOrbitsInvariantsCubic2000, ditraniClassificationRealComplex2023}}]
    Every tensor in $\bigcup_i \mathcal U_i$ is $\Sl$-unstable.
    Moreover,
    for any $(T_{\textnormal{ss}}, T_{\textnormal{u}}) \in \mathcal S_i \times \mathcal U_i$,
    the tensor $T_{\textnormal{ss}} + T_{\textnormal{u}}$ is $\Sl$-unstable if and only if $i = 5$.
    \label{lemma:classification stability}
\end{proposition}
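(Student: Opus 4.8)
The plan is to reduce the claim to two already-available facts: (i) an explicit stability statement for the base cases $T_{\textnormal{ss}} \in \mathcal S_i$, and (ii) the semicontinuity-style interaction between adding a nilpotent/unstable piece and $\Sl$-stability. First I would settle the easy part: every tensor in $\bigcup_i \mathcal U_i$ is $\Sl$-unstable. For each of the finitely many tensors listed in \cref{table:unstable tensors families 1 to 4} and \cref{table:c333 unstable tensors info}, one exhibits a one-parameter subgroup (a triple of diagonal matrices with exponents summing to zero in each factor) that drives the tensor to $0$; equivalently, by \cref{corollary:uniform marginals}, one checks that the uniform point $(u_3\sep u_3\sep u_3)$ is \emph{not} in the support-convex-hull witnessing semistability — concretely, $\supp(T_{\textnormal u})$ lies in an open half-space through the centroid for a suitable weight. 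This is a routine finite check, one Hilbert--Mumford witness per tensor, and it also handles the trivial entries ($T_{\textnormal u}=0$) since $0$ is manifestly unstable only as the zero tensor — but note those rows pair with $\mathcal S_i$, so the content is really in the mixed case.

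Next, for $i\in\{1,2,3,4\}$ I would show $T_{\textnormal{ss}} + T_{\textnormal{u}}$ is $\Sl$-semistable whenever $T_{\textnormal{ss}}\in\mathcal S_i$ (nonzero) and $T_{\textnormal u}\in\mathcal U_i$. The key structural input from Vinberg/Nurmiev theory is that $\mathcal S_i$ sits inside a Cartan subspace $\mathfrak c$ of the $\theta$-grading with $\C^3\ot\C^3\ot\C^3$ as the degree-one piece, the condition $(\familyparam_1^3+\familyparam_2^3+\familyparam_3^3)^3 - (3\familyparam_1\familyparam_2\familyparam_3)^3\neq 0$ (and its degenerations for $\mathcal S_2,\mathcal S_3,\mathcal S_4$) is exactly the non-vanishing of the relevant fundamental $\Sl$-invariants, so each $T_{\textnormal{ss}}\in\mathcal S_i$ is $\Sl$-polystable by construction, and $T_{\textnormal u}$ lies in the nilradical $\mathfrak n$ of the parabolic stabilizing $T_{\textnormal{ss}}$'s flag. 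Then $T_{\textnormal{ss}}+T_{\textnormal u}$ and $T_{\textnormal{ss}}$ lie in a common $\Sl$-invariant fiber of an invariant-theoretic quotient — more precisely all nonconstant $\Sl$-invariants take the same nonzero values on $T_{\textnormal{ss}}+T_{\textnormal u}$ as on $T_{\textnormal{ss}}$, because the invariants restricted to $T_{\textnormal{ss}}+\mathfrak n$ are pulled back from $\mathfrak c$. Hence $0\notin\overline{\Sl\cdot(T_{\textnormal{ss}}+T_{\textnormal u})}$, i.e. $T_{\textnormal{ss}}+T_{\textnormal u}$ is $\Sl$-semistable. (One can alternatively invoke that $T_{\textnormal u}$ is in the tangent cone to the $\Sl$-orbit direction, but the invariant-theoretic phrasing is cleanest and uses only \cref{corollary:uniform marginals} plus the published invariant formulas.)

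Finally, the case $i=5$: here $\mathcal S_5=\{0\}$, so $T_{\textnormal{ss}}+T_{\textnormal u}=T_{\textnormal u}\in\mathcal U_5$, which is $\Sl$-unstable by the first paragraph. This gives the ``only if'' direction and closes the biconditional. Assembling: the $\Sl$-orbit of $T_{\textnormal{ss}}+T_{\textnormal u}$ is semistable iff $i\le 4$, unstable iff $i=5$, exactly as claimed. I expect the main obstacle to be the bookkeeping in the middle step — namely verifying cleanly, uniformly across the four families, that each listed $T_{\textnormal u}\in\mathcal U_i$ actually lies in the nilradical of the parabolic adapted to the \emph{whole} family $\mathcal S_i$ (not just to a single generic $T_{\textnormal{ss}}$), since the families $\mathcal S_2,\mathcal S_3,\mathcal S_4$ are degenerations and the adapted parabolic can jump; handling this may require either Nurmiev's explicit stabilizer computations or a direct invariant-evaluation check $\mathrm{inv}(T_{\textnormal{ss}}+T_{\textnormal u})=\mathrm{inv}(T_{\textnormal{ss}})\neq 0$ for the generators of the $\Sl$-invariant ring listed in \cite{nurmievOrbitsInvariantsCubic2000}, which I would default to if the parabolic argument gets delicate.
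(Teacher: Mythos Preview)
The paper does not prove this proposition: it is stated with attribution to \cite{nurmievOrbitsInvariantsCubic2000, ditraniClassificationRealComplex2023} and no proof is given. Your proposal is therefore not comparable to ``the paper's own proof'' but rather a sketch of how the cited classification results imply the statement.

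That said, your sketch is broadly in line with the underlying theory the paper alludes to (see the remark following \cref{thm:c333 classification merged} about the embedding $\C^3\ot\C^3\ot\C^3\hookrightarrow\mathfrak{e}_6$ and Vinberg's $\theta$-groups). In that framework, $T_{\textnormal{ss}}$ and $T_{\textnormal u}$ are literally the semisimple and nilpotent parts of the Jordan decomposition, so the fact that $\Sl$-invariants depend only on $T_{\textnormal{ss}}$ is built in, and your middle step is essentially the correct mechanism. Your concern about the parabolic ``jumping'' across families is a real technical point, but it is exactly what Nurmiev's case analysis handles; defaulting to the direct invariant-evaluation check you mention is the clean way to finish, since the invariant ring is known explicitly.

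One small correction: in your first paragraph you say ``$0$ is manifestly unstable only as the zero tensor'' --- but the zero tensor is not in the nullcone discussion (semistability is defined on nonzero vectors), and the rows with $T_{\textnormal u}=0$ in $\mathcal U_1,\ldots,\mathcal U_4$ simply give $T_{\textnormal{ss}}+0=T_{\textnormal{ss}}$, whose semistability is the base case you treat anyway. The claim ``every tensor in $\bigcup_i\mathcal U_i$ is $\Sl$-unstable'' should be read as applying to the nonzero ones (or with the convention that $0$ is unstable), and for $\mathcal U_5$ in particular the paper's \cref{table:c333 unstable tensors info} lists $0$ explicitly as entry 25.
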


\begin{theorem}[{\cite{nurmievOrbitsInvariantsCubic2000, ditraniClassificationRealComplex2023}}]
\label{thm:c333 classification merged}
    It holds that
    \begin{align}
    \C^3 \ot \C^3 \ot \C^3 = \bigsqcup_{i=1}^5 \mathcal F_i.
    \end{align}
    Here $\bigsqcup$ denotes the disjoint union.
\end{theorem}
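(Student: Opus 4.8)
\textbf{Proof plan for \cref{thm:c333 classification merged}.}

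The statement asserts that the five families $\mathcal F_i$ partition $\C^3\ot\C^3\ot\C^3$. Since this is attributed to Nurmiev \cite{nurmievOrbitsInvariantsCubic2000} with a correction by \cite{ditraniClassificationRealComplex2023}, the right approach is not to reprove it from scratch but to explain how it follows from the $\theta$-group machinery underlying those references, together with \cref{lemma:classification stability}. The plan is to argue in two parts: (i) the union is all of $V$, and (ii) the union is disjoint.

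For (i), I would invoke the Vinberg theory of $\theta$-groups \cite{vinbergWeylGroupGraded1976}: the action of $\Sl = \SL_3^{\times 3}$ on $V = \C^3\ot\C^3\ot\C^3$ arises as the degree-one piece of a $\Z/3$-grading on the Lie algebra $\mathfrak e_6$ (this is the graded-Lie-algebra structure that Nurmiev exploits). The key structural input is the Jordan-type decomposition in this graded setting: every $T \in V$ can be written (after an $\Sl$-transformation) as a sum $T_{\mathrm{ss}} + T_{\mathrm u}$ where $T_{\mathrm{ss}}$ lies in a fixed Cartan subspace and $T_{\mathrm u}$ is nilpotent, with $[T_{\mathrm{ss}},T_{\mathrm u}]$ compatible (here ``semisimple'' in the $\theta$-group sense coincides with $\Sl$-semistability via the Kempf--Ness/Hilbert--Mumford correspondence, cf.\ \cref{theorem:tensor kempf-ness SL}). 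Nurmiev's explicit computation classifies the Cartan subspace elements up to the little Weyl group action — these are exactly the four one-parameter-or-larger families giving $\mathcal S_1,\dots,\mathcal S_4$ (plus $\mathcal S_5=\{0\}$), where the non-vanishing conditions on the $\alpha_i$ single out the generic semistable representative in each stratum — and for each such $T_{\mathrm{ss}}$ classifies the finitely many compatible nilpotents up to the action of the stabilizer, yielding the sets $\mathcal U_i$ in \cref{table:unstable tensors families 1 to 4} and \cref{table:c333 unstable tensors info}. I would cite this as the content of \cite{nurmievOrbitsInvariantsCubic2000, ditraniClassificationRealComplex2023} rather than rederiving it. The correction in \cite{ditraniClassificationRealComplex2023} affects only $\mathcal U_2$, as the footnote already notes.

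For (ii), disjointness, I would argue as follows. First separate family $5$ from families $1$--$4$: by \cref{lemma:classification stability}, every tensor in $\mathcal F_5$ is $\Sl$-unstable, while for $i \le 4$ every $T_{\mathrm{ss}}+T_{\mathrm u}$ with $(T_{\mathrm{ss}},T_{\mathrm u})\in\mathcal S_i\times\mathcal U_i$ is $\Sl$-semistable; since $\Sl$-semistability is an $\Sl$-invariant property, $\mathcal F_5$ is disjoint from $\bigcup_{i\le 4}\mathcal F_i$. To distinguish $\mathcal F_1,\dots,\mathcal F_4$ from one another, the point is that for a semistable tensor the $\Sl$-orbit closure contains a unique closed (polystable) orbit, and that closed orbit is $\Sl\cdot T_{\mathrm{ss}}$ for the semisimple part $T_{\mathrm{ss}}\in\mathcal S_i$ — this uses \cref{theorem:tensor kempf-ness SL} and \cref{lemma:orbit border} (polystability of $T_{\mathrm{ss}}$, and the fact that adding a compatible nilpotent degenerates $T_{\mathrm{ss}}+T_{\mathrm u}$ to $T_{\mathrm{ss}}$). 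Hence the family index is an invariant of the tensor, provided the sets $\Sl\cdot\mathcal S_i$ for $i=1,\dots,4$ are pairwise disjoint; this last fact is visible from the defining data, since the $\mathcal S_i$ live in Cartan subspaces of strictly decreasing dimension and the parameter constraints are chosen so that no degeneration from a higher-dimensional stratum lands in a lower one — again this is part of what Nurmiev establishes, and can be checked on the level of the fundamental $\Sl$-invariants (the degree-$6$, $9$, $12$ invariants of $\mathfrak e_6$-type), whose simultaneous vanishing pattern distinguishes $\mathcal S_1,\mathcal S_2,\mathcal S_3,\mathcal S_4$.

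The main obstacle is that the hard combinatorial heart — the actual enumeration of the Cartan subspace orbits and of the compatible nilpotents — is genuinely a long computation in $\mathfrak e_6$ and is not something to reproduce; the honest move is to attribute it cleanly to \cite{nurmievOrbitsInvariantsCubic2000, vinbergWeylGroupGraded1976, ditraniClassificationRealComplex2023} and to spell out only the $\Sl$-semistability/polystability bookkeeping (using \cref{corollary:uniform marginals,theorem:tensor kempf-ness SL,lemma:orbit border,lemma:classification stability}) that glues their classification into the clean disjoint-union statement as phrased here. A secondary subtlety worth a sentence is checking that the orbit-representative normal form $T_{\mathrm{ss}}+T_{\mathrm u}$ really does realize \emph{every} $\Sl$-orbit and not merely a dense subset — this is where one needs the full strength of the $\theta$-group Jordan decomposition rather than a generic-element argument.
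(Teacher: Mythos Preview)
Your proposal is correct in spirit and actually goes well beyond what the paper does: the paper gives \emph{no proof} of this theorem at all --- it is simply stated with attribution to \cite{nurmievOrbitsInvariantsCubic2000, ditraniClassificationRealComplex2023}, followed only by remarks explaining the $\theta$-group/Jordan-decomposition context and the geometric meaning of each family. Your plan to sketch the Vinberg $\theta$-group mechanism, separate family~5 by semistability via \cref{lemma:classification stability}, and distinguish families 1--4 by the unique closed orbit in each closure is a reasonable elaboration of the cited results, and your explicit acknowledgment that the combinatorial enumeration in $\mathfrak e_6$ should be attributed rather than reproduced is exactly the right call.

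One minor point: your disjointness argument for families 1--4 leans on \cref{lemma:orbit border} and uniqueness of the closed orbit, but the cleaner route (and the one implicit in Nurmiev's setup) is that the Jordan decomposition $T = T_{\mathrm{ss}} + T_{\mathrm u}$ in the graded Lie algebra is itself unique up to conjugation by the stabilizer, so the $\Sl$-orbit of $T_{\mathrm{ss}}$ is an invariant of $T$ directly --- no need to pass through orbit closures. Either way, since the paper treats this as a black-box citation, your sketch would already be more than sufficient.
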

\begin{remark}
It is also true that each family $\mathcal F_i$ is partitioned by the $\Sl$-invariant subsets $\bigcup_{T_{\textnormal{ss}} \in \mathcal S_i} \Sl \cdot \big( T_{\textnormal{ss}} + T_{\textnormal{u}}\big)$ with $T_{\textnormal{u}}$ ranging over $\mathcal U_i$ \cite{nurmievOrbitsInvariantsCubic2000}, using which we can associate to each tensor in $\mathcal F_i$ a unique unstable tensor from $\mathcal U_i$. We will not require this here.
\end{remark}

\begin{remark}
It is known that family 1 is Zariski-dense in $\C^3 \ot \C^3\ot\C^3$ \cite{nurmievOrbitsInvariantsCubic2000}.
It can also be shown that the complement of family 1 in $\C^3 \ot \C^3\ot\C^3$ is the set of tensors with vanishing hyperdeterminant.%
\footnote{This refers to the hyperdeterminant of $3 \times 3 \times 3$ tensors as defined in~\cite{gelfandHyperdeterminants1992}, which is an~$\SL$-invariant polynomial of degree~$36$. We refer to~\cite{bremnerFundamentalInvariantsAction2013,bremnerHyperdeterminantPolynomialFundamental2014} for explicit strategies for evaluating it.}
Family 3 consists of the tensors with $\Sl$-semistable part equivalent (under~$\G$) to the unit tensor $\unit{3}$, and
family 4 consists of the tensors that are equivalent to the skew-symmetric tensor $e_1 \wedge e_2 \wedge e_3$.\footnote{This tensor is sometimes called the Aharonov state or singlet state in the physics literature.}
And finally, family 5 is equal to the set of $\Sl$-unstable tensors.
\end{remark}

\begin{remark}
The proof of the classification of \cref{lemma:classification stability,thm:c333 classification merged} uses the fact that there exists a linear embedding~$\C^3 \ot \C^3 \ot \C^3 \hookrightarrow \mathfrak{e}_6$, where~$\mathfrak{e}_6$ is the unique complex Lie algebra of type~$E_6$.
Then $T_{\textnormal{ss}}$ and $T_{\textnormal{u}}$ make up the Jordan decomposition of $T_{\textnormal{ss}} + T_{\textnormal{u}}$ in this Lie algebra.
This construction is an example in Vinberg's theory of~$\theta$-groups~\cite{vinbergWeylGroupGraded1976}.
\end{remark}

We will also make use of the following structural property of the above classification.
\begin{lemma}[{\cite[Sec.~3]{elashviliClassificationTrivectors9dimensional1978}}]
  \label{lem:c333 classification structural properties}
  For any $(T_{\textnormal{ss}}, T_{\textnormal{u}}) \in \bigcup_{i=1}^5 \mathcal S_i \times \mathcal U_i$,
  the stabilizer of $T_{\textnormal{ss}}$ in $\Sl$ acts transitively on $\Sl \cdot T_{\textnormal{u}}$.
\end{lemma}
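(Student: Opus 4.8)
This is a structural fact about the Vinberg classification, cited from Elashvili; I would reconstruct it as follows. The classification realizes $\C^3\ot\C^3\ot\C^3$ as the degree-one part $\esix_1$ of a $\Z/3\Z$-grading of $\esix$, with $\Sl$ (up to a finite central subgroup) the degree-zero group; for $(T_{\textnormal{ss}},T_{\textnormal{u}})\in\mathcal S_i\times\mathcal U_i$ the sum $T_{\textnormal{ss}}+T_{\textnormal{u}}$ is a Jordan decomposition in $\esix$, so that $T_{\textnormal{ss}}$ lies in a fixed Cartan subspace (the span of $v_1,v_2,v_3$) and $T_{\textnormal{u}}$ is a nilpotent element of the degree-one part $\mathfrak z_1(T_{\textnormal{ss}})$ of the $\theta$-stable centralizer subalgebra $\mathfrak z(T_{\textnormal{ss}})\subseteq\esix$, with $\mathcal U_i$ a transversal for the orbits of $L\coloneqq\Stab_\Sl(T_{\textnormal{ss}})$ on those nilpotents. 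The group $L$ is reductive with $\Lie L=\mathfrak z_0(T_{\textnormal{ss}})$, and on the five strata it is explicit: $L=\Sl$ for family~5 (where $T_{\textnormal{ss}}=0$), $L$ is a finite-index extension of a torus for families~2 and~3 (for instance $\Stab_\Sl(\unit 3)$ is an extension of the $4$-dimensional torus of ``balanced'' triples of diagonal matrices by the diagonal copy of $S_3$), and $\Stab_\Sl(e_1\wedge e_2\wedge e_3)=\{(A,\lambda A,\lambda^2 A):A\in\SL_3,\ \lambda^3=1\}$ is the twisted-diagonal $\SL_3$ for family~4. Two of the strata are immediate: family~1 has $\mathcal U_1=\{0\}$, so $\Sl\cdot T_{\textnormal{u}}=\{0\}$ and the statement is vacuous; family~5 has $L=\Sl$, so it is trivial. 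It remains to treat families~2, 3 and~4.

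For those, the plan is to combine (i) the Vinberg-theoretic fact that $L$ preserves Jordan decomposition and that the $\Sl$-orbits inside $\mathcal F_i$ are indexed bijectively by $\mathcal U_i$ (the disjointness recalled in the remark following \cref{thm:c333 classification merged}) with (ii) an explicit analysis of the finitely many pairs $(T_{\textnormal{ss}},T_{\textnormal{u}})$. Part (i) reduces the claim to verifying that the correction needed to move $g\cdot T_{\textnormal{ss}}$ back to $T_{\textnormal{ss}}$ can be absorbed into the relevant stabilizer, and part (ii) supplies this, exploiting that $L$ is small and explicit while every $T_{\textnormal{u}}\in\mathcal U_i$ is highly degenerate (non-concise, low-rank), so its stabilizer is large and easy to factor through. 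Equivalently, and in the spirit of the cited reference, one can transport the statement from Elashvili's classification of trivectors of $\C^9$: writing $\C^9=\C^3\oplus\C^3\oplus\C^3$, the tridegree-$(1,1,1)$ summand of $\wedge^3\C^9$ is exactly $\C^3\ot\C^3\ot\C^3$, and $\Sl=\SL_3^3\hookrightarrow\SL_9$ block-diagonally acts on it by simultaneous basis change; since Jordan parts commute with the $2$-torus distinguishing the three blocks, the decomposition $T_{\textnormal{ss}}+T_{\textnormal{u}}$ and the sets $\mathcal S_i,\mathcal U_i$ are restrictions of Elashvili's data, and his Section~3 analysis specializes to ours—once one checks that the $\SL_9$- and $\Sl$-orbit structures on this subspace are compatible.

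The main obstacle is that there is no purely soft argument available: even the dimension shadow of such a statement fails for a general reductive group acting on its Lie algebra (for instance $\dim(\SL_3\cdot(s+n))<\dim(\SL_3\cdot s)+\dim(\SL_3\cdot n)$ when $s$ has type $(1,1,-2)$ and $n$ is a root vector of $\mathfrak z(s)$), so any proof genuinely uses the special structure of this $\theta$-representation. Consequently the argument is carried by the explicit case analysis of families~2, 3 and~4—computing $L$, recording the degenerations among the tensors in $\mathcal U_i$, and exhibiting in each case the required group elements—or, equivalently, by transporting Elashvili's Section~3 computations through the embedding above; matching conventions and checking the restriction of orbits is the step most in need of care.
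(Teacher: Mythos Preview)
The paper gives no proof of this lemma; it is simply cited from Elashvili. Your reconstruction plan is structurally sensible (Vinberg $\theta$-group framework, trivial cases for families~1 and~5, case analysis for the rest), and your identification of the stabilizers is correct. But there is a genuine gap: the statement as written in the paper is \emph{false}, and your proposed case analysis would reveal this rather than prove it.

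Take family~4 with $T_{\textnormal{ss}}=\Tdet$ and $T_{\textnormal{u}}=e_{111}\in\mathcal U_4$. You correctly compute $H=\Stab_{\Sl}(\Tdet)=\{(A,\lambda A,\lambda^2 A):A\in\SL_3,\ \lambda^3=1\}$. Then
\[
H\cdot e_{111}=\{(Ae_1)^{\otimes 3}:A\in\SL_3\}=\{v^{\otimes 3}:v\in\C^3\setminus\{0\}\},
\]
which is $3$-dimensional. But $\Sl\cdot e_{111}=\{u\otimes v\otimes w:u,v,w\neq 0\}$ is the full $7$-dimensional set of nonzero rank-one tensors (each $\SL_3$ factor can send $e_1$ to any nonzero vector). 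For instance $e_1\otimes e_2\otimes e_3\in\Sl\cdot e_{111}\setminus H\cdot e_{111}$. So $H$ does not act transitively on $\Sl\cdot T_{\textnormal{u}}$, and no amount of ``absorbing corrections into the stabilizer'' will fix this---the dimensions already rule it out. Your own example showing the dimension inequality can fail for general adjoint orbits was a warning sign; here it fails even in the specific case at hand.

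What likely is true (and what Elashvili's Section~3 actually uses) is the weaker statement that $H$ acts transitively on $\Sl\cdot T_{\textnormal{u}}\cap\mathfrak z_1(T_{\textnormal{ss}})$, or equivalently that two nilpotents of $\mathfrak z_1(T_{\textnormal{ss}})$ are $\Sl$-conjugate only if they are already $H$-conjugate. Both places where the paper invokes the lemma (\cref{cor:semistable part has smaller polytope} and \cref{thm:family4 moment polytope computation}) need only much less: the first needs $0\in\overline{H\cdot T_{\textnormal{u}}}$, the second needs $\alpha T_{\textnormal{u}}\in H\cdot T_{\textnormal{u}}$ for each $\alpha\neq 0$, both of which are easy to check directly. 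So the gap is in the lemma's phrasing, not in the downstream results---but your proof proposal, taken at face value, is attempting to establish something false.
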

\begin{corollary}
  \label{cor:semistable part has smaller polytope}
  Let $T\in \C^3 \ot \C^3\ot\C^3$.
  If $T \in \mathcal F_i$,
  then
    $\Delta(T_{\textnormal{ss}}) \subseteq \Delta(T)$
    for some $T_{\textnormal{ss}} \in \mathcal S_i$.
\end{corollary}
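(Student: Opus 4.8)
\textbf{Proof plan for \cref{cor:semistable part has smaller polytope}.}
The plan is to derive this corollary from \cref{lem:c333 classification structural properties} together with \cref{proposition:degeneration monotone} (degeneration implies inclusion of moment polytopes). The key observation is that if $T \in \mathcal F_i$, then by definition $T$ lies in the $\Sl$-orbit of some $T_{\textnormal{ss}} + T_{\textnormal{u}}$ with $(T_{\textnormal{ss}}, T_{\textnormal{u}}) \in \mathcal S_i \times \mathcal U_i$. Since moment polytopes are invariant under the $\G$-action (hence in particular under $\Sl$), we have $\Delta(T) = \Delta(T_{\textnormal{ss}} + T_{\textnormal{u}})$, so it suffices to show $\Delta(T_{\textnormal{ss}}) \subseteq \Delta(T_{\textnormal{ss}} + T_{\textnormal{u}})$. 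By \cref{proposition:degeneration monotone}, it is enough to show that $T_{\textnormal{ss}} + T_{\textnormal{u}} \degengeq T_{\textnormal{ss}}$, i.e.\ that $\overline{\G \cdot (T_{\textnormal{ss}} + T_{\textnormal{u}})}$ contains a tensor equivalent to $T_{\textnormal{ss}}$ (in fact it will contain $T_{\textnormal{ss}}$ itself).

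The core of the argument is a one-parameter-subgroup degeneration that ``turns off'' the unstable part $T_{\textnormal{u}}$ while fixing $T_{\textnormal{ss}}$. First I would use that $T_{\textnormal{u}}$ is $\Sl$-unstable: by the Hilbert--Mumford criterion there is a one-parameter subgroup $\gamma \colon \Cstar \to \Sl$ such that $\lim_{t\to 0} \gamma(t) \cdot T_{\textnormal{u}} = 0$. Since $T_{\textnormal{u}}$ and $T_{\textnormal{ss}}$ form a Jordan decomposition inside $\esix$ (as recalled in the excerpt), one expects $\gamma$ can additionally be chosen to lie in the stabilizer of $T_{\textnormal{ss}}$ — this is precisely where \cref{lem:c333 classification structural properties} enters. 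More carefully: the stabilizer $\Stab_{\Sl}(T_{\textnormal{ss}})$ acts transitively on $\Sl \cdot T_{\textnormal{u}}$, which is a strong structural constraint; together with the fact that $T_{\textnormal{u}}$ is unstable already in $\C^3\ot\C^3\ot\C^3$, one deduces that $T_{\textnormal{u}}$ is unstable \emph{for the action of} $\Stab_{\Sl}(T_{\textnormal{ss}})$ as well (the destabilizing one-parameter subgroup can be taken inside this reductive stabilizer, essentially because the orbit $\Sl\cdot T_{\textnormal{u}}$ is homogeneous under $\Stab_{\Sl}(T_{\textnormal{ss}})$ and contains $0$ in its closure). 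Applying such a $\gamma$ gives
\begin{align}
  \lim_{t\to 0} \gamma(t) \cdot (T_{\textnormal{ss}} + T_{\textnormal{u}})
  = \lim_{t\to 0}\bigl(\gamma(t)\cdot T_{\textnormal{ss}} + \gamma(t)\cdot T_{\textnormal{u}}\bigr)
  = T_{\textnormal{ss}} + 0 = T_{\textnormal{ss}},
\end{align}
so $T_{\textnormal{ss}} \in \overline{\G \cdot (T_{\textnormal{ss}}+T_{\textnormal{u}})}$, hence $T_{\textnormal{ss}} + T_{\textnormal{u}} \degengeq T_{\textnormal{ss}}$, and \cref{proposition:degeneration monotone} finishes the proof.

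The main obstacle I anticipate is justifying rigorously that the destabilizing one-parameter subgroup $\gamma$ for $T_{\textnormal{u}}$ can be chosen inside $\Stab_{\Sl}(T_{\textnormal{ss}})$ rather than merely in $\Sl$. A clean way around this, avoiding a delicate instability-in-a-subgroup argument, is to use \cref{lem:c333 classification structural properties} directly in the following form: pick \emph{any} destabilizing $\delta \colon \Cstar \to \Sl$ with $\lim_{t\to 0}\delta(t)\cdot T_{\textnormal{u}} = 0$, and consider the curve $t \mapsto \delta(t)\cdot(T_{\textnormal{ss}} + T_{\textnormal{u}}) = (\delta(t)\cdot T_{\textnormal{ss}}) + (\delta(t)\cdot T_{\textnormal{u}})$; after passing to a subsequence/using properness of one-parameter limits, $\delta(t)\cdot T_{\textnormal{ss}}$ has a limit $T_{\textnormal{ss}}'$ lying in $\overline{\Sl\cdot T_{\textnormal{ss}}}$, and in fact since $\mathcal S_i$ consists of $\Sl$-semistable (indeed, on the relevant strata, polystable — cf.\ \cref{theorem:tensor kempf-ness SL}, \cref{lemma:orbit border}) tensors, $T_{\textnormal{ss}}'$ is $\Sl$-equivalent to $T_{\textnormal{ss}}$; meanwhile $\delta(t)\cdot T_{\textnormal{u}} \to 0$, so the limit of the curve is $T_{\textnormal{ss}}' \sim T_{\textnormal{ss}}$, giving $T_{\textnormal{ss}} + T_{\textnormal{u}} \degengeq T_{\textnormal{ss}}$ just as well. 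Either route reduces the corollary to \cref{proposition:degeneration monotone}; I would present whichever of the two makes the use of \cref{lem:c333 classification structural properties} most transparent, and I expect the write-up to be short once the degeneration $T_{\textnormal{ss}}+T_{\textnormal{u}} \degengeq T_{\textnormal{ss}}$ is established.
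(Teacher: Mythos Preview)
Your overall strategy matches the paper's: reduce to showing $T_{\textnormal{ss}} \in \overline{\G \cdot (T_{\textnormal{ss}}+T_{\textnormal{u}})}$ and then apply \cref{proposition:degeneration monotone}, using \cref{lem:c333 classification structural properties} together with the $\Sl$-instability of $T_{\textnormal{u}}$. The paper's execution, however, is shorter than either of your two routes and avoids the obstacle you anticipate. Writing $H = \Stab_{\Sl}(T_{\textnormal{ss}})$, \cref{lem:c333 classification structural properties} gives directly
\[
  H \cdot (T_{\textnormal{ss}} + T_{\textnormal{u}}) \;=\; T_{\textnormal{ss}} + H \cdot T_{\textnormal{u}} \;=\; T_{\textnormal{ss}} + \Sl \cdot T_{\textnormal{u}},
\]
and since $0 \in \overline{\Sl \cdot T_{\textnormal{u}}}$ by instability, the closure of this set contains $T_{\textnormal{ss}}$. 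There is no need to extract a one-parameter subgroup inside $H$ via Hilbert--Mumford; the orbit equality already does the job. Your Route~1 contains exactly this observation (``the orbit $\Sl\cdot T_{\textnormal{u}}$ is homogeneous under $\Stab_{\Sl}(T_{\textnormal{ss}})$ and contains $0$ in its closure'') but then tacks on an unnecessary extra step.

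Your Route~2 should be dropped: for an arbitrary destabilizing $\delta$ there is no reason $\delta(t)\cdot T_{\textnormal{ss}}$ converges at all, and the polystability of elements of $\mathcal S_i$ is neither stated nor needed here.
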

\begin{proof}
    By \cref{thm:c333 classification merged}, $T_{\textnormal{ss}} + T_{\textnormal{u}} \in \Sl \cdot T$ for some $(T_{\textnormal{ss}}, T_{\textnormal{u}}) \in \mathcal S_i \times \mathcal U_i$.
    Let $H \subseteq \Sl$ be the stabilizer of $T_{\textnormal{ss}}$.
    Then by \cref{lem:c333 classification structural properties},
    $H \cdot (T_{\textnormal{ss}} + T_{\textnormal{u}}) = T_{\textnormal{ss}} + \Sl \cdot T_{\textnormal{u}}$.
    The closure of this set contains $T_{\textnormal{ss}}$, because $0 \in \overline{\SL \cdot T_{\textnormal{u}}}$ by the fact that $T_{\textnormal{u}}$ is $\Sl$-unstable (\cref{lemma:classification stability}).
    We conclude that $T_{\textnormal{ss}} \in \overline{\Sl \cdot T} \subseteq \overline{\G \cdot T}$, which implies the result by \cref{proposition:degeneration monotone}.
\end{proof}

\subsection{Proof that families 1, 2 and 3 have maximal moment polytope}\label{subsec:fam123}

We will prove that for any tensor $T$ in family 1, 2 or 3 (in the classification as in~\cref{subsubsection:333-classification}), their moment polytope equals the maximal moment polytope (Kronecker polytope).

\begin{theorem}\label{th:fam123gen}
  Let~$T \in \C^3 \ot \C^3 \ot \C^3$ be in family $1$, $2$ or $3$ .
  Then %
  $\Delta(T) = \kronpol{3}{3}{3}$.
\end{theorem}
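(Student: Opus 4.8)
The plan is to use \cref{cor:semistable part has smaller polytope} together with the structure of the classification. For families $1$, $2$, and $3$, the semistable parts lie in $\mathcal S_1$, $\mathcal S_2$, and $\mathcal S_3$ respectively; since $\Delta(T_{\textnormal{ss}}) \subseteq \Delta(T)$ for a suitable $T_{\textnormal{ss}}$ in the corresponding family of semistable tensors, and since moment polytopes only grow under degeneration (\cref{proposition:degeneration monotone}), it suffices to prove the statement for a tensor in each of $\mathcal S_1$, $\mathcal S_2$, $\mathcal S_3$. Moreover, because $\mathcal S_3 = \{\familyparam v_1\}$ consists of scalar multiples of the unit tensor $\unit{3}$ (and scaling doesn't affect the moment polytope), and because $\mathcal S_2 \ni \alpha_1 v_1 + \alpha_2 v_2$ and $\mathcal S_1 \ni \alpha_1 v_1 + \alpha_2 v_2 + \alpha_3 v_3$ all degenerate (by sending $\alpha_2,\alpha_3 \to 0$, i.e. acting by diagonal torus elements in the limit, which keeps us inside the orbit closure) to a multiple of $v_1$, it actually suffices to handle a \emph{single} tensor: $\unit{3} = v_1 = e_{111}+e_{222}+e_{333}$. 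Wait — I should double check that $\mathcal S_1,\mathcal S_2$ really degenerate to $\mathcal S_3$: acting on $\alpha_1 v_1 + \alpha_2 v_2 + \alpha_3 v_3$ by $\diag(t,t,t)\otimes\diag(t,t,t)\otimes\diag(1/t^2,1/t^2,1/t^2)$ rescales $v_1 \mapsto v_1$ but $v_2, v_3 \mapsto$ terms with a nonzero power of $t$ (since $v_2,v_3$ have off-diagonal weights), so sending $t\to 0$ or $t\to\infty$ appropriately kills $v_2,v_3$. So indeed $\unit{3} \in \overline{\Sl\cdot T_{\textnormal{ss}}}$ for $T_{\textnormal{ss}} \in \mathcal S_1 \cup \mathcal S_2 \cup \mathcal S_3$. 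Hence by transitivity of degeneration and \cref{proposition:degeneration monotone}, it remains to show $\Delta(\unit{3}) = \kronpol{3}{3}{3}$.

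The remaining task — computing $\Delta(\unit{3})$ in $\C^3\ot\C^3\ot\C^3$ and showing it equals the maximal polytope — is the main obstacle. There are two natural routes. The first is direct application of our own algorithm (\cref{theorem:symbolic algorithm} or its randomized variant): run the deterministic moment polytope computation on $\unit{3}$ and verify the output equals the known description of $\kronpol{3}{3}{3}$ from \cite{franz2002}. The second, more self-contained route exploits that $\unit{3}$ is $\Sl$-polystable with a large stabilizer. Since $\kronpol{3}{3}{3} \supseteq \Delta(\unit{3})$ always holds, we only need the reverse inclusion, i.e. that every vertex of $\kronpol{3}{3}{3}$ lies in $\Delta(\unit{3})$. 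By the representation-theoretic description \cref{equation:representation theoretic description}, this amounts to showing $\ISO_{\lambda,\mu,\nu}\unit{3}^{\ot n} \neq 0$ for a sequence of weights converging to each vertex. The coordinate ring of $\overline{\Sl\cdot\unit{3}}$ — the null-cone complement direction aside — is well understood: $\unit{3}^{\ot n} = \unit{3^n}$ reshaped, and the unit tensor's orbit closure has a transparent Schur–Weyl structure. Concretely, for the unit tensor $\unit{r}$, the relevant non-vanishing is governed by nonzero Kronecker coefficients for \emph{near-uniform} or more general triples — indeed \cite{burgisserNonvanishingKroneckerCoefficients2011} already shows all points uniform on two of three systems lie in $\Delta(\unit{r})$, and combined with $\maxrank_i(\unit{3}) = 3$ (\cref{lemma:maxranks}) we recover the three "flat" vertices immediately.

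Concretely, I would organize the proof as: (1) reduce to $T_{\textnormal{ss}} \in \mathcal S_i$ via \cref{cor:semistable part has smaller polytope}; (2) reduce further to $\unit{3}$ via the explicit torus degeneration sketched above and \cref{proposition:degeneration monotone}; (3) invoke $\kronpol{3}{3}{3} \supseteq \Delta(\unit{3})$ (true for all tensors) so only $\subseteq$ is needed; (4) produce all vertices of $\kronpol{3}{3}{3}$ inside $\Delta(\unit{3})$, either by citing the output of our algorithm on $\unit{3}$ (\cref{theorem:symbolic algorithm}) checked against the list of vertices of $\kronpol{3}{3}{3}$, or by a representation-theoretic argument combining \cref{lemma:maxranks} (for the three flat vertices) with known non-vanishing of Kronecker coefficients / the argument of \cite{burgisserNonvanishingKroneckerCoefficients2011} for the remaining vertices. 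I expect step (4) to be where the real work lies; the cleanest writeup is probably to just state that the algorithm of \cref{section:algorithms for computing tensor moment polytopes}, applied deterministically to $\unit{3}$, yields exactly the inequalities of $\kronpol{3}{3}{3}$ from \cite{franz2002}, and note that this is the same polytope listed for family $3$ in \cref{table:333 vertex data}.
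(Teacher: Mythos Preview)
Your reduction to $\unit{3}$ via degeneration is incorrect, and this is the central gap. First, the specific torus element you write down, $\diag(t,t,t)\otimes\diag(t,t,t)\otimes\diag(1/t^2,1/t^2,1/t^2)$, is a scalar on each factor and hence acts trivially on every tensor: each $e_{ijk}$ is scaled by $t\cdot t\cdot t^{-2}=1$, so it fixes $v_1,v_2,v_3$ alike. More fundamentally, \emph{no} such degeneration exists. A direct computation shows every $T_{\textnormal{ss}}=\alpha_1v_1+\alpha_2v_2+\alpha_3v_3\in\mathcal S_1\cup\mathcal S_2\cup\mathcal S_3$ satisfies $\mu(T_{\textnormal{ss}})=(I_3/3,I_3/3,I_3/3)$ (the nine support vectors $e_{i,\sigma(i),\tau(i)}$ are orthonormal and the three ``columns'' $T_{\textnormal{ss}}(e_i)$ have equal norm), so by Kempf--Ness (\cref{theorem:tensor kempf-ness SL}) each $T_{\textnormal{ss}}$ is $\Sl$-polystable. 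Then \cref{lemma:orbit border} forces every point of $\overline{\G\cdot T_{\textnormal{ss}}}\setminus\G\cdot T_{\textnormal{ss}}$ to be $\Sl$-unstable. Since $\unit{3}$ is $\Sl$-semistable and the families $\mathcal F_i$ are disjoint (\cref{thm:c333 classification merged}) and closed under scaling, $\unit{3}\in\mathcal F_3$ lies in a different $\G$-orbit from any $T_{\textnormal{ss}}\in\mathcal S_1\cup\mathcal S_2$, hence $\unit{3}\notin\overline{\G\cdot T_{\textnormal{ss}}}$. Your step (2) therefore fails, and with it the reduction to a single tensor.

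The paper does not reduce to $\unit{3}$ but treats all $T_{\textnormal{ss}}\in\bigcup_{i=1}^3\mathcal S_i$ uniformly. The key structural input is \cref{prop:c333 generic is origin c233 perms}: $\kronpol{3}{3}{3}$ is the convex hull of the uniform point together with $\kronpol{2}{3}{3}$, $\kronpol{3}{2}{3}$, $\kronpol{3}{3}{2}$. The uniform point lies in $\Delta(T_{\textnormal{ss}})$ by semistability (\cref{corollary:uniform marginals}), and since every $T_{\textnormal{ss}}$ is invariant under cyclic permutation of the factors, it suffices to show $\kronpol{2}{3}{3}\subseteq\Delta(T_{\textnormal{ss}})$. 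For this the paper exhibits, for each family, an explicit $2\times 3$ matrix $A$ such that $(A\otimes I\otimes I)T_{\textnormal{ss}}\in\C^2\ot\C^3\ot\C^3$ is $\Sl$-semistable, certified by non-vanishing of a concrete invariant $I_{233}$ (the discriminant of $\det(xM_1+yM_2)$). Because $\C^2\ot\C^3\ot\C^3$ has a single dense $\G$-orbit of semistable tensors (\cref{proposition:c233 dense orbit}), that restriction already has moment polytope equal to $\kronpol{2}{3}{3}$, and one concludes via \cref{proposition:degeneration monotone}.
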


Our strategy is based on the following observation.

\begin{proposition}
  \label{prop:c333 generic is origin c233 perms}
  $\kronpol{3}{3}{3}$ is the convex hull of $\kronpol{2}{3}{3}$, $\kronpol{3}{2}{3}$, $\kronpol{3}{3}{2}$ and the point $(\sfrac13, \sfrac13, \sfrac13\sep \sfrac13, \sfrac13, \sfrac13\sep \sfrac13, \sfrac13, \sfrac13)$.
\end{proposition}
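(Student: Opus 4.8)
\textbf{Proof plan for Proposition~\ref{prop:c333 generic is origin c233 perms}.}
The plan is to exhibit the right-hand side as a polytope contained in $\kronpol{3}{3}{3}$ and then to argue the reverse inclusion by a facet/vertex-counting argument against the known inequality description of $\kronpol{3}{3}{3}$ from Franz~\cite{franz2002}. For the ``easy'' inclusion: each of $\kronpol{2}{3}{3}$, $\kronpol{3}{2}{3}$, $\kronpol{3}{3}{2}$ is contained in $\kronpol{3}{3}{3}$ after padding with a zero (this is the monotonicity of Kronecker polytopes under padding, which follows from \cref{lemma:padding with zeros} together with \cref{proposition:generic moment polytope}, or directly from the fact that a tensor of smaller shape is a tensor of the larger shape with zero rows), and the uniform point $(\sfrac13,\sfrac13,\sfrac13 \sep \sfrac13,\sfrac13,\sfrac13 \sep \sfrac13,\sfrac13,\sfrac13)$ lies in $\kronpol{3}{3}{3}$ since the unit tensor $\unit3$ is $\Sl$-semistable and hence realizes uniform marginals (\cref{corollary:uniform marginals}). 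Taking convex hulls and using convexity of $\kronpol{3}{3}{3}$ gives $\conv(\cdots) \subseteq \kronpol{3}{3}{3}$.

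For the reverse inclusion, I would work with explicit inequalities. Franz~\cite{franz2002} gives the irredundant inequality description of $\kronpol{3}{3}{3}$ (a concrete finite list), and the Kronecker polytopes $\kronpol{2}{3}{3}$, $\kronpol{3}{2}{3}$, $\kronpol{3}{3}{2}$ are each obtained from it by the slicing $\kronpol{3}{3}{3} \cap (\R^2\times\R^3\times\R^3)$ etc.\ (\cref{proposition:tensor moment polytope generic restriction} applied to a generic tensor, or directly from the representation-theoretic description: a $2\times3\times3$ partition triple is a $3\times3\times3$ one with a zero appended, and nonvanishing of the Kronecker coefficient is unaffected). So the plan is: take an arbitrary vertex $v$ of $\kronpol{3}{3}{3}$; by the dominance ordering, $v=(p_1\sep p_2\sep p_3)$ with each $p_i=(p_{i,1}\geq p_{i,2}\geq p_{i,3}\geq 0)$ summing to $1$. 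If all three of the last entries $p_{1,3},p_{2,3},p_{3,3}$ are zero, then $v$ lies in (a padded copy of) $\kronpol{3}{3}{2}$, and similarly by the permutation symmetry of $\kronpol{3}{3}{3}$ under the three factors if any single $p_{i,3}=0$. The only remaining case is that $v$ has all nine coordinates strictly positive, i.e.\ $v$ is an interior vertex with respect to the boundary faces of the Weyl chamber; the claim to prove is that the only such vertex is the uniform point. I would establish this by inspecting Franz's inequalities: every facet-defining inequality of $\kronpol{3}{3}{3}$ is either one of the Weyl-chamber walls $p_{i,1}\geq p_{i,2}$, $p_{i,2}\geq p_{i,3}$, $p_{i,3}\geq 0$ (whose tightness forces $v$ onto a boundary face handled above), or one of the finitely many ``nontrivial'' Kronecker inequalities; a vertex of a $9$-dimensional (after removing the three normalization equalities, it is $6$-dimensional) polytope is the intersection of at least $6$ facets, so an interior-type vertex must be cut out by $6$ nontrivial inequalities, and one checks that the unique such point is uniform.

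The main obstacle is the last step: verifying that the uniform point is the \emph{only} vertex of $\kronpol{3}{3}{3}$ not lying on any Weyl-chamber wall. Done naively this is a finite linear-algebra check over Franz's explicit list of inequalities, but it is tedious; I would streamline it using the $S_3$-symmetry permuting the three tensor factors (which $\kronpol{3}{3}{3}$ enjoys by symmetry of Kronecker coefficients) together with the reflection symmetry $p_{i,j}\mapsto p_{i,4-j}$ composed with complementation — these symmetries cut down the casework substantially. Alternatively, and perhaps more cleanly, one can avoid vertex enumeration entirely: show instead that the only inequalities of $\kronpol{3}{3}{3}$ \emph{not} valid for the convex hull on the right are Weyl-chamber walls (hence automatically valid), which reduces to checking each of Franz's nontrivial inequalities against the four generating polytopes — a bounded computation, since a nontrivial inequality valid on $\kronpol{3}{3}{2}$, $\kronpol{3}{2}{3}$, $\kronpol{2}{3}{3}$ and the uniform point is valid on their convex hull, and conversely I must confirm that every nontrivial facet inequality of $\kronpol{3}{3}{3}$ does hold on all four. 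I would carry this out by direct substitution, using that each $\kronpol{3}{3}{2}$-type polytope is itself cut out by a short known list of inequalities.
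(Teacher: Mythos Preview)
Your overall strategy matches the paper's: both arguments reduce to the claim that every vertex of $\kronpol{3}{3}{3}$ other than the uniform point has $p_{i,3}=0$ for at least one $i$, hence lies in one of the three padded sub-polytopes $\kronpol{2}{3}{3}$, $\kronpol{3}{2}{3}$, $\kronpol{3}{3}{2}$ (using, as you note, that these are slices of $\kronpol{3}{3}{3}$ by \cref{proposition:tensor moment polytope generic restriction}). The easy inclusion and the slicing argument are the same in both.

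The difference is in how this key claim is verified. You propose to deduce it from Franz's \emph{inequality} description, arguing that any vertex with all three last entries positive must be cut out by six nontrivial facet inequalities and then checking that the uniform point is the only such solution; you also sketch a dual alternative via checking each facet inequality against the four generators. The paper instead uses Franz's explicit \emph{vertex} list for $\kronpol{3}{3}{3}$ directly: up to permutation of the three factors there are eleven vertices, and one sees by inspection that all of them except the uniform point have a zero in the last entry of at least one block. This is a one-line observation once the vertex list is written out, so your inequality-based route---while correct in principle---is considerably more work than needed. Since Franz computed the vertices as well as the inequalities, there is no reason to avoid using them.
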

\begin{proof}
We use the explicit set of vertices of $\kronpol{3}{3}{3}$ as determined in \cite[Prop.~5.1]{franz2002}. Up to permutations of the three subsystems, the vertices are given by
\begin{center}
  \renewcommand{\arraystretch}{1.3}
  \begin{tabular}{*{2}{c@{}c@{}c@{}c@{\ }c@{\ }c@{}c@{}c@{\ }c@{\ }c@{}c@{}c@{}c@{\hspace{3em}}}}
    $($ & $\sfrac13,$ & $\sfrac13,$ & $\sfrac13$ & $\sep$ & $\sfrac13,$ & $\sfrac13,$ & $\sfrac13$ & $\sep$ & $\sfrac13,$ & $\sfrac13,$ & $\sfrac13$ & $),$  & $($ & $\sfrac12,$ & $\sfrac14,$ & $\sfrac14$ & $\sep$ & $\sfrac12,$ & $\sfrac12,$ & $0$ & $\sep$ & $\sfrac34,$ & $\sfrac14,$ & $0$ & $),$ \\
    $($ & $\sfrac13,$ & $\sfrac13,$ & $\sfrac13$ & $\sep$ & $\sfrac13,$ & $\sfrac13,$ & $\sfrac13$ & $\sep$ & $\sfrac12,$ & $\sfrac12,$ & $0$ & $),$        & $($ & $\sfrac12,$ & $\sfrac12,$ & $0$ & $\sep$ & $\sfrac12,$ & $\sfrac12,$ & $0$ & $\sep$ & $\sfrac12,$ & $\sfrac12,$ & $0$ & $),$ \\
    $($ & $\sfrac13,$ & $\sfrac13,$ & $\sfrac13$ & $\sep$ & $\sfrac13,$ & $\sfrac13,$ & $\sfrac13$ & $\sep$ & $1,$ & $0,$ & $0$ & $),$                    & $($ & $\sfrac12,$ & $\sfrac12,$ & $0$ & $\sep$ & $\sfrac12,$ & $\sfrac12,$ & $0$ & $\sep$ & $1,$ & $0,$ & $0$ & $),$ \\
    $($ & $\sfrac13,$ & $\sfrac13,$ & $\sfrac13$ & $\sep$ & $\sfrac12,$ & $\sfrac12,$ & $0$ & $\sep$ & $\sfrac12,$ & $\sfrac12,$ & $0$ & $),$              & $($ & $\sfrac23,$ & $\sfrac16,$ & $\sfrac16$ & $\sep$ & $\sfrac23,$ & $\sfrac16,$ & $\sfrac16$ & $\sep$ & $\sfrac12,$ & $\sfrac12,$ & $0$ & $),$ \\
    $($ & $\sfrac13,$ & $\sfrac13,$ & $\sfrac13$ & $\sep$ & $\sfrac23,$ & $\sfrac16,$ & $\sfrac16$ & $\sep$ & $\sfrac12,$ & $\sfrac12,$ & $0$ & $),$        & $($ & $1,$ & $0,$ & $0$ & $\sep$ & $1,$ & $0,$ & $0$ & $\sep$ & $1,$ & $0,$ & $0$ & $),$ \\
    $($ & $\sfrac13,$ & $\sfrac13,$ & $\sfrac13$ & $\sep$ & $\sfrac23,$ & $\sfrac13,$ & $0$ & $\sep$ & $\sfrac23,$ & $\sfrac13,$ & $0$ & $).$ &&&&&&&&&&&&$\phantom{\sfrac13}$\\
  \end{tabular}
\end{center}
By \cref{proposition:generic moment polytope}, there exists a tensor $T \in \C^3\ot\C^3\ot\C^3$ such that $\Delta(T) = \kronpol{3}{3}{3}$. Then by \cref{proposition:tensor moment polytope generic restriction}, all vertices listed above except $(\sfrac13,\sfrac13,\sfrac13 \sep \sfrac13,\sfrac13,\sfrac13 \sep \sfrac13,\sfrac13,\sfrac13)$ are included in the moment polytope $\Delta(S)$ of some tensor $S \in \C^3\ot\C^3\ot\C^2$ with $T \geq S$.
It follows that $\kronpol{3}{3}{2}$ includes these vertices as well. Since we obtain  $\kronpol{3}{2}{3}$ and $\kronpol{2}{3}{3}$ by permuting the systems of $\kronpol{3}{3}{2}$, the claim follows.
\end{proof}

Now let $T \in \mathcal F_1 \cup \mathcal F_2 \cup \mathcal F_3$.
The inclusion of the uniform point $(\sfrac13,\sfrac13,\sfrac13 \sep \sfrac13,\sfrac13,\sfrac13 \sep \sfrac13,\sfrac13,\sfrac13)$ in $\Delta(T)$ is given by \cref{corollary:uniform marginals}, using the fact that all tensors of family 1, 2 and 3 are $\Sl$-semistable (\cref{lemma:classification stability}).
We will now prove that for establishing inclusion of $\kronpol{2}{3}{3}$, it is sufficient to show that $T$ restricts to an $\Sl$-semistable tensor in $\C^2 \ot \C^3 \ot \C^3$.
A classification of the~$\G$-orbits in $\C^2 \ot \C^3 \ot \C^3$ follows from the classification of $\Sl$-unstable tensors in~$\C^3 \ot \C^3 \ot \C^3$, i.e., family~5 from~\cref{thm:c333 classification merged} (alternatively, one can consult~\cite{chenRangeCriterionClassification2006a}).
By inspection of~\cref{table:c333 unstable tensors polytope data}, tensor~$9$ from~\cref{table:c333 unstable tensors info} is the only $\Sl$-semistable tensor in this format up to equivalence.
We formalize this below by showing that it has dense~$\G$-orbit, although we use a slightly different description of the tensor in the proof.

\begin{lemma}
  \label{proposition:c233 dense orbit}
  In $\C^2 \ot \C^3 \ot \C^3$, any two $\Sl$-semistable tensors are $\G$-equivalent.
  The $\G$-orbit of any $\Sl$-semistable tensor is a dense subset of $\C^2 \ot \C^3 \ot \C^3$.
\end{lemma}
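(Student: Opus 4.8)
The plan is to establish both statements at once by exhibiting one concrete $\Sl$-semistable tensor $T_0 \in \C^2 \ot \C^3 \ot \C^3$ whose $\G$-orbit is dense, and then to argue that every $\Sl$-semistable tensor must lie in this dense orbit. A convenient choice is $T_0 \coloneqq e_1 \ot (e_{11} + e_{22} + e_{33}) + e_2 \ot (e_{12} + e_{23} + e_{31})$ (a ``pencil of matrices'' whose two slices are the identity and a cyclic permutation matrix), or any equivalent description; the point is that its two $3 \times 3$ slices form a regular pencil with distinct eigenvalues, so that no nonzero linear combination of slices is singular. First I would verify $\Sl$-semistability of $T_0$ directly: using \cref{corollary:uniform marginals} it suffices to produce $S \in \overline{\G \cdot T_0}$ with $\mu(S) = (I_2/2, I_3/3, I_3/3)$, and for the chosen $T_0$ one can compute $\mu(T_0)$ explicitly and check it already has (or can be scaled/rotated to have) maximally mixed marginals — the first marginal is $\tfrac12 I_2$ by the obvious symmetry between the two slices, and the second and third are $\tfrac13 I_3$ because each slice is (a scalar multiple of) a permutation matrix. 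Alternatively, one invokes that $T_0$ sits inside family $1$ of $\C^3\ot\C^3\ot\C^3$ after padding, hence is $\Sl$-semistable by \cref{lemma:classification stability}.

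Next I would show the $\G$-orbit of $T_0$ is dense in $\C^2\ot\C^3\ot\C^3$. The cleanest route is a dimension count combined with the classification of $\Sl$-unstable tensors: $\dim(\C^2\ot\C^3\ot\C^3) = 18$, while $\dim \G = 4 + 9 + 9 = 22$, so a dense orbit is plausible. To make this rigorous, I would compute the stabilizer of $T_0$ in $\G$: for a regular matrix pencil with distinct eigenvalues the stabilizer is (up to the scaling torus) essentially the diagonal torus acting simultaneously, of dimension $3 + 1 = 4$ (a torus scaling the three ``eigenvectors'' plus a global rescaling coupling the two factors), giving orbit dimension $22 - 4 = 18 = \dim(\C^2\ot\C^3\ot\C^3)$. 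Since $\G \cdot T_0$ is a constructible set of full dimension in the irreducible variety $\C^2\ot\C^3\ot\C^3$, its closure is the whole space, i.e.\ the orbit is dense.

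For the first claim — that any two $\Sl$-semistable tensors are $\G$-equivalent — I would use the classification of $\G$-orbits in $\C^2 \ot \C^3 \ot \C^3$ afforded by family~5 of \cref{thm:c333 classification merged} (the $\Sl$-unstable tensors of $\C^3\ot\C^3\ot\C^3$ restrict, after projecting one factor to $\C^2$, to a finite list of orbit representatives, tabulated in \cref{table:c333 unstable tensors info}/\cref{table:c333 unstable tensors polytope data}), together with the observation that a tensor concise in all three factors of $\C^2\ot\C^3\ot\C^3$ must have its $\C^3$-factors forced; inspecting that list, $T_0$ is the unique representative (up to $\G$) that is $\Sl$-semistable, every other orbit in the list being $\Sl$-unstable (visible from the tabulated polytope data, e.g.\ the uniform point is absent, equivalently there is a destabilizing one-parameter subgroup). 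Hence every $\Sl$-semistable $T$ lies in $\G \cdot T_0$, which proves both that it is $\G$-equivalent to any other such tensor and that its orbit is the dense one.

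The main obstacle I anticipate is making the ``unique $\Sl$-semistable orbit'' step fully rigorous without simply citing the table: one must be careful that the reduction from $\C^3\ot\C^3\ot\C^3$ to $\C^2\ot\C^3\ot\C^3$ (setting the third slice of the first factor to zero, or rather taking $P_2 \ot I \ot I$) genuinely captures all $\G_2 \times \G_3 \times \G_3$-orbits and not just the $\Sl$-unstable ones of the larger space — this needs the fact that any tensor in $\C^2\ot\C^3\ot\C^3$ is $\Sl$-unstable when viewed in $\C^3\ot\C^3\ot\C^3$ (the first marginal on $\C^3$ can never be maximally mixed), so the entire space $\C^2\ot\C^3\ot\C^3$ is contained in family~5 of the $3\times3\times3$ classification, and the finite orbit list there restricts to a finite orbit list here. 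Once that is in place, the stabilizer/dimension computation for $T_0$ is the only genuine calculation, and it is short.
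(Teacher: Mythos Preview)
Your proposal is correct and shares the same skeleton as the paper's proof---the same tensor (up to a diagonalization: the paper takes slices $I_3$ and $\diag(1,\zeta_3,\zeta_3^2)$, which is your cyclic permutation diagonalized), the same verification that $\mu(T_0)$ is exactly uniform, and the same stabilizer/dimension count yielding orbit dimension $18$. The two proofs diverge only at the final step, and the difference is worth noting.

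You establish uniqueness of the $\Sl$-semistable orbit by invoking the orbit classification (family~5 of \cref{thm:c333 classification merged}) and inspecting the tables to see that only one representative of shape $(2,3,3)$ is $\Sl$-semistable. This is valid, but it leans on Nurmiev's classification and on polytope data that the paper computes later. The paper instead extracts more from the computation you already did: since $\mu(T_0)$ is \emph{exactly} uniform, \cref{theorem:tensor kempf-ness SL} (Kempf--Ness) gives that $T_0$ is $\Sl$-\emph{polystable}, not merely semistable. Then \cref{lemma:orbit border} says every tensor in the boundary $\overline{\G\cdot T_0}\setminus(\G\cdot T_0)$ is $\Sl$-unstable. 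Since the orbit is dense, every tensor in $\C^2\ot\C^3\ot\C^3$ is either in $\G\cdot T_0$ or in its boundary---hence semistable iff in the orbit. No classification needed.

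So the only thing you missed is upgrading ``semistable'' to ``polystable'' via Kempf--Ness, which you had already set up by computing $\mu(T_0)$ on the nose. That one-line strengthening replaces your entire appeal to the tables with a direct application of \cref{lemma:orbit border}, making the argument self-contained.
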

\begin{proof}
  Let~$T = e_1 \ot I_3 + e_2 \ot M$ where $I_3 = e_1 \ot e_1 + e_2 \ot e_2 + e_3 \ot e_3$ and $M = e_1 \ot e_1 + \zeta_3 e_2 \ot e_2 + \zeta_3^2 e_3 \ot e_3$, where $\zeta_3$ is a primitive cube root of unity.
  Then the moment map evaluates to $\mu(T) = (I_3/3 , I_3/3 , I_3/3)$, which implies that $T$ has closed $\Sl$-orbit by the Kempf--Ness theorem (\cref{theorem:tensor kempf-ness SL}).
  We may apply \cref{lemma:orbit border} to find that all tensors in the boundary of $\G \cdot T$ are $\Sl$-unstable.
  We will show that the dimension of $\G \cdot T$ is equal to $\dim(\C^2\ot\C^3\ot\C^3) = 18$.
  This implies its Zariski closure has dimension 18, which means $\overline{\G \cdot T} = \C^2\ot\C^3\ot\C^3$.
  In particular, any tensor in $\C^2\ot\C^3\ot\C^3$ is either in $\G \cdot T$ (in which case it is $\Sl$-semistable) or in the boundary of $\overline{\G \cdot T}$ (in which case it it $\Sl$-unstable). This proves the first claim.

  We now compute the dimension of $\G \cdot T$.
  It is equal to the dimension of $\G$ minus the dimension of the stabilizer of $T$ in $\G$~\cite[Thm.~AG.10.1]{borel2012}.
  We can compute these dimensions by computing the dimension of their tangent vector spaces at the identity element (that is, the dimension of their Lie algebras)~\cite[Cor.~I.3.6]{borel2012}.
  The Lie algebra of $\G$ consists of triples of matrices and has dimension $2^2 + 3^2 + 3^2 = 22$.
  The Lie algebra of the stabilizer of $T$ in $\G$ (by e.g.~\cite[Prop.~I.3.8 and I.3.22]{borel2012}) consists of those triples $(A_1, A_2, A_3)$ of matrices such that
  \begin{equation*}
    (A_1 \ot I_3 \ot I_3 \,+\, I_2 \ot A_2 \ot I_3 \,+\, I_2 \ot I_3 \ot A_3) T = 0.
  \end{equation*}
  It is easy to verify that this is spanned by the tuples
  \[
      (0,\ \diag(1,0,-1),\ \diag(-1,0,1))
    \qquad\text{and}\qquad
      (0,\ \diag(1,-1,0),\ \diag(-1,1,0)),
  \]
  as well as $(I_2,-I_3,0)$ and $(I_2,0,-I_3)$ (which stabilize any tensor).
  Therefore the stabilizer of $T$ is $4$-dimensional.
  We conclude that the $\G$-orbit of T has dimension $22-4=18$.
\end{proof}

\begin{corollary}
  \label{cor:c233 semistable implies generic polytope}
  Let $S \in \C^2 \ot \C^3 \ot \C^3$ be $\Sl$-semistable. Then $\Delta(S) = \kronpol{2}{3}{3}$.
  If a tensor $T$ satisfies $S \degenleq T$, then $\kronpol{2}{3}{3} \subseteq \Delta(T)$.
\end{corollary}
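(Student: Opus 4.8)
The plan is to prove the corollary as a straightforward consequence of the preceding results, chiefly \cref{proposition:c233 dense orbit}, \cref{proposition:generic moment polytope}, and \cref{proposition:degeneration monotone}.

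First I would handle the equality $\Delta(S) = \kronpol{2}{3}{3}$. By \cref{proposition:c233 dense orbit}, any $\Sl$-semistable $S \in \C^2 \ot \C^3 \ot \C^3$ has dense $\G$-orbit in $\C^2 \ot \C^3 \ot \C^3$. In particular, a generic tensor $T_{\textnormal{g}} \in \C^2 \ot \C^3 \ot \C^3$ (in the sense of \cref{proposition:generic moment polytope}, i.e.\ in the relevant nonempty Zariski-open set) lies in $\G \cdot S$, so $S$ and $T_{\textnormal{g}}$ are $\G$-equivalent, hence in fact in the same $\G$-orbit. Since $\Delta$ is constant on $\G$-orbits (this follows from \cref{proposition:moment polytope embedding}, as two tensors in the same $\G$-orbit are equivalent and of equal shape), we get $\Delta(S) = \Delta(T_{\textnormal{g}}) = \kronpol{2}{3}{3}$.

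Next, for the second assertion, suppose $S \degenleq T$. Then $T \degengeq S$, so by \cref{proposition:degeneration monotone} we have $\Delta(T) \supseteq \Delta(S) = \kronpol{2}{3}{3}$, using the padding/embedding convention of \cref{remark:tensor moment polytope embedding} to make sense of the inclusion when $T$ has a larger shape. This completes the proof.

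I do not expect any genuine obstacle here: every ingredient is already in place, and the only mild subtlety is bookkeeping about the shape mismatch (when $T$ is, say, in $\C^3\ot\C^3\ot\C^3$) which is absorbed by the embedding convention established in \cref{proposition:moment polytope embedding} and \cref{remark:tensor moment polytope embedding}. The one point worth stating carefully is that ``dense $\G$-orbit'' plus ``generic orbit realizes the Kronecker polytope'' forces the semistable tensor itself to realize the Kronecker polytope — this is immediate because a dense orbit and a nonempty Zariski-open set must intersect, and both are unions of $\G$-orbits, so they coincide on that intersection and $S$ actually lies in the generic orbit.
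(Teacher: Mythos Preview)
Your proof is correct and essentially parallels the paper's. The only cosmetic difference is in the first step: you route through \cref{proposition:generic moment polytope} (dense orbit meets the generic locus, hence $\Delta(S)=\Delta(T_{\textnormal{g}})=\kronpol{2}{3}{3}$), whereas the paper simply observes that $\overline{\G\cdot S}=\C^2\ot\C^3\ot\C^3$ and then reads off $\Delta(S)=\kronpol{2}{3}{3}$ directly by comparing the geometric descriptions \cref{equation:polytope spec,equation:generic moment polytope}. Both arguments are immediate from \cref{proposition:c233 dense orbit}, and the second claim via \cref{proposition:degeneration monotone} is identical.
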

\begin{proof}
    By \cref{proposition:c233 dense orbit} and the geometric description of the moment polytope (\cref{equation:polytope spec,equation:generic moment polytope}), we find that $\Delta(S) = \mu(\C^2 \ot \C^3 \ot \C^3 \setminus \{0\}) \cap \weylchamber = \kronpol{2}{3}{3}$.
    The second claim then follows directly from monotonicity of moment polytopes under degeneration (\cref{proposition:degeneration monotone}).
\end{proof}

It remains to show that any tensor $T$ in family 1, 2 or 3 restricts to an $\Sl$-semistable tensor $S_T \in \C^2 \ot \C^3 \ot \C^3$.
We achieve this by showing that there exists a non-constant $\Sl$-invariant polynomial that does not vanish on $S_T$, which directly implies $0 \notin \overline{\Sl \cdot S_T}$ as desired.
We now construct this polynomial.
It is defined via a reduction to the homogeneous cubic discriminant, which is defined as follows.
Let $q(x,y) = a x^3 + 3 b x^2 y + 3 c x y^2 + d y^3 \in \C[x,y]_3$ be a bivariate homogeneous cubic polynomial.
Then the homogeneous discriminant $\Disc(q)$ of $q$ is defined by
\begin{equation}
    \label{def:discriminant-of-cubic}
    \Disc(q) \coloneqq 3 b^2 c^2  + 6 a b c d - 4 b^3 d - 4c^3 a - a^2 d^2.
\end{equation}

\begin{lemma}[see e.g.\ {\cite{kimuraClassificationIrreduciblePrehomogeneous1977}}]
  \label{lem:discriminant-is-invariant}
  The homogeneous discriminant is invariant under the $\SL_2$-action on~$\C[x,y]_3$. That is, %
  for every $A \in \SL_2$ and $q \in \C[x,y]_3$, we have $\Disc(q \circ A^{-1}) = \Disc(q)$.
\end{lemma}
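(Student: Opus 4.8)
The plan is to exhibit an explicit $\SL_2$-equivariant identification of $\C[x,y]_3$ with the standard representation of $\SL_2$ on homogeneous cubics, and then to recognize $\Disc(q)$ as (a scalar multiple of) the discriminant of the cubic, which is classically known to be a relative invariant of weight given by a power of the determinant --- and of weight zero precisely when we restrict to $\SL_2$. Concretely, writing $q(x,y) = ax^3 + 3bx^2y + 3cxy^2 + dy^3$, the expression in \cref{def:discriminant-of-cubic} is, up to an overall nonzero constant, the classical discriminant $\Delta(q)$ of the binary cubic form $q$, i.e.\ the resultant $\mathrm{Res}(q_x, q_y)$ up to a constant, or equivalently the product $\prod_{i<j}(\alpha_i\beta_j - \alpha_j\beta_i)^2$ over the three projective roots $[\alpha_i : \beta_i]$ of $q$. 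First I would recall (or re-derive) this root-product formula; then $\SL_2$-invariance is immediate, since $A \in \SL_2$ sends the roots $[\alpha_i:\beta_i]$ to $A\cdot[\alpha_i:\beta_i]$ and each factor $\alpha_i\beta_j - \alpha_j\beta_i$ is exactly the $2\times 2$ determinant $\det\begin{pmatrix}\alpha_i & \alpha_j\\ \beta_i & \beta_j\end{pmatrix}$, which is multiplied by $\det A = 1$ under the action.

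The main subtlety is bookkeeping: the factor of $3$ on the middle coefficients of $q$ means that the naive discriminant of the ``plain'' cubic $a x^3 + b' x^2 y + c' x y^2 + d y^3$ with $b' = 3b$, $c' = 3c$ differs from the expression in \cref{def:discriminant-of-cubic} by a power of $3$; one just checks that substituting $b' = 3b, c' = 3c$ into the standard discriminant $b'^2c'^2 - 4b'^3 d - 4c'^3 a - 27 a^2 d^2 + 18 ab'c'd$ yields $81$ times the right-hand side of \cref{def:discriminant-of-cubic}, so up to the nonzero constant $81$ we have exactly the classical object. Since invariance is unaffected by an overall nonzero scalar, this is harmless. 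An alternative, entirely self-contained route --- and the one I would actually write down to avoid citing ``classical'' facts --- is a direct computation: it suffices to check invariance under a set of generators of $\SL_2$, e.g.\ the elementary matrices $\begin{pmatrix}1 & t\\ 0 & 1\end{pmatrix}$, $\begin{pmatrix}1 & 0\\ t & 1\end{pmatrix}$, and the diagonal torus $\begin{pmatrix}\lambda & 0\\ 0 & \lambda^{-1}\end{pmatrix}$. For the torus, $q \circ A^{-1}$ rescales $(a,b,c,d) \mapsto (\lambda^{-3}a, \lambda^{-1}b, \lambda c, \lambda^3 d)$ (reading off the action on $x^3, x^2y, xy^2, y^3$), and one sees each of the five monomials $b^2c^2$, $abcd$, $b^3d$, $c^3a$, $a^2d^2$ has total $\lambda$-degree $0$, so $\Disc$ is torus-invariant; for the two unipotent generators it is a short polynomial identity in $t$ and the coefficients.

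So the skeleton is: (1) reduce to checking $\Disc(q\circ A^{-1}) = \Disc(q)$ for $A$ in a generating set of $\SL_2$; (2) handle the torus by the weight computation above; (3) handle the two unipotents by a direct (routine but slightly tedious) expansion, using the transformation rule for $(a,b,c,d)$ under $x \mapsto x, y \mapsto y - tx$ and under $x \mapsto x - ty, y \mapsto y$. The one place to be careful is getting the coefficient transformation law right with the factor-of-three convention; I expect the unipotent verification to be the ``main obstacle'' only in the sense of being the most calculation-heavy, not conceptually hard --- and since \cref{lem:discriminant-is-invariant} is flagged as standard (``see e.g.\ \cite{kimuraClassificationIrreduciblePrehomogeneous1977}''), it is legitimate to compress steps (1)--(3) into a one-line appeal to the classical theory of binary cubic forms, perhaps accompanied by the torus weight observation as a sanity check, rather than grinding through the full generator computation.
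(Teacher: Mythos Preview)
Your proposal is correct, but there is nothing to compare against: the paper does not prove this lemma at all. It is stated with a citation (``see e.g.\ \cite{kimuraClassificationIrreduciblePrehomogeneous1977}'') and used as a black box in the proof of the subsequent proposition. Your sketch --- either via the root-product formula or via checking on torus and unipotent generators --- is exactly the kind of argument one would supply if asked to fill in the details, and your remark about the factor-of-three bookkeeping is the only genuine subtlety.
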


We use the homogeneous discriminant to construct an $\Sl$-invariant on~$\C^2 \ot \C^3 \ot \C^3$.
Let $T = e_1 \ot M_1 + e_2 \ot M_2$ for $M_1, M_2 \in \C^3 \ot \C^3$ be an arbitrary tensor in $\C^2 \ot \C^3 \ot \C^3$.
Interpret $M_1$ and $M_2$ as $3 \times 3$ matrices. Then $\det(x M_1 + y M_2)$ is a homogeneous cubic polynomial in $x$ and $y$. Define
\begin{equation}
I_{233}(T) \coloneqq \Disc(\det(x M_1 + y M_2)) \in \C[\C^2\ot\C^3\ot\C^3].
\end{equation}
\begin{proposition}
  \label{prop:c233 only invariant}
  $I_{233}$ is a non-zero homogeneous $\Sl$-invariant on $\C^2 \ot \C^3 \ot \C^3$.
\end{proposition}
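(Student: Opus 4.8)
The plan is to establish the two claims separately: $\SL$-invariance, and non-vanishing. For the invariance, I would decompose the action. Write $T = e_1 \ot M_1 + e_2 \ot M_2$ and track how $I_{233}$ transforms under each factor of $\Sl = \SL_2 \times \SL_3 \times \SL_3$. An element $(P, Q, R) \in \SL_2 \times \SL_3 \times \SL_3$ sends $T$ to a tensor whose two matrix slices are linear combinations of $Q M_1 R^\intercal$ and $Q M_2 R^\intercal$ dictated by $P$; concretely, if $P = \begin{psmallmatrix} p_{11} & p_{12} \\ p_{21} & p_{22}\end{psmallmatrix}$ then the new slices are $p_{11} Q M_1 R^\intercal + p_{21} Q M_2 R^\intercal$ and $p_{12} Q M_1 R^\intercal + p_{22} Q M_2 R^\intercal$. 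Now observe that
\begin{align}
\det\!\big( x(Q M_1 R^\intercal) + y (Q M_2 R^\intercal)\big) = \det(Q)\det(R)\det(x M_1 + y M_2) = \det(x M_1 + y M_2),
\end{align}
so the $\SL_3 \times \SL_3$ action leaves the cubic $q(x,y) := \det(x M_1 + y M_2)$ completely unchanged. For the $\SL_2$ factor: substituting the new slices, the new cubic is $\det\big(x(p_{11}M_1 + p_{21}M_2) + y(p_{12}M_1 + p_{22}M_2)\big) = \det\big((p_{11}x + p_{12}y)M_1 + (p_{21}x + p_{22}y)M_2\big) = q(P^\intercal (x,y)^\intercal)$, i.e.\ $q$ precomposed with a linear map of determinant $\det P = 1$. (Checking whether it is $P^\intercal$ or $P^{-1}$ is a routine bookkeeping matter; either way the relevant linear substitution lies in $\SL_2$.) By \cref{lem:discriminant-is-invariant}, $\Disc$ is unchanged under precomposition with $\SL_2$, so $I_{233}((P,Q,R)\cdot T) = \Disc(q) = I_{233}(T)$. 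This proves $\SL$-invariance. Homogeneity is immediate: $q$ depends linearly on the entries of $T$ (degree $3$ in $(x,y)$-coefficients, each a cubic form in the entries via the $3\times 3$ determinant), so $q$ scales as $\lambda^3$ under $T \mapsto \lambda T$, hence $\Disc(q)$, being homogeneous of degree $4$ in the coefficients of $q$ (inspect \cref{def:discriminant-of-cubic}: each monomial is a product of four of $a,b,c,d$), scales as $\lambda^{12}$; so $I_{233}$ is homogeneous of degree $12$.

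For non-vanishing, it suffices to exhibit one tensor on which $I_{233}$ does not vanish, and the natural candidate is the $\Sl$-semistable tensor from \cref{proposition:c233 dense orbit}. Take $T = e_1 \ot I_3 + e_2 \ot M$ with $M = \diag(1, \zeta_3, \zeta_3^2)$, $\zeta_3$ a primitive cube root of unity. Then $\det(xI_3 + yM) = \prod_{j=0}^{2}(x + \zeta_3^{j} y) = x^3 + y^3$ (using $\prod_j(x + \zeta_3^j y) = x^3 - (-y)^3 = x^3 + y^3$). Writing this as $a x^3 + 3bx^2 y + 3cxy^2 + dy^3$ gives $a = d = 1$, $b = c = 0$, and \cref{def:discriminant-of-cubic} yields $\Disc = 3\cdot 0 + 6\cdot 0 - 0 - 0 - 1\cdot 1 = -1 \neq 0$. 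Hence $I_{233}$ is not the zero polynomial.

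I would close by noting the main obstacle is essentially bookkeeping: getting the $\SL_2$ substitution to come out as an honest element of $\SL_2$ (so that \cref{lem:discriminant-is-invariant} applies) requires care with transposes/inverses in how $P$ acts on the pair $(M_1, M_2)$ versus on the variables $(x,y)$, but there are no conceptual difficulties — the determinant identity kills the two $\SL_3$ factors outright, and the remaining $\SL_2$-invariance is precisely \cref{lem:discriminant-is-invariant}. One should also remark (for later use, e.g.\ in the argument that families 1–3 have maximal polytope) that since $I_{233}$ is a non-constant $\SL$-invariant, its non-vanishing on a tensor $S$ certifies $0 \notin \overline{\Sl \cdot S}$, i.e.\ $\Sl$-semistability of $S$; this is the property the proposition is designed to deliver.
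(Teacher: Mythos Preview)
Your proposal is correct and follows essentially the same approach as the paper: handle the $\SL_3 \times \SL_3$ factors via the determinant identity $\det(B(xM_1+yM_2)C^\intercal) = \det(xM_1+yM_2)$, and the $\SL_2$ factor via \cref{lem:discriminant-is-invariant}. You are in fact more thorough than the paper's proof, which omits both the explicit non-vanishing computation and the homogeneity argument that you supply.
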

\begin{proof}
  Write $T = e_1 \ot M_1 + e_2 \ot M_2 \in \C^2 \ot \C^3 \ot \C^3$.
  Let $A \in \SL_2$ and $B, C \in \SL_3$.
  Then $I_{233} \big( (A,I_3,I_3) \cdot T \big) = \Disc\!\big( \det(x M_1 + y M_2) \circ A^{-1} \big) = I_{233}(T)$ by the $\SL_2$-invariance of the discriminant (\cref{lem:discriminant-is-invariant}).
  We also find that $(I \ot B \ot C) T = e_1 \ot (B M_1 C^T) + e_2 \ot (B M_2 C^T)$ and $\det(B (x M_1 + y M_2) C^T) = \det(x M_1 + y M_2)$.
  We conclude that $I_{233} \big( (A,B,C) \cdot T \big) = I_{233}(T)$, as desired.
\end{proof}

\begin{remark}
    It can be shown that the ring of invariants $\C[\C^2 \ot \C^3 \ot \C^3]^{\SL_2 \times \SL_3 \times \SL_3}$ is in fact equal to $\C[I_{233}]$. That is, $I_{233}$ is essentially the only $\Sl$-invariant on $\C^2 \ot \C^3 \ot \C^3$.
    See e.g.\ \cite[pg.~145]{kimuraClassificationIrreduciblePrehomogeneous1977} or \cite[Table~II]{kacRemarksNilpotentOrbits1980}.
\end{remark}

\begin{proof}[Proof of \cref{th:fam123gen}]
    By \cref{cor:semistable part has smaller polytope}, there exists some $T_{\textnormal{ss}} \in \bigcup_{i=1}^3 \mathcal S_i$ with $\Delta(T_{\textnormal{ss}}) \subseteq \Delta(T)$.
    We may assume without loss of generality that $T = T_{\textnormal{ss}}$.
    We will prove that $(\sfrac13, \sfrac13, \sfrac13\sep \sfrac13, \sfrac13, \sfrac13\sep \sfrac13, \sfrac13, \sfrac13) \in \Delta(T)$ and $\kronpol{2}{3}{3}, \kronpol{3}{2}{3}, \kronpol{3}{3}{2} \subseteq \Delta(T)$. Then \cref{prop:c333 generic is origin c233 perms} together with convexity of $\Delta(T)$ implies the result.
    The first inclusion follows from the fact that $T$ is $\Sl$-semistable (\cref{lemma:classification stability}) combined with \cref{corollary:uniform marginals}.

    We prove the other inclusions.
    By definition of $\mathcal S_i$ (\cref{definition:333 semistable sets}), we can write $T = \familyparam_1 v_1 + \familyparam_2 v_2 + \familyparam_3 v_3$ for some $\familyparam_1,\familyparam_2,\familyparam_3 \in \C$.
    Observe that~$T$ is invariant under cyclic permutations of the subsystems.
    So to prove the remaining inclusions, it suffices to prove $\kronpol{2}{3}{3} \subseteq \Delta(T)$.
    To this end, we exhibit a $2 \times 3$ matrix $A$ such that $I_{233}((A \ot I \ot I) T) \neq 0$.
    Then it follows by \cref{prop:c233 only invariant} that $(A \ot I \ot I) T$ is $\Sl$-semistable, after which \cref{cor:c233 semistable implies generic polytope} completes our argument.

    It only remains to construct a $2\times 3$ matrix $A$ such that $I_{233}((A \ot I \ot I) T) \neq 0$. Consider first $T \in \mathcal S_1$.
    Then all $\familyparam_1, \familyparam_2, \familyparam_3$ are non-zero.
    It may be verified that
    \begin{equation*}
      \left(\begin{bmatrix}
        1 & 0 & 0 \\
        0 & 1 & 0
      \end{bmatrix}
      \ot I \ot I\right) T =
      e_1 \ot
      \begin{bmatrix}
        \familyparam_1 & 0 & 0 \\
        0 & 0 & \familyparam_2 \\
        0 & \familyparam_3 & 0
      \end{bmatrix}
      + e_2 \ot
      \begin{bmatrix}
        0 & 0 & \familyparam_3 \\
        0 & \familyparam_1 & 0 \\
        \familyparam_2 & 0 & 0
      \end{bmatrix},
    \end{equation*}
    and evaluating $I_{233}$ on this tensor yields $- \familyparam_1^4 \familyparam_2^4 \familyparam_3^4 \neq 0$, as desired.
    For $T \in \mathcal S_2 \cup \mathcal S_3$ we use a slightly different restriction. In this case, $\familyparam_3 = 0$, and we find for all $u,v \in \C$ that
    \begin{equation*}
      I_{233}\left(\left(\begin{bmatrix}
          1 & 0 & u \\
          0 & 1 & v
        \end{bmatrix}
    \ot I \ot I\right) T\right) = \frac{1}{27} (\familyparam_1^3 + \familyparam_2^3)^4 u^2 v^2.
    \end{equation*}
    For $T \in \mathcal S_2 \cup \mathcal S_3$ we have $\familyparam_1^3 + \familyparam_2^3 \neq 0$, so setting $u = v = 1$ yields a non-zero value, as desired.
\end{proof}

\begin{corollary}
  The moment polytope of the unit tensor~$\unit{3}$ equals the Kronecker polytope. That is, $\Delta(\unit{3}) = \kronpol{3}{3}{3}$.
\end{corollary}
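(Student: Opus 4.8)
The plan is to recognize that $\unit{3}$ is, up to nothing at all, one of the representative tensors of the classification, so that \cref{th:fam123gen} applies on the nose. Concretely, by \cref{equation:333 vs} we have $\unit{3} = e_{111} + e_{222} + e_{333} = v_1$, and by \cref{definition:333 semistable sets} the tensor $v_1$ lies in $\mathcal S_3$ (take the parameter $\familyparam = 1$). Since $0 \in \mathcal U_3$ (entry $6$ of \cref{table:unstable tensors families 1 to 4}), the pair $(v_1,0)$ belongs to $\mathcal S_3 \times \mathcal U_3$, so $\unit{3} = v_1 + 0 \in \Sl \cdot (v_1 + 0) \subseteq \mathcal F_3$; that is, $\unit{3}$ lies in family $3$.

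With this in hand I would simply invoke \cref{th:fam123gen}, which asserts that every tensor in family $1$, $2$ or $3$ has moment polytope equal to $\kronpol{3}{3}{3}$. Applying it to $\unit{3}$ yields $\Delta(\unit{3}) = \kronpol{3}{3}{3}$, as claimed. Alternatively, one can bypass the family bookkeeping entirely: since $\unit{3} = v_1$ is literally an element of $\mathcal S_3$, and \cref{th:fam123gen} already covers all of family $3$ (which contains $\mathcal S_3$ via the choice $T_{\textnormal u} = 0$), the statement follows directly without appealing to the full disjoint-union decomposition \cref{thm:c333 classification merged}.

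There is essentially no obstacle here: all the substance has already been carried out in \cref{th:fam123gen} and its ingredients (\cref{cor:semistable part has smaller polytope}, \cref{prop:c333 generic is origin c233 perms}, and the construction of the invariant $I_{233}$ together with \cref{cor:c233 semistable implies generic polytope}). The only point requiring a moment's care is the bookkeeping of the classification — matching $\unit{3}$ with the correct semistable representative $v_1 \in \mathcal S_3$ rather than with a representative of another family — but this is immediate from the definitions, and in fact $\unit{3}$ being a pure $\SL$-semistable tensor (with trivial unstable part) makes the identification unambiguous.
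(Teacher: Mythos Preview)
Your proposal is correct and follows exactly the paper's approach: observe that $\unit{3}$ lies in family~3 and invoke \cref{th:fam123gen}. The paper's proof is the one-line version of what you wrote, so the extra bookkeeping you spelled out (identifying $\unit{3} = v_1 \in \mathcal S_3$ and taking $T_{\textnormal u}=0$) is fine but more detail than strictly needed.
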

\begin{proof}
The unit tensor~$\unit{3}$ is in family 3, so this follows from \cref{th:fam123gen}.
\end{proof}

\subsection{Algorithmic computation of the moment polytopes for families 4 and 5}\label{subsec:fam4}

We can determine the moment polytopes of all tensors in family 4 and 5 using \cref{algorithm:tensor algorithm randomized} from \cref{section:algorithms for computing tensor moment polytopes}.
This is our randomized algorithm which is always correct when successful.
However, the symbolic attainability verification as done in \cref{line:determine attainability symbolically} presents a bottleneck, and we need to do some manual work to ensure our algorithm terminates.
We discuss our strategy now.
We use the notation from \cref{subsection:attainability algorithms}.
The strategy is based on two observations. The first is that if we add any polynomial $f$ to $\tensorpolysystem^{T_{\textnormal{s}}}(h)$, and the corresponding reduced Gr\"obner basis $G_{\Q(\G_{\uppertriangular})}(\tensorpolysystem^{T_{\textnormal{s}}}(h) \cup \{f\})$ is not equal to $\{1\}$, then the same is true for the original Gr\"obner basis $G_{\Q(\G_{\uppertriangular})}(\tensorpolysystem^{T_{\textnormal{s}}}(h))$.
Secondly, we saw that the Gr\"obner basis over $\Q$ using $T' = (A \ot B \ot C) T$ for generic $(A,B,C) \in \G_{\uppertriangular}$ arises from the symbolic Gr\"obner basis by filling $(A,B,C)$ into the symbolic coefficients (see the proof of \cref{lemma:attainability symbolic}).
The idea is to use the structure of the Gr\"obner basis over $\Q$ to help construction of the symbolic basis.
Adding polynomials can greatly reduce the runtime of the Gr\"obner basis computation.
This is best explained via the following example.

\begin{example}
\label{example:symbolic groebner derandomization}
    Consider $T =  e_{113} + e_{122} + e_{212} + e_{221} + e_{331}$ where $e_{ijk} = e_i \ot e_j \ot e_k \in \C^3\ot\C^3\ot\C^3$ (this is tensor 4 from \cref{table:c333 unstable tensors info})
    and the inequality $h = (\,0, -1, 1 \sep 1, 0, -1 \sep 1, 0, 0\,)$. %
    For some randomization the Gr\"obner basis over $\Q$ equals
    \begin{align*}
        \Big\{\ x_1,\ x_4 + \frac{223735}{102354},\ x_5,\ x_6 + \frac{2146}{4629},\ x_7 + \frac{12325}{29518},\  x_8 + \frac{12325}{29518}x_9\ \Big\}.
    \end{align*}
    We were not able to compute the Gr\"obner basis of $\tensorpolysystem^{T_{\textnormal{s}}}(h)$ directly.
    However, the Gr\"obner basis of $\tensorpolysystem^{T_{\textnormal{s}}}(h) \cup \{x_1\}$ could be computed within 0.001 seconds.
    It equals
    \begin{align*}
        \Big\{
        \ &x_1,\  x_4 + \frac{z_1z_4z_5z_{10} + z_1z_5^2z_{11}}{z_1z_4z_5z_8 + z_1z_5^2z_9 - z_2z_4z_5z_7 + z_3z_4^2z_7},\  x_5,\
        x_6 + \frac{z_5z_{12}}{z_4z_{10} + z_5z_{11}},\
        \\&x_7 + \frac{z_1z_5z_{16}}{z_1z_5z_{14} + z_2z_5z_{13} - z_3z_4z_{13}},\
        x_8 + \frac{z_1z_5z_{16}}{z_1z_5z_{14} + z_2z_5z_{13} - z_3z_4z_{13}}x_9
        \ \ \Big\}.
    \end{align*}%
    Because this is not equal to $\{1\}$, we conclude $h$ is a valid inequality for $\Delta(T)$.
\end{example}

The strategy of this example turns out to be sufficient to carry out \cref{algorithm:tensor algorithm randomized} for each of the moment polytopes of family 4 and 5.

In family 4 we find that all tensors have generic polytope, except for the $\GL$-orbits of three tensors: $\Tdet := e_1 \wedge e_2 \wedge e_3,\ \Tdet + e_{111}$ and $\Tdet + \TW$ with $\TW \coloneqq e_{112} + e_{121} + e_{211}$.\footnote{The notation $\Tdet$ is used because the tensor is known as a \emph{determinant tensor} (namely, $(A,A,A)\cdot \Tdet = \det(A) \Tdet$ for any matrix $A$). The notation $\TW$ is used because the tensor is known in quantum information as the $W$-state.}
In family 5, consisting of $25$ $\Sl$-unstable tensors (up to permutation of the subsystems), every tensor has a distinct moment polytope.
The complete list of vertices for all these polytopes is given in
\cref{table:333 vertex data}, as well as online in \cite{vandenBerg2025momentPolytopesGithub}.

\begin{theorem}
\label{thm:family4 moment polytope computation}
    For any tensor $\familyparam \Tdet + T_{\textnormal{u}}$ in family 4, we have that $\Delta(\familyparam \Tdet + T_{\textnormal{u}}) = \Delta(\Tdet + T_{\textnormal{u}})$.
    Moreover,
    \begin{align}
        \Delta(\Tdet) \ \subsetneq\  \Delta(\Tdet + e_{111}) \ \subsetneq\  \Delta(\Tdet + \TW) \ \subsetneq\  \kronpol{3}{3}{3},
    \end{align}
    and $\Delta(\Tdet + T_{\textnormal{u}}) = \kronpol{3}{3}{3}$ for all $T_{\textnormal{u}} \in \mathcal U_4 \setminus \{0,e_{111},\TW\}$.
\end{theorem}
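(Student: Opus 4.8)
\textbf{Proof plan for \cref{thm:family4 moment polytope computation}.}

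The plan is to treat the three claims in order: first the parameter-independence $\Delta(\familyparam\Tdet+T_{\textnormal{u}})=\Delta(\Tdet+T_{\textnormal{u}})$, then the four-fold strict chain of inclusions, then the equality $\Delta(\Tdet+T_{\textnormal{u}})=\kronpol{3}{3}{3}$ for the remaining $T_{\textnormal{u}}$.

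For the parameter-independence, the point is that scaling the semistable part by a nonzero $\familyparam\in\C$ is implemented by a group element. Concretely, choose $A\in\GL_3$ with $\det(A)=\familyparam$ and $A$ acting appropriately (e.g.\ $A=\familyparam^{1/3}I_3$, so that $(A,A,A)\cdot\Tdet=\det(A)\Tdet=\familyparam\Tdet$ since $\Tdet$ is a determinant tensor, while $(A,A,A)\cdot T_{\textnormal{u}}=\familyparam T_{\textnormal{u}}$ for every $T_{\textnormal{u}}\in\mathcal U_4$ by homogeneity of the standard-basis action). Hence $(A,A,A)\cdot(\Tdet+\familyparam^{-1}T_{\textnormal{u}})=\familyparam\Tdet+T_{\textnormal{u}}$, which shows $\familyparam\Tdet+T_{\textnormal{u}}\sim\Tdet+\familyparam^{-1}T_{\textnormal{u}}$. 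Since every $T_{\textnormal{u}}\in\mathcal U_4$ is itself a scalar multiple of nothing special — one needs $\familyparam^{-1}T_{\textnormal{u}}$ to again be $\GL$-equivalent to $T_{\textnormal{u}}$, which holds because the nonzero $T_{\textnormal{u}}$ in $\mathcal U_4$ (namely $e_{111}$, $\TW$, $e_{111}+e_{222}$, and the $6$-term tensor) are each scaled by a diagonal $\GL$ element back to themselves (e.g.\ $e_{111}\mapsto\familyparam^{-1}e_{111}$ via $\diag(\familyparam^{-1},1,1)$ on the first factor). Then \cref{proposition:moment polytope embedding} (equivalence preserves moment polytopes) finishes this part.

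For the strict chain, I would establish the inclusions and the strictness separately. The inclusions $\Delta(\Tdet)\subseteq\Delta(\Tdet+e_{111})\subseteq\Delta(\Tdet+\TW)\subseteq\kronpol{3}{3}{3}$ follow from \cref{proposition:degeneration monotone} once I exhibit degenerations $\Tdet+\TW\degengeq\Tdet+e_{111}\degengeq\Tdet$; the last of these is the general bound $\Delta(T)\subseteq\kronpol{3}{3}{3}$ valid for all tensors. The degeneration $\Tdet+\TW\degengeq\Tdet+e_{111}$ should come from a one-parameter subgroup sending $\TW$ into the orbit-closure of $e_{111}$ while fixing $\Tdet$ up to scalar (note $W$-state degenerates to the tensor $e_{111}$ under a standard $1$-parameter limit), and similarly $e_{111}$ degenerates to $0$ fixing $\Tdet$ up to scalar. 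Alternatively, and perhaps more robustly, all four polytopes are computed explicitly by \cref{algorithm:tensor algorithm randomized}, so the inclusions and strictness can be read off from the vertex lists in \cref{table:333 vertex data}: one checks that each successive polytope contains a vertex not in the previous (and that $\kronpol{3}{3}{3}$ — whose vertices are listed in the proof of \cref{prop:c333 generic is origin c233 perms} — strictly contains $\Delta(\Tdet+\TW)$). I would present the algorithmic verification as the primary argument and mention the degeneration picture as conceptual context.

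For the final equality, fix $T_{\textnormal{u}}\in\mathcal U_4\setminus\{0,e_{111},\TW\}$, i.e.\ $T_{\textnormal{u}}\in\{e_{113}+e_{122}+e_{131}+e_{212}+e_{221}+e_{311},\ e_{113}+e_{131}+e_{311}+e_{222},\ e_{111}+e_{222}\}$. The cleanest route is to run \cref{algorithm:tensor algorithm randomized} (successfully, using the polynomial-adding strategy of \cref{example:symbolic groebner derandomization} to push through the symbolic Gr\"obner computations of \cref{line:determine attainability symbolically}) and observe the output equals the vertex set of $\kronpol{3}{3}{3}$ from \cref{prop:c333 generic is origin c233 perms}. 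Since the algorithm is always correct when it succeeds (\cref{theorem:symbolic algorithm randomized}), this proves $\Delta(\Tdet+T_{\textnormal{u}})=\kronpol{3}{3}{3}$. \textbf{The main obstacle} is exactly this last step computationally: the symbolic attainability checks over the function field $\Q(\G_{\uppertriangular})$ for the many inequalities defining $\kronpol{3}{3}{3}$ are the bottleneck, and one must verify by hand (as in \cref{example:symbolic groebner derandomization}) that adding suitable monomials lets each Gr\"obner basis computation terminate with a non-trivial basis; I expect this requires case-by-case inspection for each of the three tensors and each inequality, and is where the real work lies. A conceptual alternative worth attempting — to avoid the heavy computation — is to show each such $\Tdet+T_{\textnormal{u}}$ degenerates from (or to) a tensor already known to have the generic polytope, e.g.\ relating $\Tdet+(e_{111}+e_{222})$ to a family-$1$, $2$ or $3$ tensor via a limit, and invoking \cref{th:fam123gen} and \cref{proposition:degeneration monotone}; but identifying the right degeneration may itself be delicate.
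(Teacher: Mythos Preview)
Your approach to the second and third claims---running \cref{algorithm:tensor algorithm randomized} with the strategy of \cref{example:symbolic groebner derandomization} to push through the symbolic attainability checks---is exactly what the paper does, and your identification of the computational bottleneck is accurate.

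However, your argument for the parameter-independence $\Delta(\familyparam\Tdet+T_{\textnormal{u}})=\Delta(\Tdet+T_{\textnormal{u}})$ has a genuine gap. Your reduction, via a global scalar, to showing $\Tdet+\familyparam^{-1}T_{\textnormal{u}}\sim\Tdet+T_{\textnormal{u}}$ is correct. But then you write that ``one needs $\familyparam^{-1}T_{\textnormal{u}}$ to again be $\GL$-equivalent to $T_{\textnormal{u}}$'', giving the example $(\diag(\familyparam^{-1},1,1),I,I)$ for $T_{\textnormal{u}}=e_{111}$. This is the wrong criterion: the statement $T_{\textnormal{u}}\sim\familyparam^{-1}T_{\textnormal{u}}$ is trivially true for any nonzero tensor and does \emph{not} imply $\Tdet+T_{\textnormal{u}}\sim\Tdet+\familyparam^{-1}T_{\textnormal{u}}$. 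What is actually needed is a group element that scales $T_{\textnormal{u}}$ \emph{and simultaneously} sends $\Tdet$ to a scalar multiple of itself. Your example element does not do this: $(\diag(\familyparam^{-1},1,1),I,I)\cdot\Tdet$ is not proportional to $\Tdet$.

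The paper closes this gap by invoking \cref{lem:c333 classification structural properties}: the stabilizer of $\Tdet$ in $\Sl$ acts transitively on $\Sl\cdot T_{\textnormal{u}}$. Given this, it suffices to verify $\familyparam T_{\textnormal{u}}\in\Sl\cdot T_{\textnormal{u}}$, i.e., to exhibit for each $T_{\textnormal{u}}\in\mathcal U_4$ an $\Sl$-element (not required to fix $\Tdet$) with $(A,B,C)\cdot T_{\textnormal{u}}=\familyparam T_{\textnormal{u}}$; these are listed in a short table. Your direct approach can be repaired without \cref{lem:c333 classification structural properties}---for each $T_{\textnormal{u}}$ one can find a diagonal element of the form $(A,A,A)$, so that automatically $\Tdet\mapsto\det(A)\Tdet$, which also scales $T_{\textnormal{u}}$ by the desired factor---but this still requires a case-by-case check, and the element you proposed is not of this form.
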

\begin{proof}
For the first claim, we prove that $\Tdet + T_{\textnormal{u}} \in \G \cdot (\familyparam\Tdet + T_{\textnormal{u}})$ for any $\familyparam \neq 0$.
By \cref{lem:c333 classification structural properties}, we know $\Tdet + \familyparam^{-1}  \Sl \cdot T_{\textnormal{u}} \subseteq \Sl \cdot (\Tdet + \familyparam^{-1} T_{\textnormal{u}})$.
Observe also that $\Sl \cdot (\Tdet + \familyparam^{-1} T_{\textnormal{u}}) \subseteq \G \cdot (\familyparam \Tdet + T_{\textnormal{u}})$.
Hence it is sufficient to find for each $T_{\textnormal{u}} \in \mathcal U_4$ a group element $(A,B,C) \in \Sl$ such that $(A \ot B \ot C) T_{\textnormal{u}} = \familyparam T_{\textnormal{u}}$. For $T_{\textnormal{u}} = 0$ any element suffices trivially. The group elements for the other tensors are given in the following table:
\begin{center}
\begin{tabular}{ll@{}r@{,}r@{,}r@{}l}\toprule
\multicolumn{1}{c}{Tensors $T_{\textnormal{u}} \in \mathcal U_4$} & \multicolumn{5}{c}{Scaling $\Sl$ element} \\\cmidrule(lr){1-6}%
    $e_{113} + e_{131} + e_{311} + e_{222}$ &
    $\bigg($& $\left[\begin{smallmatrix} 1 & 0 & 0 \\ 0 & \familyparam^3 & 0 \\ 0 & 0 & \familyparam^{-3} \end{smallmatrix}\right]$&
    $\left[\begin{smallmatrix} 1 & 0 & 0 \\ 0 & \familyparam^3 & 0 \\ 0 & 0 & \familyparam^{-3} \end{smallmatrix}\right]$&
    $\left[\begin{smallmatrix} \familyparam^4 & 0 & 0 \\ 0 & \familyparam^{-5} & 0 \\ 0 & 0 & \familyparam \end{smallmatrix}\right]$& $\bigg)$ \\
    $e_{113} + e_{122} + e_{131} + e_{212} + e_{221} + e_{311}$ &
    $\bigg($& $\left[\begin{smallmatrix} \familyparam & 0 & 0 \\ 0 & 1 & 0 \\ 0 & 0 & \familyparam^{-1} \end{smallmatrix}\right]$&
    $\left[\begin{smallmatrix} \familyparam & 0 & 0 \\ 0 & 1 & 0 \\ 0 & 0 & \familyparam^{-1} \end{smallmatrix}\right]$&
    $\left[\begin{smallmatrix} \familyparam & 0 & 0 \\ 0 & 1 & 0 \\ 0 & 0 & \familyparam^{-1} \end{smallmatrix}\right]$& $\bigg)^{\phantom{l}}$
    \\
    $\TW$, $e_{111}$  &
    $\bigg($& $\left[\begin{smallmatrix} \familyparam & 0 & 0 \\ 0 & \familyparam & 0 \\ 0 & 0 & \familyparam^{-2} \end{smallmatrix}\right]$&
    $I_3$& $\qquad I_3
    $& $\bigg)^{\phantom{l}} $\\
\bottomrule
\end{tabular}
\end{center}
For the second claim, we need to compute the moment polytopes of only a finite number of tensors. This we do using \cref{algorithm:tensor algorithm randomized} as in \cref{theorem:symbolic algorithm randomized} (combined with the strategy of \cref{example:symbolic groebner derandomization} to obtain fast execution).
\end{proof}

\begin{remark}
The tensor $\Tdet + e_{111}$ was studied in~\cite{christandlTensorNetworkRepresentations2020a} (where it is denoted by $\lambda$). There, it is shown that $\MM_2$ degenerates, but does not restrict, to $\Tdet + e_{111}$.
\end{remark}

Additional information about the computed moment polytopes are given in the remarks below and their corresponding tables/figures.

\begin{remark}[Free, tight and oblique tensors]
\label{remark:free tensors}
In~\cref{table:c333 unstable tensors info} we list whether each tensor it is \emph{free} or \emph{tight}.
A tensor $T \in \C^a \ot \C^b \ot \C^c$ said to have free support if for $\Gamma = \{ (i,j,k) \mid  T_{i,j,k} \neq 0 \}$, any choice of two distinct elements $(i,j,k), (i',j',k') \in \Gamma$ differ in at least two coordinates.
It is said to have tight support if there exist injective functions $f\colon [a] \to \Z$, $g\colon [b] \to \Z$, $h\colon [c] \to \Z$ such that $f(i) + g(j) + h(k) = 0$ for all $(i,j,k) \in \Gamma$.
It is said to have \emph{oblique} support if~$\Gamma$ is antichain, that is, no two elements are comparable under the partial ordering on~$[a] \times [b] \times [c]$ induced by the total orders on~$[a], [b], [c]$.
A tensor $T$ is called free (respectively, oblique, tight) if there exists some $g \in \G$ such that $g \cdot T$ has free (respectively, oblique, tight) support.
All tight tensors are oblique, and all oblique tensors are free.
Equivalently, a tensor is tight if its stabilizer contains a regular semisimple element, see e.g.~\cite[Sec.~2.1]{connerGeometricApproachStrassen2021}, but no such geometric characterization is known for oblique or free tensors.
For the specific case of~$3 \times 3 \times 3$-tensors, it is also known that oblique supports are tight~\cite[Sec.~2.5]{connerGeometricApproachStrassen2021}.

In~\cite{connerGeometricApproachStrassen2021} the dimensions of the (Zariski closure of) the sets of free, tight and oblique tensors are computed.
In~$\C^3 \ot \C^3 \ot \C^3$, the set of free tensors is dense, but this fails in larger balanced formats.
In~\cite{vandenBerg2025nonFreeTensor} we prove that tensors $2$ and $5$ from family $5$ (\cref{table:c333 unstable tensors info}) are not free. We also observe that tensors~$3$ and~$4$ are not tight, through an explicit computation of their stabilizers.
\end{remark}

\begin{remark}[Special $\Sl$-unstable tensors]
\label{remark:special tensors}
We have identified several tensors in~\cref{table:c333 unstable tensors info} which arise in other contexts.
Tensors $20$, $22$, $23$, $24$ are easily identified as the rank-$2$ unit tensor $\unit{2}$ and the matrix multiplication tensors $\MM_{1,1,3}$, $\MM_{1,1,2}$, $\MM_{1,1,1}$, respectively
(here $\MM_{a,b,c} \in \C^{ab} \ot \C^{bc} \ot \C^{ac}$ is the tensor describing the multiplication of an $a\times b$ matrix with a $b \times c$ matrix).
Tensor $21$ (which we denote by $\TW$) is equivalent to every concise $\Sl$-unstable tensor in $\C^2 \ot \C^2 \ot \C^2$.

Some of the tensors can be described as algebra-tensors.
If $A$ is a finite-dimensional algebra over $\C$, then its multiplication $A \times A \to A$ is a bilinear map and hence can be interpreted as a tensor in $A^* \otimes A^* \otimes A$.
For instance, the unit tensor $\unit{n}$ describes the algebra $\C^n$ with element-wise multiplication (after identifying $(\C^n)^*$ with $\C^n$).
The $3$-dimensional associative unital algebras, as well as the algebra degenerations between them (which is equivalent to degeneration for the multiplication tensor~\cite{DBLP:conf/mfcs/BlaserL16}), were classified by Gabriel~\cite{gabrielFiniteRepresentationType1975}.
We observe that tensors $8$, $10$, $11$ and $14$ are of this form, with the algebras being given by $\C[x]/(x^3)$, the upper-triangular $2 \times 2$ matrices, and $\C[x,y]/(x^2,y^2,xy)$ respectively.
The only other $3$-dimensional associative unital algebra is $\C \oplus \C \oplus \C$, whose multiplication tensor is the unit tensor $\unit{3}$.
The support functionals on these tensors were also computed by Strassen~\cite{strassen1991}.
Tensor $17$ was also studied by Strassen~\cite{strassen1991}, and is referred to as the \emph{truncated null-algebra} or \emph{Strassen's tensor} \cite{blaser2013fast}.

Tensor $10$ is also known as the \emph{cap-set tensor} (over $\F_3$), and the computation of its asymptotic slice rank played an important role for the cap-set problem~\cite{taoCapsetBlogPost2016,taoSawinCapsetBlogPost2016,costaGapSliceRank2021}.
\end{remark}

\begin{remark}[Inclusion relations]
\label{remark:inclusion relations}
The inclusion relations of all the moment polytopes of tensors in $\C^3\ot\C^3\ot\C^3$ are given in \cref{fig:c333-unstable-moment-polytope-inclusions}.
Many of the inclusions between the moment polytopes of $\SL$-unstable tensors follow from the existence of degenerations between these tensors (via an application of \cref{proposition:degeneration monotone}), which were determined by Nurmiev \cite{nurmievClosuresNilpotentOrbits2000}.
Among these, we discover two new inclusions which do not have a corresponding degeneration.

Three other moment polytopes come from tensors in family~$4$.
We find the following ``generating'' relations to the polytopes of these tensors:
(1) $\Delta(\Tdet)$ contains~$\Delta(\Tnurmiev_{17})$,
(2) $\Delta(\Tdet + e_{111})$ contains~$\Delta(\Tnurmiev_{10})$, and (3) $\Delta(\Tdet + \TW)$ contains $\Delta(\Tnurmiev_{7})$ and~$\Delta(\Tnurmiev_{8})$.
We know that the inclusion in (1) can be realized through a degeneration~$\Tdet \degengeq \Tnurmiev_{17}$.
Based on numerical experiments we expect that~$\Tdet + e_{111} \degengeq \Tnurmiev_{10}$ and $\Tdet + \TW \degengeq \Tnurmiev_{7}$, but~$
\Tdet + \TW \not\degengeq \Tnurmiev_{8}$.

Finally, one notable tensor that has the Kronecker polytope as its moment polytope is $\unit{3}$.
We have that $\unit{3} \not\degengeq \Tdet+W$: namely, we know $\Tdet + W \notin \G \cdot \unit{3}$ (e.g.\ because the moment polytopes are different), after which this follows from \cref{lemma:orbit border} (noting that $\unit{3}$ is $\Sl$-polystable by \cref{theorem:tensor kempf-ness SL} and $\Tdet + W$ is $\Sl$-semistable).
It is also known that $\unit{3} \not\degengeq \Tnurmiev_1$, but $\unit{3} \degengeq \Tnurmiev_8$ and $\unit{3} \degengeq \Tnurmiev_9$ \cite{MR3239293}.
\end{remark}

\begin{remark}[Quantum functionals]
\label{remark:quantum functionals}
In \cref{table:c333 unstable tensors polytope data} we provide (for the $\Sl$-unstable tensors) data on the minimum-norm points in the moment polytope as well as several values of the \emph{quantum functionals} \cite{christandlUniversalPointsAsymptotic2021}.
The quantum functionals are defined as $F_\theta(T) \coloneqq 2^{E_\theta(T)}$, where
$E_\theta(T) \coloneqq \max_{p \in \Delta(T)} \theta_1 H(p_1) + \theta_2 H(p_2) + \theta_3 H(p_3)$, with $\theta = (\theta_1,\theta_2,\theta_3)$ any probability distribution and $H(p)$ the (base-$2$) entropy of a probability distribution $p$.
The values of $\theta$ are chosen for illustrative purposes: we may compute $F^{\theta}$ for any $\theta$ via any algorithm for concave optimization over our computed polytopes.
Note that for all $\Sl$-semistable tensors, the uniform point is included in the moment polytope, and $F^\theta$ evaluates to $3$ for any $\theta$.

The last column of the table gives the minimum of $F_\theta$ over probability distributions $\theta$, which is known to equal the base-$2$ logarithm of the asymptotic slice rank of the tensor~\cite{christandlUniversalPointsAsymptotic2021}. When the tensor is tight, asymptotic slice rank agrees with the asymptotic subrank~\cite{christandlUniversalPointsAsymptotic2021}.

The quantum functionals also form obstructions for the existence of asymptotic restrictions between tensors: if $F^{\theta}(T) \ngeq F^{\theta}(S)$ for some $\theta$, then $T$ does not asymptotically restrict to $S$ \cite{christandlUniversalPointsAsymptotic2021}.
Hence our polytope data can be used to rule out existence of asymptotic restrictions.
For instance, from~\cref{table:c333 unstable tensors polytope data} we see that tensors $10$ and $11$ are asymptotically incomparable (even after possibly permuting their subsystems).
\end{remark}

\begin{table}
\def\arraystretch{1.1}
\setlength\tabcolsep{0.4em}
\footnotesize
\makebox[\textwidth][c]{
\begin{tabular}{rlccll}\toprule
No.\ & \multicolumn{1}{c}{Tensor} & Shape     &   Free/tight       & \multicolumn{1}{c}{Algebra}  & \multicolumn{1}{c}{Name}      \\
\cmidrule(lr){1-6}
1   & $e_{123} + e_{132} + e_{213} + e_{222} + e_{231} + e_{311}$ & $(3,3,3)$ & tight             &                                                  \\
2   & $e_{123} + e_{132} + e_{213} + e_{221} + e_{222} + e_{311}$ & $(3,3,3)$ & not free &                                                  \\
3   & $e_{113} + e_{122} + e_{131} + e_{212} + e_{223} + e_{311}$ & $(3,3,3)$ & free   &                                                  \\
4   & $e_{113} + e_{122} + e_{212} + e_{221} + e_{331}$         & $(3,3,3)$ & free   &                                                  \\
5   & $e_{113} + e_{131} + e_{132} + e_{221} + e_{312}$         & $(3,3,3)$ & not free &                                                  \\
6   & $e_{113} + e_{122} + e_{212} + e_{231} + e_{321}$         & $(3,3,3)$ & tight             &                                                  \\
7   & $e_{113} + e_{122} + e_{131} + e_{212} + e_{321}$         & $(3,3,3)$ & tight             &                                                  \\
8   & $e_{113} + e_{131} + e_{222} + e_{311}$                 & $(3,3,3)$ & tight             &  $\C \oplus \C[x]/(x^2)$ \\ %
9   & $e_{111} + e_{122} + e_{222} + e_{233}$                 & $(2,3,3)$ & tight             & \\ %
10  & $e_{113} + e_{122} + e_{131} + e_{212} + e_{221} + e_{311}$ & $(3,3,3)$ & tight             &
$\C[x]/(x^3)$ & Cap-set tensor \\ %
11  & $e_{113} + e_{131} + e_{212} + e_{321}$                 & $(3,3,3)$ & tight             &
$\{\left[\begin{smallmatrix} \alpha & \beta \\0 & \gamma \end{smallmatrix}\right] \mid \alpha,\beta,\gamma \in \C\}$ \\
12  & $e_{113} + e_{131} + e_{211} + e_{222}$                 & $(2,3,3)$ & tight             &                                                  \\
13  & $e_{113} + e_{122} + e_{131} + e_{212} + e_{221}$         & $(2,3,3)$ & tight             &                                                  \\ %
14  & $e_{113} + e_{121} + e_{132} + e_{211} + e_{312}$         & $(3,3,3)$ & tight             & $\C[x,y]/(x^2,y^2,xy)$  & Null-algebra   \\
15  & $e_{122} + e_{133} + e_{211}$                         & $(2,3,3)$ & tight             & & $\unit{1} \oplus \MM_{2,1,1}$             \\ %
16  & $e_{113} + e_{122} + e_{131} + e_{211}$                 & $(2,3,3)$ & tight             &                                                  \\ %
17  & $e_{112} + e_{121} + e_{213} + e_{231}$                 & $(2,3,3)$ & tight             & & Truncated null-algebra                               \\ %
18  & $e_{111} + e_{122} + e_{212} + e_{223}$                 & $(2,2,3)$ & tight             & & \\ %
19  & $e_{113} + e_{121} + e_{212}$                         & $(2,2,3)$ & tight             &                                                  \\
20  & $e_{111} + e_{222}$                                 & $(2,2,2)$ & tight             & $\C^2$ & $\unit{2}$                                       \\
21  & $e_{112} + e_{121} + e_{211}$                         & $(2,2,2)$ & tight             & $\C[x]/(x^2)$ & $\TW$                   \\
22  & $e_{111} + e_{122} + e_{133}$                         & $(1,3,3)$ & tight             &  & $\MM_{1,1,3}$                          \\
23  & $e_{111} + e_{122}$                                 & $(1,2,2)$ & tight             &  & $\MM_{1,1,2}$                          \\
24  & $e_{111}$                                         & $(1,1,1)$ & tight             & $\C$ & $\unit{1}$\\
25 & 0 & $(0,0,0)$ & tight \\ \bottomrule
\end{tabular}}
\caption{
Representatives of the $\G$-orbits of the $\Sl$-unstable tensors in~$\C^3 \ot \C^3 \ot \C^3$ (up to permutation of the systems) and additional information.
The finite set $\mathcal U_5$ from \cref{subsubsection:333-classification} is equal to this set of tensors and their cyclic permutations.
The fourth column indicates whether the tensor is free or tight, where the label ``free'' indicates it is free but not tight (see \cref{remark:free tensors} for definitions).
The last two columns contains additional information about some tensors, as explained in \cref{remark:special tensors}.
}
\label{table:c333 unstable tensors info}
\end{table}

\begin{figure}
    \vspace{-2em}
    \centering
\makebox[\textwidth][c]{
    \begin{tikzpicture}[
        node/.style = {circle, draw, fill=red!20, minimum size=7mm, inner sep=1pt},
        edgedotted/.style = {-Stealth,shorten >=10pt, shorten <=10pt,
                      },
        nodesq/.style = {draw, fill=blue!20, minimum size=7mm, inner sep=1pt},
        nodesqg/.style = {draw, fill=green!20, minimum size=7mm, inner sep=1pt},
        >=stealth, shorten >=1pt, auto,
        rotate=-90,xscale=0.9,yscale=1.2,every node/.style={scale=0.95}
      ]
      \node[node] (1) at (0, 0)   {$\Tnurmiev_1$};
      \node[node] (2) at (0, 1)   {$\Tnurmiev_2$};
      \node[node] (3) at (-1, 2)  {$\Tnurmiev_3$};
      \node[node] (4) at (1, 2)   {$\Tnurmiev_4$};
      \node[node] (5) at (-1, 3)  {$\Tnurmiev_5$};
      \node[node] (6) at (1, 3)   {$\Tnurmiev_6$};
      \node[node] (7) at (-2, 4)  {$\Tnurmiev_7$};
      \node[node] (8) at (0, 4)   {$\Tnurmiev_8$};
      \node[node] (9) at (2, 4)   {$\Tnurmiev_9$};
      \node[node] (10) at (-2, 5) {$\Tnurmiev_{10}$};
      \node[node] (11) at (0, 5)  {$\Tnurmiev_{11}$};
      \node[node] (12) at (2, 5)  {$\Tnurmiev_{12}$};
      \node[node] (13) at (-2, 6) {$\Tnurmiev_{13}$};
      \node[node] (14) at (0, 6)  {$\Tnurmiev_{14}$};
      \node[node] (15) at (2, 6)  {$\Tnurmiev_{15}$};
      \node[node] (16) at (0, 7)  {$\Tnurmiev_{16}$};
      \node[node] (17) at (-2, 7) {$\Tnurmiev_{17}$};
      \node[node] (18) at (2, 7)  {$\Tnurmiev_{18}$};
      \node[node] (19) at (-1, 8) {$\Tnurmiev_{19}$};
      \node[node] (20) at (0, 9)  {$\Tnurmiev_{20}$};
      \node[node] (21) at (0, 10) {$\Tnurmiev_{21}$};
      \node[node] (22) at (1, 8)  {$\Tnurmiev_{22}$};
      \node[node] (23) at (0, 11) {$\Tnurmiev_{23}$};
      \node[node] (24) at (0, 12) {$\Tnurmiev_{24}$};
      \node[node] (25) at (0, 13) {$\Tnurmiev_{25}$};
      \node[nodesq] (gen) at (-3.3, 0) {$\,\unit{3}\,$};
      \node[nodesq] (DW) at (-3.3, 2.3) {$\,\Tdet+\TW\,$};
      \node[nodesq] (De111) at (-3.3, 4.4) {$\,\Tdet+e_{111}\,$};
      \node[nodesq] (D) at (-3.3, 6) {$\Tdet$};
      \path[->] (1) edge (2);
      \path[->] (2) edge (3) edge (4);
      \path[->] (3) edge (5) edge (6);
      \path[->] (4) edge (5) edge (6);
      \path[->] (5) edge (7) edge (8) edge (9);
      \path[->] (6) edge (7);
      \path[->,dashed] (6) edge (8);
      \path[->] (7) edge (10) edge (11) edge (12);
      \path[->] (8) edge (10) edge (12);
      \path[->,dashed] (8) edge (11);
      \path[->] (9) edge (12);
      \path[->] (10) edge (13) edge (14);
      \path[->] (11) edge (14) edge (15);
      \path[->] (12) edge (13) edge (15);
      \path[->] (13) edge (16) edge (17) edge (18);
      \path[->] (14) edge (16) edge (17);
      \path[->] (15) edge (16);
      \path[->] (16) edge (19) edge (22);
      \path[->] (17) edge (19);
      \path[->] (18) edge (19);
      \path[->] (19) edge (20);
      \path[->] (20) edge (21);
      \path[->] (21) edge (23);
      \path[->, bend right=10] (22) edge (23);
      \path[->] (23) edge (24);
      \path[->] (24) edge (25);
      \path[->]  (DW) edge (De111);
      \path[->]  (De111) edge (D);
      \draw[arrows={->[scale=1,black,width=1pt]},dash pattern=on 1pt off 1pt]  (DW) -- (7);
      \draw[arrows={->[scale=1,black,width=1pt]},dash pattern=on 1pt off 1pt]  (DW) -- (8);
      \draw[arrows={->[scale=1,black,width=1pt]},dash pattern=on 1pt off 1pt]  (De111) -- (10);
      \path[->]  (D) edge (17);
      \path[->,dashed] (gen) edge (1);
      \path[->,dashed] (gen) edge (DW);
    \end{tikzpicture}}
    \caption{Overview of inclusions among the moment polytopes of tensors in $\C^3 \ot \C^3 \ot \C^3$, up to cyclic permutations of the factors.
        Nodes are labeled by representative tensors with this moment polytope.
        The tensor $\Tnurmiev_i$ is tensor $i$ from \cref{table:c333 unstable tensors info}.
        The moment polytope of $\unit{3}$ is the Kronecker polytope.
        The same is true for any other tensor not equivalent to the ones in the diagram.
        Square and circular nodes contain $\Sl$-semistable and $\Sl$-unstable tensors respectively.
        An arrow is drawn from polytope $P$ to polytope $Q$ if $P \supseteq Q'$ for a $Q'$ that can be obtained from $Q$ by a permutation of the three factors.
        It is dashed if there is an inclusion of moment polytopes (as above) but no degeneration between the corresponding tensors (for all permutations of the factors).
        It is dotted if we do not know whether such a degeneration exists.
        See \cref{remark:inclusion relations} for more details.
    }
    \label{fig:c333-unstable-moment-polytope-inclusions}
\end{figure}

\begin{table}[H]
\centering
\renewcommand{\arraystretch}{1.2}
\footnotesize
\makebox[\linewidth]{
\begin{tabular}{rCCc@{}c@{}c@{}c@{\ }c@{\ }c@{}c@{}c@{\ }c@{\ }c@{}c@{}c@{}clllll}\toprule
No. & vertices & ineqs. & \multicolumn{13}{c}{Minimum-norm point} & $F_{(\frac13,\frac13,\frac13)}$ & $F_{(\frac12,\frac12,0)}$ & $F_{(\frac12,0,\frac12)}$ & $F_{(0,\frac12,\frac12)}$ & $\min_{\theta} F_{\theta}$ \\\cmidrule(lr){1-21}
1   & 38       & 46           & $($ & $\frac{5}{13},$  & $ \frac{9}{26},$ & $ \frac{7}{26}$ & $\sep$ & $\frac{29}{78},$ & $ \frac{1}{3},$  & $ \frac{23}{78}$ & $\sep$ & $\frac{29}{78},$ & $ \frac{1}{3},$ & $ \frac{23}{78}$ & $)$  & $2.9806$ & $3{\color{gray}.0000}$ & $3{\color{gray}.0000}$ & $3{\color{gray}.0000}$ & $2.9798$   \\
2   & 36       & 46           & $($ & $\frac{17}{42},$ & $\frac{1}{3},$   & $\frac{11}{42}$ & $\sep$ & $\frac{17}{42},$ & $\frac{1}{3},$   & $\frac{11}{42}$  & $\sep$ & $\frac{5}{14},$  & $\frac{5}{14},$ & $\frac{2}{7}$    & $)$  & $2.9643$ & $3{\color{gray}.0000}$ & $3{\color{gray}.0000}$ & $3{\color{gray}.0000}$ & $2.9622$  \\
3   & 43       & 49           & $($ & $\frac{7}{18},$  & $\frac{7}{18},$  & $\frac{2}{9}$   & $\sep$ & $\frac{7}{18},$  & $\frac{7}{18},$  & $\frac{2}{9}$    & $\sep$ & $\frac{4}{9},$   & $\frac{5}{18},$ & $\frac{5}{18}$   & $)$  & $2.9156$ & $3{\color{gray}.0000}$ & $3{\color{gray}.0000}$ & $3{\color{gray}.0000}$ & $2.9154$  \\
4   & 53       & 52           & $($ & $\frac{2}{5},$   & $\frac{3}{10},$  & $\frac{3}{10}$  & $\sep$ & $\frac{2}{5},$   & $\frac{3}{10},$  & $\frac{3}{10}$   & $\sep$ & $\frac{13}{30},$ & $\frac{1}{3},$  & $\frac{7}{30}$   & $)$  & $2.9508$ & $3{\color{gray}.0000}$ & $3{\color{gray}.0000}$ & $3{\color{gray}.0000}$ & $2.9476$ \\
5   & 47       & 50           & $($ & $\frac{13}{30},$ & $\frac{1}{3},$   & $\frac{7}{30}$  & $\sep$ & $\frac{13}{30},$ & $\frac{1}{3},$   & $\frac{7}{30}$   & $\sep$ & $\frac{2}{5},$   & $\frac{2}{5},$  & $\frac{1}{5}$    & $)$  & $2.8980$ & $3{\color{gray}.0000}$ & $3{\color{gray}.0000}$ & $3{\color{gray}.0000}$ & $2.8979$  \\
6   & 57       & 54           & $($ & $\frac{2}{5},$   & $\frac{13}{35},$ & $\frac{8}{35}$  & $\sep$ & $\frac{2}{5},$   & $\frac{13}{35},$ & $\frac{8}{35}$   & $\sep$ & $\frac{16}{35},$ & $\frac{2}{7},$  & $\frac{9}{35}$   & $)$  & $2.9143$ & $3{\color{gray}.0000}$ & $3{\color{gray}.0000}$ & $3{\color{gray}.0000}$ & $2.9130$ \\
7   & 47       & 51           & $($ & $\frac{3}{7},$   & $\frac{2}{7},$   & $\frac{2}{7}$   & $\sep$ & $\frac{10}{21},$ & $\frac{1}{3},$   & $\frac{4}{21}$   & $\sep$ & $\frac{10}{21},$ & $\frac{1}{3},$  & $\frac{4}{21}$   & $)$ & $2.8595$ & $3{\color{gray}.0000}$ & $3{\color{gray}.0000}$ & $3{\color{gray}.0000}$ & $2.8536$  \\
8   & 52       & 51           & $($ & $\frac{3}{7},$   & $\frac{5}{14},$  & $\frac{3}{14}$  & $\sep$ & $\frac{3}{7},$   & $\frac{5}{14},$  & $\frac{3}{14}$   & $\sep$ & $\frac{3}{7},$   & $\frac{5}{14},$ & $\frac{3}{14}$   & $)$   & $2.8899$ & $3{\color{gray}.0000}$ & $3{\color{gray}.0000}$ & $3{\color{gray}.0000}$ & $2.8899$ \\
9   & 18       & 25           & $($ & $\frac{1}{2},$   & $\frac{1}{2},$   & $0$             & $\sep$ & $\frac{1}{3},$   & $\frac{1}{3},$   & $\frac{1}{3}$    & $\sep$ & $\frac{1}{3},$   & $\frac{1}{3},$  & $\frac{1}{3}$    & $)$ & $2.6207$ & $2.4495$ & $2.4495$ & $3{\color{gray}.0000}$ & $2{\color{gray}.0000}$ \\
10  & 29       & 46           & $($ & $\frac{1}{2},$   & $\frac{1}{3},$   & $\frac{1}{6}$   & $\sep$ & $\frac{1}{2},$   & $\frac{1}{3},$   & $\frac{1}{6}$    & $\sep$ & $\frac{1}{2},$   & $\frac{1}{3},$  & $\frac{1}{6}$    & $)$  & $2.7551$ & $3{\color{gray}.0000}$ & $3{\color{gray}.0000}$ & $3{\color{gray}.0000}$ & $2.7551$ \\
11  & 31       & 33           & $($ & $\frac{2}{5},$   & $\frac{3}{10},$  & $\frac{3}{10}$  & $\sep$ & $\frac{1}{2},$   & $\frac{3}{10},$  & $\frac{1}{5}$    & $\sep$ & $\frac{1}{2},$   & $\frac{3}{10},$ & $\frac{1}{5}$    & $)$ & $2.8567$ & $3{\color{gray}.0000}$ & $3{\color{gray}.0000}$ & $2.8284$ & $2.8284$ \\
12  & 23       & 28           & $($ & $\frac{6}{11},$  & $\frac{5}{11},$  & $0$             & $\sep$ & $\frac{4}{11},$  & $\frac{4}{11},$  & $\frac{3}{11}$   & $\sep$ & $\frac{4}{11},$  & $\frac{4}{11},$ & $\frac{3}{11}$   & $)$ & $2.6030$ & $2.4495$ & $2.4495$ & $3{\color{gray}.0000}$ & $2{\color{gray}.0000}$  \\
13  & 17       & 26           & $($ & $\frac{5}{9},$   & $\frac{4}{9},$   & $0$             & $\sep$ & $\frac{4}{9},$   & $\frac{1}{3},$   & $\frac{2}{9}$    & $\sep$ & $\frac{4}{9},$   & $\frac{1}{3},$  & $\frac{2}{9}$    & $)$  & $2.5522$ & $2.4495$ & $2.4495$ & $3{\color{gray}.0000}$ & $2{\color{gray}.0000}$  \\
14  & 20       & 25           & $($ & $\frac{5}{9},$   & $\frac{2}{9},$   & $\frac{2}{9}$   & $\sep$ & $\frac{5}{9},$   & $\frac{2}{9},$   & $\frac{2}{9}$    & $\sep$ & $\frac{4}{9},$   & $\frac{4}{9},$  & $\frac{1}{9}$    & $)$ & $2.6866$ & $2.8284$ & $3{\color{gray}.0000}$ & $3{\color{gray}.0000}$ & $2.6834$ \\
15  & 15       & 23           & $($ & $\frac{3}{5},$   & $\frac{2}{5},$   & $0$             & $\sep$ & $\frac{2}{5},$   & $\frac{3}{10},$  & $\frac{3}{10}$   & $\sep$ & $\frac{2}{5},$   & $\frac{3}{10},$ & $\frac{3}{10}$   & $)$ & $2.5874$ & $2.4142$ & $2.4142$ & $3{\color{gray}.0000}$ & $2{\color{gray}.0000}$ \\
16  & 13       & 21           & $($ & $\frac{2}{3},$   & $\frac{1}{3},$   & $0$             & $\sep$ & $\frac{1}{2},$   & $\frac{1}{3},$   & $\frac{1}{6}$    & $\sep$ & $\frac{1}{2},$   & $\frac{1}{3},$  & $\frac{1}{6}$    & $)$  & $2.4361$ & $2.4142$ & $2.4142$ & $3{\color{gray}.0000}$ & $2{\color{gray}.0000}$ \\
17  & 13       & 18           & $($ & $\frac{1}{2},$   & $\frac{1}{2},$   & $0$             & $\sep$ & $\frac{1}{2},$   & $\frac{1}{4},$   & $\frac{1}{4}$    & $\sep$ & $\frac{1}{2},$   & $\frac{1}{4},$  & $\frac{1}{4}$    & $)$  & $2.5198$ & $2.4495$ & $2.4495$ & $2.8284$ & $2{\color{gray}.0000}$ \\
18  & 9        & 12           & $($ & $\frac{1}{2},$   & $\frac{1}{2},$   & $0$             & $\sep$ & $\frac{1}{2},$   & $\frac{1}{2},$   & $0$              & $\sep$ & $\frac{1}{3},$   & $\frac{1}{3},$  & $\frac{1}{3}$    & $)$  & $2.2894$ & $2{\color{gray}.0000}$ & $2.4495$ & $2.4495$ & $2{\color{gray}.0000}$  \\
19  & 8        & 13           & $($ & $\frac{3}{5},$   & $\frac{2}{5},$   & $0$             & $\sep$ & $\frac{3}{5},$   & $\frac{2}{5},$   & $0$              & $\sep$ & $\frac{2}{5},$   & $\frac{2}{5},$  & $\frac{1}{5}$    & $)$  & $2.2300$ & $2{\color{gray}.0000}$ & $2.4142$ & $2.4142$ & $2{\color{gray}.0000}$ \\
20  & 5        & 6            & $($ & $\frac12,$       & $ \frac12,$      & $ 0$            & $\sep$ & $ \frac12,$      & $ \frac12,$      & $ 0$             & $\sep$ & $ \frac12,$      & $ \frac12,$     & $ 0$             & $)$  & $2{\color{gray}.0000}$ & $2{\color{gray}.0000}$ & $2{\color{gray}.0000}$ & $2{\color{gray}.0000}$ & $2{\color{gray}.0000}$ \\
21  & 4        & 4            & $($ & $\frac23,$       & $ \frac13,$      & $ 0$            & $\sep$ & $ \frac23,$      & $ \frac13,$      & $ 0$             & $\sep$ & $ \frac23,$      & $ \frac13,$     & $ 0$             & $)$  & $1.8899$ & $2{\color{gray}.0000}$ & $2{\color{gray}.0000}$ & $2{\color{gray}.0000}$ & $1.8899$ \\
22  & 3        & 3            & $($ & $1,$             & $ 0,$            & $ 0$            & $\sep$ & $ \frac13,$      & $\frac13,$       & $\frac13$        & $\sep$ & $ \frac13,$      & $\frac13,$      & $\frac13$        & $)$ & $2.0801$ & $1.7321$ & $1.7321$ & $3{\color{gray}.0000}$ & $1{\color{gray}.0000}$ \\
23  & 2        & 2            & $($ & $1,$             & $ 0,$            & $ 0$            & $\sep$ & $ \frac12,$      & $ \frac12,$      & $ 0$             & $\sep$ & $ \frac12,$      & $ \frac12,$     & $ 0$             & $)$ & $1.5874$ & $1.4142$ & $1.4142$ & $2{\color{gray}.0000}$ & $1{\color{gray}.0000}$ \\
24  & 1        & 0            & $($ & $1,$             & $ 0,$            & $ 0$            & $\sep$ & $ 1,$            & $ 0,$            & $ 0$             & $\sep$ & $ 1,$            & $ 0,$           & $ 0$             & $)$ & $1{\color{gray}.0000}$ & $1{\color{gray}.0000}$ & $1{\color{gray}.0000}$ & $1{\color{gray}.0000}$ & $1{\color{gray}.0000}$
\\\bottomrule
\end{tabular}
}
\caption{Data on the moment polytopes of the non-zero $\Sl$-unstable tensors in $\C^3 \ot \C^3 \ot \C^3$ (see \cref{table:c333 unstable tensors info}). We list the minimum-norm point in the polytope measured with respect to the $\ell^2$-norm (equivalently, the point closest to the uniform point in the respective format), which is unique. We also give numerical evaluations of the quantum functionals $F_\theta$~\cite{christandlUniversalPointsAsymptotic2021} for certain parameters $\theta$, see
 \cref{remark:quantum functionals}.}
\label{table:c333 unstable tensors polytope data}
\end{table}

\newpage

\section{A probabilistic approach for \texorpdfstring{$4\times4\times4$}{4 x 4 x 4} tensors}
\label{section:algorithms for 4x4x4}

At this point we are able to enumerate a sufficient set of inequalities $\ineqs$ as in \cref{lemma:tensor attainability} for $\C^4 \ot \C^4 \ot \C^4$ using our optimized enumeration algorithm (\cref{algorithm:tensor weight-matrix-enumeration-no-symmetries}).
However, running the many required Gr\"obner basis computations (typically around one million of them per tensor) to determine attainability as in \cref{section:algorithms for computing tensor moment polytopes} presents an obstruction for computing moment polytopes for tensors of this format.
One sizable difficulty that arises when computing Gr\"obner bases is potential coefficient swell of the intermediate polynomials, which leads to memory and performance issues.
This problem can occur even when the starting polynomials and resulting Gr\"obner basis are simple. See \cite{arnoldModularAlgorithmsComputing2003} for an example with Buchberger's algorithm.

In \cref{subsection:groebner finite fields} we present a heuristic solution to this problem, namely computing the Gr\"obner bases over finite fields.
In \cref{subsection:verification}, we state a Borel polytope computation algorithm using the above heuristic.
As a consequence of errors introduced by this heuristic, we require ways to verify the correctness of computed moment polytopes.
To this end, we provide two verification algorithms in \cref{subsection:verification}.
Analogously to the probabilistic algorithm (\cref{theorem:probabilistic algorithm}) and the randomized algorithm (\cref{theorem:symbolic algorithm randomized}), one is correct with bounded probability while the second will be always correct when successful, but it may fail with some probability.
These algorithms require the use of tensor scaling algorithms \cite{burgisserTheoryNoncommutativeOptimization2019,burgisser2018tensorScaling} to verify whether some vertices are in fact elements of the moment polytope in question, which we will explain in \cref{subsection:tensor scaling}.
As for the randomized algorithm for computing moment polytopes, the randomized verification algorithm will require computation of Gr\"obner bases over function fields, which is slow.
In \cref{example:symbolic groebner derandomization} we showed a way to circumvent these issues.
In \cref{subsection:groebner reconstruction} we describe a more general procedure to achieve this goal.
We end the section in \cref{subsection:4x4x4} by applying our techniques to compute the moment polytopes of certain tensors in $\C^4\ot\C^4\ot\C^4$, most notably the $2\times2$ matrix multiplication tensor.

\subsection{Computing Gr\"obner bases over finite fields}
\label{subsection:groebner finite fields}

A possible solution to coefficient swell in Gr\"obner basis computations is to replace the field $\Q$ with the finite field $\F_q$ for some large prime $q$, and compute the Gr\"obner basis over $\F_q$ instead.
More precisely, fix some tensor $T$ and inequality $h$ and consider the polynomial system $F = \tensorpolysystem^T\!(h)$ from \cref{definition:tensor polysystem}. If $F \subseteq \Z[x_1,\ldots,x_m]$ (as is the case for our tensors of interest, also after randomization as in \cref{theorem:probabilistic algorithm}), we may interpret the coefficients as elements of $\F_q$ to obtain the polynomial system $F_{\F_q} \subseteq \F_q[x_1,\ldots,x_m]$.\footnote{Even if the entries of $T$ do not lie in $\Z$, we may pass to some suitable field extension of $\F_q$, but for simplicity we do not discuss this case.}
For computation of the reduced Gr\"obner basis $G_{\F_q}$ of $\<F_{\F_q}>_{\F_q}$ coefficients are of course elements of $\F_q$, and therefore coefficient swell is not a problem.

The primary question now is whether $q$ has the desired property that $\<F>_\C = \<1>_\C$ precisely when $\<F_{\F_q}>_{\F_q} = \<1>_{\F_q}$. %
Equivalently, by \cref{proposition:reduced groebner basis trivial ideal,corollary:groebner field extension}, we want to understand the logical relation between $G_\Q = \{1\}$ and $G_{\F_q} = \{1\}$, where $G_\Q$ denotes the Gr\"obner basis of $\<F>_\C$.
In general, these two statements are not equivalent.
However, it is known that this is only so for a finite number of primes, which illustrates the feasibility of this approach.
The first direction is straightforward.
Assuming $G_{\Q} = \{1\}$ implies that $1 \in \<F>_\Q$ which gives that $\sum_{f\in F} f g_f = 1$ for some rational polynomials $g_f$.
Let $D \in \N$ be the minimal scalar that satisfies $D \cdot g_f \in \Z[x_1,\ldots,x_m]$ for all $f \in F$.
Then $D\sum_{f\in F} f g_f = D$ is an equation in $\Z[x_1,\ldots,x_m]$, and we can pass to $\F_q$.
If $q$ does not divide $D$, then clearly $1 \in \<F_{\F_q}>_{\F_q}$.
This is false only for the finite number of divisors of $D$.
These divisors are of course upper bounded by $D$, which motivates us to pick $q$ to be large (even though $D$ can in principle be huge).

The other direction is more involved, and we refer to the literature on Gr\"obner bases \cite{arnoldModularAlgorithmsComputing2003}.
Primes for which passing to $\F_q$ does not ``lose much algebraic information'' are typically called \emph{lucky primes} for $F$.
We refer to \cite[Definition~5.1]{arnoldModularAlgorithmsComputing2003} for a precise definition.
For all lucky primes $q$ as defined in \cite{arnoldModularAlgorithmsComputing2003}, we have $G_\Q = \{1\}$ if and only if $G_{\F_q} = \{1\}$.
Then \cite[Theorem~5.13]{arnoldModularAlgorithmsComputing2003} states $q$ is lucky whenever $q$ does not divide the leading coefficients of a certain set of polynomials in $\Z[x_1,\ldots,x_m]$.
This set of polynomials is what is called a \emph{strong Gr\"obner basis} of $F \subseteq \Z[x_1,\ldots,x_m]$ computed over the ring $\Z$, which we do not define here.
The take-away message is, $q$ is not lucky only for the finite set of divisors. In all other cases $G_\Q = \{1\}$ if and only if $G_{\F_q} = \{1\}$.

Assuming the generalized Riemann hypothesis, there exists bounds on the number of primes such that
$G_{\F_q} \neq \{1\}$ (e.g.\ a solution exists in the algebraic closure of $\F_q$) whenever $G_\Q = \{1\}$ (e.g.\ no solution exists over $\C$) \cite[Theorem~5, see also Section~7]{koiran1996hilbertNullstellensatzPolynomialHierachy}.
Then there are $d^{\OO(m^2)}$ such primes, where $d$ is a the largest degree among the polynomials of $F$ and $m$ is the number of variables. In our case, $d$ equals the number of tensor factors $d = 3$, and the number of variables equals $m = a(a-1)/2 + b(b-1)/2 + c(c-1)/2$.
In the case $G_\Q \neq \{1\}$ the situation is not so clear, and we are not aware of explicit bounds on the number of primes for which $G_{\F_q} \neq \{1\}$.

We obtain the following alternative algorithm to \Cref{algorithm:tensor checking-inequalities}.

\begin{algorithmbreak}{Determine attainability for a tensor $T$ using the finite field heuristic.}
\label{algorithm:tensor checking-inequalities Fq}%
\textbf{Input:} A tensor $T \in \C^a\ot\C^b\ot\C^c$ and an inequality $h \in \R^{a+b+c}$. \\
\textbf{Output:} \texttt{True} or \texttt{False} \\
\textbf{Algorithm:}
\begin{algorithmic}[1]
    \State Generate a large random prime $q$, say $q \geq 2^{30}$.
    \State Create symbolic lower triangular matrices $A,B,C$ with ones on the diagonal.
    \State Setup the polynomial system $\tensorpolysystem^T_{\F_q}\!(h) \coloneqq \big\{ \big((A\ot B\ot C)T\big){}_{i,j,k} \mid \langle (\,e_i \sep e_j \sep e_k\,),h \rangle < 0 \big\}$.
    \State Compute the reduced Gr\"obner basis $G_{\F_q}$ of $\smash{\tensorpolysystem^T_{\F_q}\!(h)}$ over $\F_q$.
    \State \Return $G_{\F_q} \neq \{1\}$
\end{algorithmic}
\end{algorithmbreak}

\subsection{Inclusion verification via tensor scaling}
\label{subsection:tensor scaling}

Recall the randomized algorithm from \cref{section:algorithms for computing tensor moment polytopes}, \cref{algorithm:tensor algorithm randomized}.
We used there that for any matrices $A,B$ and $C$, the candidate polytope $\DeltaB((A \ot B \ot C)T)$ is always contained in $\Delta(T)$.
Hence after computing $\DeltaB((A \ot B \ot C)T)$ for random $A,B$ and $C$, it only remains to prove that $\Delta(T) \subseteq \DeltaB((A \ot B \ot C)T)$, which we could do by computing Gr\"obner bases over an appropriate function field.
When moving to finite fields, we have no guarantees at all whether attainability will be correctly determined. As a result the candidate polytope $P$ we obtain need not be contained in $\Delta(T)$.
Indeed, even if we prove that the inequalities of $P$ hold for $\Delta(T)$ (using Gr\"obner bases over the function field as before), we obtain only that $\Delta(T) \subseteq P$.
Thus, an extra ingredient is required for verification.
This ingredient is provided by tensor scaling algorithms \cite{burgisser2018tensorScaling,burgisserTheoryNoncommutativeOptimization2019}, which can be used to verify whether a point $p$ is an element of $\Delta(T)$.
If we let $p$ range over the vertices of $P$ and use these algorithms to prove $p \in \Delta(T)$ for all $p$, we obtain $P \subseteq \Delta(T)$ by convexity, as desired.

More precisely, tensor scaling is a procedure that attempts the following: given a tensor $T \in \C^{a}\ot\C^b\ot\C^c \setminus \{0\}$ and a point~$p \in \weylchamber$ assumed to be in $\Delta(T)$, find an element $T' \in \overline{\G \cdot T}$ such that $\mu(T') = \diag(p)$ (recall that $\mu$ is the moment map defined in \cref{equation:tensor moment map}).
Methods for this problem, sometimes referred to as the \emph{moment polytope membership} problem, were developed in a long series of works \cite{burgisser2018alternatingMinimization,allenzhu2018operatorScaling,cole2018operatorScaling,garg2019operatorScaling,burgisser2018tensorScaling,burgisserTheoryNoncommutativeOptimization2019}.

A triple of matrices $(A,B,C) \in \G$ such that $T' \approx (A\ot B \ot C) T$ can be viewed as a certificate for $p \in \Delta(T)$, provided the approximation error is small enough, as we explain below.
Tensor scaling is an iterative and numerical algorithm, and it will find $T' = (A \ot B \ot C)T$ such that $\mu(T')$ and $\diag(p)$ are close, say $\norm{\mu(T') - \diag(p)} \leq \eps$, where we can set $\eps > 0$ to a value of our choice.

Recall that $\weylchamber_\Q \subseteq \weylchamber$ are the set of triples of non-decreasing vectors entries from $\Q$.
Let $\weylchamber_\Z \subseteq \weylchamber$ be the subset consisting of vectors with entries from $\Z$.

\begin{theorem}[Tensor scaling, {\cite[Theorem~1.13]{burgisser2018tensorScaling}}]
    \label{thm:scaling via gradient descent}
    There exists an algorithm with input $T \in \C^a\ot\C^b\ot\C^c$, $p \in \weylchamber_\Q$ and $\eps > 0$ that either outputs correctly that $p \notin \Delta(T)$ with probability at least 1/2,
    or it outputs $(A,B,C) \in \G$ such that
    \begin{equation}
        \label{eq:tensor scaling}
        \big\| \mu\bigl( (A \ot B \ot C) T\bigr) - \diag(p) \big\| \leq \eps.
    \end{equation}
    This algorithm runs in time polynomial in $1/\eps$ and the bitsize of the inputs $T$ and $p$.
\end{theorem}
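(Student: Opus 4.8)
\textbf{Proof proposal for \cref{thm:scaling via gradient descent}.}

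The plan is to reduce the statement to the non-commutative optimization framework of Bürgisser et al.\ \cite{burgisser2018tensorScaling,burgisserTheoryNoncommutativeOptimization2019}, which provides exactly this kind of scaling guarantee, and to explain how the pieces assemble. First I would recall that $p \in \Delta(T)$ is equivalent, via the geometric description \cref{equation:geometric description} (together with \cref{equation:polytope spec}), to the existence of $T' \in \overline{\G \cdot T} \setminus \{0\}$ with $\spec(\mu(T')) = p$; since $p$ is dominant, this is the same as $\mu(T') = \diag(p)$. The key analytic object is the shifted Kempf--Ness function $f_{T,p}(A,B,C) \coloneqq \log\norm{(A\ot B\ot C)T} - \langle \diag(p), \log|{\cdot}|\rangle$ (more precisely the capacity-type functional whose gradient at the identity is $\mu(\cdot) - \diag(p)$), whose infimum over $\G$ is finite precisely when $p$ lies in (the relative interior considerations aside) $\Delta(T)$, and whose approximate critical points are exactly the scalings satisfying \cref{eq:tensor scaling}. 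This is the content of the duality between moment polytope membership and boundedness of the Kempf--Ness function established in \cite{burgisserTheoryNoncommutativeOptimization2019}.

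Next I would invoke the geodesically-convex-optimization algorithm (box-constrained Newton / second-order method, or the first-order gradient-descent variant) from \cite[Theorem~1.13]{burgisser2018tensorScaling}: running this optimizer on $f_{T,p}$ for a number of iterations polynomial in $1/\eps$, $\log(1/\eps)$, and the bitsize of $T$ and $p$ yields one of two outcomes. Either the iterates drive the norm of the gradient below $\eps$, in which case the corresponding group element $(A,B,C) \in \G$ satisfies $\norm{\mu((A\ot B\ot C)T) - \diag(p)} \le \eps$ and we output it; or the function value is observed to decrease past a threshold computed from the \emph{a priori} bounds on the minimal scaling (the integrality/denominator bounds on vertices of $\Delta(T)$, cf.\ the bound $\ell$ appearing in \cref{theorem:probabilistic algorithm}), certifying that $f_{T,p}$ is unbounded below and hence $p \notin \Delta(T)$. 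The factor $1/2$ failure probability and the randomization enter because the optimizer needs a generic starting point (or a random perturbation/rounding of $p$) to avoid degenerate behavior on the boundary, exactly as in the cited theorem; boosting to any desired confidence is by independent repetition.

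Finally I would verify the running-time claim: each iteration requires evaluating $\mu((A\ot B\ot C)T)$ and its derivative, which is polynomial-size arithmetic in the entries, the number of iterations is polynomial in $1/\eps$ and input bitsize by the convergence analysis in \cite{burgisser2018tensorScaling} (using smoothness/robustness bounds on the Kempf--Ness function for the tensor action), and all intermediate numbers stay polynomially bounded. The main obstacle I anticipate is not any single computation but the careful bookkeeping of \emph{which} quantitative version of the Bürgisser--Franks--Garg--Oliveira--Walter--Wigderson machinery to cite and how its parameters (the diameter bound on the search region, the weight-norm and gap-constant dependence, and the precise meaning of ``in $\Delta(T)$'' versus ``in the relative interior'') match the hypotheses here; once the statement is phrased as a black-box invocation of \cite[Theorem~1.13]{burgisser2018tensorScaling} with $p \in \weylchamber_\Q$, the proof is essentially a citation plus the translation between $\spec(\mu(\cdot))$ and $\mu(\cdot)$ for dominant $p$, so I would keep this section short and defer all analytic details to the cited works.
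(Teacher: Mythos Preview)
This theorem is not proved in the paper: it is quoted verbatim as \cite[Theorem~1.13]{burgisser2018tensorScaling} and used as a black box (see the sentence immediately preceding the theorem and its later invocation in \cref{algorithm:tensor algorithm verification}). There is nothing in the paper's own text to compare your argument against.

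Your sketch is a reasonable high-level summary of the machinery behind the cited result, and your closing instinct is exactly right: in this manuscript the ``proof'' should simply be the citation, with at most the one-line remark that for dominant~$p$ the condition $\spec(\mu(T')) = p$ is equivalent to $\mu(T') = \diag(p)$ after conjugating by an element of~$\K$. Everything else you wrote belongs to the analysis in \cite{burgisser2018tensorScaling,burgisserTheoryNoncommutativeOptimization2019}, not to this paper.
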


Crucially, there is a choice of $\eps$ such that inclusion of a point $p$ is guaranteed if $\norm{ \mu\big( (A\ot B \ot C) T \big) - p} \leq \eps$ for some $(A,B,C) \in \G$. This has been observed in
\cite[Lemma~7]{burgisser2015membershipCoNP} (for the Kronecker polytope), \cite[Theorem~1.12]{burgisser2018tensorScaling}, \cite[Corollary~3.35]{burgisserTheoryNoncommutativeOptimization2019}.
We repeat the proof here for convenience of the reader.
We denote by $\norm{h}_\infty \coloneqq \max_{i=1}^{a+b+c} |h_i|$ the infinity norm of $h \in \R^{a+b+c} \cong \R^a \times \R^b \times \R^c$.

\begin{theorem}[{\cite{burgisser2015membershipCoNP}}]
\label{prop:scaling necessary epsilon}
Let $(\lambda,\mu,\nu) \in \weylchamber_\Z$ with $\lambda,\mu,\nu$ each summing to $\ell \in \N$.
Let $\ineqs$ be as in \cref{lemma:weight enumeration algorithm}.
Set $C \coloneqq \max\{\norm{h}_\infty \mid h \in \ineqs\}$.
Write $n \coloneqq a+b+c$.
If $0 \leq \eps < (\sqrt{n}C\ell)^{-1}$ and $T' \in \G \cdot T$, then
\begin{align}
\label{equation:polytope inclusion bound}
    \big\| \mu\bigl( T' \bigr) - \diag(p) \big\| \leq \eps
\end{align}
implies $p \in \Delta(T)$.
Moreover, $C \leq n^{n-1}$.
\end{theorem}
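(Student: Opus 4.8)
The plan is to first pass from the Hermitian marginals to their sorted spectra, and then use the integrality of the weights to upgrade ``$p$ is $\eps$-close to $\Delta(T)$'' into ``$p\in\Delta(T)$''. Here $p$ is the rational point $(\lambda/\ell,\mu/\ell,\nu/\ell)$, so $\ell p=(\lambda,\mu,\nu)$ has integer entries and $p$ is dominant. Set $q\coloneqq\spec(\mu(T'))$. Since $T'\in\G\cdot T\subseteq\overline{\G\cdot T}$, the geometric description together with \cref{theorem:tensor moment polytopes nm} gives $q\in\Delta(T)$. Because $p$ is dominant we have $\spec(\diag(p))=p$, so applying the Hoffman--Wielandt inequality blockwise to the pair $\mu(T'),\diag(p)\in\Herm_a\times\Herm_b\times\Herm_c$ yields $\norm{q-p}_2\le\norm{\mu(T')-\diag(p)}\le\eps$.

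Next I would show that $p$ satisfies every inequality in a defining set of $\Delta(T)$. By \cref{theorem:tensor moment polytopes franz} we have $\Delta(T)=\DeltaB(\Tgeneric)$ for generic $\Tgeneric\in\G\cdot T$, and by \cref{lemma:tensor attainability} (with $\ineqs$ as in \cref{lemma:weight enumeration algorithm}) there is a subset $\ineqs_{\Tgeneric}\subseteq\ineqs$ with $\DeltaB(\Tgeneric)=\bigcap_{h\in\ineqs_{\Tgeneric}}H_h\cap\weylchamber$. The point $p$ lies in $\weylchamber$ (it is dominant) and in $\aff\Omega$ (each block sums to $1$), so it suffices to check $\<p,h>\ge0$ for each $h\in\ineqs_{\Tgeneric}$. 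Fix such an $h$; it has coprime integer entries with $\norm{h}_\infty\le C$. On one hand $\<q,h>\ge0$ since $q\in\Delta(T)$, so by Cauchy--Schwarz
\begin{align}
\<p,h>\;=\;\<q,h>+\<p-q,h>\;\ge\;-\norm{p-q}_2\,\norm{h}_2\;\ge\;-\eps\sqrt{n}\,C.
\end{align}
On the other hand $\ell\<p,h>=\<(\lambda,\mu,\nu),h>\in\Z$, so $\<p,h>\in\tfrac1\ell\Z$; were it negative it would be at most $-1/\ell$, contradicting $\<p,h>\ge-\eps\sqrt{n}\,C>-1/\ell$, where the last inequality holds precisely because $\eps<(\sqrt{n}\,C\,\ell)^{-1}$. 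Hence $\<p,h>\ge0$ for all $h\in\ineqs_{\Tgeneric}$, and therefore $p\in\Delta(T)$.

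For the ``moreover'' part I would bound $C$ directly from \cref{algorithm:tensor weight-matrix-enumeration}. Every $h\in\ineqs$ is, up to rescaling to coprime integer entries, a generator of the one-dimensional kernel of an integer matrix $M_{\textnormal e}\in\Z^{(n-1)\times n}$ of rank $n-1$ built from $d=n-3$ weight rows together with the two rows $(\bm1\sep-\bm1\sep\bm0)$ and $(\bm0\sep\bm1\sep-\bm1)$. By Cramer's rule such a kernel vector has $j$-th entry $\pm\det$ of the $(n-1)\times(n-1)$ submatrix of $M_{\textnormal e}$ obtained by deleting column $j$. Each row of $M_{\textnormal e}$ has Euclidean norm at most $\sqrt{n}$ --- the weight rows have norm $\sqrt3$, and the two trivial-equality rows have norms $\sqrt{a+b}$ and $\sqrt{b+c}$ --- so Hadamard's inequality gives $|h_j|\le n^{(n-1)/2}$ for every $j$. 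Rescaling to coprime entries only shrinks the entries, so $C\le n^{(n-1)/2}\le n^{n-1}$.

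The individual steps are routine; the one point that requires care is the interplay between the numerical estimate and the arithmetic one, i.e.\ ensuring that the Cauchy--Schwarz bound $-\eps\sqrt{n}\,C$ on $\<p,h>$ is strictly larger than the smallest attainable negative value $-1/\ell$. The threshold $\eps<(\sqrt{n}\,C\,\ell)^{-1}$ is exactly what makes this work, and the fact that $\ineqs$ contains a defining set for $\Delta(T)$ is supplied by the earlier lemmas, so no further subtlety arises.
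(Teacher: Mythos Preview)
Your proof is correct and follows essentially the same approach as the paper: both pass from marginals to spectra via Hoffman--Wielandt, then use integrality of $h$ and $\ell p$ to rule out a strictly negative value of $\langle p,h\rangle$ given the $\eps$-closeness. The only notable difference is the ``moreover'' bound: the paper invokes Siegel's lemma to obtain $C\le n^{n-1}$, whereas you use Cramer's rule combined with Hadamard's inequality to get the sharper $C\le n^{(n-1)/2}$, which of course also implies the stated bound.
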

\begin{proof}
Suppose the assumptions of the lemma hold but $p \notin \Delta(T)$.
Then there exists an inequality $h$ that is valid for $\Delta(T)$ while $\<p,h> < 0$.
Again let $\ineqs$ be as in \cref{lemma:weight enumeration algorithm}.
By \cref{lemma:tensor attainability} and because $p \in \weylchamber$, we may without loss of generality pick $h \in \ineqs_S \subseteq \ineqs$ for some $S \subseteq \Omega$.
We observe that $h$ and $(\lambda,\mu,\nu)$ have integer entries, so we can bound the distance from $p$ to $\Delta(T)$ as
\begin{align*}
\frac{\left|\<p, h>\right|}{\norm{h}} = \frac{1}{\ell} \frac{\left|\<(\lambda,\mu,\nu),h>\right|}{\norm{h}} \geq \frac{1}{\ell \norm{h}}.
\end{align*}
Now we use that the Euclidean norm $\norm{\cdot}$ on $\R^{n}$ is upper bounded by $\sqrt{n}\norm{\cdot}_\infty$.
By definition of $C$ we know $\norm{h}_\infty \leq C$.
Thus the distance from $p$ to $\Delta(T)$ is lower bounded by $(\sqrt{n}C\ell)^{-1}$.
Finally, we use the Wielandt-Hoffman theorem (see e.g.\ \cite[III.5.15]{bhatia1997matrixAnalysis}) to bound
\begin{align*}
    \norm{\mu(T') - \diag(p)}
    \geq \bigl\| \spec\bigl( \mu(T') \bigr) - p \bigr\|
    \geq \frac{1}{\sqrt{n}C\ell}
\end{align*}
and obtain a contradiction.

It remains to upper bound $C$.
Recall that any $h \in \ineqs$ is defined as the solution to $M_\textnormal{e} = 0$, where $M$ is some weight matrix as computed in \cref{algorithm:tensor weight-matrix-enumeration}.
Note that $M_\textnormal{e}$ has only integer entries equal to $1$, $-1$ and $0$. Then Siegel's lemma \cite[Lemma~D.4.1.]{hindry2000diophantineGeometry} tells us that
$\norm{h}_\infty \leq n^{n-1}$.
\end{proof}

For every instance of tensor scaling, to some vertex $p$, we set $\eps$ such that the above property holds.
In our applications, $n$, $\ell$ and $C$ are all small, and the procedure is practically feasible and fast. For example, for $\C^3\ot\C^3\ot\C^3$ the value of $\ell$ for any vertex is bounded by 15 (see \cref{table:333 vertex data}) and
$C$ is equal to 16. So we need only take $\varepsilon < 1/720$.
For $\C^4\ot\C^4\ot\C^4$ we have $C = 43$. Our candidate for the $2\times2$ matrix multiplication polytope satisfies $\ell \leq 12$ for all its vertices, so
we can take $\varepsilon < 1/(516\sqrt{n}) < 1/2064$.
For the Kronecker polytope, $\ell \leq 24$, and we can take $\varepsilon < 1/4128$.

Tensor scaling algorithms produce numerical results, so some care should be taken to ensure no numerical issues arise when checking \cref{equation:polytope inclusion bound}.
Standard interval arithmetic packages can do the job.
Alternatively, the produced matrices $A,B,C$ can potentially be rounded to an algebraic representation which satisfies \cref{equation:polytope inclusion bound} exactly.

\begin{remark}
In \cite{burgisserTheoryNoncommutativeOptimization2019} the above results are stated for arbitrary reductive groups and their representations.
Hence the procedure can also be used in the general setting.
\end{remark}

\subsection{Verification algorithms}
\label{subsection:verification}

From \cref{algorithm:tensor checking-inequalities Fq} we obtain a new algorithm for computing Borel polytopes.%

\begin{algorithmbreak}{Borel polytope computation using the finite field heuristic.}
\label{algorithm:tensor algorithm borel Fq}%
 \textbf{Input:} A tensor $T \in \Z^a \ot \Z^b \ot \Z^c$. \\
 \textbf{Output:} A set of inequalities defining a polytope $P$, potentially equal to the Borel polytope $\DeltaB(T)$ of $T$. \\
 \textbf{Algorithm:}
\begin{algorithmic}[1]
    \State Let $\ineqs$ be a set of inequalities as in \cref{lemma:tensor attainability}.
    \Statex \vspace{-0.12em}
    \State Initialize $\widehat{\ineqs}_{T} \gets \varnothing$.
    \For{$h \in \ineqs$}
        \State Determine attainability of $\Omega_h$ for $(A \ot B \ot C)T$ using the finite field heuristic (\cref{algorithm:tensor checking-inequalities Fq}).
        \If{attainable}
            \State Add $h$ to $\widehat{\ineqs}_{T}$.
        \EndIf
    \EndFor
    \Statex  \vspace{-0.12em}
    \State \Return The set $\widehat{\ineqs}_{T}$ along with a set of inequalities defining $\weylchamber$.
\end{algorithmic}
\end{algorithmbreak}

As mentioned before, the above algorithm may be extended to other $T \in \C^a\ot\C^c\ot\C^c$ by taking suitable field extensions.

Typically we run \cref{algorithm:tensor algorithm borel Fq} on input $(A\ot B\ot C)T$ for randomly generated $(A,B,C) \in \G_{\uppertriangular}$ with integer entries.
As we have no guarantee that the polytope $P$ defined by the output is correct,
we need to verify $\Delta(T) \subseteq P$ and $P \subseteq \Delta(T)$.
This we do in the following algorithm.

\begin{algorithmbreak}{Moment polytope verification.}
\label{algorithm:tensor algorithm verification}%
 \textbf{Input:} A tensor $T \in \C^a \ot \C^b \ot \C^c$ and a candidate set of rational inequalities $\widehat{\ineqs}_{\Tgeneric}$ for $\Delta(T)$. \\
 \textbf{Output:} \textnormal{\texttt{Correct}}, \textnormal{\texttt{Incorrect}} or \textnormal{\texttt{Failure}}. \\
 \textbf{Algorithm:}
\begin{algorithmic}[1]
    \State Replace $\widehat{\ineqs}_{\Tgeneric}$ by an irredundant subset.
    \State Enumerate the vertices of the polytope $P \subseteq \weylchamber$ defined by $\widehat{\ineqs}_{\Tgeneric}$.
    \State Compute $C \coloneqq \max \{ \norm{h}_\infty \mid h \in \ineqs\}$.
    \Statex \vspace{-0.12em}
        \For{all vertices $p$ of $P$}
        \State Determine $\ell$ such that $p\ell \in \N^{a+b+c}$ and compute $\eps = (\sqrt{a+b+c}\,\ell\, C + 1)^{-1}$.
        \If{tensor scaling (\cref{thm:scaling via gradient descent}) with inputs $T, p$ and precision $\eps$ claims $p \notin \Delta(T)$}
            \State \Return \texttt{Incorrect}
        \EndIf
    \EndFor
    \Statex \vspace{-0.12em}
    \For{$h \in \widehat{\ineqs}_{\Tgeneric}$}
        \State Determine attainability of $\Omega_h$ for all $T_h \in U_h \cdot T$, with $U_h \subseteq \G_{\uppertriangular}$ non-empty Zariski-open (\cref{algorithm:tensor checking-inequalities symbolic}).
        \If{not attainable}
            \State \Return \textnormal{\texttt{Failure}}.
        \EndIf
    \EndFor
    \Statex \vspace{-0.12em}
    \State \Return \texttt{Correct}.
\end{algorithmic}
\end{algorithmbreak}

The algorithm may output \texttt{Failure}, which indicates that the inequalities may be correct, but the verification algorithm cannot prove this. This is for the same reason as discussed below \cref{theorem:symbolic algorithm randomized}: an inequality may be valid while $\Omega_h$ is not (generically) attainable.

\begin{theorem}
    \label{theorem:verification}
    Let $T \in \C^a \ot \C^b \ot \C^c$ be a tensor and $\widehat{\ineqs}_{\Tgeneric}$  a set of rational inequalities.
    If \cref{algorithm:tensor algorithm verification} with input $T$ and $\widehat{\ineqs}_{\Tgeneric}$ outputs \textnormal{\texttt{Correct}}, then
    \begin{align}
    \label{equation:verification output}
        \Delta(T) = \bigcap_{\widehat{\ineqs}_{\Tgeneric}} H_h \cap \weylchamber.
    \end{align}
    If the output is instead \textnormal{\texttt{Incorrect}}, \cref{equation:verification output} is false with probability at least 1/2.
\end{theorem}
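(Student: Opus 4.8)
The statement splits naturally into two implications, matching the two non-\texttt{Failure} outputs. For the \texttt{Correct} case, the plan is to show that the two verification loops in \cref{algorithm:tensor algorithm verification} establish the two inclusions $\Delta(T) \subseteq P$ and $P \subseteq \Delta(T)$, where $P \coloneqq \bigcap_{\widehat{\ineqs}_{\Tgeneric}} H_h \cap \weylchamber$. First I would handle $P \subseteq \Delta(T)$: this comes from the vertex loop. For each vertex $p$ of $P$, tensor scaling (\cref{thm:scaling via gradient descent}) did not claim $p \notin \Delta(T)$, so it returned $(A,B,C) \in \G$ with $\norm{\mu((A\ot B \ot C)T) - \diag(p)} \leq \eps$, where $\eps = (\sqrt{a+b+c}\,\ell\,C+1)^{-1} < (\sqrt{a+b+c}\,\ell\,C)^{-1}$. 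By \cref{prop:scaling necessary epsilon} this forces $p \in \Delta(T)$ (here I use that $\ell$ is chosen so $p\ell \in \N^{a+b+c}$ and that $C = \max\{\norm{h}_\infty \mid h \in \ineqs\}$ is exactly the constant in that theorem). Since every vertex of $P$ lies in the convex set $\Delta(T)$, we get $P = \conv(\text{vertices of } P) \subseteq \Delta(T)$.

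For the reverse inclusion $\Delta(T) \subseteq P$, I would use the second loop, which mirrors the argument in the proof of \cref{theorem:symbolic algorithm randomized}. Passing to an irredundant subset in line~1 does not change $P$. For each remaining $h \in \widehat{\ineqs}_{\Tgeneric}$, \cref{algorithm:tensor checking-inequalities symbolic} returned \texttt{True} (otherwise the output would have been \texttt{Failure}), so by \cref{lemma:attainability symbolic} there is a non-empty Zariski-open $U_h \subseteq \G_{\uppertriangular}$ such that $\Omega_h$ is attainable for every $T_h \in U_h \cdot T$. Let $V \subseteq \G_{\uppertriangular}$ be the non-empty Zariski-open subset from \cref{theorem:tensor moment polytopes franz} on which $\DeltaB(\Tgeneric) = \Delta(T)$. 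Since $\G_{\uppertriangular}$ is an irreducible variety, $V \cap \bigcap_{h \in \widehat{\ineqs}_{\Tgeneric}} U_h \neq \varnothing$; pick $\Tgeneric$ in this intersection. Then for each $h$, attainability of $\Omega_h$ for $\Tgeneric$ means $\supp((A'\ot B' \ot C')\Tgeneric) \subseteq \Omega_h$ for some lower-triangular triple, so $\conv\supp((A'\ot B' \ot C')\Tgeneric)$ is a term in \cref{eq:franz borel polytope again}, whence $\Delta(T) = \DeltaB(\Tgeneric) \subseteq H_h$. Intersecting over all $h \in \widehat{\ineqs}_{\Tgeneric}$ and with $\weylchamber$ (which contains $\Delta(T)$) gives $\Delta(T) \subseteq P$. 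Combined with the first part, $\Delta(T) = P$, which is \cref{equation:verification output}.

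For the \texttt{Incorrect} case, the output is produced only inside the vertex loop, when tensor scaling claims $p \notin \Delta(T)$ for some vertex $p$ of $P$. By \cref{thm:scaling via gradient descent}, when the algorithm outputs ``$p \notin \Delta(T)$'' this is correct with probability at least $1/2$. Conditioned on this claim being correct, $p$ is a vertex of $P$ that is not in $\Delta(T)$, so $P \neq \Delta(T)$; a fortiori $\bigcap_{\widehat{\ineqs}_{\Tgeneric}} H_h \cap \weylchamber = P \neq \Delta(T)$, i.e.\ \cref{equation:verification output} is false. Hence \cref{equation:verification output} is false with probability at least $1/2$. (As in the earlier probabilistic statements, this $1/2$ can be boosted by repetition or by tightening the scaling precision.)

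\textbf{Main obstacle.} The routine-but-delicate point is verifying that the precise choice $\eps = (\sqrt{a+b+c}\,\ell\,C+1)^{-1}$ satisfies the strict inequality hypothesis $\eps < (\sqrt{n}\,C\,\ell)^{-1}$ of \cref{prop:scaling necessary epsilon} — this is immediate since the denominator is one larger — and that $\ell$, $C$, $n$ in the algorithm are instantiated exactly as in that proposition (in particular that $\ineqs$ in line~3 of the verification algorithm is the same complete inequality set as in \cref{lemma:weight enumeration algorithm}, so that $C$ bounds all relevant defining inequalities, including those of $\Delta(T)$). The only genuinely substantive input is that $\G_{\uppertriangular}$ is irreducible, so that finitely many non-empty Zariski-open subsets have non-empty (indeed dense) intersection; this is used exactly as in the proof of \cref{theorem:symbolic algorithm randomized} and needs no new argument.
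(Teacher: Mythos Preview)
Your proposal is correct and follows essentially the same approach as the paper's proof: establish $P \subseteq \Delta(T)$ via tensor scaling and \cref{prop:scaling necessary epsilon} on each vertex, establish $\Delta(T) \subseteq P$ by the attainability argument from the proof of \cref{theorem:symbolic algorithm randomized} (using irreducibility of $\G_{\uppertriangular}$), and handle the \texttt{Incorrect} case via the $1/2$ correctness guarantee of \cref{thm:scaling via gradient descent}. Your write-up is simply more explicit than the paper's (e.g.\ spelling out why $\eps = (\sqrt{n}\,\ell\,C+1)^{-1}$ satisfies the strict bound), but there is no substantive difference in strategy.
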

\begin{proof}
    Let $P = \bigcap{}_{\widehat{\ineqs}_{\Tgeneric}} H_h \cap \weylchamber$.
    If the output is \texttt{Correct}, we find that for generic $\Tgeneric \in \G_{\uppertriangular}$ we have that $\Delta(T) = \Delta(\Tgeneric)$ and for each
    $h \in \smash{\widehat{\ineqs}_{\Tgeneric}}$, $\Omega_h$ is attainable for $\Tgeneric$ (this is the same argument as in the proof of \cref{theorem:symbolic algorithm randomized}).
    This proves $\Delta(T) = \DeltaB(\Tgeneric) \subseteq P$.

    If tensor scaling succeeds, by \cref{thm:scaling via gradient descent,prop:scaling necessary epsilon} we obtain that all vertices of $P$ lie in $\Delta(T)$. We conclude that $\Delta(T) = P$ by convexity, as desired.

    If the output is \texttt{Incorrect}, tensor scaling failed for a vertex of $P$, and \cref{thm:scaling via gradient descent} tells us that $p \notin \Delta(T)$ with probability at least $1/2$. This implies $P \neq \Delta(T)$.
\end{proof}

\begin{remark}
\label{remark:probabilistic outer verification}
Alternatively to using \cref{algorithm:tensor checking-inequalities symbolic} in \cref{algorithm:tensor algorithm verification}, attainability of $h$ for $T' = (A \ot B \ot C)T$ can be verified for each inequality $h \in \widehat{\ineqs}_{\Tgeneric}$ using \cref{algorithm:tensor checking-inequalities}.
If this succeeds for all $h \in \widehat{\ineqs}_{\Tgeneric}$ we obtain $\DeltaB(T') \subseteq P$, where $P$ is the polytope in $\weylchamber$ defined by $\widehat{\ineqs}_{\Tgeneric}$.
If tensor scaling succeeds for all vertices of $P$, we obtain $P \subseteq \Delta(T)$.
If $(A,B,C)$ is chosen as in \cref{theorem:probabilistic algorithm}, with probability at least $1/2$, we find that $\Delta(T) = \DeltaB(T')$ and hence $\Delta(T) = P$.
If the procedure does not fail, this allows us to conclude $P$ is equal to the moment polytope of $T$ with arbitrary probability less than 1.
\end{remark}

\subsection{Gr\"obner basis reconstruction}
\label{subsection:groebner reconstruction}

In practice, for many inequalities computing the Gr\"obner basis over the function field in \Cref{algorithm:tensor checking-inequalities symbolic} takes too long to be executable directly.
In \cref{example:symbolic groebner derandomization} we saw a way we could still run perform this algorithm, which was sufficient for all $3\times3\times3$ tensors. However, this trick is not applicable in general.
In this section we present a more systematic way to use Gr\"obner basis over $\Q$ (obtained in \cref{algorithm:tensor checking-inequalities}) to construct Gr\"obner bases over the function field.
We apply our techniques to prove an inequality of the $2\times2$ matrix multiplication tensor.

This construction proceeds via (symbolic) linear system solving.
Let $e \in G_\Q(T') \subseteq \Q[x_1,\ldots,x_m]$.
For generic randomization $(A,B,C) \in \G_{\uppertriangular}$, we saw before that the coefficients of $e$ can be deduced from one of the elements of the symbolic Gr\"obner basis by filling in $(A,B,C)$ into the coefficients (\cref{proposition:groebner function field}).
Hence, if we know $e$ it is likely the symbolic Gr\"obner basis contains an element $e_{\textnormal{s}}$ which equals $e$ except with all coefficients in $\Q$ replaced with some rational functions in $\Q(\G_{\uppertriangular})$.

We can write $e = \sum_{i} f_i g_i$ where $\{f_1,\ldots,f_q\} = \tensorpolysystem^{T'}\!(h) \subseteq \Q[x_1,\ldots,x_m]$ and $g_i \in \Q[x_1,\ldots,x_m]$.
The coefficients of $e$ are then specified by a linear system (see also \cite{deLoeraComputingInfeasabilityCertificates2011}).
More precisely, let $d$ be the maximum degree among $\{g_i\}$ and $D$ the maximum degree among $\{f_ig_i\}$.
Then order all monomials in $\Q[x_1,\ldots,x_m]$ up to degree $d$ by $h_1,\ldots,h_k$ and up to degree $D$ by $h_1,\ldots,h_K$. One may take only the monomials that contain variables used in $f_i$ and $g_i$.
Write $v_{f} \in \C^K$ for the column vector of coefficients of a polynomial $f$ of degree less than $D$ with respect to this monomial ordering.
Then this linear system is given by
\begin{align}
    \label{eq:rational to symbolic system}
    \underbrace{\bigg[\begin{array}{c|c|c}
        F_1 & \cdots & F_q
    \end{array}\bigg]}_{\eqqcolon\, F}
    {
    \renewcommand\arraystretch{0.6}
    \setlength\extrarowheight{3pt}
    \left[\begin{array}{c}
        w_1\\\text{---}\\
        \raisebox{0.1em}{$\vdots$}\\\text{---}\\
        w_q
    \end{array}\right]}
    =
    v_e
    \qquad
    \text{where}\quad
    w_i \in \C^k \text{ and }
    F_i \coloneqq
    \Big[\begin{array}{c|c|c|c}
        v_{f_ih_1} & v_{f_ih_2} & \cdots & v_{f_ih_k}
    \end{array}\Big].
\end{align}
Observe that when taking $w_i$ to equal the vector of coefficients defining $g_i$ as a linear combination of $h_1,\ldots,h_k$, the above linear system is indeed satisfied.

Now we replace $\{f_1,\ldots,f_q\}$ by $\{f_{\textnormal{s},1},\ldots,f_{\textnormal{s},q}\} = \tensorpolysystem^{T_{\textnormal{s}}}(h) \subseteq \Q(\G_{\uppertriangular})[x_1,\ldots,x_m]$, and set up the symbolic matrix $F_{\textnormal{s}}$ defined analogously to $F$ above.
Remove the rows in the system belonging to non-zero rows of $v_e$.
Then we can compute the kernel $w_{\textnormal{s}}$ of the resulting matrix $F_{\textnormal{s}}'$.
If all goes well, $F_{\textnormal{s}} w_{\textnormal{s}}$ will equal $v_{e_{\textnormal{s}}}$.
If all goes well for all elements, the resulting Gr\"obner basis $\hat G_{\Q(\G_{\uppertriangular})}$ is the symbolic Gr\"obner basis, which one may in practice check quickly by computing the Gr\"obner basis of $\tensorpolysystem^{T_{\textnormal{s}}}(h) \cup \hat G_{\Q(\G_{\uppertriangular})}$.

We may further optimize this strategy.
We can assume the symbolic solution $w_{\textnormal{s}}$ will have the same entries non-zero as the rational solution, and by this assumption we can remove all columns in $F_{\textnormal{s}}'$ corresponding to the entries where $w_i$ is zero.
Moreover, we can assume the row pivots (i.e.\ the rows which cause a increase in rank when going from top to bottom) of $F_{\textnormal{s}}$ are equal to those of $F$.
We can further restrict $F_{\textnormal{s}}$ to these rows.

\begin{example}
\label{example:mm2 derandomization}
We put this idea into practice for an inequality which holds for the moment polytope of the $2 \times 2$ matrix multiplication tensor $\MM_2$, but not for~$\kronpol{4}{4}{4}$.%
\footnote{One vertex of~$\kronpol{4}{4}{4}$ that is excluded by~$h$ is~$p = (\frac12, \frac16, \frac16, \frac16 \sep \frac12, \frac12, 0, 0 \sep \frac14, \frac14, \frac14, \frac14)$. To see that~$p \in \kronpol{4}{4}{4}$, observe that~$S = \sqrt{\sfrac{1}{4}} e_{1,1,4} + \sqrt{\sfrac{1}{4}} e_{1,2,3} + \sqrt{\sfrac{1}{6}} e_{2,2,2} + \sqrt{\sfrac{1}{12}} e_{3,1,2} + \sqrt{\sfrac{1}{12}} e_{3,2,1} + \sqrt{\sfrac{1}{6}} e_{4,1,1}$ satisfies~$\mu(S) = \diag(p)$.}
The tensor is specified by the slices
\begin{align}
    \label{equation:MM2}
    \MM_2 \sim T =
    \left[
    \begin{array}{cccc|cccc|cccc|cccc}
    \grayzero & \grayzero & \grayzero & \grayzero & \grayzero & \grayzero & \grayzero & \grayzero & \grayzero & \grayzero & 1 & \grayzero & 1 & \grayzero & \grayzero & \grayzero \\
    \grayzero & \grayzero & \grayzero & \grayzero & \grayzero & \grayzero & \grayzero & \grayzero & \grayzero & \grayzero & \grayzero & 1 & \grayzero & 1 & \grayzero & \grayzero \\
    \grayzero & \grayzero & 1 & \grayzero & 1 & \grayzero & \grayzero & \grayzero & \grayzero & \grayzero & \grayzero & \grayzero & \grayzero & \grayzero & \grayzero & \grayzero \\
    \grayzero & \grayzero & \grayzero & 1 & \grayzero & 1 & \grayzero & \grayzero & \grayzero & \grayzero & \grayzero & \grayzero & \grayzero & \grayzero & \grayzero & \grayzero
    \end{array}
    \right].
\end{align}
We prove the inequality $h = (\,-1, 0, 0, -1 \sep 0, 0, 1, 1 \sep 1, 1, 0, 0\,)$ holds for $\Delta(\MM_2)$.
Let $T'$ equal $\MM_2$ with some randomization $(A,B,C) \in \G_{\uppertriangular}$ applied.
The polynomial system $\tensorpolysystem^{T'}(h)$ contains 8 polynomials of degree at most 2 using the 7 variables $\{x_{18}, x_{17}, x_{16}, x_{15}, x_{14}, x_6, x_7\}$.
For some randomization, the Gr\"obner basis over $\Q$ equals
\begin{align*}
    \Big\{
    &x_5 + \tfrac{268593690625}{3339944740372}, x_6 + \tfrac{742225980905}{834986185093}, x_{14} - \tfrac{17694295006140}{9661977296489},
    x_{15} + \tfrac{102169377609156}{9661977296489},
    \\&x_{16} - \tfrac{17694295006140}{9661977296489} x_{18} - \tfrac{519372632850}{9661977296489},
    x_{17} + \tfrac{102169377609156}{9661977296489}x_{18} + \tfrac{3949646632800}{9661977296489}
     \Big\}.
\end{align*}
Notice that we cannot perform the trick from \cref{example:symbolic groebner derandomization}.
Let $e = x_5 + {\scriptstyle 268593690625/3339944740372}$.
We can solve the rational system \cref{eq:rational to symbolic system} for $d = 2$.
There are $k = 29$ monomials of degree $2$ or lower, and $K = 323$ monomials of degree $D = 2 \cdot 2 = 4$ or lower.
Since $8 \cdot 29 = 232$, $F_{\textnormal{s}}$ is a symbolic matrix of size $323 \times 232$.
Restricting to rows where $v_e$ is non-zero and columns where the solution $w$ of the rational system (containing the stacked $v_{g_i}$) is non-zero, the resulting matrix has size $321 \times 21$.
Restricting to the row pivots of $F$, it has size $20 \times 21$.
We find a kernel element and construct
\begin{align*}
    e_{\textnormal{s}}
    =
    &\ x_5 +
    \scalebox{0.75}{$
    \big(-z_{1}z_{4}z_{8}z_{10}z_{11}z_{15} + z_{2}z_{3}z_{8}z_{10}z_{11}z_{15} \big)/\big(z_{1}^2z_{5}z_{9}z_{13}z_{17} - z_{1}^2z_{5}z_{9}z_{14}z_{16} - z_{1}z_{2}z_{5}z_{8}z_{13}z_{17} + z_{1}z_{2}z_{5}z_{8}z_{14}z_{16}
    $}\\&\scalebox{0.85}{$
    +\ z_{1}z_{3}z_{5}z_{9}z_{11}z_{17} - z_{1}z_{3}z_{5}z_{9}z_{12}z_{16} + z_{1}z_{3}z_{5}z_{9}z_{13}z_{15} - z_{1}z_{4}z_{5}z_{8}z_{11}z_{17} + z_{1}z_{4}z_{5}z_{8}z_{12}z_{16} - z_{1}z_{4}z_{5}z_{8}z_{13}z_{15}
    $}\\&\scalebox{0.85}{$
    +\ z_{1}z_{4}z_{6}z_{9}z_{11}z_{15} - z_{1}z_{4}z_{7}z_{8}z_{11}z_{15} - z_{2}z_{3}z_{6}z_{9}z_{11}z_{15} + z_{2}z_{3}z_{7}z_{8}z_{11}z_{15} + z_{3}^2z_{5}z_{9}z_{11}z_{15} - z_{3}z_{4}z_{5}z_{8}z_{11}z_{15} \big)$}
\end{align*}
This approach works for every element in the Gr\"obner basis, and we verify the resulting set equals the symbolic Gr\"obner basis.
Hence we prove the inequality $h$ holds for $\Delta(\MM_2)$.
\end{example}

\subsection{Moment polytopes of selected \texorpdfstring{$4\times4\times4$}{4 x 4 x 4}  tensors.}
\label{subsection:4x4x4}

We end this section with computational results.
We computed the moment polytopes of several tensors $T \in \C^4\ot\C^4\ot\C^4$ by applying \cref{algorithm:tensor algorithm borel} to $(A \ot B \ot C)T$ for integer randomization of $A,B,C$ (we provide more implementation details in \cref{subsection:tensor implementation}).
The results are listed in \cref{table:444 vertex data}.
All results were verified following \cref{remark:probabilistic outer verification}.
That is, inclusion of all points is verified with certainty, but there is a non-zero probability too little points have been listed.
For each vertex, exact algebraic rational expressions for $(A,B,C)$ satisfying property \cref{equation:polytope inclusion bound} were reconstructed from the numeric matrices obtained from running the tensor scaling algorithm (\cref{thm:scaling via gradient descent}), via rounding.

In the table, $\unit{k}$ denotes the unit tensor, $\MM_2$ denotes the $2 \times 2$ matrix multiplication tensor (\cref{equation:MM2}), $v_i$ are defined as in \cref{equation:333 vs},
$W \coloneqq e_{112}+e_{121}+e_{211}$
and $\Tdet \coloneqq e_1 \wedge e_2 \wedge e_3$.
The symbols $\ot$ and $\oplus$ denote the Kronecker product and the direct sum respectively.

For the $2\times2$ matrix multiplication tensor, exclusion of the point $(\sfrac14,\sfrac14,\sfrac14,\sfrac14 \sep \sfrac13,\sfrac13,\sfrac13,0 \sep \sfrac12, \sfrac12,0,0)$ is proved in \cite{vandenBerg2025mmPolytope}, where a generalization to $n \times n$ matrix multiplication tensors is presented as well.
These results where inspired by the computational results obtained here.

\section*{Acknowledgements}
MvdB, VL, and MW acknowledge support by the European Research Council (ERC Grant Agreement No.~101040907).
MvdB also acknowledges financial support by the Dutch National Growth Fund (NGF), as part of the Quantum Delta NL visitor programme.
MC and HN acknowledge financial support from the European Research Council (ERC Grant Agreement No.~818761), VILLUM FONDEN via the QMATH Centre of Excellence (Grant No.~10059) and the Novo Nordisk Foundation (grant NNF20OC0059939 `Quantum for Life'). MC also thanks the National Center for Competence in Research SwissMAP of the Swiss National Science Foundation and the Section of Mathematics at the University of Geneva for their hospitality. Part of this work was completed while MC was Turing Chair for Quantum Software, associated to the QuSoft research center in Amsterdam, acknowledging financial support by the Dutch National Growth Fund (NGF), as part of the Quantum Delta NL visitor programme.
HN also acknowledges support by the European Union via a ERC grant (QInteract, Grant No.~101078107).
MW also acknowledges the Deutsche Forschungsgemeinschaft (DFG, German Research Foundation) under Germany's Excellence Strategy - EXC\ 2092\ CASA - 390781972, the BMBF (QuBRA, 13N16135; QuSol, 13N17173) and the Dutch Research Council (NWO grant OCENW.KLEIN.267).
JZ was supported by NWO Veni grant VI.Veni.212.284. JZ and MvdB are grateful for support by Qusoft and CWI.
Views and opinions expressed are those of the author(s) only and do not necessarily reflect those of the European Union or the European Research Council Executive Agency. Neither the European Union nor the granting authority can be held responsible for them.

\newpage

\appendix

\vspace*{-5em}
\section{Vertices of moment polytopes of all \texorpdfstring{$3\times3\times3$}{3 x 3 x 3} tensors}

\makebox[\textwidth][c]{\includegraphics{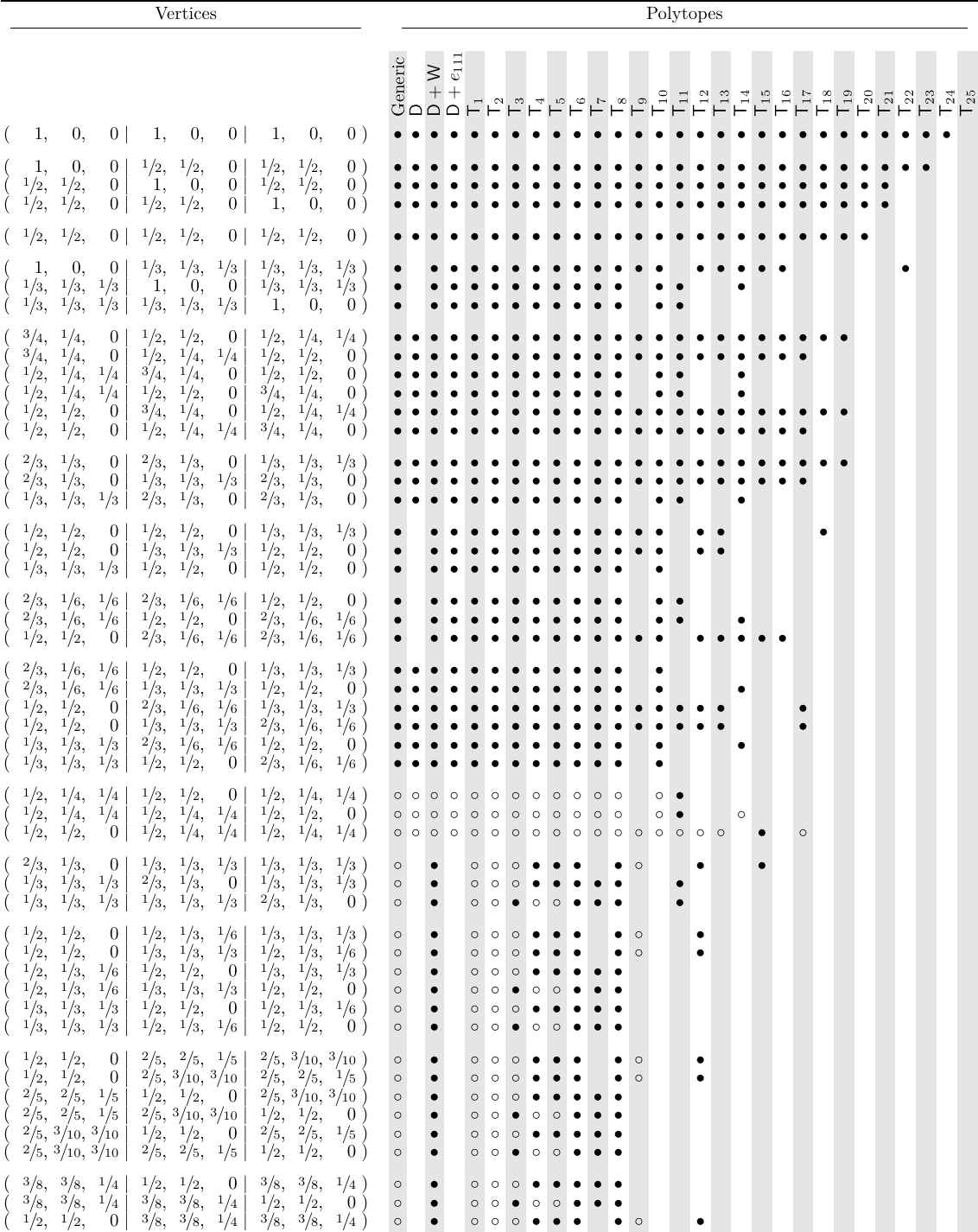}}
\begin{table}[H]
\makebox[\textwidth][c]{\includegraphics{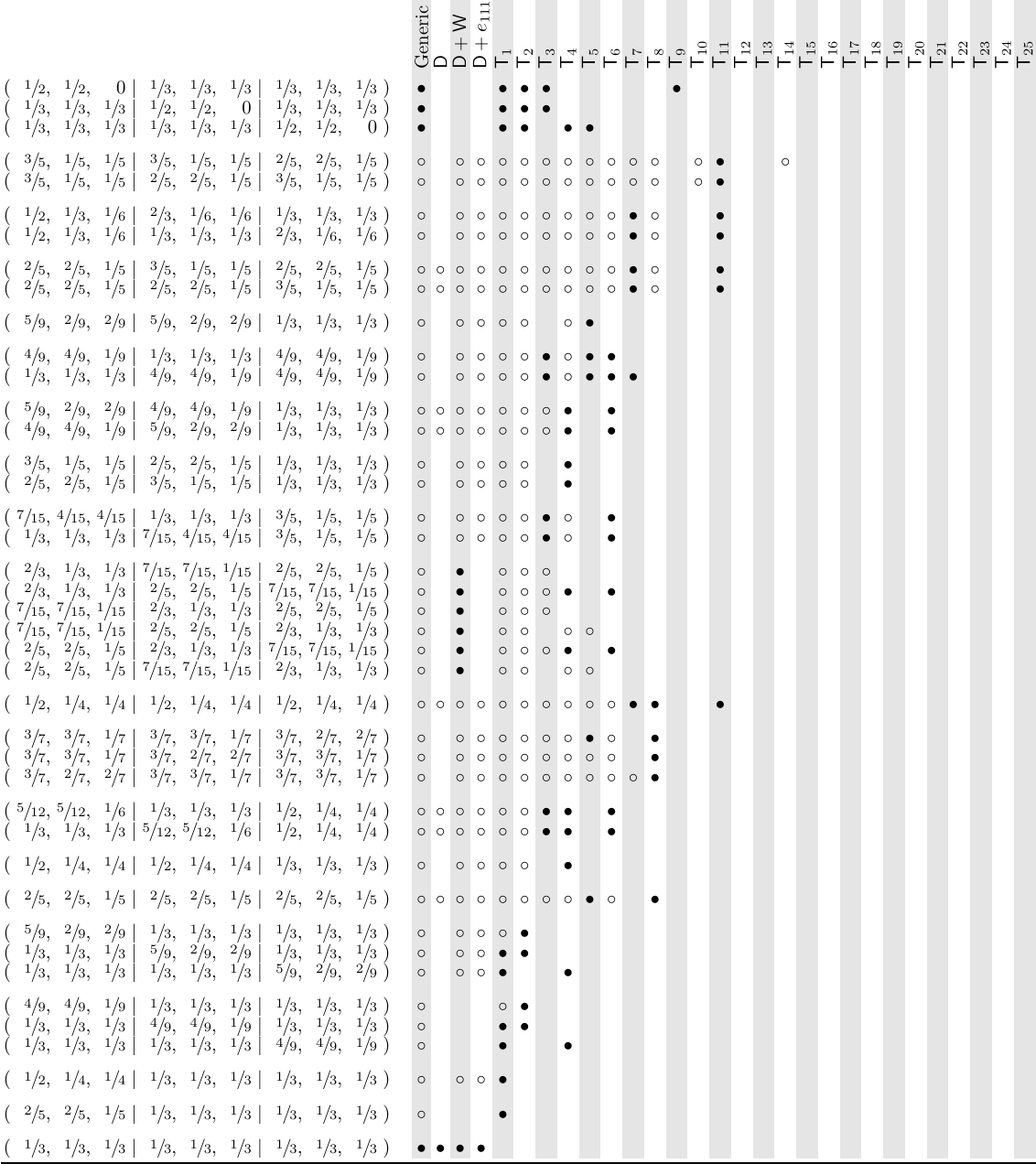}}
\caption{Complete vertex data of the moment polytopes in $\C^3 \ot \C^3 \ot \C^3$, see \cref{section:C333}. A $\cm$ indicates the point is a vertex of the moment polytope of the indicated tensor. A $\cl$ indicates it is only an element.
The column labeled \emph{Generic} concerns the Kronecker polytope.
This is the moment polytope of all tensors in family 1, 2 and 3, and of some tensors in family 4.
The tensor $\Tnurmiev_i$ refers to the $i$-th tensor in \cref{table:c333 unstable tensors info}.
Up to permutations of the subsystems, all moment polytopes are included.
Vertices are grouped by their orbits under system permutations.
This data is also available online in \cite{vandenBerg2025momentPolytopesGithub}.}
\label{table:333 vertex data}
\end{table}

\newpage

\vspace*{-5em}
\section{Vertices of moment polytopes of selected \texorpdfstring{$4\times4\times4$}{4 x 4 x 4} tensors}

\raisebox{0pt}[63em]{\makebox[\textwidth][c]{\includegraphics[scale=1.00]{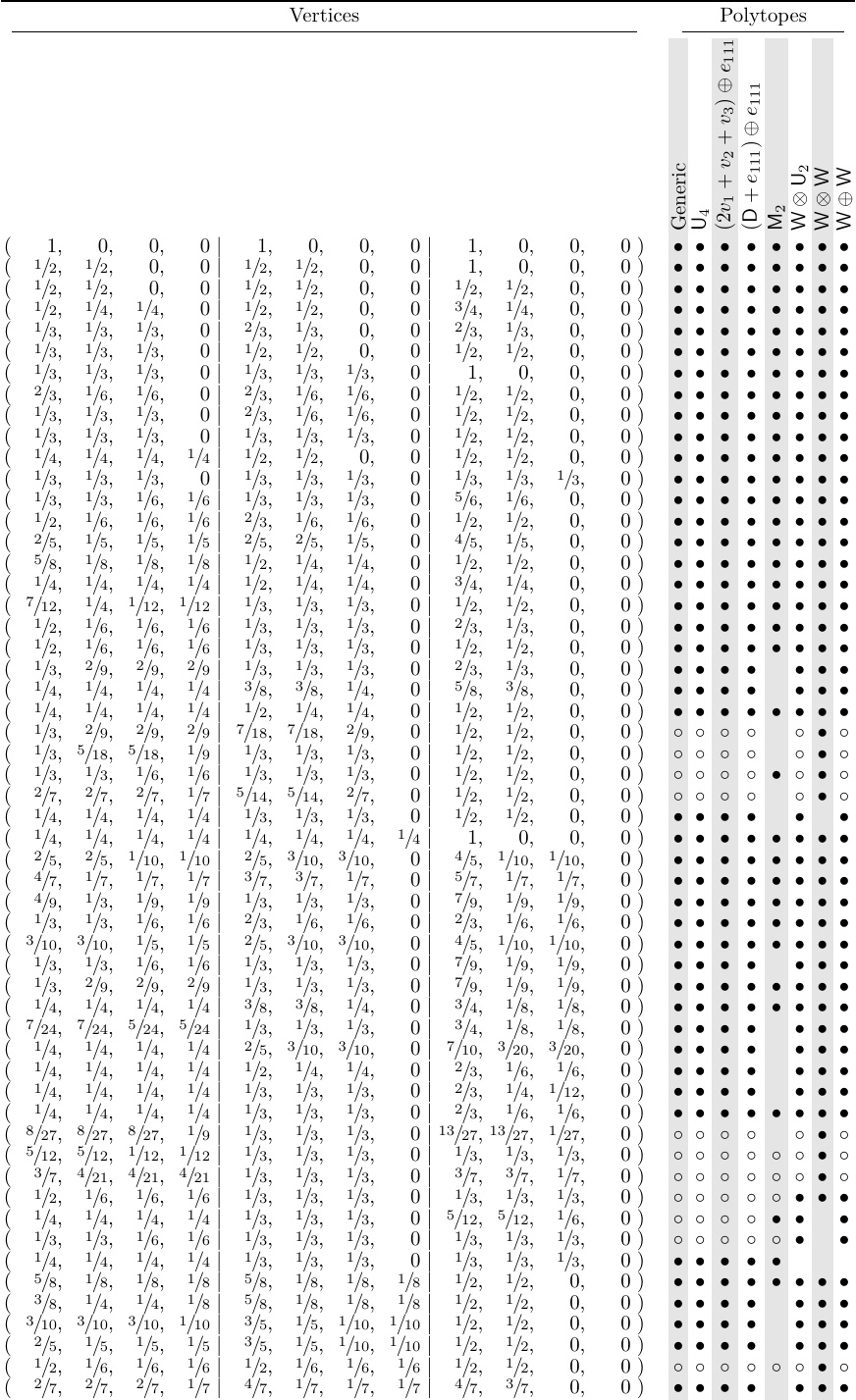}}}
\newpage
\begin{table}[H]
\makebox[\textwidth][c]{\includegraphics{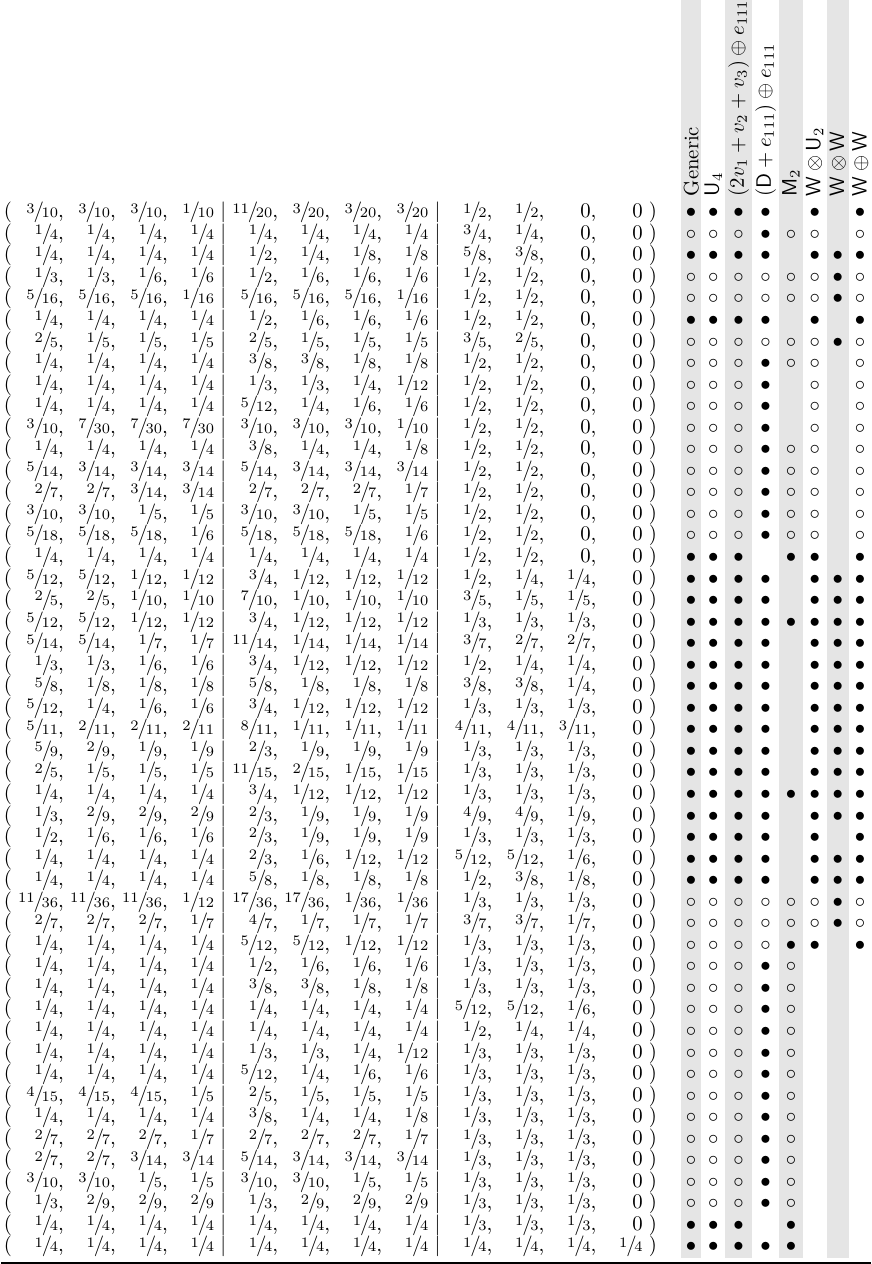}}
\caption{Vertex data of several moment polytopes in $\C^4 \ot \C^4 \ot \C^4$, see \cref{section:algorithms for 4x4x4}.
    Vertices are listed up to permutation of the systems (the polytopes are symmetric under system permutations).
A $\cm$ indicates the point is a vertex of the moment polytope of the indicated tensor. A $\cl$ indicates it is only an element.
Inclusion of all points has been verified following \cref{prop:scaling necessary epsilon}.
Exclusion of other points has been checked probabilistically following \cref{remark:probabilistic outer verification}.
The column labeled \emph{Generic} concerns the Kronecker polytope, which is the moment polytope of a generic tensor.
This data is available online in \cite{vandenBerg2025momentPolytopesGithub}.
}
\label{table:444 vertex data}
\end{table}

\section{Our implementation, number of inequalities and runtimes}
\label{subsection:tensor implementation}

Our reference implementation has been written in Sage \cite{sagemath}.
For working with polytopes (in particular for vertex enumeration) we used the Normaliz library \cite{normaliz}.
\Cref{algorithm:tensor weight-matrix-enumeration} and variations use FLINT \cite{flint} for computing integer kernels.
\Cref{algorithm:tensor checking-inequalities,algorithm:tensor checking-inequalities Fq} use Singular \cite{singular} to compute the Gr\"obner bases.
\Cref{algorithm:tensor checking-inequalities symbolic} uses the implementation of Buchbergers algorithm from Sage.
We use the \emph{Total degree reverse lexicographical ordering} (often called \emph{degrevlex}) for the total monomial order used when computing Gr\"obner bases (see \cref{definition:groebner-basis}). This is generally considered to be the optimal choice for most applications \cite{AdamsLoustaunauGroebner}, and we found this to be true for our applications as well.

\begin{remark}[Homotopy continuation]
\label{remark:homotopy continuation}
    We have briefly investigated homotopy continuation \cite{burgisser2013condition, bates2024numericalnonlinearalgebra} as an alternative to the Gr\"obner basis computations, via the implementation in \cite{breiding2018homotopyContinuationjl}.
    Although we decided against using homotopy continuation at all, future work may benefit from this technique and because of this we include this discussion here.

    Homotopy continuation presents some overhead for each computation,
    while the majority of Gr\"obner basis computations in Singular take very little time.
    Considering the large number of inequalities (see \cref{tab:inequality counts} and \cref{tab:attainability}), computing the Gr\"obner basis is preferred on average.
    For a few specific computations, Gr\"obner basis computations may take an unpractical number of time and memory resources.
    Here homotopy continuation could present an attractive numerical alternative to computing Gr\"obner bases.
    However, for the several cases we investigated,\footnote{These cases all concern polynomial systems corresponding to generic inequalities of the tensors $\MM_2$ and $\unit{4}$ in $\C^4\ot\C^4\ot\C^4$, which we can circumvent as described in \cref{subsection:filtering inequalities}.}
    where the Gr\"obner basis computation took an unpractical number of time or was terminated because of an out of memory error, also the homotopy continuation computation ran out of memory.
\end{remark}

In $\C^4\ot\C^4\ot\C^4$ there are sometimes a small number of Gr\"obner basis computations that take too long even over $\F_q$.
For such cases, we simply assume they are not equal to $\{1\}$. (This is not completely unreasonable, because longer runtimes mean the Gr\"obner basis is likely complicated and hence not trivial.)
This is harmless if our verification algorithm (\cref{subsection:verification}) still succeeds in the end.
Such cases typically compromised less than 1 per 100,000 of inequalities.

\subsection{Randomization}

For the modular arithmetic optimization (\cref{algorithm:tensor checking-inequalities Fq}), we generate primes in the range $[2^{30},2^{31})$. These are the largest primes supported by Singular.

Following \cref{theorem:probabilistic algorithm}, we use integer randomness to generate a generic element $T' = (A\ot B\ot C)T \in \G \cdot T$ with $(A,B,C) \in \G$.
As mentioned, our bounds on the required randomness are much larger than what is required in practice.
Since large numbers can present memory issues when computing Gr\"obner bases of $\Q$, we take relatively small values in this case.
The exact choice is then based on heuristics, and determined by observing when results became consistent across repeated executions of the algorithm.
Our reference implementation over $\Q$ uses random integers from the range $\{1,\ldots,1000\}$.
When working a finite fields $\F_q$ (as in \cref{subsection:4x4x4}), we instead generate each entry uniformly random from $\{0,\ldots,q-1\}$.
In both cases we use the build-in random number generation of Sage.

As we will discuss in \cref{section:C333}, we can use the randomized algorithm (\cref{algorithm:tensor algorithm randomized}) to determine the moment polytopes of all elements of $\C^3\ot\C^3\ot\C^3$ with certainty.
Hence we can experimentally validate our choices of randomization in this case.
We ran \cref{algorithm:tensor algorithm} 1000 times, using both Gr\"obner basis computations over both $\Q$ and $\F_q$ and the randomization of the prime $q$ and generic element $T'$ as described above, on the tensors in $\C^3\ot\C^3\ot\C^3$ listed in \cref{table:c333 unstable tensors info}.
We obtained the same results every iteration, which suggests effectiveness of the two heuristics.

\subsection{Inequality enumeration}

The following two tables show the number of inequalities obtained during each step in the inequality enumeration algorithms \cref{algorithm:tensor weight-matrix-enumeration-no-weyl} and \cref{algorithm:tensor weight-matrix-enumeration-no-symmetries}, and corresponding runtimes.

\begin{table}[H]
    \centering
    \renewcommand{\arraystretch}{1.1}
\begin{tabular}{crrrrrr}\toprule
           &  \multicolumn{2}{>{\centering\arraybackslash}p{10em}}{\textbf{Full rank weight matrices}} & \multicolumn{2}{>{\centering\arraybackslash}p{10em}}{\textbf{Inequalities, no symmetries}} & \multicolumn{2}{>{\centering\arraybackslash}p{10em}}{\textbf{Inequalities}}\\\cmidrule(lr){2-3}\cmidrule(lr){4-5}\cmidrule(lr){6-7}
    \textbf{Shape}  & Number & Time (s) & Number & Time (s) & Number & Time (s) \\
    \cmidrule(lr){1-7}
 $(2,2,2)$ & 8  & 0.0009%
           & 6  & 0.0005%
           & 29 & 0.0007%
           \\
 $(2,2,3)$ & 89 & 0.0011%
           & 65 & 0.0022%
           & 81 & 0.0008%
           \\
 $(2,3,3)$ & 798 & 0.0026%
           & 528 & 0.0170%
           & 345 & 0.0023%
           \\
 $(3,3,3)$ & 5499 & 0.0247%
           & 3196 & 0.1187%
           & 2845 & 0.0338%
           \\
 $(3,3,4)$ & 155145 & 0.3296%
           & 90366 & 3.7259%
           & 22867 & 0.3648%
           \\
 $(3,4,4)$ & 2838740 & 2.5581%
           & 1563246 & 14.3456%
           & 315305 & 7.3850%
           \\
 $(4,4,4)$ & 40456223 & 52.0675%
           & 20950156 & 166.2963%
           & 8109383 & 252.8900%
           \\\bottomrule
\end{tabular}
    \caption{Data on the inequality enumeration algorithms, \cref{algorithm:tensor weight-matrix-enumeration-no-weyl} and \cref{algorithm:tensor weight-matrix-enumeration-no-symmetries} (when all dimensions are equal), for different tensor shapes.
    Lists the number of output objects in every step of the algorithm and their execution times in seconds using 15 cores on a 4.90 GHz CPU.
    These steps are: enumeration of full rank standard weight matrices, determining inequalities by computing integer kernels, and restoring the symmetries.
    In the last step, the optimization ``Highest weight inclusion'' from subsection \cref{subsection:filtering inequalities}
    is applied.
    }
    \label{tab:inequality counts}
\end{table}

\begin{table}[H]
    \centering
    \renewcommand{\arraystretch}{1.1}
\begin{tabular}{crrrrrr}\toprule
           &  \multicolumn{2}{>{\centering\arraybackslash}p{10em}}{\textbf{Full rank weight matrices}} & \multicolumn{2}{>{\centering\arraybackslash}p{10em}}{\textbf{Inequalities, no symmetries}} & \multicolumn{2}{>{\centering\arraybackslash}p{10em}}{\textbf{Inequalities}}\\\cmidrule(lr){2-3}\cmidrule(lr){4-5}\cmidrule(lr){6-7}
    \textbf{Shape}  & Number & Time (s) & Number & Time (s) & Number & Time (s) \\
    \cmidrule(lr){1-7}
 $(2,2)$ & 2    & 0.0003%
           & 2  & 0.0004%
           & 9  & 0.0006%
           \\
 $(2,2,2)$ & 8  & 0.0009%
           & 6  & 0.0005%
           & 29 & 0.0007%
           \\
 $(2,2,2,2)$ & 73  & 0.0017%
             & 48  & 0.0019%
             & 185 & 0.0016%
           \\
 $(2,2,2,2,2)$ & 1810 & 0.0318%
               & 1119 & 0.0469%
               & 3879 & 0.0461%
           \\
 $(2,2,2,2,2,2)$ & 131226 & 3.8156%
                 & 81305  & 4.2534%
                 & 281309 & 6.4527%
           \\\bottomrule
\end{tabular}
    \caption{Data on the inequality enumeration algorithms, as in \cref{tab:inequality counts}.}
    \label{tab:inequality counts qubits}
\end{table}

\subsection{Attainability verification}

We provide data on the runtime of attainability verification using \cref{algorithm:tensor checking-inequalities} and \cref{algorithm:tensor checking-inequalities Fq} as used in \cref{algorithm:tensor algorithm borel} and \cref{algorithm:tensor algorithm borel Fq}.

\begin{table}[H]
    \centering
    \setlength{\tabcolsep}{5.5pt}
    \renewcommand{\arraystretch}{1.1}
    \begin{tabular}{lccccccrr}\toprule
                   & \multicolumn{5}{c}{\textbf{Inequalities}}     & \textbf{Vertices} & \multicolumn{2}{c}{\textbf{Runtimes}} \\\cmidrule(lr){2-6}\cmidrule(lr){7-7}\cmidrule(lr){8-9}
        \textbf{Tensor}     & All      & Not generic & Maxranks     & Attainable & Final & Final & $\Q$      &  $\F_q$ \\
        \cmidrule(lr){1-9}
        $\unit{3}$     & 2845     & 2187        & 355           & 0          & 45    & 33       & 0.254     & 0.239       \\
        $\nurmievT{4}$      & 2845     & 2187        & 355           & 20         & 52    & 53       & 0.264     & 0.237       \\
        $\nurmievT{9}$      & 2845     & 2187        & 736           & 292        & 25    & 18       & 0.293     & 0.263       \\
        $\unit{4}$     & 8109383  & 7139405     & 1102518       & 0          & 270   & 328      & - & 20:10            \\
        $\MM_2$ & 8109383  & 7139405     & 1102518       & 1227       & 129   & 181      & - & 3:06                \\\bottomrule
    \end{tabular}
    \caption{
    Data concerning attainability.
    The tensors $\unit{r}$ are the unit tensors of rank $r$.
    Tensor $\nurmievT{4}$ and $\nurmievT{9}$ are taken from \cref{table:c333 unstable tensors info}. $\MM_2$ is the $2 \times 2$ matrix multiplication tensor (see \cref{equation:MM2}).
    The table contains the number of inequalities at every step of the algorithm, including many optimizations from \cref{subsection:filtering inequalities}.
    The columns correspond to:
    all possible inequalities of the shape of the tensor, the inequalities that are not valid for the generic polytope, the inequalities that contain the maxrank point corresponding to the tensor (\cref{lemma:maxranks}), the inequalities that are attainable according to \cref{algorithm:tensor checking-inequalities Fq}, and the final number of irredundant inequalities after intersecting with the generic moment polytope. Additionally, the number of vertices is listed, as well as the runtimes in seconds or minutes, using 15 cores on a 4.90 GHz CPU. These specify the runtimes for determining the attainable inequalities both over $\Q$ and over $\F_q$, for some large randomly generated prime $q$.
    A dash indicates computations could not be completed within 10 hours.
    For all tensors, the respective computations over $\Q$ and $\F_q$ resulted in the same sets of inequalities.
}
    \label{tab:attainability}
\end{table}

\bibliographystyle{alphaurl}
\bibliography{references}

\newcommand{\etalchar}[1]{$^{#1}$}
\begin{thebibliography}{vdBCL{\etalchar{+}}25b}

\bibitem[ADW{\etalchar{+}}24]{alman2024asymmetryyieldsfastermatrix}
Josh Alman, Ran Duan, Virginia~Vassilevska Williams, Yinzhan Xu, Zixuan Xu, and
  Renfei Zhou.
\newblock More asymmetry yields faster matrix multiplication, 2024.
\newblock \href {https://arxiv.org/abs/2404.16349} {\path{arXiv:2404.16349}}.

\bibitem[AGL{\etalchar{+}}18]{allenzhu2018operatorScaling}
Zeyuan {Allen-Zhu}, Ankit Garg, Yuanzhi Li, Rafael Oliveira, and Avi Wigderson.
\newblock Operator scaling via geodesically convex optimization, invariant
  theory and polynomial identity testing.
\newblock In {\em Proceedings of the 50th Annual ACM SIGACT Symposium on Theory
  of Computing (STOC 2018)}, pages 172--181, 2018.
\newblock \href {https://doi.org/10.1145/3188745.3188942}
  {\path{doi:10.1145/3188745.3188942}}.

\bibitem[AK08]{altunbulakPauliPrincipleRevisited2008}
Murat Altunbulak and Alexander Klyachko.
\newblock The {{Pauli}} principle revisited.
\newblock {\em Communications in Mathematical Physics}, 282(2):287--322,
  September 2008.
\newblock \href {https://doi.org/10.1007/s00220-008-0552-z}
  {\path{doi:10.1007/s00220-008-0552-z}}.

\bibitem[AL94]{AdamsLoustaunauGroebner}
William~W.\ Adams and Philippe Loustaunau.
\newblock {\em An Introduction to Gr\"obner Bases}, volume~3 of {\em Graduate
  Studies in Mathematics}.
\newblock American Mathematical Society, 1994.

\bibitem[AMN{\etalchar{+}}23]{acuavivaMinimalCanonicalForm2023}
Arturo Acuaviva, Visu Makam, Harold Nieuwboer, David P\'erez-Garc\'ia,
  Friedrich Sittner, Michael Walter, and Freek Witteveen.
\newblock The minimal canonical form of a tensor network.
\newblock In {\em 2023 {IEEE} 64th {A}nnual {S}ymposium on {F}oundations of
  {C}omputer {S}cience ({FOCS} 2023)}, pages 328--362. IEEE Computer Soc., Los
  Alamitos, CA, 2023.
\newblock \href {https://doi.org/10.1109/FOCS57990.2023.00027}
  {\path{doi:10.1109/FOCS57990.2023.00027}}.

\bibitem[Arn03]{arnoldModularAlgorithmsComputing2003}
Elizabeth~A. Arnold.
\newblock Modular algorithms for computing {{Gr{\"o}bner}} bases.
\newblock {\em Journal of Symbolic Computation}, 35(4):403--419, April 2003.
\newblock \href {https://doi.org/10.1016/S0747-7171(02)00140-2}
  {\path{doi:10.1016/S0747-7171(02)00140-2}}.

\bibitem[Ati82]{atiyah1982convexity}
Michael~Francis Atiyah.
\newblock Convexity and commuting {H}amiltonians.
\newblock {\em Bulletin of The London Mathematical Society}, 14:1--15, 1982.
\newblock \href {https://doi.org/10.1112/blms/14.1.1}
  {\path{doi:10.1112/blms/14.1.1}}.

\bibitem[AWRC15]{aguilarExperimentalDeterminationMultipartite2015}
Gabriel~H. Aguilar, Stephen~P. Walborn, Paulo H.~Souto Ribeiro, and Lucas~C.
  C{\'e}leri.
\newblock Experimental determination of multipartite entanglement with
  incomplete information.
\newblock {\em Physical Review X}, 5(3):031042, September 2015.
\newblock \href {https://doi.org/10.1103/PhysRevX.5.031042}
  {\path{doi:10.1103/PhysRevX.5.031042}}.

\bibitem[BBC{\etalchar{+}}24]{bates2024numericalnonlinearalgebra}
Daniel~J. Bates, Paul Breiding, Tianran Chen, Jonathan~D. Hauenstein, Anton
  Leykin, and Frank Sottile.
\newblock Numerical nonlinear algebra, 2024.
\newblock \href {https://arxiv.org/abs/2302.08585} {\path{arXiv:2302.08585}}.

\bibitem[BC13]{burgisser2013condition}
Peter B{\"u}rgisser and Felipe Cucker.
\newblock {\em Condition: The Geometry of Numerical Algorithms}.
\newblock Grundlehren der mathematischen Wissenschaften. Springer Berlin
  Heidelberg, 2013.
\newblock \href {https://doi.org/10.1007/978-3-642-38896-5}
  {\path{doi:10.1007/978-3-642-38896-5}}.

\bibitem[BCC{\etalchar{+}}17]{MR3631613}
Jonah Blasiak, Thomas Church, Henry Cohn, Joshua~A. Grochow, Eric Naslund,
  William~F. Sawin, and Chris Umans.
\newblock On cap sets and the group-theoretic approach to matrix
  multiplication.
\newblock {\em Discrete Analysis}, 2017.
\newblock \href {https://doi.org/10.19086/da.1245}
  {\path{doi:10.19086/da.1245}}.

\bibitem[BCH{\etalchar{+}}25]{björklund2024chromaticnumber19999ntime}
Andreas Björklund, Radu Curticapean, Thore Husfeldt, Petteri Kaski, and Kevin
  Pratt.
\newblock Fast deterministic chromatic number under the asymptotic rank
  conjecture.
\newblock In {\em Proceedings of the 2025 Annual ACM-SIAM Symposium on Discrete
  Algorithms (SODA 2025)}, pages 2804--2818, 2025.
\newblock \href {https://doi.org/10.1137/1.9781611978322.91}
  {\path{doi:10.1137/1.9781611978322.91}}.

\bibitem[BCI11]{burgisserNonvanishingKroneckerCoefficients2011}
Peter B{\"u}rgisser, Matthias Christandl, and Christian Ikenmeyer.
\newblock Nonvanishing of {{Kronecker}} coefficients for rectangular shapes.
\newblock {\em Advances in Mathematics}, 227(5):2082--2091, August 2011.
\newblock \href {https://doi.org/10.1016/j.aim.2011.04.012}
  {\path{doi:10.1016/j.aim.2011.04.012}}.

\bibitem[BCMW17]{burgisser2015membershipCoNP}
Peter B\"{u}rgisser, Matthias Christandl, Ketan~D. Mulmuley, and Michael
  Walter.
\newblock Membership in moment polytopes is in np and conp.
\newblock {\em SIAM Journal on Computing}, 46(3):972--991, 2017.
\newblock \href {https://arxiv.org/abs/1511.03675} {\path{arXiv:1511.03675}},
  \href {https://doi.org/10.1137/15M1048859} {\path{doi:10.1137/15M1048859}}.

\bibitem[BCS97]{burgisser1996algebraic}
Peter B\"{u}rgisser, Michael Clausen, and Mohammad~Amin Shokrollahi.
\newblock {\em Algebraic complexity theory}, volume 315 of {\em Grundlehren der
  mathematischen Wissenschaften}.
\newblock Springer-Verlag, Berlin, 1997.
\newblock \href {https://doi.org/10.1007/978-3-662-03338-8}
  {\path{doi:10.1007/978-3-662-03338-8}}.

\bibitem[BDR25]{buloisAlgorithmComputeKronecker2025a}
Micha{\"e}l Bulois, Roland Denis, and Nicolas Ressayre.
\newblock An {{Algorithm}} to compute the {{Kronecker}} cone and other moment
  cones, May 2025.
\newblock \href {https://arxiv.org/abs/2505.08812} {\path{arXiv:2505.08812}}.

\bibitem[Bel06]{MR2177198}
Prakash Belkale.
\newblock Geometric proofs of {H}orn and saturation conjectures.
\newblock {\em Journal of Algebraic Geometry}, 15(1):133--173, 2006.
\newblock \href {https://doi.org/10.1090/S1056-3911-05-00420-0}
  {\path{doi:10.1090/S1056-3911-05-00420-0}}.

\bibitem[BFG{\etalchar{+}}18]{burgisser2018tensorScaling}
Peter B{\"u}rgisser, Cole Franks, Ankit Garg, Rafael Oliveira, Michael Walter,
  and Avi Wigderson.
\newblock Efficient algorithms for tensor scaling, quantum marginals, and
  moment polytopes.
\newblock In {\em 2018 IEEE 59th Annual Symposium on Foundations of Computer
  Science (FOCS 2018)}, pages 883--897. IEEE, 2018.
\newblock \href {https://doi.org/10.1109/FOCS.2018.00088}
  {\path{doi:10.1109/FOCS.2018.00088}}.

\bibitem[BFG{\etalchar{+}}19]{burgisserTheoryNoncommutativeOptimization2019}
Peter B{\"u}rgisser, Cole Franks, Ankit Garg, Rafael Oliveira, Michael Walter,
  and Avi Wigderson.
\newblock Towards a theory of non-commutative optimization: Geodesic 1st and
  2nd order methods for moment maps and polytopes.
\newblock In {\em 2019 {{IEEE}} 60th {{Annual Symposium}} on {{Foundations}} of
  {{Computer Science}} ({{FOCS 2019}})}, pages 845--861, 2019.
\newblock \href {https://doi.org/10.1109/FOCS.2019.00055}
  {\path{doi:10.1109/FOCS.2019.00055}}.

\bibitem[BGO{\etalchar{+}}18]{burgisser2018alternatingMinimization}
Peter B{\"u}rgisser, Ankit Garg, Rafael Oliveira, Michael Walter, and Avi
  Wigderson.
\newblock Alternating minimization, scaling algorithms, and the null-cone
  problem from invariant theory.
\newblock In {\em 9th Innovations in Theoretical Computer Science Conference
  (ITCS 2018)}. Schloss Dagstuhl-Leibniz-Zentrum fuer Informatik, 2018.
\newblock \href {https://doi.org/10.4230/LIPIcs.ITCS.2018.24}
  {\path{doi:10.4230/LIPIcs.ITCS.2018.24}}.

\bibitem[BH13]{bremnerFundamentalInvariantsAction2013}
Murray~R. Bremner and Jiaxiong Hu.
\newblock Fundamental {{Invariants}} for the {{Action}} of {{SL}}{$_3
  (\mathbb{C})$} {\texttimes} {{SL}}{$_3 (\mathbb{C})$} {\texttimes} {{SL}}{$_3
  (\mathbb{C})$} on 3 {\texttimes} 3 {\texttimes} 3 {{Arrays}}.
\newblock {\em Mathematics of Computation}, 82(284):2421--2438, 2013.
\newblock \href {https://doi.org/10.1090/S0025-5718-2013-02706-8}
  {\path{doi:10.1090/S0025-5718-2013-02706-8}}.

\bibitem[Bha97]{bhatia1997matrixAnalysis}
Rajendra Bhatia.
\newblock {\em Matrix analysis}, volume 169 of {\em Graduate Texts in
  Mathematics}.
\newblock Springer-Verlag, New York, 1997.
\newblock \href {https://doi.org/10.1007/978-1-4612-0653-8}
  {\path{doi:10.1007/978-1-4612-0653-8}}.

\bibitem[BHO14]{bremnerHyperdeterminantPolynomialFundamental2014}
Murray Bremner, Jiaxiong Hu, and Luke Oeding.
\newblock The {$3\times 3\times 3$} hyperdeterminant as a polynomial in the
  fundamental invariants for {{SL}}{$_3 (\mathbb{C})$} {\texttimes} {{SL}}{$_3
  (\mathbb{C})$} {\texttimes} {{SL}}{$_3 (\mathbb{C})$}.
\newblock {\em Mathematics in Computer Science}, 8(2):147--156, 2014.
\newblock \href {https://doi.org/10.1007/s11786-014-0186-9}
  {\path{doi:10.1007/s11786-014-0186-9}}.

\bibitem[BI11]{burgisserGeometricComplexity2011}
Peter B\"{u}rgisser and Christian Ikenmeyer.
\newblock Geometric complexity theory and tensor rank.
\newblock In {\em Proceedings of the Forty-Third Annual ACM Symposium on Theory
  of Computing (STOC 2011)}, pages 509--518, New York, NY, USA, 2011.
  Association for Computing Machinery.
\newblock \href {https://doi.org/10.1145/1993636.1993704}
  {\path{doi:10.1145/1993636.1993704}}.

\bibitem[BI17]{burgisserFundamentalInvariants2017}
Peter B\"urgisser and Christian Ikenmeyer.
\newblock Fundamental invariants of orbit closures.
\newblock {\em Journal of Algebra}, 477:390--434, 2017.
\newblock \href {https://doi.org/10.1016/j.jalgebra.2016.12.035}
  {\path{doi:10.1016/j.jalgebra.2016.12.035}}.

\bibitem[BISvdO24]{normaliz}
Winfried Bruns, Bogdan Ichim, Cristof S\"oger, and Ulrich von~der Ohe.
\newblock Normaliz {3.10.3}, algorithms for rational cones and affine monoids.
\newblock Available at \url{https://www.normaliz.uni-osnabrueck.de}, 2024.

\bibitem[BK24]{10.1145/3618260.3649656}
Andreas Bj\"{o}rklund and Petteri Kaski.
\newblock The asymptotic rank conjecture and the set cover conjecture are not
  both true.
\newblock In {\em Proceedings of the 56th Annual ACM Symposium on Theory of
  Computing (STOC 2024)}, page 859–870. Association for Computing Machinery,
  2024.
\newblock \href {https://doi.org/10.1145/3618260.3649656}
  {\path{doi:10.1145/3618260.3649656}}.

\bibitem[BL14]{MR3239293}
Jaros\l~aw Buczy\'nski and J.~M. Landsberg.
\newblock On the third secant variety.
\newblock {\em Journal of Algebraic Combinatorics. An International Journal},
  40(2):475--502, 2014.
\newblock \href {https://doi.org/10.1007/s10801-013-0495-0}
  {\path{doi:10.1007/s10801-013-0495-0}}.

\bibitem[BL16]{DBLP:conf/mfcs/BlaserL16}
Markus Bl{\"{a}}ser and Vladimir Lysikov.
\newblock On degeneration of tensors and algebras.
\newblock In {\em 41st International Symposium on Mathematical Foundations of
  Computer Science ({MFCS} 2016)}, volume~58 of {\em LIPIcs}, pages
  19:1--19:11. Schloss Dagstuhl - Leibniz-Zentrum f{\"{u}}r Informatik, August
  2016.
\newblock \href {https://doi.org/10.4230/LIPICS.MFCS.2016.19}
  {\path{doi:10.4230/LIPICS.MFCS.2016.19}}.

\bibitem[Bl{\"a}13]{blaser2013fast}
Markus Bl{\"a}ser.
\newblock {\em Fast Matrix Multiplication}.
\newblock Number~5 in Graduate Surveys. Theory of Computing Library, 2013.
\newblock \href {https://doi.org/10.4086/toc.gs.2013.005}
  {\path{doi:10.4086/toc.gs.2013.005}}.

\bibitem[Bor12]{borel2012}
Armand Borel.
\newblock {\em Linear Algebraic Groups}.
\newblock Graduate Texts in Mathematics. Springer New York, 2012.
\newblock \href {https://doi.org/10.1007/978-1-4612-0941-6}
  {\path{doi:10.1007/978-1-4612-0941-6}}.

\bibitem[Bra04]{bravyi2004requirementsQuantumStates}
Sergey Bravyi.
\newblock Requirements for compatibility between local and multipartite quantum
  states.
\newblock {\em Quantum Information \& Computation}, 4(1):12–26, January 2004.
\newblock \href {https://arxiv.org/abs/quant-ph/0301014}
  {\path{arXiv:quant-ph/0301014}}.

\bibitem[Bri87]{brion1987momentMapImage}
Michel Brion.
\newblock Sur l'image de l'application moment.
\newblock In {\em S\'{e}minaire d'alg\`ebre {P}aul {D}ubreil et {M}arie-{P}aule
  {M}alliavin ({P}aris, 1986)}, volume 1296 of {\em Lecture Notes in
  Mathematics}, pages 177--192. Springer, Berlin, 1987.
\newblock \href {https://doi.org/10.1007/BFb0078526}
  {\path{doi:10.1007/BFb0078526}}.

\bibitem[BS88]{bayer1988complexitySyzygies}
David Bayer and Michael Stillman.
\newblock On the complexity of computing syzygies.
\newblock {\em Journal of Symbolic Computation}, 6(2):135--147, 1988.
\newblock \href {https://doi.org/10.1016/S0747-7171(88)80039-7}
  {\path{doi:10.1016/S0747-7171(88)80039-7}}.

\bibitem[BS00]{berenstein2000coadjointOrbitsMomentPolytope}
Arkady Berenstein and Reyer Sjamaar.
\newblock Coadjoint orbits, moment polytopes, and the {H}ilbert-{M}umford
  criterion.
\newblock {\em Journal of the American Mathematical Society}, 13(2):433--466,
  2000.
\newblock \href {https://doi.org/10.1090/S0894-0347-00-00327-1}
  {\path{doi:10.1090/S0894-0347-00-00327-1}}.

\bibitem[BT18]{breiding2018homotopyContinuationjl}
Paul Breiding and Sascha Timme.
\newblock {H}omotopy{C}ontinuation.jl: A package for homotopy continuation in
  julia.
\newblock In {\em International Congress of Mathematical Software (ICMS 2018)},
  pages 458--465, Cham, 2018. Springer International Publishing.
\newblock \href {https://doi.org/10.1007/978-3-319-96418-8_54}
  {\path{doi:10.1007/978-3-319-96418-8_54}}.

\bibitem[Buc65]{BuchbergerThesis}
Bruno Buchberger.
\newblock {\em Ein Algorithmus zum Auffinden der Basiselemente des
  Restklassenringes nach einem nulldimensionalen Polynomideal}.
\newblock PhD thesis, University of Innsbruck, 1965.
\newblock Translation by Michael P.\ Abramson.
\newblock \href {https://doi.org/10.1016/j.jsc.2005.09.007}
  {\path{doi:10.1016/j.jsc.2005.09.007}}.

\bibitem[BV25]{bugar2025errorExponentsEntanglement}
D\'avid Bug\'ar and P\'eter Vrana.
\newblock Error exponents for entanglement transformations from degenerations.
\newblock {\em IEEE Transactions on Information Theory}, 71(3):1874--1895,
  2025.
\newblock \href {https://doi.org/10.1109/TIT.2025.3534327}
  {\path{doi:10.1109/TIT.2025.3534327}}.

\bibitem[BVW17]{berline2016hornInequalitiesGeometric}
Nicole Berline, Mich\`ele Vergne, and Michael Walter.
\newblock The {H}orn inequalities from a geometric point of view.
\newblock {\em L'Enseignement math\'ematique}, 63(3-4):403--470, 2017.
\newblock \href {https://doi.org/10.4171/LEM/63-3/4-7}
  {\path{doi:10.4171/LEM/63-3/4-7}}.

\bibitem[CC06]{chenRangeCriterionClassification2006a}
Lin Chen and Yi-Xin Chen.
\newblock Range criterion and classification of true entanglement in a $2
  \times m \times n$ system.
\newblock {\em Physical Review A}, 73(5):052310, May 2006.
\newblock \href {https://doi.org/10.1103/PhysRevA.73.052310}
  {\path{doi:10.1103/PhysRevA.73.052310}}.

\bibitem[CCK25]{chindris2024hivetypepolytopesquivermultiplicities}
Calin Chindris, Brett Collins, and Daniel Kline.
\newblock Hive-type polytopes for quiver multiplicities and the membership
  problem for quiver moment cones.
\newblock {\em Algebraic Combinatorics}, 8(1):175--199, 2025.
\newblock \href {https://doi.org/10.5802/alco.398}
  {\path{doi:10.5802/alco.398}}.

\bibitem[CD21]{costaGapSliceRank2021}
Simone Costa and Marco Dalai.
\newblock A gap in the slice rank of {\emph{k}}-tensors.
\newblock {\em Journal of Combinatorial Theory, Series A}, 177:105335, January
  2021.
\newblock \href {https://doi.org/10.1016/j.jcta.2020.105335}
  {\path{doi:10.1016/j.jcta.2020.105335}}.

\bibitem[CDS08]{chitambar2008tripartiteEntanglementTransformations}
Eric Chitambar, Runyao Duan, and Yaoyun Shi.
\newblock Tripartite entanglement transformations and tensor rank.
\newblock {\em Phys. Rev. Lett.}, 101:140502, Oct 2008.
\newblock \href {https://doi.org/10.1103/PhysRevLett.101.140502}
  {\path{doi:10.1103/PhysRevLett.101.140502}}.

\bibitem[CGL{\etalchar{+}}21]{connerGeometricApproachStrassen2021}
Austin Conner, Fulvio Gesmundo, Joseph~M. Landsberg, Emanuele Ventura, and Yao
  Wang.
\newblock Towards a {{Geometric Approach}} to {{Strassen}}'s {{Asymptotic Rank
  Conjecture}}.
\newblock {\em Collectanea Mathematica}, 72(1):63--86, January 2021.
\newblock \href {https://doi.org/10.1007/s13348-020-00280-8}
  {\path{doi:10.1007/s13348-020-00280-8}}.

\bibitem[CHM07]{christandl2007nonzeroKroneckerCoefficients}
Matthias Christandl, Aram~W. Harrow, and Graeme Mitchison.
\newblock Nonzero kronecker coefficients and what they tell us about spectra.
\newblock {\em Communications in Mathematical Physics}, 270(3):575–585,
  January 2007.
\newblock \href {https://doi.org/10.1007/s00220-006-0157-3}
  {\path{doi:10.1007/s00220-006-0157-3}}.

\bibitem[Chr06]{Christandl2006Thesis}
Matthias Christandl.
\newblock {\em The Structure of Bipartite Quantum States: Insights from Group
  Theory and Cryptography}.
\newblock PhD thesis, University of Cambridge, 2006.
\newblock PhD thesis.
\newblock \href {https://arxiv.org/abs/quant-ph/0604183}
  {\path{arXiv:quant-ph/0604183}}.

\bibitem[Chr24]{Christandl2024Tensor}
Matthias Christandl.
\newblock The tensor as an informational resource.
\newblock {\em PNAS Nexus}, 3(9):pgae254, 2024.
\newblock \href {https://doi.org/10.1093/pnasnexus/pgae254}
  {\path{doi:10.1093/pnasnexus/pgae254}}.

\bibitem[CLO15]{CoxLittleOShea}
David~A. Cox, John Little, and Donal O'Shea.
\newblock {\em Ideals, Varieties, and Algorithms: An Introduction to
  Computational Algebraic Geometry and Commutative Algebra}.
\newblock Undergraduate Texts in Mathematics. Springer International
  Publishing, 2015.
\newblock \href {https://doi.org/10.1007/978-3-319-16721-3}
  {\path{doi:10.1007/978-3-319-16721-3}}.

\bibitem[CLS{\etalchar{+}}24]{christandl2023resourcetheorytensornetworks}
Matthias Christandl, Vladimir Lysikov, Vincent Steffan, Albert~H. Werner, and
  Freek Witteveen.
\newblock The resource theory of tensor networks.
\newblock {\em Quantum}, 8:1560, December 2024.
\newblock \href {https://doi.org/10.22331/q-2024-12-11-1560}
  {\path{doi:10.22331/q-2024-12-11-1560}}.

\bibitem[CLVW20]{christandlTensorNetworkRepresentations2020a}
Matthias Christandl, Angelo Lucia, P{\'e}ter Vrana, and Albert~H. Werner.
\newblock Tensor network representations from the geometry of entangled states.
\newblock {\em SciPost Physics}, 9(3):042, September 2020.
\newblock \href {https://doi.org/10.21468/SciPostPhys.9.3.042}
  {\path{doi:10.21468/SciPostPhys.9.3.042}}.

\bibitem[CM06]{christandlmitchison}
Matthias Christandl and Graeme Mitchison.
\newblock The spectra of quantum states and the {K}ronecker coefficients of the
  symmetric group.
\newblock {\em Communications in Mathematical Physics}, 261(3):789--797, 2006.
\newblock \href {https://doi.org/10.1007/s00220-005-1435-1}
  {\path{doi:10.1007/s00220-005-1435-1}}.

\bibitem[CPSV21]{ciracMatrixProductStates2021}
Juan~Ignacio Cirac, David {P{\'e}rez-Garc{\'i}a}, Norbert Schuch, and Frank
  Verstraete.
\newblock Matrix product states and projected entangled pair states:
  {C}oncepts, symmetries, and theorems.
\newblock {\em Reviews of Modern Physics}, 93(4):045003, 2021.
\newblock \href {https://doi.org/10.1103/RevModPhys.93.045003}
  {\path{doi:10.1103/RevModPhys.93.045003}}.

\bibitem[C{\c{S}}W18]{ChristandlSahinogluWalter2018}
Matthias Christandl, Mehmet~Burak {\c{S}}ahino\v{g}lu, and Michael Walter.
\newblock Recoupling coefficients and quantum entropies.
\newblock {\em Annales Henri Poincar\'e}, 19(2):385--410, 2018.
\newblock \href {https://doi.org/10.1007/s00023-017-0615-6}
  {\path{doi:10.1007/s00023-017-0615-6}}.

\bibitem[CVZ21a]{christandl2021barriers}
Matthias Christandl, P\'eter Vrana, and Jeroen Zuiddam.
\newblock Barriers for fast matrix multiplication from irreversibility.
\newblock {\em Theory of Computing}, 17(2):1--32, 2021.
\newblock \href {https://doi.org/10.4086/toc.2021.v017a002}
  {\path{doi:10.4086/toc.2021.v017a002}}.

\bibitem[CVZ21b]{christandlUniversalPointsAsymptotic2021}
Matthias Christandl, P{\'e}ter Vrana, and Jeroen Zuiddam.
\newblock Universal points in the asymptotic spectrum of tensors.
\newblock {\em Journal of the American Mathematical Society}, 36(1):31--79,
  November 2021.
\newblock \href {https://arxiv.org/abs/1709.07851} {\path{arXiv:1709.07851}},
  \href {https://doi.org/10.1090/jams/996} {\path{doi:10.1090/jams/996}}.

\bibitem[DdGM23]{ditraniClassificationRealComplex2023}
Sabino {Di Trani}, Willem~A. de~Graaf, and Alessio Marrani.
\newblock Classification of real and complex three-qutrit states.
\newblock {\em Journal of Mathematical Physics}, 64(9):091701, September 2023.
\newblock \href {https://doi.org/10.1063/5.0156805}
  {\path{doi:10.1063/5.0156805}}.

\bibitem[Der00]{derskenDegreeBounds2000}
Harm Derksen.
\newblock Polynomial bounds for rings of invariants.
\newblock {\em Proceedings of the American Mathematical Society}, 129, November
  2000.
\newblock \href {https://doi.org/10.2307/2668897} {\path{doi:10.2307/2668897}}.

\bibitem[Der22]{derksen2022gStableRank}
Harm Derksen.
\newblock {The G-stable rank for tensors and the cap set problem}.
\newblock {\em Algebra \& Number Theory}, 16(5):1071 -- 1097, 2022.
\newblock \href {https://doi.org/10.2140/ant.2022.16.1071}
  {\path{doi:10.2140/ant.2022.16.1071}}.

\bibitem[DGPS24]{singular}
Wolfram Decker, Gert-Martin Greuel, Gerhard Pfister, and Hans Sch\"onemann.
\newblock {\sc Singular} {4.4.0} --- {A} computer algebra system for polynomial
  computations.
\newblock \url{http://www.singular.uni-kl.de}, 2024.

\bibitem[DH05]{daftuar2005quantum}
Sumit Daftuar and Patrick Hayden.
\newblock Quantum state transformations and the {S}chubert calculus.
\newblock {\em Annals of Physics}, 315(1):80--122, 2005.
\newblock \href {https://doi.org/10.1016/j.aop.2004.09.012}
  {\path{doi:10.1016/j.aop.2004.09.012}}.

\bibitem[DLMM11]{deLoeraComputingInfeasabilityCertificates2011}
Jes\'us~A. {De Loera}, Jon Lee, Peter~N. Malkin, and Susan Margulies.
\newblock Computing infeasibility certificates for combinatorial problems
  through {H}ilbert's {N}ullstellensatz.
\newblock {\em Journal of Symbolic Computation}, 46(11):1260--1283, 2011.
\newblock \href {https://doi.org/10.1016/j.jsc.2011.08.007}
  {\path{doi:10.1016/j.jsc.2011.08.007}}.

\bibitem[Dub90]{dubeStructurePolynomialIdeals1990}
Thomas~W. Dub{\'e}.
\newblock The {{Structure}} of {{Polynomial Ideals}} and {{Gr{\"o}bner Bases}}.
\newblock {\em SIAM Journal on Computing}, 19(4):750--773, August 1990.
\newblock \href {https://doi.org/10.1137/0219053} {\path{doi:10.1137/0219053}}.

\bibitem[DVC00]{MR1804183}
Wolfgang D\"{u}r, Guifr\'e Vidal, and Juan~Ignacio Cirac.
\newblock Three qubits can be entangled in two inequivalent ways.
\newblock {\em Physical Review. A. Third Series}, 62(6):062314, 12, 2000.
\newblock \href {https://doi.org/10.1103/PhysRevA.62.062314}
  {\path{doi:10.1103/PhysRevA.62.062314}}.

\bibitem[EG17]{ellenberg2017large}
Jordan~S.\ Ellenberg and Dion Gijswijt.
\newblock On large subsets of with no three-term arithmetic progression.
\newblock {\em Annals of Mathematics}, pages 339--343, 2017.
\newblock URL: \url{http://www.jstor.org/stable/24906443}.

\bibitem[EGH{\etalchar{+}}11]{etingof2011introductionrepresentationtheory}
Pavel Etingof, Oleg Golberg, Sebastian Hensel, Tiankai Liu, Alex Schwendner,
  Dmitry Vaintrob, and Elena Yudovina.
\newblock {\em Introduction to representation theory}, volume~59 of {\em The
  Student Mathematical Library}.
\newblock American Mathematical Society, 2011.
\newblock \href {https://arxiv.org/abs/0901.0827} {\path{arXiv:0901.0827}}.

\bibitem[EV78]{elashviliClassificationTrivectors9dimensional1978}
Alexander~G. Elashvili and Ernest~B. Vinberg.
\newblock Classification of trivectors of a 9-dimensional space.
\newblock {\em Trudy Seminara po Vektornomu i Tenzornomu Analizu}, 18:197--233,
  1978.

\bibitem[Fau99]{FaugereF4}
Jean-Charles Faug\`ere.
\newblock A new efficient algorithm for computing {G}r\"obner bases ({F4}).
\newblock {\em Journal of Pure and Applied Algebra}, 139(1):61--88, 1999.
\newblock \href {https://doi.org/10.1016/S0022-4049(99)00005-5}
  {\path{doi:10.1016/S0022-4049(99)00005-5}}.

\bibitem[Fau02]{FaugereF5}
Jean-Charles Faug\`{e}re.
\newblock A new efficient algorithm for computing {G}r\"{o}bner bases without
  reduction to zero ({F5}).
\newblock In {\em Proceedings of the 2002 International Symposium on Symbolic
  and Algebraic Computation}, ISSAC '02, pages 75--83, New York, NY, USA, 2002.
  Association for Computing Machinery.
\newblock \href {https://doi.org/10.1145/780506.780516}
  {\path{doi:10.1145/780506.780516}}.

\bibitem[FH91]{fulton1991representation}
William Fulton and Joe Harris.
\newblock {\em Representation Theory: A First Course}.
\newblock Graduate Texts in Mathematics. Springer New York, 1991.
\newblock \href {https://doi.org/10.1007/978-1-4612-0979-9}
  {\path{doi:10.1007/978-1-4612-0979-9}}.

\bibitem[{FLI}23]{flint}
{The} {FLINT Team}.
\newblock {\em {FLINT}: {F}ast {L}ibrary for {N}umber {T}heory}, 2023.
\newblock Version 3.0.0, \url{https://flintlib.org}.

\bibitem[Fra02]{franz2002}
Matthias Franz.
\newblock Moment polytopes of projective {$G$}-varieties and tensor products of
  symmetric group representations.
\newblock {\em Journal of Lie Theory}, 12(2):539--549, 2002.
\newblock URL: \url{https://www.emis.de/journals/JLT/vol.12_no.2/16.html}.

\bibitem[Fra18]{cole2018operatorScaling}
Cole Franks.
\newblock Operator scaling with specified marginals.
\newblock In {\em Proceedings of the 50th Annual ACM SIGACT Symposium on Theory
  of Computing (STOC 2018)}, page 190–203, New York, NY, USA, 2018.
  Association for Computing Machinery.
\newblock \href {https://doi.org/10.1145/3188745.3188932}
  {\path{doi:10.1145/3188745.3188932}}.

\bibitem[FW22]{franks2022minimallengthorbitclosure}
Cole Franks and Michael Walter.
\newblock Minimal length in an orbit closure as a semiclassical limit, 2022.
\newblock \href {https://arxiv.org/abs/2004.14872} {\path{arXiv:2004.14872}}.

\bibitem[Gab75]{gabrielFiniteRepresentationType1975}
Peter Gabriel.
\newblock Finite representation type is open.
\newblock In {\em Representations of {{Algebras}}}, Lecture {{Notes}} in
  {{Mathematics}}, pages 132--155, Berlin, Heidelberg, 1975. Springer.
\newblock \href {https://doi.org/10.1007/BFb0081219}
  {\path{doi:10.1007/BFb0081219}}.

\bibitem[GGOW17]{garg2017operatorScalingBrascampLieb}
Ankit Garg, Leonid Gurvits, Rafael Oliveira, and Avi Wigderson.
\newblock Algorithmic and optimization aspects of {B}rascamp-{L}ieb
  inequalities, via operator scaling.
\newblock In {\em Proceedings of the 49th Annual ACM SIGACT Symposium on Theory
  of Computing (STOC 2017)}, page 397–409, New York, NY, USA, 2017.
  Association for Computing Machinery.
\newblock \href {https://doi.org/10.1145/3055399.3055458}
  {\path{doi:10.1145/3055399.3055458}}.

\bibitem[GGOW19]{garg2019operatorScaling}
Ankit Garg, Leonid Gurvits, Rafael Oliveira, and Avi Wigderson.
\newblock Operator scaling: theory and applications.
\newblock {\em Foundations of Computational Mathematics}, pages 1--68, 2019.
\newblock \href {https://doi.org/10.1007/s10208-019-09417-z}
  {\path{doi:10.1007/s10208-019-09417-z}}.

\bibitem[GKZ92]{gelfandHyperdeterminants1992}
Israel~M. Gelfand, Mikhail~M. Kapranov, and Andrei~V. Zelevinsky.
\newblock Hyperdeterminants.
\newblock {\em Advances in Mathematics}, 96(2):226--263, December 1992.
\newblock \href {https://doi.org/10.1016/0001-8708(92)90056-Q}
  {\path{doi:10.1016/0001-8708(92)90056-Q}}.

\bibitem[GS05]{guillemin2005convexityProperties}
Victor Guillemin and Reyer Sjamaar.
\newblock {\em Convexity properties of {H}amiltonian group actions}, volume~26
  of {\em CRM Monograph Series}.
\newblock American Mathematical Society, Providence, RI, 2005.
\newblock \href {https://doi.org/10.1090/crmm/026}
  {\path{doi:10.1090/crmm/026}}.

\bibitem[GS06]{guillemin2006convexityBorel}
Victor Guillemin and Reyer Sjamaar.
\newblock Convexity theorems for varieties invariant under a {B}orel subgroup.
\newblock {\em Pure and Applied Mathematics Quarterly}, 2(3):637--653, 2006.
\newblock \href {https://doi.org/10.4310/PAMQ.2006.v2.n3.a2}
  {\path{doi:10.4310/PAMQ.2006.v2.n3.a2}}.

\bibitem[HKM{\etalchar{+}}02]{hayashi2003errorExponents}
Masahito Hayashi, Masato Koashi, Keiji Matsumoto, Fumiaki Morikoshi, and
  Andreas Winter.
\newblock Error exponents for entanglement concentration.
\newblock {\em Journal of Physics A: Mathematical and General}, 36(2):527, dec
  2002.
\newblock \href {https://doi.org/10.1088/0305-4470/36/2/316}
  {\path{doi:10.1088/0305-4470/36/2/316}}.

\bibitem[HNW23]{hirai2023interiorPointManifolds}
Hiroshi Hirai, Harold Nieuwboer, and Michael Walter.
\newblock Interior-point methods on manifolds: theory and applications.
\newblock In {\em 2023 IEEE 64th Annual Symposium on Foundations of Computer
  Science (FOCS 2023)}, page 2021–2030. IEEE, November 2023.
\newblock \href {https://doi.org/10.1109/focs57990.2023.00123}
  {\path{doi:10.1109/focs57990.2023.00123}}.

\bibitem[HS00]{hindry2000diophantineGeometry}
Marc Hindry and Joseph~H. Silverman.
\newblock {\em Diophantine geometry: an introduction}, volume 201 of {\em
  Graduate Texts in Mathematics}.
\newblock Springer-Verlag, New York, 2000.
\newblock An introduction.
\newblock \href {https://doi.org/10.1007/978-1-4612-1210-2}
  {\path{doi:10.1007/978-1-4612-1210-2}}.

\bibitem[HS24]{hiraiGradientDescentUnbounded2024}
Hiroshi Hirai and Keiya Sakabe.
\newblock Gradient descent for unbounded convex functions on {{Hadamard}}
  manifolds and its applications to scaling problems, April 2024.
\newblock \href {https://arxiv.org/abs/2404.09746} {\path{arXiv:2404.09746}}.

\bibitem[HSS03]{higuchi2003qubitReducedStates}
Atsushi Higuchi, Anthony Sudbery, and Jason Szulc.
\newblock One-qubit reduced states of a pure many-qubit state: polygon
  inequalities.
\newblock {\em Physical Review Letters}, 90:107902, March 2003.
\newblock \href {https://doi.org/10.1103/PhysRevLett.90.107902}
  {\path{doi:10.1103/PhysRevLett.90.107902}}.

\bibitem[HZG04]{han2004compatibleConditionsEntanglement}
Yong-Jian Han, Yong~Sheng Zhang, and Guang~Can Guo.
\newblock Compatible conditions, entanglement, and invariants.
\newblock {\em Physical Review. A. Third Series}, 70(4):042309, 9, 2004.
\newblock \href {https://doi.org/10.1103/PhysRevA.70.042309}
  {\path{doi:10.1103/PhysRevA.70.042309}}.

\bibitem[JV20]{jensenAsymptoticSpectrumLOCC2020}
Asger~Kj{\ae}rulff Jensen and P{\'e}ter Vrana.
\newblock The asymptotic spectrum of {LOCC} transformations.
\newblock {\em IEEE Transactions on Information Theory}, 66(1):155--166,
  January 2020.
\newblock \href {https://doi.org/10.1109/TIT.2019.2927555}
  {\path{doi:10.1109/TIT.2019.2927555}}.

\bibitem[Kac80]{kacRemarksNilpotentOrbits1980}
Victor~G. Kac.
\newblock Some remarks on nilpotent orbits.
\newblock {\em Journal of Algebra}, 64(1):190--213, May 1980.
\newblock \href {https://doi.org/10.1016/0021-8693(80)90141-6}
  {\path{doi:10.1016/0021-8693(80)90141-6}}.

\bibitem[KBB{\etalchar{+}}08]{MR2383757}
Leonid Khachiyan, Endre Boros, Konrad Borys, Khaled Elbassioni, and Vladimir
  Gurvich.
\newblock Generating all vertices of a polyhedron is hard.
\newblock {\em Discrete \& Computational Geometry. An International Journal of
  Mathematics and Computer Science}, 39(1-3):174--190, 2008.
\newblock \href {https://doi.org/10.1007/s00454-008-9050-5}
  {\path{doi:10.1007/s00454-008-9050-5}}.

\bibitem[Kly98]{MR1654578}
Alexander~A. Klyachko.
\newblock Stable bundles, representation theory and {H}ermitian operators.
\newblock {\em Selecta Mathematica. New Series}, 4(3):419--445, 1998.
\newblock \href {https://doi.org/10.1007/s000290050037}
  {\path{doi:10.1007/s000290050037}}.

\bibitem[Kly04]{klyachkoQuantumMarginalProblem2004}
Alexander Klyachko.
\newblock Quantum marginal problem and representations of the symmetric group,
  September 2004.
\newblock \href {https://arxiv.org/abs/quant-ph/0409113}
  {\path{arXiv:quant-ph/0409113}}.

\bibitem[Kly06]{klyachko2006quantum}
Alexander Klyachko.
\newblock Quantum marginal problem and {N}-representability.
\newblock {\em Journal of Physics: Conference Series}, 36(1):72, 2006.
\newblock \href {https://doi.org/10.1088/1742-6596/36/1/014}
  {\path{doi:10.1088/1742-6596/36/1/014}}.

\bibitem[KM25]{kaski2024universalsequencetensorsasymptotic}
Petteri Kaski and Mateusz Micha{\l}ek.
\newblock {A Universal Sequence of Tensors for the Asymptotic Rank Conjecture}.
\newblock In {\em 16th Innovations in Theoretical Computer Science Conference
  (ITCS 2025)}, volume 325, pages 64:1--64:24, 2025.
\newblock \href {https://doi.org/10.4230/LIPIcs.ITCS.2025.64}
  {\path{doi:10.4230/LIPIcs.ITCS.2025.64}}.

\bibitem[KN79]{kempfLengthVectorsRepresentation1979}
George Kempf and Linda Ness.
\newblock The length of vectors in representation spaces.
\newblock In Knud L{\o}nsted, editor, {\em Algebraic {{Geometry}}}, volume 732,
  pages 233--243. Springer Berlin Heidelberg, 1979.
\newblock \href {https://doi.org/10.1007/BFb0066647}
  {\path{doi:10.1007/BFb0066647}}.

\bibitem[Koi96]{koiran1996hilbertNullstellensatzPolynomialHierachy}
Pascal Koiran.
\newblock Hilbert's {N}ullstellensatz is in the polynomial hierarchy.
\newblock {\em Journal of complexity}, 12(4):273--286, 1996.
\newblock \href {https://doi.org/10.1006/jcom.1996.0019}
  {\path{doi:10.1006/jcom.1996.0019}}.

\bibitem[KT99]{MR1671451}
Allen Knutson and Terence Tao.
\newblock The honeycomb model of {${\rm GL}_n({\bf C})$} tensor products. {I}.
  {P}roof of the saturation conjecture.
\newblock {\em Journal of the American Mathematical Society}, 12(4):1055--1090,
  1999.
\newblock \href {https://doi.org/10.1090/S0894-0347-99-00299-4}
  {\path{doi:10.1090/S0894-0347-99-00299-4}}.

\bibitem[KW01]{KeylWerner2001}
Michael Keyl and Reinhard~F. Werner.
\newblock Estimating the spectrum of a density operator.
\newblock {\em Physical Review A}, 64(5):052311, 2001.
\newblock \href {https://doi.org/10.1103/PhysRevA.64.052311}
  {\path{doi:10.1103/PhysRevA.64.052311}}.

\bibitem[LO13]{MR3081636}
Joseph~M. Landsberg and Giorgio Ottaviani.
\newblock Equations for secant varieties of {V}eronese and other varieties.
\newblock {\em Annali di Matematica Pura ed Applicata. Series IV},
  192(4):569--606, 2013.
\newblock \href {https://doi.org/10.1007/s10231-011-0238-6}
  {\path{doi:10.1007/s10231-011-0238-6}}.

\bibitem[LO15]{MR3376667}
Joseph~M. Landsberg and Giorgio Ottaviani.
\newblock New lower bounds for the border rank of matrix multiplication.
\newblock {\em Theory of Computation}, 11:285--298, 2015.
\newblock \href {https://doi.org/10.4086/toc.2015.v011a011}
  {\path{doi:10.4086/toc.2015.v011a011}}.

\bibitem[MBN{\etalchar{+}}25]{manssourParametricVersionHilbert2025}
Rida Ait~El Manssour, Nikhil Balaji, Klara Nosan, Mahsa Shirmohammadi, and
  James Worrell.
\newblock A parametric version of the {{Hilbert Nullstellensatz}}.
\newblock In {\em 2025 {{Symposium}} on {{Simplicity}} in {{Algorithms}}
  ({{SOSA 2025}})}, Proceedings, pages 444--451. {Society for Industrial and
  Applied Mathematics}, January 2025.
\newblock \href {https://doi.org/10.1137/1.9781611978315.32}
  {\path{doi:10.1137/1.9781611978315.32}}.

\bibitem[MFK94]{mumfordGeometricInvariantTheory1994}
David Mumford, John Fogarty, and Frances Kirwan.
\newblock {\em Geometric Invariant Theory}, volume~34 of {\em Ergebnisse Der
  Mathematik Und Ihrer Grenzgebiete (2)}.
\newblock Springer-Verlag, Berlin, 3rd edition, 1994.
\newblock \href {https://doi.org/10.1007/978-3-642-57916-5}
  {\path{doi:10.1007/978-3-642-57916-5}}.

\bibitem[MM82]{mayr1982complexityWordProblem}
Ernst~W. Mayr and Albert~R. Meyer.
\newblock The complexity of the word problems for commutative semigroups and
  polynomial ideals.
\newblock {\em Advances in Mathematics}, 46(3):305--329, 1982.
\newblock \href {https://doi.org/10.1016/0001-8708(82)90048-2}
  {\path{doi:10.1016/0001-8708(82)90048-2}}.

\bibitem[MR13]{mayrDimensiondependentBoundsGrobner2013}
Ernst~W. Mayr and Stephan Ritscher.
\newblock Dimension-dependent bounds for {{Gr{\"o}bner}} bases of polynomial
  ideals.
\newblock {\em Journal of Symbolic Computation}, 49:78--94, February 2013.
\newblock \href {https://doi.org/10.1016/j.jsc.2011.12.018}
  {\path{doi:10.1016/j.jsc.2011.12.018}}.

\bibitem[NM84]{nessStratificationNullCone1984}
Linda Ness and David Mumford.
\newblock A stratification of the null cone via the moment map.
\newblock {\em American Journal of Mathematics}, 106(6):1281, December 1984.
\newblock \href {https://doi.org/10.2307/2374395} {\path{doi:10.2307/2374395}}.

\bibitem[Nur00a]{nurmievClosuresNilpotentOrbits2000}
A.~G. Nurmiev.
\newblock Closures of nilpotent orbits of cubic matrices of order three.
\newblock {\em Russian Mathematical Surveys}, 55(2):347, April 2000.
\newblock \href {https://doi.org/10.1070/RM2000v055n02ABEH000279}
  {\path{doi:10.1070/RM2000v055n02ABEH000279}}.

\bibitem[Nur00b]{nurmievOrbitsInvariantsCubic2000}
Anvar~G. Nurmiev.
\newblock Orbits and invariants of cubic matrices of order three.
\newblock {\em Sbornik: Mathematics}, 191(5):717, June 2000.
\newblock \href {https://doi.org/10.1070/SM2000v191n05ABEH000478}
  {\path{doi:10.1070/SM2000v191n05ABEH000478}}.

\bibitem[OLV12]{onishchik2012}
A.L. Onishchik, D.A. Leites, and Ernest~B. Vinberg.
\newblock {\em Lie Groups and Algebraic Groups}.
\newblock Springer Series in Soviet Mathematics. Springer Berlin Heidelberg,
  2012.
\newblock \href {https://doi.org/10.1007/978-3-642-74334-4}
  {\path{doi:10.1007/978-3-642-74334-4}}.

\bibitem[Pra24]{10.1145/3618260.3649620}
Kevin Pratt.
\newblock A stronger connection between the asymptotic rank conjecture and the
  set cover conjecture.
\newblock In {\em Proceedings of the 56th Annual ACM Symposium on Theory of
  Computing (STOC 2024)}, page 871–874, New York, NY, USA, 2024. Association
  for Computing Machinery.
\newblock \href {https://doi.org/10.1145/3618260.3649620}
  {\path{doi:10.1145/3618260.3649620}}.

\bibitem[Pro07]{procesi2007}
Claudio Procesi.
\newblock {\em Lie groups, an approach through invariants and representations}.
\newblock Universitext. Springer, New York, 2007.
\newblock \href {https://doi.org/10.1007/978-0-387-28929-8}
  {\path{doi:10.1007/978-0-387-28929-8}}.

\bibitem[Res]{ressayrePersonalCommuncation}
Nicolas Ressayre.
\newblock Personal communication.

\bibitem[Res10]{ressayre2010generalizedEigenvalueProblem}
Nicolas Ressayre.
\newblock Geometric invariant theory and the generalized eigenvalue problem.
\newblock {\em Inventiones mathematicae}, 180(2):389--441, 2010.
\newblock \href {https://doi.org/10.1007/s00222-010-0233-3}
  {\path{doi:10.1007/s00222-010-0233-3}}.

\bibitem[Res11]{ressayre2011generalizedEigenvalueProblemII}
Nicolas Ressayre.
\newblock Geometric {Invariant} {Theory} and {Generalized} {Eigenvalue}
  {Problem} {II}.
\newblock {\em Annales de l'Institut Fourier}, 61(4):1467--1491, 2011.
\newblock \href {https://doi.org/10.5802/aif.2647}
  {\path{doi:10.5802/aif.2647}}.

\bibitem[{Sag}24]{sagemath}
{The} {Sage Developers}.
\newblock {\em {S}ageMath, the {S}age {M}athematics {S}oftware {S}ystem
  ({V}ersion 10.4)}, 2024.
\newblock {\url{https://www.sagemath.org}}.

\bibitem[SAK{\etalchar{+}}18]{SchillingAltunbulakKnechtLopesWhitfieldChristandlGrossReiher2018}
Christian Schilling, Murat Altunbulak, Stefan Knecht, Alexandre Lopes, James~D.
  Whitfield, Matthias Christandl, David Gross, and Markus Reiher.
\newblock Generalized {P}auli constraints in small atoms.
\newblock {\em Physical Review A}, 97(5):052503, 2018.
\newblock \href {https://doi.org/10.1103/PhysRevA.97.052503}
  {\path{doi:10.1103/PhysRevA.97.052503}}.

\bibitem[SGC12]{SchillingChristandlGross2012}
Christian Schilling, David Gross, and Matthias Christandl.
\newblock Pinning of fermionic occupation numbers.
\newblock {\em Physical Review Letters}, 109(23):233602, 2012.
\newblock \href {https://doi.org/10.1103/PhysRevLett.109.233602}
  {\path{doi:10.1103/PhysRevLett.109.233602}}.

\bibitem[SK77]{kimuraClassificationIrreduciblePrehomogeneous1977}
Mikio Sato and Tatsuo Kimura.
\newblock A classification of irreducible prehomogeneous vector spaces and
  their relative invariants.
\newblock {\em Nagoya Mathematical Journal}, 65:1--155, 1977.
\newblock \href {https://doi.org/10.1017/S0027763000017633}
  {\path{doi:10.1017/S0027763000017633}}.

\bibitem[Str69]{strassen1969gaussian}
Volker Strassen.
\newblock Gaussian elimination is not optimal.
\newblock {\em Numerische Mathematik}, 13(4):354--356, 1969.
\newblock \href {https://doi.org/10.1007/BF02165411}
  {\path{doi:10.1007/BF02165411}}.

\bibitem[Str83]{strassenRankOptimalComputation1983}
Volker Strassen.
\newblock Rank and optimal computation of generic tensors.
\newblock {\em Linear Algebra and its Applications}, 52--53:645--685, July
  1983.
\newblock \href {https://doi.org/10.1016/0024-3795(83)80041-X}
  {\path{doi:10.1016/0024-3795(83)80041-X}}.

\bibitem[Str86]{DBLP:conf/focs/Strassen86}
Volker Strassen.
\newblock The asymptotic spectrum of tensors and the exponent of matrix
  multiplication.
\newblock In {\em 27th Annual Symposium on Foundations of Computer Science,
  Toronto, Canada, 27-29 October 1986}, pages 49--54. {IEEE} Computer Society,
  1986.
\newblock \href {https://doi.org/10.1109/SFCS.1986.52}
  {\path{doi:10.1109/SFCS.1986.52}}.

\bibitem[Str88]{strassen1988asymptotic}
Volker Strassen.
\newblock The asymptotic spectrum of tensors.
\newblock {\em Journal f\"{u}r die Reine und Angewandte Mathematik.},
  384:102--152, 1988.
\newblock \href {https://doi.org/10.1515/crll.1988.384.102}
  {\path{doi:10.1515/crll.1988.384.102}}.

\bibitem[Str91]{strassen1991}
Volker Strassen.
\newblock Degeneration and complexity of bilinear maps: Some asymptotic
  spectra.
\newblock {\em Journal f\"ur die reine und angewandte Mathematik},
  413:127--180, 1991.
\newblock \href {https://doi.org/10.1515/crll.1991.413.127}
  {\path{doi:10.1515/crll.1991.413.127}}.

\bibitem[SWK13]{sawicki2013threeQubits}
Adam Sawicki, Michael Walter, and Marek Kuś.
\newblock When is a pure state of three qubits determined by its
  single-particle reduced density matrices?
\newblock {\em Journal of Physics A: Mathematical and Theoretical},
  46(5):055304, 2013.
\newblock \href {https://doi.org/10.1088/1751-8113/46/5/055304}
  {\path{doi:10.1088/1751-8113/46/5/055304}}.

\bibitem[Tao16]{taoCapsetBlogPost2016}
Terence Tao.
\newblock A symmetric formulation of the
  {C}root–{L}ev–{P}ach–{E}llenberg-{G}ijswijt capset bound, 2016.
\newblock URL:
  \url{https://terrytao.wordpress.com/2016/05/18/a-symmetric-formulation-of-the-croot-lev-pach-ellenberg-gijswijt-capsetbound/}.

\bibitem[TS16]{taoSawinCapsetBlogPost2016}
Terence Tao and William Sawin.
\newblock Notes on the slice rank of tensors, 2016.
\newblock URL: \url{https://terrytao.wordpress.
  com/2016/08/24/notes-on-the-slice-rank-of-tensors/}.

\bibitem[vdBCL{\etalchar{+}}25a]{vandenBerg2025nonFreeTensor}
Maxim van~den Berg, Matthias Christandl, Vladimir Lysikov, Harold Nieuwboer,
  Michael Walter, and Jeroen Zuiddam.
\newblock Explicit non-free tensors, 2025.
\newblock \href {https://arxiv.org/abs/2503.22650} {\path{arXiv:2503.22650}}.

\bibitem[vdBCL{\etalchar{+}}25b]{vandenBerg2025mmPolytope}
Maxim van~den Berg, Matthias Christandl, Vladimir Lysikov, Harold Nieuwboer,
  Michael Walter, and Jeroen Zuiddam.
\newblock The moment polytope of matrix multiplication is not maximal, 2025.
\newblock \href {https://arxiv.org/abs/2503.22633} {\path{arXiv:2503.22633}}.

\bibitem[vdBCL{\etalchar{+}}25c]{vandenBerg2025momentPolytopesGithub}
Maxim van~den Berg, Matthias Christandl, Vladimir Lysikov, Harold Nieuwboer,
  Michael Walter, and Jeroen Zuiddam.
\newblock Tensor moment polytopes repository.
\newblock \url{https://github.com/qi-rub/tensor_moment_polytopes}, 2025.

\bibitem[Vin76]{vinbergWeylGroupGraded1976}
Ernest~B. Vinberg.
\newblock The {{Weyl}} group of a graded {{Lie}} algebra.
\newblock {\em Mathematics of the USSR-Izvestiya}, 10(3):463, June 1976.
\newblock \href {https://doi.org/10.1070/IM1976v010n03ABEH001711}
  {\path{doi:10.1070/IM1976v010n03ABEH001711}}.

\bibitem[Vra23]{vranaFamilyMultipartiteEntanglement2023}
P{\'e}ter Vrana.
\newblock A family of multipartite entanglement measures.
\newblock {\em Communications in Mathematical Physics}, 402(1):637--664, August
  2023.
\newblock \href {https://doi.org/10.1007/s00220-023-04731-8}
  {\path{doi:10.1007/s00220-023-04731-8}}.

\bibitem[VW17]{vergneInequalitiesMomentCone2017}
Mich{\`e}le Vergne and Michael Walter.
\newblock Inequalities for moment cones of finite-dimensional representations.
\newblock {\em Journal of Symplectic Geometry}, 15:1209--1250, January 2017.
\newblock \href {https://doi.org/10.4310/JSG.2017.v15.n4.a8}
  {\path{doi:10.4310/JSG.2017.v15.n4.a8}}.

\bibitem[VW23]{vergne2023momentconemembershipquivers}
Mich\`ele Vergne and Michael Walter.
\newblock Moment cone membership for quivers in strongly polynomial time, 2023.
\newblock \href {https://arxiv.org/abs/2303.14821} {\path{arXiv:2303.14821}}.

\bibitem[{Wal}14]{walter2014thesis}
{Walter, Michael}.
\newblock {\em Multipartite Quantum States and their Marginals}.
\newblock PhD thesis, ETH Zurich, 2014.
\newblock \href {https://doi.org/10.3929/ETHZ-A-010250985}
  {\path{doi:10.3929/ETHZ-A-010250985}}.

\bibitem[Wal17]{wallachGeometricInvariantTheory2017}
Nolan~R. Wallach.
\newblock {\em Geometric {{Invariant Theory}}}.
\newblock Universitext. Springer International Publishing, Cham, 2017.
\newblock \href {https://doi.org/10.1007/978-3-319-65907-7}
  {\path{doi:10.1007/978-3-319-65907-7}}.

\bibitem[WDGC13]{walterEntanglementPolytopes2013}
Michael Walter, Brent Doran, David Gross, and Matthias Christandl.
\newblock Entanglement polytopes: multiparticle entanglement from
  single-particle information.
\newblock {\em Science}, 340(6137):1205--1208, 2013.
\newblock \href {https://doi.org/10.1126/science.1232957}
  {\path{doi:10.1126/science.1232957}}.

\bibitem[Wil17]{MR3645110}
Mark~M. Wilde.
\newblock {\em Quantum information theory}.
\newblock Cambridge University Press, Cambridge, second edition, 2017.
\newblock \href {https://doi.org/10.1017/9781316809976}
  {\path{doi:10.1017/9781316809976}}.

\bibitem[WZ25]{wigderson2022asymptotic}
Avi Wigderson and Jeroen Zuiddam.
\newblock Asymptotic spectra: Theory, applications and extensions.
\newblock {\em Bulletin of the American Mathematical Society (to appear)},
  2025.
\newblock URL: \url{https://staff.fnwi.uva.nl/j.zuiddam/papers/convexity.pdf}.

\bibitem[ZGZ{\etalchar{+}}17]{zhaoExperimentalDetectionEntanglement2017}
Yuan-Yuan Zhao, Markus Grassl, Bei Zeng, Guo-Yong Xiang, Chao Zhang, Chuan-Feng
  Li, and Guang-Can Guo.
\newblock Experimental detection of entanglement polytopes via local filters.
\newblock {\em npj Quantum Information}, 3(1):1--7, March 2017.
\newblock \href {https://doi.org/10.1038/s41534-017-0007-5}
  {\path{doi:10.1038/s41534-017-0007-5}}.

\bibitem[Zie12]{ziegler2012lectures}
G\"unter~M. Ziegler.
\newblock {\em Lectures on Polytopes}.
\newblock Graduate Texts in Mathematics. Springer New York, 2012.
\newblock \href {https://doi.org/10.1007/978-1-4613-8431-1}
  {\path{doi:10.1007/978-1-4613-8431-1}}.

\end{thebibliography}

\end{document}